\theoremstyle{plain}
\newtheorem{theo}{Theorem}[chapter]
\newtheorem{coro}[theo]{Corollary}
\newtheorem{lemm}[theo]{Lemma}
\newtheorem{prop}[theo]{Proposition}
\newtheorem{epiprop}{Proposition}[]  
\newtheorem{epilemm}[epiprop]{Lemma} 
\newtheorem*{theointro}{Theorem}
\theoremstyle{definition}
\newtheorem{defi}[theo]{Definition}
\newtheorem{exam}[theo]{Example}
\newtheorem{rema}[theo]{Remark}
\newcommand{\ZZ}{\mathbb{Z}}
\newcommand{\QQ}{\mathbb{Q}}
\newcommand{\RR}{\mathbb{R}}
\newcommand{\CC}{\mathbb{C}}
\newcommand{\CP}{\mathbb{P}}
\newcommand{\CA}{\mathcal{A}}
\newcommand{\BB}{\mathcal{B}}
\newcommand{\CH}{\mathcal{C}}
\newcommand{\DC}{\mathcal{D}}
\newcommand{\EE}{\mathcal{E}}
\newcommand{\GG}{\mathcal{G}}
\newcommand{\LL}{\mathcal{L}}
\newcommand{\MM}{\mathcal{M}}
\newcommand{\PP}{\mathcal{P}}
\newcommand{\OO}{\mathcal{O}}
\newcommand{\WW}{\mathcal{W}}
\newcommand{\al}{\alpha}
\newcommand{\be}{\beta}
\newcommand{\ga}{\gamma}
\newcommand{\de}{\delta}
\newcommand{\ep}{\varepsilon}
\newcommand{\vp}{\varphi}
\newcommand{\ka}{\kappa}
\newcommand{\io}{\iota}
\newcommand{\lm}{\lambda}
\newcommand{\sig}{\sigma}
\newcommand{\te}{\theta}
\newcommand{\om}{\omega}
\newcommand{\ze}{\zeta}
\newcommand{\Ga}{\Gamma}
\newcommand{\Om}{\Omega}
\newcommand{\De}{\Delta}
\newcommand{\Lm}{\Lambda}
\newcommand{\Sig}{\Sigma}
\newcommand{\ot}{\otimes}
\newcommand{\ov}{\overline}
\newcommand{\STS}{S^1\times S^1}
\newcommand{\MTM}{M\times M}
\newcommand{\MTN}{M\times N}
\newcommand{\NTN}{N\times N}
\newcommand{\GTG}{G\times G}
\newcommand{\sres}{\mathcal{S}_{(h,\ell,P,\eta,\gamma)}}
\newcommand{\sq}{/\kern-.7ex/}  
\newcommand{\ssq}{/\kern-.4ex/} 
\newcommand{\im}{\mbox{im}\,}
\newcommand{\bb}{{\bf b}}
\newcommand{\na}{\nabla}
\newcommand{\mg}{\mathfrak{g}}
\newcommand{\la}{\langle}
\newcommand{\ra}{\rangle}
\begin{document}

\frontmatter 


\begin{titlepage}
\begin{center}

\rule{\linewidth}{0.5mm} \\[0.4cm] 
{\huge \bfseries Construction of right inverses \\ of the Kirwan map}\\[0.4cm] 
\rule{\linewidth}{0.5mm} \\[1.5cm] 

\begin{minipage}{0.4\textwidth}
\begin{flushleft} \large
\emph{Autor:}\\
Andratx Bellmunt Giralt\\
\href{mailto:andratx.bellmunt@gmail.com}{andratx.bellmunt@gmail.com}
\end{flushleft}
\end{minipage}
\begin{minipage}{0.4\textwidth}
\begin{flushright} \large
\emph{Director:} \\
Dr. Ignasi Mundet i Riera\\
\hfill
\end{flushright}
\end{minipage}\\[2cm]

\vfill

\textit{Tesi presentada per a optar \\ al grau de Doctor en Matemàtiques}\\[0.3cm]
Novembre 2015

\vfill\vfill

Departament d'Àlgebra i Geometria\\[0.5cm]
Facultat de Matemàtiques\\[0.5cm]

\hspace{0.7cm}\includegraphics[scale=0.4]{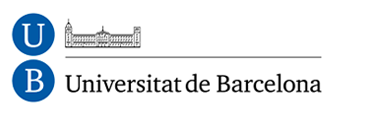}\\

\end{center}
\end{titlepage}


\newpage\thispagestyle{empty}\null 


\newpage
\thispagestyle{empty}
\null\vfil
\begin{center}{\Large A les estrelles fugaces del cel de Siurana}\end{center}
\vfil\null
\newpage\null\thispagestyle{empty} 


\newpage
\addcontentsline{toc}{chapter}{Acknowledgements}
\pagestyle{plain}

\begin{center}{\LARGE{\textit{Agraïments/Acknowledgements/Agradecimentos}} \par}\end{center}
Al meu director de tesi, el Dr. Ignasi Mundet, vull agrair-li, a més de la seva paciència i comprensió --sé que sovint no li ho he posat fàcil--, que em proposés d'estudiar un tema ben interessant: quan t'adones que has hagut de fer servir anàlisi per construir geomètricament aquell objecte topològic que et permet trobar l'objecte algebraic que busques, saps que has estat fent matemàtiques.

I would also like to thank Professor Frances Kirwan for her kindness and hospitality. It is a great privilege to be able to work directly with the person who developed the theory on which your dissertation is based upon. During my stay at the University of Oxford I shared many mathematical ideas and many beer pints with Maria, Michael, Markus, Rob, Pádraig, Tim and Thomas.

No Instituto Superior Técnico fui muito bem recebido pelo Professor Rui Loja Fernandes. Mais nunca tenho conhecido alguém com a capacidade dele para conseguir tirar tempo do nada; estou-lhe muito agradecido por isso. Diversos postdoutorandos foram grandes anfitriões: o Rémi, o Gabriele, o Sebastian e particularmente o meu companheiro de gabinete, o Filippo. O responsável da minha estimação por Lisboa e as minhas múltiples visitas posteriores na cidade é o pessoal do Intendente: o tio Jared, as tias Claudia e Carla, os primos Héctor e Teresa, mas muito especialmente a tia Paula; és a melhor, simplesmente.

La part més gran de la tesi, però, s'ha desenvolupat a Catalunya. La meva vida com a matemàtic no s'entendria sense l'Alberto, amb qui he compartit tantes coses des que els trens de Sant Esteve i Tarragona ens van fer arribar abans que ningú a la primera classe de la llicenciatura. Les converses freqüents amb la Joana al balconet del departament van ser un gran ajut; amb ella vam descobrir que de matinada a París els clients dels bars se't posen a parlar de Grothendieck amb tota normalitat. Els altres {\it Bott-Tu}'s, l'Abdó i el Federico, han estat grans companys a l'hora de resoldre dubtes topològics i geomètrics. L'entusiasme i la constància del David i el Dani van fer possible endegar el seminari SIMBa i que fos tot un èxit; he tingut el plaer d'organitzar-lo amb ells dos i amb la Mireia, el Carlos, l'Arturo i l'Eloi. Els assistents incondicionals del seminari han estat clau perquè tota una generació i mitja de doctorands de la facultat ens coneguéssim més tant a nivell matemàtic com a nivell personal. Una menció especial és per a la cosineta Meri, amb qui vam compartir dinars, despatx, monedes de xocolata i problemes de Geometria projectiva. Finalment, al Carles Currás vull agrair-li tot el marge que em va donar per fer i desfer en les classes de l'assignatura de Geometria diferencial.

En els darrers mesos he estat gaudint d'un fantàstic ambient de treball a Gauss \& Neumann, on la majoria dels companys també han tingut l'experiència d'escriure una tesi doctoral en Matemàtiques o Física. Tots ells són una font de bons consells: moltes gràcies a l'Alberto, la Sònia, la Jessica, l'Ignasi, el Marc, el Zorion, la Federica, el Rubén i la Sara.

Fora de l'àmbit matemàtic són molts els companys i amics que m'han sentit parlar durant temps i temps d'aquesta tesi. El Ricard, la Carlota, la Lara, el Javi i el Christian sempre m'esperen amb els braços oberts les vegades --moltes menys de les que voldria-- que torno a Cambrils. Els dos Alberts, el Joan, el Pere Joan, el Néstor, el Tomeu, el Carles i la Mireia, tot i la diàspora, sempre tindrem un punt de trobada al {\it Bar Bodega Javier}. El Cerni i la seva borda fan que veure's un cop l'any sigui apoteòsic. El Pep i l'Oriol són la garantia que mai arribi a tocar massa de peus a terra i que sempre tingui l'oportunitat d'emprendre el vol. Dels anys del Ramon Llull encara en queden molts altres amics -amb alguns dels quals he compartit pis- que sempre m'han fet costat: l'Aleix, el Borja, l'Elisenda, la Paz, el Marc i l'Eli. Els companys i el {\it sifu} de kung-fu m'han fet suar de valent, però sobretot han tingut una constància impertorbable interessant-se a cada entrenament per l'evolució de la tesi.

Els matemàtics d'aquesta universitat tenim el goig de compartir espai amb els nostres veïns filòlegs. En el meu cas, aquest goig se sublima en l'Adelaida.

Tots els membres de la meva família sense excepció m'han donat un enorme suport, sempre han estat disposats a ajudar-me i a empènyer-me per tirar endavant. El Pere i l'Èlia, pare i germana, en són els màxims representants. La Fina, la meva mare, em va ensenyar amb els seus actes, però sobretot amb el seus sacrificis, fins a quin punt valen la pena uns estudis universitaris. L'hauria entusiasmada poder veure aquesta tesi completada.

\tableofcontents

\listoffigures
\addcontentsline{toc}{chapter}{List of Figures}
\newpage\null\thispagestyle{empty} 


\newpage
\thispagestyle{empty}

\null\vfill 

\begin{flushright}


\textit{``O tempo, ainda que os relógios queiram convencer-nos\\ do contrário, não é o mesmo para toda a gente."}

José Saramago, \textit{Todos os nomes}
\end{flushright}

\vfill\vfill\vfill\vfill\vfill\vfill\null 

\newpage\null\thispagestyle{empty} 


\pagestyle{fancy} 

\chapter{Preamble}\label{ch:preamble}

\fancyhead[RE,LO]{Preamble}


This preamble is the only variation from the text that was originally submitted as a requirement for the author to get his PhD degree. The jury of the corresponding public defence -that took place on 17\textsuperscript{th} December 2015- pointed out that, although it was not a serious issue, it would have been better not to assume that the reader was familiar with Hamiltonian spaces. In order to offset this issue we add this preamble. Doing so the original introduction remains unmodified. 

We are about to explain what Hamiltonian spaces are beginning from the very definition of a symplectic manifold. Very good references for this topic are \cite{Can} and \cite{McSa1}. The only background needed are smooth manifolds, differential forms and Lie groups.

A {\it symplectic manifold} is a pair $(M,\om)$ formed by:
\begin{itemize}
\item A smooth manifold $M$.
\item A differential 2-form $\om\in\Om^2(M)$ which is closed and non-degenerate. We say that $\om$ is a {\it symplectic form}.
\end{itemize}
Symplectic manifolds arise naturally in many different contexts such as Hamiltonian Classical Mechanics or as projective varieties over $\CC$. 

To our purposes we will assume that $(M,\om)$ is a compact and connected symplectic manifold. Let $G$ be a Lie group, which we also assume to be compact and connected. Let $\mg$ be the Lie algebra of $G$ and let $\exp:\mg\rightarrow G$ be the corresponding exponential map. 

If $G$ acts smoothly on $M$, for every $\xi\in\mg$ we get a vector field, which on $p\in M$ is defined by
\[
X_p^\xi=\left.\frac{d}{dt}\right|_{t=0}\exp(t\xi)\cdot p.
\]
The action is {\it symplectic} if $X^\xi$ preserves the symplectic form, i.e. $\LL_{X^\xi}\om=0$, for every $\xi\in\mg$. Then, from the Cartan formula
\[
\LL_{X^\xi}\om=d\io_{X^\xi}\om+\io_{X^\xi}d\om
\] 
and $d\om=0$ ($\om$ is closed) it follows that $d\io_{X^\xi}\om=0$, so the contraction $\io_{X^\xi}\om$ is a closed $1$-form. If, moreover, $\io_{X^\xi}\om$ is exact we say that the action is {\it Hamiltonian}. In this situation there exists a function $h^\xi:M\rightarrow\RR$ such that $dh^{\xi}=\io_{X^\xi}\om$. From this we can define the correspondence
\[
\begin{array}{rccc}
\mu: & M & \longrightarrow & \mg^*\\
 &p&\longmapsto &(\xi\mapsto h^\xi(p))
\end{array},
\]
which is called {\it moment map}. The moment map turns out to be $G$-equivariant with respect to the coadjoint action on $\mg^*$.

We say that $(M,\om,G,\mu)$ is a {\it Hamiltonian space}.

\chapter{Introduction}\label{ch:introduction}

\fancyhead[RE,LO]{Introduction}


The objects studied in this thesis are Hamiltonian spaces and it is assumed that the reader is familiar with them as explained in e.g. \cite[VII-IX]{Can} or \cite[\S 5]{McSa1}. The usual contents of first year graduate courses in differential geometry and algebraic topology are also assumed. We did our best effort to explain or to provide useful references to anything beyond that, including some of the notions that appear in this introduction.

Let $(M,\om,G,\mu)$ be a Hamiltonian space where both $M$ and $G$ are compact and connected. Since the moment map $\mu:M\rightarrow\mathfrak{g}^*$ is equivariant with respect to the coadjoint action on $\mathfrak{g^*}$, the action on $M$ restricts to $\mu^{-1}(0)$. If $0$ is a regular value of $\mu$ this action is locally free and $M\sq G:=\mu^{-1}(0)/G$ is an orbifold which inherits a symplectic structure from $M$. In {\bf rational} cohomology there is an isomophism $H_G^*(\mu^{-1}(0))\simeq H^*(M\sq G)$, called the Cartan isomorphism. Also, the inclusion $\mu^{-1}(0)\hookrightarrow M$ induces a ring homomorphism $H_G^*(M)\rightarrow H_G^*(\mu^{-1}(0))$ in $G$-equivariant cohomology. Composing it with the Cartan isomorphism we get the {\it Kirwan map},
\[
\ka:H_G^*(M)\longrightarrow H^*(M\sq G).
\]
In \cite{Kir} Kirwan shows that $\ka$ is a surjective map. A brief sketch of her proof goes as follows: take a $G$-invariant inner product on $\mathfrak{g}$ and use it to identify $\mathfrak{g}$ and $\mathfrak{g}^*$. Then $\mu$ can be thought as taking values on $\mathfrak{g}$. Using the norm defined by the inner product we can define a $G$-invariant smooth function $f:M\rightarrow\RR$ by $f(p):=\|\mu(p)\|^2$. This function has the property that $\mu^{-1}(0)$, which equals $f^{-1}(0)$, is its minimum level. The function $f$ is not Morse-Bott in general, but it is minimally degenerate --as described in \cite[\S10]{Kir}-- which is sufficient to prove that the stable manifolds of the gradient flow of $f$ with respect to an invariant Riemannian metric form, in a sense, a stratification of $M$ (see \cite[2.11]{Kir}). Using this stratification Kirwan shows that $f$ is equivariantly perfect, meaning that if $C$ is a component of the critical set of $f$ and $M_{\pm}=f^{-1}(-\infty,f(C)\pm\ep)$, for sufficiently small $\ep>0$ the equivariant cohomology exact sequence of the pair $M_+,M_-$,
\[
\cdots\rightarrow H^*_G(M_+,M_-)\rightarrow H^*_G(M_+)\rightarrow H^*_G(M_-)\rightarrow\cdots
\]
splits into short exact sequences. Then, by induction on the critical levels, it follows that $H_G^*(M)\rightarrow H_G^*(f^{-1}(0))$ is surjective. Since $f^{-1}(0)=\mu^{-1}(0)$ this map is nothing else than the map induced by the inclusion $\mu^{-1}(0)\hookrightarrow M$, and surjectivity of $\ka$ follows.

Kirwan's proof does not define, neither explicitly nor implicitly, a particular choice of right inverse of $\ka$. We now describe a geometric construction that defines a right inverse of $\ka$. Let $2m=\dim M$, $d=\dim G$, consider the action of $\GTG$ on $\MTM$ given by $(g,h)(p,q)=(g\cdot p,h\cdot q)$ and let
\[
\ka^2:H_{\GTG}^*(\MTM)\longrightarrow H^*(M\sq G\times M\sq G)
\]
be the Kirwan map for this action. A class $\de\in H_{\GTG}^{2(m-d)}(\MTM)$ such that $\ka^2(\de)$ is Poincaré dual to the homology class $[\De_{M\ssq G\times M\ssq G}]\in H_{2(m-d)}(M\sq G\times M\sq G)$ defined by the diagonal is called a {\it biinvariant diagonal class}. Using Künneth on both the source and the target spaces of $\ka^2$ we can interpret $\ka^2$ as $\ka\ot\ka$. The image of the biinvariant diagonal class $\de$ under the map
\[
\multiply\dgARROWLENGTH by2
\begin{diagram}
\node{\bigoplus_{k=0}^{2(m-d)}H^{2(m-d)-k}_G(M)\ot H_G^k(M)}\arrow{e,t}{(PD\circ\ka)\ot id}\node{\bigoplus_{k=0}^{2(m-d)}H^k(M\sq G)^\vee\ot H_G^k(M)}
\end{diagram}
\]
can be interpreted as a degree preserving linear map $\ell_\de: H^*(M\sq G)\rightarrow H^*_G(M)$. Here $PD:H^*(M\sq G)\rightarrow H^{2(m-d)-*}(M\sq G)^\vee$ denotes Poincaré duality. It turns out that $\ell_\de$ is a right inverse of $\ka$: if $\de=\sum\ep_i\ot\eta_i$ is any decomposition and $\al\in H^*(M\sq G)$ we have that
\[
\ell_\de(\al)=\sum\left(\int_{M\ssq G}\al\smile\ka(\ep_i)\right)\eta_i
\]
and hence
\[
(\ka\circ\ell_\de)(\al)=\sum\left(\int_{M\ssq G}\al\smile\ka(\ep_i)\right)\ka(\eta_i)=\al,
\]
where the second equality is given by the fact that $\sum\ka(\ep_i)\ot\ka(\eta_i)$ is Poincaré dual to the class defined by the diagonal in $M\sq G\times M\sq G$. The lack of uniqueness of a right inverse of $\ka$ is reflected in this construction in the fact that in general there exist many different biinvariant diagonal classes. On the other hand, if we manage to give a direct definition of some biinvariant diagonal class $\de$ then, by the preceding argument, we obtain a new proof of Kirwan surjectivity and at the same time we get a ``natural'' choice of right inverse of $\ka$.

When $G$ is abelian the coadjoint action is trivial and then symplectic reduction $M\sq_cG=\mu^{-1}(c)/G$ can be taken at any regular value $c$ of $\mu$. We get the corresponding Kirwan map $\ka_c: H_G^*(M)\rightarrow H^*(M\sq_c G)$. A class that is biinvariant diagonal for all regular values simultaneously is called a {\it global biinvariant diagonal class}. In \cite{Mun} a global biinvariant diagonal class is constructed geometrically using the moduli space of gradient flow lines of each component of $\mu$.  It turns out that to do so it is sufficient to focus on the case $G=S^1$, the result for a general abelian Lie group following from that case. Note that a great advantage of working with $G=S^1$ is that $\mu$ itself is a Morse-Bott function. Let us explain the the geometric idea behind the construction in \cite{Mun}:

Denote by $X$ the vector field generated by the infinitesimal action of $S^1$ on $M$ and take $J$ an invariant almost complex structure compatible with $\om$. Then $\rho_J=\om(\cdot,J(\cdot))$ is a Riemannian metric on $M$ and $-JX$ is the downward gradient vector field of $\mu$ with respect to this metric. Finally let $\xi^{JX}$ denote the flow of $-JX$. The submanifold of $\MTM$ defined by 
\[
\De_{S^1}=\{(p,q)\in\MTM:\ \exists t\in\RR,\ \al\in S^1\ \mbox{s.t.}\ q=\al\cdot\xi^{JX}_t(p)\}
\]
satisfies the following properties:
\begin{enumerate}
\item It is $\STS$-invariant.
\item It has dimension $2m+2$.
\item The intersection $\De_{S^1}\cap(\mu^{-1}(c)\times\mu^{-1}(c))$ is the preimage of the diagonal under the projection $\mu^{-1}(c)\times\mu^{-1}(c)\rightarrow M\sq_cS^1\times M\sq_cS^1$ for every regular value $c$ of $\mu$.
\end{enumerate}

Using multivalued perturbations --more on that in a moment--  $\De_{S^1}$  is compactified and taking intersection pairing with this compactification one gets a cohomology class
\[
\de_{S^1}\in H^{2m-2}_{\STS}(\MTM).
\]
Property 1 allows to define an equivariant cohomology --instead of ordinary cohomology-- class. Property 2 makes the class have the right degree. Property 3  makes $\de_{S^1}$  to be mapped to the Poincaré dual of $[\De_{M\ssq_cS^1\times M\ssq_cS^1}]$ under $\ka_c^2$. This shows that $\de_{S^1}$ is a global biinvariant diagonal class.

This thesis should be regarded as a continuation of the work carried out in \cite{Mun}. The main tool to construct the global biinvariant diagonal class are multivalued perturbations of the gradient flow equation: to properly compactify $\De_{S^1}$ some transversality conditions on the (un)stable manifolds defined by the flow $\xi^{JX}$ are needed but they are impossible to achieve in general if we require invariance --property 1-- to hold after compactification. However, the only obstruction is due to the presence of finite non-trivial stabilisers of the action of $S^1$ on $M$; if the action is semi-free standard perturbations are sufficient to get transversality and invariance simultaneously. In the general situation where finite non-trivial stabilisers do exist, we need to make use of multivalued perturbations to achieve the same result. Although multivalued perturbations are not a new concept, to our knowledge they were not described in detail for the gradient flow equation before \cite{Mun}. One of the purposes of the present text is to describe them at a slower pace with the humble hope that this work can be useful to anyone with the wish of learning this topic. After some necessary tools are given in Chapter \ref{ch:toolbox}, we devote Chapter \ref{ch:standard} to standard perturbations, which provide a benchmark for Chapter \ref{ch:multivalued}, where multivalued perturbations are presented. 

Chapter \ref{ch:biinvariant} contains the new results of this thesis. Our first main contribution is the geometric construction of a linear map
\[
\boxed{\lm:H_{\STS}^*(\CC P^1)\longrightarrow H^{*+2m-2}_{\STS}(\MTM)}
\]
where $\STS$ acts on $\CC P^1$ by $(\te,\ze)[z:w]=[\te z:\ze w]$ and on $\MTM$ by  $(\te,\ze)(p,q)=(\te\cdot p,\ze\cdot q)$, with the property that $\lm(1)=\de_{S^1}$ and that plays a crucial role in the description of biinvariant diagonal classes of Cartesian products. The geometric idea behind the construction of $\lm$ is the following:

The manifold $\De_{S^1}\subseteq\MTM$ which was defined previously can be identified with the image of the map
\[
\begin{array}{rccc}
F_1: &M\times\RR\times S^1&\longrightarrow &\MTM\\
 &(p,t,\al)&\longmapsto & (p,\al\cdot\xi^{JX}_t(p)) 
\end{array}.
\]
Define also the map
\[
\begin{array}{rccc}
F_2:&M\times\RR\times S^1&\longrightarrow &\CC P^1\\
 &(p,t,\al)&\longmapsto & [1:2^t\al] 
\end{array}.
\]
Both $F_1,F_2$ are equivariant with respect to the action of $\STS$ on $M\times\RR\times S^1$ given by  $(\te,\ze)(p,t,\al)=(\te\cdot p,t,\ze\al\bar{\te})$. Morally speaking $\lm$ is constructed by finding a suitable compactification of $M\times\RR\times S^1$ to which the action of $\STS$ and the maps $F_1,F_2$ extend. Then $\lm$ is set to be an analogue in equivariant cohomology of the map $F_1^!\circ F_2^*$, where $F_1^!=PD^{-1}\circ(F_1)_*\circ PD$ is the shriek map associated to $F_1$. To give a proper sense to this definition of $\lm$ we need to address compactification issues that are dealt with multivalued perturbations (precisely because we want to preserve $\STS$-equivariance). According to this approach $\lm(1)$ should be understood as $F_1^!(1)$, which loosely speaking is an equivariant Poincaré dual of $\im F_1=\De_{S^1}$, so it does make sense that $\lm(1)=\de_{S^1}$. 

In \cite{Mun} no explicit computations of biinvariant diagonal classes are carried out. By further studying the map $\lm$ we get results that give tools to compute global biinvariant diagonal classes in certain situations:

One can show that
\[
H^*_{\STS}(\CC P^1)\simeq \QQ[t_1,t_2,u]/(u-t_1)(u-t_2)
\]
for some degree $2$ classes $u,t_1,t_2$. The classes $t_1,t_2$ are the standard generators of 
\[
H^*(B(\STS))\simeq H^*(BS^1)\ot H^*(BS^1)\simeq\QQ[t_1]\ot\QQ[t_2]=\QQ[t_1,t_2]
\]
and the class $u$ can be identified with the Euler class of the tautological bundle over a certain projective bundle. The second main contribution of this thesis is the following result:

\begin{framed}
\begin{theointro} Let $M,N$ be Hamiltonian $S^1$-spaces and let $\lm^M, \lm^N$ be their respective lambda-maps. Then
\begin{enumerate}
\item $\lm^{\MTN}(1)=(t_1+t_2)\lm^M(1)\ot\lm^N(1)-\lm^M(u)\ot\lm^N(1)-\lm^M(1)\ot\lm^N(u)$
\item $\lm^{\MTN}(u)=t_1t_2\lm^M(1)\ot\lm^N(1)-\lm^M(u)\ot\lm^N(u)$
\end{enumerate}
where $\lm^{\MTN}$ is the lambda-map of the product Hamiltonian space $\MTN$.
\end{theointro}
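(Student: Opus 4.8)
The plan is to realize both lambda-maps $\lm^M,\lm^N$ and $\lm^{\MTN}$ as shriek-pullback compositions $F_1^!\circ F_2^*$ attached to compactifications of the respective flow spaces, and then to identify the compactified flow space for $\MTN$ in terms of those for $M$ and $N$. The key observation is that the downward gradient vector field for the moment map of the product $\MTN$ is the direct sum of the gradient fields on $M$ and on $N$, so $-J_{\MTN}X_{\MTN}=(-J_MX_M)\oplus(-J_NX_N)$ and the flow $\xi^{J_{\MTN}X_{\MTN}}_t$ on $\MTN$ is just $(\xi^{J_MX_M}_t,\xi^{J_NX_N}_t)$ run with the \emph{same} time parameter $t$. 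Consequently the manifold $\De_{S^1}^{\MTN}\subseteq(\MTN)\times(\MTN)$ is the image of the map sending $((p,p'),t,\be)$ to $\bigl((p,p'),\be\cdot(\xi^{J_MX_M}_t(p),\xi^{J_NX_N}_t(p'))\bigr)$; after reordering factors this sits inside $(\MTM)\times(\NTN)$ as a codimension-one ``diagonal in the $t$-direction'' inside the product $\De_{S^1}^M\times\De_{S^1}^N$, which already explains why the answer is a sum of $\ot$-products of the $M$-side and $N$-side classes, with one total degree-$2$ factor floating around.

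First I would set up the correspondence explicitly. For $M$, the space $M\times\RR\times S^1$ maps by $F_1^M$ into $\MTM$ and by $F_2^M$ into $\CC P^1$ via $(p,t,\al)\mapsto[1:2^t\al]$; the analogue holds for $N$ and for $\MTN$. The product space $(M\times\RR\times S^1)\times(N\times\RR\times S^1)$ carries a $(S^1)^4$-action, and the locus where the two $\RR$-factors agree and the two $S^1$-factors agree cuts out a copy of $(\MTN)\times\RR\times S^1$; restricted to this locus, $F_1^M\times F_1^N$ becomes (up to reshuffling) $F_1^{\MTN}$, while $F_2^M\times F_2^N$ restricted there has image the small diagonal-type curve $\{([1:2^t\al],[1:2^t\al])\}\subseteq\CC P^1\times\CC P^1$. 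The cohomological content of ``restricting to the locus $t_M=t_N,\ \al_M=\al_N$'' is exactly cup product with an equivariant Poincaré dual of that sublocus, and this Poincaré dual is what produces the coefficients $t_1+t_2$ and $t_1t_2$. Concretely, under $H^*_{(S^1)^2}(\CC P^1)\simeq\QQ[t_1,t_2,u]/(u-t_1)(u-t_2)$, the diagonal $\CC P^1\hookrightarrow\CC P^1\times\CC P^1$ (with the twisted action making $F_2$ equivariant) has Poincaré dual computable by the standard projective-bundle/splitting-principle computation: its class, pushed to $\CC P^1\times\CC P^1$ and re-expanded in the tensor basis $\{1\ot 1,\ 1\ot u,\ u\ot 1,\ u\ot u\}$, has precisely the coefficient pattern $(t_1+t_2,-1,-1,0)$ on the component pairing against $1\in H^0$ and $(t_1t_2,0,0,-1)$ on the component pairing against $u$ — matching the two displayed formulas once we feed in $\lm^M(1),\lm^M(u),\lm^N(1),\lm^N(u)$ through the functoriality $\lm(F_2^*\beta)=F_1^!(F_2^*\beta)$.

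The main obstacle is not this linear-algebra bookkeeping but the compactification compatibility: the maps $\lm^M,\lm^N,\lm^{\MTN}$ are only defined after passing to the multivalued-perturbation compactifications of the flow spaces built in Chapter \ref{ch:multivalued}, and I must check that a perturbation datum on $\MTN$ can be chosen compatibly with chosen data on $M$ and $N$, so that the product-of-compactifications maps correctly onto the compactification of the $\MTN$-flow space and the broken-flow boundary strata contribute nothing spurious to the intersection pairing. In particular the finite stabilizers on $\MTN$ are products of those on $M$ and $N$, and one must verify that the local multivalued structure on $\MTN$ is the fibre product of the local structures, so that the pushforward $F_1^!$ commutes with the $\ot$-decomposition; this is where the bulk of the genuine work lies. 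Once compatibility of compactifications and perturbations is established, identifying $\lm^{\MTN}$ with the restriction-to-diagonal of $\lm^M\boxtimes\lm^N$ is formal, and the two formulas follow by evaluating on $1$ and on $u$ respectively, using $(u-t_1)(u-t_2)=0$ in $H^*_{(S^1)^2}(\CC P^1)$ to simplify.
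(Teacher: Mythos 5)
Your proposal follows essentially the same route as the paper: the identification of the $\MTN$ flow data with the ``same $t$, same $\al$'' locus inside the product of the $M$- and $N$-flow data is exactly the content of Theorem \ref{theo:bigtwo} (proved there via compatible perturbation spaces $\PP_{\MTN}\to\PP_M,\PP_N$ and the diffeomorphism $\tilde\chi$ between the relevant Cartesian squares), and your cup-product-with-the-equivariant-diagonal formulation is equivalent to the paper's factorisation $\lm^{\MTN}=(\lm^M\ot\lm^N)\circ\io^!_{\De,\STS}$. Your asserted coefficient patterns agree with the paper's Lemmas \ref{lemm:ioone} and \ref{lemm:iou}, namely $\io^!_{\De,\STS}(1)=(t_1+t_2)(1\ot 1)-u\ot 1-1\ot u$ and $\io^!_{\De,\STS}(u)=t_1t_2(1\ot 1)-u\ot u$, and you correctly locate the genuine work in the perturbation/compactification compatibility that the paper's remark and proof carry out.
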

\end{framed}
In particular, the first formula lets us compute a global biinvariant diagonal class of $\MTN$ in terms of the lambda-maps of its components.

The last question we address in Chapter \ref{ch:biinvariant} is uniqueness of global biinvariant diagonal classes. By means of a very hands-on computation we show that for $M=\CC P^1\times\stackrel{n}{\cdots}\times{\CC P^1}$ and the $S^1$-action given by
\[
\te([z_0:w_0],\ldots,[z_{n-1}:w_{n-1}])=([\te z_0:w_0],\ldots,[\te^{2^{n-1}}z_{n-1}:w_{n-1}]),
\]
global diagonal biinvariant classes form an affine space which has strictly positive dimension in general. 

We finish the thesis with a brief epilogue were we recap the ideas behind some of the results and give some thoughts on what directions can be taken in the future.

\mainmatter 

\pagestyle{headings}
\renewcommand{\chaptermark}[1]{ \markboth{\thechapter.\ #1}{} }
\renewcommand{\sectionmark}[1]{ \markright{\thesection.\ #1}{} }

\chapter{Toolkit}\label{ch:toolbox}


This chapter's purpose is to provide some background on three topics: equivariant cohomology, branched manifolds and pseudocycles. It is not our goal to cover them in detail but rather to equip ourselves with a set of tools that will be used in the chapters to follow. Having this idea in mind some core results are stated without proof while some other minor results may receive more attention because they are used later on. Even the readers familiar with these topics are suggested to take at least a brief look to this chapter, because we fix here some notations. On the other hand, any reader who wishes to learn any of the three topics should find here enough content to get a first grasp of them and will be given references were they are treated thoroughly.


\section{Equivariant cohomology}\label{sec:equivcoh}

Throughout this section $G$ will denote a compact and connected Lie group. A smooth manifold $M$ acted by $G$ is called a $G$-{\it manifold}. We will work only with compact, connected and oriented $G$-manifolds. If the action is free, $M/G$ is a smooth manifold, but otherwise it may contain singularities. Let $H^*(\cdot)$ denote singular cohomology with respect to a ring of coefficients that will only be specified when necessary. The idea behind equivariant cohomology is to construct a cohomology $H_G^*(M)$ that captures information of the action. In particular, if the action is free one wants $H_G^*(M)=H^*(M/G)$. The literature is full of excellent references on equivariant cohomology that the reader can consult. Some suggestions are \cite{GuSt} or the lecture notes \cite{Lib}. For the relation between equivariant cohomology and moment maps, good references are \cite{AtBo2} and \cite{GGK}. For a broad and deep treatment of equivariant formality see \cite{GKM}.

Let $EG\rightarrow BG$ be the classifying bundle of $G$, which is a $G$-principal bundle. The group $G$ acts freely on the contractible space $EG$. We can form a new bundle 
\[
M\times_G EG\longrightarrow BG
\]
with fibre $M$.

\begin{defi}The {\it equivariant cohomology} of $M$ is defined to be \[H_G^*(M):=H^*(M\times_G EG).\]\end{defi}

\begin{rema} If the action of $G$ on $M$ is free, then $M\rightarrow M/G$ is also a principal bundle and we can form the bundle $M\times_G EG\rightarrow M/G$ with fibre $EG$. Since the fibres are contractible we get $H_G^*(M)=H^*(M/G)$, which is a  property we wanted equivariant cohomology to satisfy.\end{rema}

The group $G$ has a faithful linear representation, which means that it can be embedded as a Lie subgroup of $U(n)$ for $n$ large enough \cite[III.4.1]{BrTo}. Let $EG_k$ be the space of orthonormal $n$-frames in $\CC^{k+1}$ for $k\geq n$. Then $g\in G\subseteq U(n)$ acts on $v\in EG_k$ by matrix multiplication. The resulting action is free and we denote by $BG_k=EG_k/G$ the corresponding quotient. We then get principal bundles $\pi_k:EG_k\rightarrow BG_k$. The inclusion map 
\[
\begin{array}{rccc}
e_k:&EG_k&\longrightarrow & EG_{k+1}\\
 &(v_0,\ldots,v_k)&\longmapsto&(v_0,\ldots,v_k,0)
\end{array}
\] 
is $G$-equivariant and defining $b_k:BG_k\rightarrow BG_{k+1}$ by $b_k(\pi_k(v)):=\pi_{k+1}(e_k(v))$ we get commutative diagrams 
\[
\begin{diagram}
\node{EG_k}\arrow{e,t}{e_k}\arrow{s,l}{\pi_k}\node{EG_{k+1}}\arrow{s,r}{\pi_{k+1}}\\
\node{BG_k}\arrow{e,b}{b_k}\node{BG_{k+1}}
\end{diagram}.
\]
The direct limit of the principal bundles $\pi_k$ is the classifying bundle of $G$. In this sense we say that the bundles $EG_k\rightarrow BG_k$ {\it approximate} $EG\rightarrow BG$. For a fixed degree $d$ the maps $b_k^d:H^d(BG_{k+1})\rightarrow H^d(BG_k)$ are isomorphisms for all $k$ large enough, so $H^*(BG)$ is the inverse limit of the $H^*(BG_k)$. Then, by a computation on spectral sequences we get the following stabilisation argument, which asserts that $H^*_G(M)$ is the inverse limit of $H^*(M\times_GEG_k)$:

\begin{lemm}[Stabilisation]\label{lemm:stabilisation} Let $M$ be a $G$-manifold and let $EG_k\rightarrow BG_k$ be principal $G$-bundles approximating the classifying bundle $EG\rightarrow BG$. Given a fixed degree $d$, the map
\[
\begin{array}{rccc}
i_k:&M\times_GEG_k&\longrightarrow & M\times_GEG_{k+1}\\
    &\left[(m,v)\right]&\longmapsto&\left[(m,e_k(v))\right]
\end{array}
\]
induces an isomorphism
\[
i_k^d:H^d(M\times_GEG_{k+1})\stackrel{\simeq}{\longrightarrow} H^d(M\times_GEG_k)
\] 
for all $k$ large enough. Thus an element of $H^d_G(M)$ can be defined uniquely from an element of $H^d(M\times_GEG_k)$ provided that $k$ is large enough.
\end{lemm}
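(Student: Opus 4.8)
The plan is to prove the Stabilisation Lemma by reducing it to a statement about the Leray--Serre spectral sequences of the fibre bundles $M\times_G EG_k\to BG_k$ and the maps between them. First I would recall that for each $k$ the projection $M\times_G EG_k\to BG_k$ is a fibre bundle with fibre $M$, and that the inclusions $e_k$ and $b_k$ induce a map of fibre bundles; consequently they induce a morphism of the associated cohomology spectral sequences which on the $E_2$-page is the map $H^p(BG_{k+1};H^q(M))\to H^p(BG_k;H^q(M))$ induced by $b_k$ on the base (the coefficient system $H^q(M)$ is the same for all $k$ because the structure group and fibre do not change, and $BG_k$ is simply connected for $k$ large so the system is untwisted, or in any case pulled back along $b_k$).

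Next I would invoke the classical fact, already alluded to in the text, that $b_k^{\,d}:H^d(BG_{k+1})\to H^d(BG_k)$ is an isomorphism for all $k$ large, and more precisely that for each fixed degree $p$ one has $b_k^{\,p}$ an isomorphism once $k$ is sufficiently large (this is the standard approximation property of $BG_k=EG_k/G$, with $EG_k$ being $n$-frames in $\CC^{k+1}$, whose first nonzero reduced homotopy group has dimension growing linearly in $k$; see \cite[III.4.1]{BrTo} and the surrounding discussion). Since we only care about a single total degree $d$, only finitely many columns $E_2^{p,q}$ with $p+q=d$ are relevant, and each of them involves $H^p(BG_{k+1};H^q(M))\to H^p(BG_k;H^q(M))$ with $p\le d$; choosing $k$ large enough that $b_k^{\,p}$ is an isomorphism for all $p\le d$ simultaneously (possible since there are finitely many such $p$), the map of $E_2$-pages is an isomorphism in every bidegree $(p,q)$ with $p+q\le d$, hence also in a range sufficient to control total degree $d$.

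Then I would run the usual comparison-of-spectral-sequences argument: if a morphism of first-quadrant spectral sequences is an isomorphism on $E_2^{p,q}$ for all $(p,q)$ in the relevant range, it is an isomorphism on $E_\infty^{p,q}$ there, and since both spectral sequences converge to the cohomology of the respective total spaces, the induced map on associated graded pieces of $H^d$ is an isomorphism; a short five-lemma / filtration induction then upgrades this to an isomorphism $i_k^{\,d}:H^d(M\times_G EG_{k+1})\to H^d(M\times_G EG_k)$. Finally, stringing together these isomorphisms for $k,k+1,k+2,\dots$ identifies $H^d_G(M)=H^d(M\times_G EG)$ with the common value $H^d(M\times_G EG_k)$ for $k$ large, which is the last assertion; here one also uses that $EG_k\to BG_k$ genuinely approximates $EG\to BG$ so that $\varinjlim_k M\times_G EG_k \simeq M\times_G EG$ and cohomology of the colimit is the inverse limit of the cohomologies, stabilising in each degree.

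The main obstacle is bookkeeping the range of validity: one must be careful that ``$k$ large enough'' can be chosen uniformly over all the finitely many bidegrees $(p,q)$ with $p+q\le d$ that feed into total degree $d$ (including the convergence/filtration steps, which involve differentials into and out of total degree $d$, hence bidegrees up to total degree $d+1$), and that the coefficient systems $H^q(M)$ really are compatible under $b_k$ — harmless here because everything is pulled back from a fixed bundle, but it should be stated. Everything else is the standard machinery of comparing Leray--Serre spectral sequences.
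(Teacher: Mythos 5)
Your argument is correct and is exactly the route the paper has in mind: the text states the lemma after remarking that $b_k^d$ is an isomorphism for $k$ large and that the result follows ``by a computation on spectral sequences'', which is precisely the Leray--Serre comparison over $b_k:BG_k\to BG_{k+1}$ that you carry out, including the correct care about choosing $k$ uniformly for the finitely many bidegrees up to total degree $d+1$. Nothing further is needed.
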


\begin{exam}
If $G=S^1$, $ES^1_k$ is just the $(2k+1)$-sphere and the quotient $BS^1_k=S^{2k+1}/S^1=\CC P^k$ is the $k$-dimensional projective space. Then the classifying bundle of $S^1$ is $S^\infty\rightarrow\CC P^\infty$ and $H^*(BS^1;\QQ)\simeq\QQ[t]$, where $t$ can be taken to be the Euler class of the tautological bundle.
\end{exam}

Equivariant cohomology is functorial: suppose that $M,N$ are $G$-manifolds and that $f:M\rightarrow N$ is a $G$-equivariant smooth map. Then 
\[
\begin{array}{ccc}
M\times_GEG & \longrightarrow & N\times_GEG\\
\left[p,g\right] & \longmapsto & [f(p),g]
\end{array}
\] 
is well defined. The morphism it induces in singular cohomology gives a morphism 
\[
f^*_G:H^*_G(N)\longrightarrow H^*_G(M)
\] 
in equivariant cohomology.

\begin{exam}\label{eqcoexam} Let us compute some examples of equivariant cohomology: 
\begin{enumerate}
\item If $G$ acts on itself by left translations, the action is free and transitive, so \[H^*_G(G)=H^*(G/G)=H^*(pt).\]
\item The most simple example of a non-free action one can think of is $G$ acting on a point. Then \[H^*_G(pt)=H^*(pt\times_GEG)\simeq H^*(BG).\] Therefore, for example, $H_{S^1}^*(pt;\QQ)\simeq \QQ[t]$ which shows that, in general, equivariant cohomology is non-zero for degrees as large as one wishes. This differs notably from usual cohomology and prevents some relevant results such as Poincaré duality to hold.
\end{enumerate}
\end{exam}

\begin{defi}
Let $M$ be a $G$-manifold. The morphism 
\[
c_M:H^*(BG)\longrightarrow H^*_G(M)
\]
induced by $M\rightarrow pt$ in equivariant cohomology is the {\it characteristic morphism}\footnote{The characteristic morphism can also be interpreted as the map induced in singular cohomology by the projection $M\times_GEG\rightarrow BG$.} of $M$.
\end{defi}

The characteristic morphism gives $H^*_G(M)$ an $H^*(BG)$-module structure, so $H_G^*(M)$ is an $H^*(BG)$-algebra.

\begin{rema} Let $\CH_G$ be the category with $G$-manifolds as objects and $G$-equivariant smooth maps as morphisms. If $f:M\rightarrow N$ is such a morphism, $f^*_G:H_G^*(N)\rightarrow H_G^*(M)$ commutes with characteristic maps, and thus it is a linear map of $H^*(BG)$-modules. Since $f^*_G$ is also a ring homomorphism we conclude that equivariant cohomology is a contravariant functor from $\CH_G$ to the category of $H^*(BG)$-algebras.\end{rema}

The inclusion of $M$ as the fibre of the bundle $M\times_GEG\rightarrow BG$ induces a morphism
\[
r_M:H^*_G(M)\rightarrow H^*(M)
\]
in singular cohomology that we call the {\it restriction morphism} of $M$.

\begin{defi}\label{defi:formality}
A $G$-manifold is {\it equivariantly formal} if its restriction morphism is surjective.
\end{defi}

\begin{exam} Let us see a couple of examples, one of a non equivariantly formal manifold and one of an equivariantly formal manifold:
\begin{enumerate}
\item Let $T=\STS$ be the $2$-torus and consider the diagonal action of $S^1$ on $T$, which is free. Then $H_{S^1}^*(T)\simeq H^*(T/S^1)\simeq H^*(S^1)$, so $H_{S^1}^1(T;\QQ)\simeq\QQ$. However $H^1(T;\QQ)\simeq\QQ^2$, so the degree $1$ piece of $r_T$ cannot be surjective. Therefore $T$ is not equivariantly formal.
\item Consider the action of $S^1$ on $\CC P^1$ given by $\te[z:w]=[\te z:w]$. Note that $BS^1=\CC P^\infty$ and that the composition $r_{\CC P^1}\circ c_{\CC P^1}$ of the characteristic map with the restriction map is just the map induced by the inclusion $\CC P^1\hookrightarrow\CC P^\infty$, so in rational cohomology we get the surjective map
\[
\begin{array}{rccc}
r_{\CC P^1}\circ c_{\CC P^1}:& \QQ[t] & \longrightarrow & \QQ[t]/t^2\\
 & t &\longmapsto & t
\end{array}.
\]
Since the composition is surjective we also have that $r_{\CC P^1}$ is surjective.
\end{enumerate}
\end{exam}


\subsection{Rational equivariant cohomology}

Throughout this section the coefficient ring is assumed to be $\QQ$ and we will write it every time in order to emphasize this fact. Using rational coefficients gives us the possibility to use the Leray-Hirsch theorem and, in particular, the Künneth isomorphism. One first important consequence of this is the following: if $M$ is equivariantly formal the restriction map of $M$ is surjective and a direct application of Leray-Hirsch gives

\begin{prop}\label{prop:split}
Let $M$ be an equivariantly formal $G$-manifold and let $s_M$ be a section of the restriction map $r_M$. The map
\[
\begin{array}{ccc}
H^*(BG;\QQ)\ot H^*(M;\QQ) &\longrightarrow &H^*_G(M;\QQ)\\
\be\ot\mu & \longmapsto & c_M(\be)\smile s_M(\mu)
\end{array}
\]
is an isomorphism of $H^*(BG;\QQ)$-modules.
\end{prop}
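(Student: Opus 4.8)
\emph{Proof strategy.} This proposition is the Leray--Hirsch theorem for the Borel fibration
\[
M\longrightarrow M\times_GEG\longrightarrow BG .
\]
The plan is first to dispose of the formal points and then to invoke Leray--Hirsch via the finite-dimensional approximations already set up. The map in the statement is well defined on the tensor product because the section $s_M$ is $\QQ$-linear, and it is a morphism of $H^*(BG;\QQ)$-modules: since $c_M$ is a ring homomorphism, $c_M(\be'\be)\smile s_M(\mu)=c_M(\be')\smile\bigl(c_M(\be)\smile s_M(\mu)\bigr)$, so multiplication by $\be'$ in the source corresponds to cup product with $c_M(\be')$ in the target. It therefore suffices to prove that the underlying $\QQ$-linear map is bijective.

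Next I would fix a homogeneous $\QQ$-basis $\mu_1,\dots,\mu_r$ of $H^*(M;\QQ)$ (finite, since $M$ is compact) and set $e_j:=s_M(\mu_j)\in H^*_G(M;\QQ)$; by Definition~\ref{defi:formality} these classes satisfy $r_M(e_j)=\mu_j$, i.e.\ they restrict to a basis of the cohomology of the fibre $M$. To make Leray--Hirsch rigorous over the infinite-dimensional space $BG$ I would run the argument on the approximations of Lemma~\ref{lemm:stabilisation}: for each $k$ one has an honest smooth fibre bundle $M\times_GEG_k\to BG_k$ over a compact base with fibre $M$. Its Serre spectral sequence has $E_2^{p,q}=H^p(BG_k;\QQ)\ot H^q(M;\QQ)$ (trivial coefficients, since $G$ connected makes $BG_k$ simply connected), and it degenerates at $E_2$ because the fibre restriction is surjective on the multiplicative generators provided by the $e_j$, hence on everything; the section $s_M$ then splits the resulting filtration, so $\be\ot\mu_j\mapsto c_M(\be)\smile e_j$ is an isomorphism $H^*(BG_k;\QQ)\ot H^*(M;\QQ)\xrightarrow{\ \simeq\ }H^*(M\times_GEG_k;\QQ)$ in each degree once $k$ is large. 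Finally, passing to the inverse limit over $k$ and using that $H^*(BG;\QQ)=\varprojlim H^*(BG_k;\QQ)$ and $H^*_G(M;\QQ)=\varprojlim H^*(M\times_GEG_k;\QQ)$ by Lemma~\ref{lemm:stabilisation} gives the isomorphism in every fixed degree, which is what we want.

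The only genuinely delicate step is this last passage to the limit: one must check that the Leray--Hirsch isomorphisms for consecutive $k$ are compatible with the stabilisation maps $i_k^*$ and $b_k^*$ so that they assemble into an isomorphism after applying $\varprojlim$. This is where choosing the classes $e_j$ \emph{once and for all} at a single level $k_0$ (possible by Lemma~\ref{lemm:stabilisation}, and then pulling them back) does the work, since both sides are then computed from the same explicit classes and the compatibility is automatic; the Stabilisation Lemma also guarantees the inverse systems are eventually constant in each degree, so no $\varprojlim^1$ term appears. Everything else is a direct application of standard tools, and one could alternatively bypass the approximations altogether by applying the Serre spectral sequence directly to $M\times_GEG\to BG$ as a fibration over a CW base.
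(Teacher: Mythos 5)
Your proposal is correct and follows the same route as the paper, which simply records this statement as ``a direct application of Leray--Hirsch'' to the Borel fibration $M\to M\times_GEG\to BG$; you merely spell out the standard details (module structure, choice of classes $s_M(\mu_j)$ restricting to a fibre basis, degeneration of the Serre spectral sequence at the finite approximations, and stabilisation via Lemma~\ref{lemm:stabilisation}). No gaps.
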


We can use this result to prove an equivariant version of the Künneth isomorphism. First we will need a lemma:

\begin{lemm}\label{lemm:eqfoproduct} 
Let $M,N$ be equivariantly formal $G$-manifolds. Then $\MTN$ is also an equivariantly formal $G$-manifold.
\end{lemm}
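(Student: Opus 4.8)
The plan is to reduce the equivariant formality of $\MTN$ to that of $M$ and $N$ separately by exhibiting a section of the restriction map $r_{\MTN}$ built out of sections of $r_M$ and $r_N$. First I would recall that $(M\times N)\times_G EG$ fibers over $BG$ with fiber $M\times N$, and that the restriction map $r_{\MTN}\colon H^*_G(M\times N;\QQ)\to H^*(M\times N;\QQ)$ is the map induced by including this fiber. Since we are working over $\QQ$, the ordinary Künneth theorem gives $H^*(M\times N;\QQ)\simeq H^*(M;\QQ)\ot H^*(N;\QQ)$, so it suffices to hit every class of the form $\mu\ot\nu$ with $\mu\in H^*(M;\QQ)$, $\nu\in H^*(N;\QQ)$.

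Next I would use the two equivariant projections $p\colon M\times N\to M$ and $q\colon M\times N\to N$, which are $G$-equivariant, hence induce $p^*_G\colon H^*_G(M;\QQ)\to H^*_G(M\times N;\QQ)$ and $q^*_G\colon H^*_G(N;\QQ)\to H^*_G(M\times N;\QQ)$. Given sections $s_M$ of $r_M$ and $s_N$ of $r_N$ (which exist by equivariant formality of $M$ and $N$, Definition \ref{defi:formality}), I would define, for a decomposable class $\mu\ot\nu$,
\[
s_{\MTN}(\mu\ot\nu) := p^*_G(s_M(\mu))\smile q^*_G(s_N(\nu)),
\]
and extend $\QQ$-linearly using the Künneth decomposition of $H^*(M\times N;\QQ)$. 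The key compatibility is that the restriction map commutes with the equivariant pullbacks along $p$ and $q$ in the obvious way: restricting $p^*_G(a)$ to the fiber $M\times N$ gives $p^*(r_M(a))$ where $p\colon M\times N\to M$ is the ordinary projection, and similarly for $q$; this follows from naturality of the fiber-inclusion square. Since $r_{\MTN}$ is a ring homomorphism, one computes
\[
r_{\MTN}\bigl(s_{\MTN}(\mu\ot\nu)\bigr) = p^*(r_M(s_M(\mu)))\smile q^*(r_N(s_N(\nu))) = p^*(\mu)\smile q^*(\nu) = \mu\ot\nu,
\]
so $r_{\MTN}\circ s_{\MTN} = \mathrm{id}$ on a spanning set, hence on all of $H^*(M\times N;\QQ)$. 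This shows $r_{\MTN}$ is surjective, i.e. $\MTN$ is equivariantly formal.

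The only real technical point — and the step I would be most careful about — is verifying that the restriction map intertwines equivariant pullback along the $G$-equivariant projection with ordinary pullback along the projection of the fiber; everything else is formal once that square commutes. This is genuinely routine: the Borel construction is functorial, so the equivariant projection $M\times N\to M$ gives a map of fibrations $(M\times N)\times_G EG\to M\times_G EG$ over $BG$, and restricting to fibers recovers the ordinary projection $M\times N\to M$; naturality of the cohomology pullback under this commutative square is exactly the required identity. An alternative, equally short route is to invoke Proposition \ref{prop:split} for $M$ and for $N$ and then apply the ordinary $\QQ$-Künneth isomorphism twice (once to $H^*_G(M\times N;\QQ)=H^*((M\times N)\times_G EG;\QQ)$ after noting $(M\times N)\times_G EG$ fibers over $BG$ with fiber $M\times N$, and once to the fiber itself), but the section-construction above is cleaner and sets up notation that will presumably be reused in the equivariant Künneth statement that follows.
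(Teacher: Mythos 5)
Your proof is correct and follows essentially the same route as the paper: both arguments rest on the naturality identity $r_{\MTN}\circ\pi_{M,G}^*=\pi_M^*\circ r_M$ (and its analogue for $N$) together with the rational K\"unneth isomorphism for $H^*(\MTN;\QQ)$. The only difference is presentational — you package the argument as an explicit section $s_{\MTN}$ on decomposable classes, while the paper deduces surjectivity of $r_{\MTN}$ from the surjectivity of the composite $(\pi_M^*\smile\pi_N^*)\circ(r_M\ot r_N)$ in a commutative square.
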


\begin{proof}
The projection $\pi_M:\MTN\rightarrow M$ induces maps
\[
\pi_M^*: H^*(M;\QQ)\rightarrow H^*(\MTN;\QQ)
\] 
in singular cohomology and 
\[
\pi_{M,G}^*:H_G^*(M;\QQ)\rightarrow H^*_G(\MTN;\QQ)
\] 
in equivariant cohomology. These maps commute with the restriction maps: $\pi_M^*\circ r_M=r_{\MTN}\circ\pi_{M,G}^*$. In the same way the projection $\pi_N$ onto $N$ satisfies $\pi_N^*\circ r_N=r_{\MTN}\circ\pi_{N,G}^*$. From these facts we deduce that the following diagram is commutative:
\[
\begin{diagram}
\node{H_G^*(M;\QQ)\ot H_G^*(N;\QQ)}\arrow{s,l}{\pi_{M,G}^*\smile\pi_{N,G}^*}\arrow{e,t}{r_M\ot r_N}\node{H^*(M;\QQ)\ot H^*(N;\QQ)}\arrow{s,r}{\pi_M^*\smile\pi_N^*}\\
\node{H_G^*(\MTN;\QQ)}\arrow{e,b}{r_{\MTN}}\node{H^*(\MTN;\QQ)}
\end{diagram}.
\]
Since $M,N$ are equivariantly formal, $r_M,r_N$ are surjective maps, so the top arrow is surjective. The right hand side vertical arrow is the Künneth isomorphism. Hence the composition of both is a surjective map, implying that the bottom arrow, $r_{M\times N}$, is also surjective.
\end{proof}

\begin{prop}[Equivariant Künneth]\label{prop:equikunn} Let $M,N$ be equivariantly formal $G$-manifolds.  Then there is an isomorphism of  $H^*(BG;\QQ)$-algebras
\[
H^*_G(\MTN;\QQ)\simeq H^*_G(M;\QQ)\ot_{H^*(BG;\QQ)}H^*_G(N;\QQ).
\]
\end{prop}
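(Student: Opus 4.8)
\emph{Proof plan.} The strategy is to write down the obvious external-product homomorphism and then to identify it, via the splittings of Proposition~\ref{prop:split}, with the ordinary rational Künneth isomorphism tensored with the identity of $H^*(BG;\QQ)$.

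First I would introduce the external product. The projections $\pi_M:\MTN\to M$ and $\pi_N:\MTN\to N$ induce ring homomorphisms $\pi_{M,G}^*,\pi_{N,G}^*:H_G^*(\cdot\,;\QQ)\to H_G^*(\MTN;\QQ)$, and I set $\Phi(a\ot b):=\pi_{M,G}^*(a)\smile\pi_{N,G}^*(b)$. Because $\pi_{M,G}^*\circ c_M=c_{\MTN}=\pi_{N,G}^*\circ c_N$ -- all three maps being induced by $\MTN\to pt$ -- the classes $\pi_{M,G}^*(c_M(\ga))$ and $\pi_{N,G}^*(c_N(\ga))$ agree in $H_G^*(\MTN;\QQ)$ for every $\ga\in H^*(BG;\QQ)$; hence $\Phi$ factors through $H_G^*(M;\QQ)\ot_{H^*(BG;\QQ)}H_G^*(N;\QQ)$ and defines there a homomorphism of $H^*(BG;\QQ)$-algebras.

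Next I would manufacture a section of $r_{\MTN}$ out of sections of $r_M$ and $r_N$. By Lemma~\ref{lemm:eqfoproduct} the space $\MTN$ is equivariantly formal, and the commutative square in the proof of that lemma shows that, given sections $s_M,s_N$ of $r_M,r_N$, the map $s_{\MTN}:=(\pi_{M,G}^*\smile\pi_{N,G}^*)\circ(s_M\ot s_N)\circ K^{-1}$ is a section of $r_{\MTN}$, where $K:H^*(M;\QQ)\ot H^*(N;\QQ)\to H^*(\MTN;\QQ)$ is the ordinary Künneth isomorphism (external cup product). Now apply Proposition~\ref{prop:split} three times: to $M$ with $s_M$, to $N$ with $s_N$, and to $\MTN$ with $s_{\MTN}$. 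Using $\pi_{M,G}^*\circ c_M=c_{\MTN}$ (and likewise for $N$), together with the fact that forming $\ot_{H^*(BG;\QQ)}$ collapses the two copies of $H^*(BG;\QQ)$ produced by the splittings for $M$ and $N$, one checks by a direct computation that $\Phi$ carries $\big(c_M(\be_1)\smile s_M(\mu)\big)\ot\big(c_N(\be_2)\smile s_N(\nu)\big)$ to $c_{\MTN}(\be_1\be_2)\smile s_{\MTN}\big(K(\mu\ot\nu)\big)$. In other words, under the three splittings $\Phi$ becomes $\mathrm{id}_{H^*(BG;\QQ)}\ot K$; since $K$ is an isomorphism, so is $\Phi$, which is exactly the assertion.

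The only delicate point is the bookkeeping in the last step: one must keep track that the module structure in the tensor product over $H^*(BG;\QQ)$ is the one under which the isomorphisms of Proposition~\ref{prop:split} are $H^*(BG;\QQ)$-linear (namely multiplication on the $H^*(BG;\QQ)$-factor), and that the signs coming from graded-commutativity cancel. I do not expect a genuine obstacle beyond this; the essential ingredient is that over the field $\QQ$ ordinary Künneth has no $\mathrm{Tor}$ correction, which is precisely why rational coefficients are imposed here.
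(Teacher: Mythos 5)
Your proposal is correct and follows essentially the same route as the paper: the same external-product map, the same section $s_{\MTN}=(\pi_{M,G}^*\smile\pi_{N,G}^*)\circ(s_M\ot s_N)\circ(\pi_M^*\smile\pi_N^*)^{-1}$, and the same inputs (Lemma \ref{lemm:eqfoproduct}, Proposition \ref{prop:split}, ordinary rational Künneth). The only cosmetic difference is at the end: you identify the map with $\mathrm{id}_{H^*(BG;\QQ)}\ot K$ under the three splittings to get bijectivity at once, whereas the paper proves surjectivity by the same splitting computation and then deduces injectivity from equality of (finite) dimensions in each degree.
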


\begin{proof} Since the maps $\pi_{M,G}^*,\pi_{N,G}^*$ are homomorphisms of $H^*(BG;\QQ)$-algebras, the map
\[
\begin{array}{rccc}
h: &H^*_G(M;\QQ)\ot_{H^*(BG;\QQ)}H^*_G(N;\QQ) & \longrightarrow  & H^*_G(\MTN;\QQ)\\
    & m\ot n & \longmapsto & \pi_{M,G}^* m\smile\pi_{N,G}^* n
\end{array},
\]
is a well defined homomorphism of $H^*(BG;\QQ)$-algebras. From Lemma \ref{lemm:eqfoproduct} we know that $\MTN$ is equivariantly formal. Using this fact we derive that the source and target spaces of $h$ are isomorphic as $H^*(BG;\QQ)$-modules:
\[
\begin{array}{l}
H^*_G(\MTN;\QQ)\simeq\\
\hspace{1.3cm}\stackrel{(1)}{\simeq}  H^*(BG;\QQ)\ot H^*(\MTN;\QQ)\\
\hspace{1.3cm}\stackrel{(2)}{\simeq}  H^*(BG;\QQ)\ot H^*(M;\QQ)\ot H^*(N;\QQ)\\
\hspace{1.3cm}\simeq  (H^*(BG;\QQ)\ot H^*(M;\QQ))\ot_{H^*(BG;\QQ)}(H^*(BG;\QQ)\ot H^*(N;\QQ))\\
\hspace{1.3cm}\stackrel{(3)}{\simeq}  H^*_G(M;\QQ)\ot_{H^*(BG;\QQ)}H^*_G(N;\QQ)
\end{array}
\]
where (1) is given by $\MTN$ being equivariantly formal, (2) is the usual Künneth isomorphism and (3) is given by $M,N$ being equivariantly formal.

We shall now see that $h$ is surjective: if $s_M,s_N$ are sections of $r_M,r_N$, 
\[
s_{\MTN}:=(\pi_{M,G}^*\smile\pi_{N,G}^*)\circ(s_M\ot s_N)\circ(\pi_M^*\smile\pi_N^*)^{-1}
\]
is a section of $r_{\MTN}$. Now, given $a\in H_G^*(M\times N;\QQ)$, by Proposition \ref{prop:split} and Künneth there exist unique $\be\in H^*(BG;\QQ)$, $\mu\in H^*(M;\QQ)$ and $\nu\in H^*(N;\QQ)$ such that 
\[
a=c_{\MTN}(\be)\smile s_{\MTN}(\pi_M^*\mu\smile\pi_N^*\nu).
\] 
But then 
\[
a=c_{\MTN}(\be)\smile(\pi_{M,G}^*\smile\pi_{N,G}^*)(s_M\mu\ot s_N\nu)=h(\be\cdot(s_M\mu\ot s_N\nu)).
\]
Since the restriction of $h$ to each degree piece is a surjective map between finite dimensional linear spaces of the same dimension, $h$ must also be injective.
\end{proof}

We now move to another result derived from using rational coefficients: the very definition of equivariant cohomology guarantees that if the action is free there is an isomorphism $H^*_G(M;\QQ)\simeq H^*(M/G;\QQ)$. This isomorphism can be extend to a bit more general situation, namely when the stabilisers of the action are all finite.

\begin{lemm}\label{lemm:finitegroup}
Let $\Ga$ be a finite group. Then $H^*(B\Ga;\QQ)\simeq\QQ$.
\end{lemm}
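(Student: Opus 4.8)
The plan is to exploit that $B\Gamma$ admits a model which is the quotient of a contractible space $E\Gamma$ by a free action of the finite group $\Gamma$, and to run the standard averaging (transfer) argument — this is precisely the place where rational coefficients are essential.

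First I would recall the construction of $E\Gamma$ (for instance Milnor's infinite join $\Gamma*\Gamma*\cdots$, or the frame-space approximations $E\Gamma_k$ used above): it is contractible, $\Gamma$ acts freely on it, and $B\Gamma=E\Gamma/\Gamma$. The projection $p\colon E\Gamma\to B\Gamma$ is then a covering of degree $|\Gamma|$. For any finite covering of degree $n$ there is a transfer homomorphism $\tau\colon H^*(E\Gamma;\QQ)\to H^*(B\Gamma;\QQ)$ such that $\tau\circ p^*\colon H^*(B\Gamma;\QQ)\to H^*(B\Gamma;\QQ)$ equals multiplication by $n=|\Gamma|$. Concretely, on singular cochains $\tau$ is the sum over the $\Gamma$-translates of a local section, so $\tfrac1{|\Gamma|}$-invertible coefficients are needed to make the argument below run.

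Now fix $d\geq 1$. Since $E\Gamma$ is contractible, $H^d(E\Gamma;\QQ)=0$, so the composite $\tau\circ p^*$ is the zero endomorphism of $H^d(B\Gamma;\QQ)$. But it is also multiplication by $|\Gamma|$, which is an invertible scalar on the $\QQ$-vector space $H^d(B\Gamma;\QQ)$. Hence $H^d(B\Gamma;\QQ)=0$ for all $d\geq 1$. Finally $B\Gamma$ is path-connected, being a quotient of the connected space $E\Gamma$, so $H^0(B\Gamma;\QQ)\simeq\QQ$, and altogether $H^*(B\Gamma;\QQ)\simeq\QQ$.

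There is no serious obstacle here; the one point deserving care is making the transfer map and the identity $\tau\circ p^*=|\Gamma|\cdot\mathrm{id}$ precise for the (a priori infinite-dimensional) covering $E\Gamma\to B\Gamma$. This is handled cleanly either by passing to the finite-dimensional approximations $E\Gamma_k\to B\Gamma_k$ and invoking the stabilisation Lemma \ref{lemm:stabilisation} — in any fixed degree $d$ the highly connected Stiefel manifold $E\Gamma_k$ already has $H^d(E\Gamma_k;\QQ)=0$ once $k$ is large — or by quoting the singular-cohomology transfer for finite coverings directly. Equivalently one may phrase the whole argument through the isomorphism $H^*(E\Gamma/\Gamma;\QQ)\cong H^*(E\Gamma;\QQ)^{\Gamma}$ for free actions of finite groups, which is again just the averaging projector $\tfrac1{|\Gamma|}\sum_{g\in\Gamma}g^*$ and uses rational coefficients in exactly the same way.
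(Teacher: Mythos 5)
Your proof is correct and follows essentially the same route as the paper: both arguments rest on the transfer for the $|\Gamma|$-fold covering $E\Gamma\to B\Gamma$, the contractibility of $E\Gamma$, and the invertibility of $|\Gamma|$ in $\QQ$. The paper merely implements the transfer by hand at the chain level in homology (summing over the $|\Gamma|$ lifts of each singular simplex, so that $\pi_*\tilde{\alpha}=|\Gamma|\cdot\alpha$ bounds because $E\Gamma$ is contractible) and then dualizes, which is exactly the chain-level incarnation of the identity $\tau\circ p^*=|\Gamma|\cdot\mathrm{id}$ that you invoke in cohomology.
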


\begin{proof}
We shall see that $H_*(B\Ga;\QQ)\simeq\QQ$ and the result follows by duality:

Let $\pi:E\Ga\rightarrow B\Ga$ be the classifying bundle of $\Ga$ and let $k>0$. Let $s=\sum_iq_i\sig_i\in S_k(B\Ga;\QQ)$ be a singular $k$-chain such that $\partial\al=0$. Then
\[
\tilde{\al}:=\sum_iq_i\sum_{\pi\circ\tilde{\sig}_i=\sig_i}\tilde{\sig}_i\in S_k(E\Ga;\QQ)
\]
is a $k$-chain such that $\partial\tilde{\al}=0$ and such that $\pi_*(\tilde{\al})=|\Ga|\cdot\al$. Since $E\Ga$ is contractible there exists $\be\in S_{k+1}(E\Ga;\QQ)$ such that $\partial\be=\tilde{\al}$. Then $\partial\pi_*\be=\pi_*\tilde{\al}=|\Ga|\cdot\al$, so $\left[|\Ga|\cdot\al\right]=0\in H_k(B\Ga;\QQ)$. We deduce that multiplication by $|\Ga|$ on $H_k(B\Ga;\QQ)$ is the zero map and it must be the case that $H_k(B\Ga;\QQ)=0$.
\end{proof}

\begin{rema} If we take a ring of coefficients different from $\QQ$, this lemma does not hold in general: we cannot expect the cohomology of the classisfying space of a finite group to be isomorphic to the coefficient ring. For example, take $\Ga=\ZZ/(2)$ as the finite group and take $\ZZ/(2)$ as the coefficient ring. The classifying bundle of $\ZZ/(2)$ is $S^\infty\rightarrow\RR P^\infty$ but $H^*(\RR P^\infty;\ZZ/(2))$ is isomorphic to the polynomial ring\footnote{This can be computed by identifying $S^\infty\rightarrow\RR P^\infty$ with the sphere bundle associated to the tautological bundle $\mathcal{O}(-1)\rightarrow\RR P^\infty$ and using the corresponding Gysin sequence.} $\ZZ/(2)[x]$.\end{rema}

\begin{prop}[Cartan isomorphism]\label{prop:cartaniso} Let $M$ be a $G$-manifold and suppose that the action has finite stabilisers. There is an isomorphism \[H_G^*(M;\QQ)\simeq H^*(M/G;\QQ).\]\end{prop}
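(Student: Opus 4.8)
The plan is to reduce the statement to two facts: first, that for a manifold with finite stabilisers the Borel construction $M \times_G EG$ fibres over the quotient $M/G$ with rationally acyclic fibres, and second, that such a fibration induces an isomorphism on rational cohomology. Concretely, I would first observe that when all stabilisers are finite, the orbit space $M/G$ is still a (possibly singular) space but the map $q: M \times_G EG \to M/G$, $[m,v] \mapsto Gm$, is well defined and surjective. The fibre of $q$ over an orbit $Gm$ is $EG / G_m$, i.e. $BG_m$ for the stabiliser group $G_m$, which is finite by hypothesis. By Lemma~\ref{lemm:finitegroup}, $H^*(BG_m;\QQ) \simeq \QQ$, so every fibre of $q$ is rationally acyclic.

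Next I would want to conclude that a map all of whose fibres are rationally acyclic induces an isomorphism on $\QQ$-cohomology. The clean way to do this is via the Leray spectral sequence of $q$ (or, if one prefers to stay elementary, via a Čech-to-derived-functor argument on a good cover of $M/G$): the stalk of the higher direct image sheaf $R^i q_* \QQ$ at a point $Gm$ is the colimit of $H^i$ of preimages of shrinking neighbourhoods, and using that $G$ is compact one can choose $G$-invariant tube neighbourhoods of orbits (slice theorem) so that these preimages deformation retract onto the single fibre $BG_m$. Hence $R^0 q_* \QQ \simeq \QQ_{M/G}$ and $R^i q_* \QQ = 0$ for $i>0$, so the Leray spectral sequence collapses and gives $H^*(M\times_G EG;\QQ) \simeq H^*(M/G;\QQ)$, which is exactly $H_G^*(M;\QQ) \simeq H^*(M/G;\QQ)$ by the definition of equivariant cohomology.

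To make the fibre-neighbourhood computation rigorous I would invoke the slice theorem for compact group actions: around any orbit $Gm$ there is a $G$-invariant neighbourhood $G$-equivariantly diffeomorphic to $G \times_{G_m} V$ for a linear slice representation $V$ of the finite group $G_m$; its image in $M/G$ is $V/G_m$, a cone, which is contractible, and its preimage in $M\times_G EG$ is $(G\times_{G_m} V)\times_G EG \simeq V \times_{G_m} EG$, which deformation retracts (radially in $V$) onto $\{0\}\times_{G_m}EG = BG_m$. This is the technical heart of the argument. The main obstacle I anticipate is not any single deep point but the bookkeeping needed to assemble these local statements into a global one over the singular space $M/G$ — i.e. handling the Leray spectral sequence (or an inductive Mayer–Vietoris over a finite cover by slice neighbourhoods) carefully enough that one really gets an isomorphism of graded rings and not merely of graded vector spaces. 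If one only needs the module or vector-space statement, the argument is as sketched; the ring structure follows because $q^*$ is a ring map and is the inverse isomorphism, so naturality of cup products finishes it.
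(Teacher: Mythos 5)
Your proposal is correct, and its local analysis is exactly the one the paper uses: invariant tube/slice neighbourhoods of orbits, the identification of the piece of the Borel construction over such a neighbourhood with $V\times_{G_p}EG\simeq BG_p$ (up to homotopy), and Lemma~\ref{lemm:finitegroup} to see that this is rationally acyclic while the image $V/G_p$ in $M/G$ is a contractible cone. Where you diverge is in the gluing step: the paper takes a \emph{finite} subcover of $M$ by invariant tubes, pushes it down to a good cover of $M/G$, and runs an induction on the number of open sets using Mayer--Vietoris and the five lemma to show that $\pi^*$ (your $q^*$) is an isomorphism; you instead package the same local acyclicity into the vanishing of the higher direct images $R^iq_*\QQ$ and collapse the Leray spectral sequence. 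Your route is slicker and more conceptual, but it silently uses that sheaf cohomology of the constant sheaf on the (possibly singular) orbit space agrees with singular cohomology -- which holds because $M/G$ is compact, paracompact and, by the slice description, locally contractible -- and that the stalkwise computation via shrinking slice neighbourhoods is legitimate; the paper's finite-cover Mayer--Vietoris induction buys you the same conclusion with no sheaf machinery at all, at the cost of a little bookkeeping. Your fallback option (``inductive Mayer--Vietoris over a finite cover by slice neighbourhoods'') is in fact precisely the paper's proof, and your closing remark on the ring structure -- that $q^*$ is a ring homomorphism, so bijectivity suffices -- is the same implicit logic the paper relies on for $\pi^*$.
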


\begin{proof}
Let $\pi:M\times_GEG\rightarrow M\stackrel{p}{\rightarrow} M/G$ denote the natural projection. We shall see that $\pi^*:H^*(M/G;\QQ)\rightarrow H^*_G(M;\QQ)$ is an isomorphism. 

Taking a $G$-invariant tubular neighbourhood for each $G$-orbit in $M$ gives an open cover of $M$. Choose a finite subcover  $V_1,\ldots,V_n$. Then the sets $U_i:=p(V_i)$ form a good cover of $M/G$. We will use induction to see that
\[
\pi^*:H^*(U_1\cup\ldots\cup U_s;\QQ)\longrightarrow H^*_G(V_1\cup\ldots\cup V_s;\QQ)
\] 
is an isomorphism for all $s=1,\ldots,n$.

Let us start by the case $s=1$: let $V$ be  a tubular neighbourhood of an orbit $O_p$ in $M$, then $U=V/G$ is contractible and hence $H^*(U;\QQ)\simeq\QQ$. Then we must check  that also $H_G^*(V;\QQ)\simeq\QQ$. We have isomorphisms
\[
H^*_G(V;\QQ)\simeq H^*_G(O_p;\QQ)\simeq H^*_G(G/G_p;\QQ)=H^*((G/G_p)\times_GEG;\QQ).
\] 
Now, $(G/G_p)\times_GEG\simeq(EG\times_GG)/G_p\simeq EG/G_p\simeq BG_p$. From both facts we deduce that $H^*_G(V;\QQ)\simeq H^*(BG_p;\QQ)$. Finally, from Lemma \ref{lemm:finitegroup}, we get $H^*(BG_p;\QQ)\simeq\QQ$ because $G_p$ is finite.

Induction step: suppose that $H^*(U_1\cup\ldots\cup U_{s-1};\QQ)\simeq H_G^*(V_1\cup\ldots\cup V_{s-1};\QQ)$. Using this fact and the case $s=1$ we have a new isomorphism  
\[
H^*(U_1\cup\ldots\cup U_{s-1};\QQ)\oplus H^*(U_s;\QQ)\simeq H_G^*(V_1\cup\ldots\cup V_{s-1};\QQ)\oplus H^*_G(V_s;\QQ).
\] 
The induction hypothesis also gives an isomorphism 
\[
H^*((U_1\cup\ldots\cup U_{s-1})\cap U_s;\QQ)\simeq H_G^*((V_1\cup\ldots\cup V_{s-1})\cap V_s;\QQ).
\] 
Considering the Mayer-Vietoris sequences for $U_1\cup\ldots\cup U_s$ and $V_1\cup\ldots\cup V_s$ and using the five lemma we get the desired result.\end{proof}

In the other chapters we deal mainly with circle actions. We now compute explicitly an example of $S^1$-equivariant rational cohomology that will be used several times:

\begin{exam}\label{exam:eqcoprojexam}

Consider the action of $S^1$ on $\CC P^n$ given by
\[
\te[z_0:\ldots:z_n]=[\te^{j_0}z_0:\te^{j_1}z_1:\ldots:\te^{j_n}z_n]
\] 
where $j_0,\ldots,j_n$ are integers. Let us compute $H_{S^1}^*(\CC P^n;\QQ)$: the bundle
\[
\CC P^n\times_{S^1}ES^1\rightarrow BS^1
\] 
is the projectivisation of the complex rank $n+1$ vector bundle
\[
V=\CC^{n+1}\times_{S^1}ES^1\rightarrow BS^1,
\] 
so, as a consequence of the Leray-Hirsch theorem,
\[
H_{S^1}^*(\CC P^n;\QQ)\simeq H^*(BS^1;\QQ)[x]/(x^n-c_1(V)x^{n-1}+\cdots+(-1)^nc_n(V)),
\] 
where $c_1(V),\cdots,c_n(V)\in H^*(BS^1;\QQ)$ are the Chern classes of $V$ and $x$ is the Euler class of the tautological bundle $\OO_{\CP(V)}(-1)\rightarrow\CP(V)$.

We know that $H^*(BS^1;\QQ)=\QQ[t]$ where $t$ is the Euler class of the tautological bundle $\OO_{\CC P^\infty}(-1)\rightarrow\CC P^\infty\simeq BS^1$. There is a line bundle splitting $V=V_0\oplus\cdots\oplus V_n$, with $c_1(V_i)=j_it$. Then
\[
c_k(V)=\sig_k(c_1(V_0),\ldots,c_1(V_n))=\sig_k(j_0,\ldots,j_n)t^k,
\] 
where $\sig_k$ is the $k$-th symmetric polynomial in $n+1$ variables. Therefore 
\[
x^n-c_1(V)x^{n-1}+\cdots+(-1)^nc_n(V)=\sum_{k=0}^n(-1)^k\sig_k(j_0,\ldots,j_n)t^kx^{n-k}=\prod_{i=0}^n(x-j_it).
\]
Finally, we get $H_{S^1}^*(\CC P^n;\QQ)\simeq \QQ[t,x]/(x-j_0t)\cdots(x-j_nt)$.
\end{exam}


\subsection{The equivariant diagonal class}\label{subsec:edc}

We devote this last section on equivariant cohomology to the construction of a particular class that we will use in Section \ref{sec:biidiagprod}. In order to carry out this construction we need to review shriek maps. We consider again any ring of coefficients, that we will not write. 

If $M$ is a compact, connected and oriented manifold of dimension $m$ we denote by
\[
\begin{array}{rccc}
PD_M: & H^*(M) & \longrightarrow & H_{m-*}(M)\\
 &\al& \longmapsto &\al\frown[M]
\end{array}
\]
the isomorphism given by Poincaré duality. Let $N$ also be a compact, connected and oriented manifold and let $n=\dim N$. If $f:M\rightarrow N$ is continuous, we can define the {\it shriek}\footnote{Depending on the source this map is also called {\it transfer}, {\it umkehrungs} or just {\it push-forward}. We take the name and the notation from \cite{Bre}.} map
\[
f^!:=PD^{-1}_N\circ f_*\circ PD_M: H^*(M)\longrightarrow H^{*+n-m}(N),
\]
where $f_*:H_*(M)\rightarrow H_*(N)$ is the map induced in singular homology by $f$.

\begin{lemm}\label{lemm:thom}
Let $i:M\hookrightarrow N$ be and embedding and let $T$ be a closed tubular neighbourhood of $i(M)$ inside $N$. Then $i^!$ equals the composition
\[
H^*(M)\stackrel{t}{\longrightarrow}H^{*+n-m}(T,\partial T)\stackrel{e^{-1}}{\longrightarrow}H^{*+n-m}(N,N\setminus M)\stackrel{g}{\longrightarrow} H^{*+n-m}(N),
\] 
where $t$ is the Thom isomorphism, $e^{-1}$ is the inverse of the excision isomorphism and $g$ is the natural map in the exact sequence of the pair $(N,N\setminus M)$.
\end{lemm}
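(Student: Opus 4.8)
The plan is to reduce everything to the well-known behaviour of the fundamental class under the cap product and the Thom isomorphism, tracking the diagram through cohomology and homology simultaneously. First I would set up the relevant picture: write $n-m$ for the codimension, let $T$ be a closed tubular neighbourhood of $i(M)$, and let $\nu$ be the normal bundle of $i(M)$ in $N$, so that $(T,\partial T)$ is diffeomorphic to the disk/sphere bundle pair $(D\nu,S\nu)$. Orient $\nu$ by the convention that the orientation of $M$ together with that of $\nu$ gives the orientation of $N$. Under this convention the Thom class $\tau\in H^{n-m}(T,\partial T)$ is characterized by the fact that its image in $H^{n-m}(T)\cong H^{n-m}(M)$ (via the bundle projection, which is a homotopy equivalence) is the Euler class, and more importantly by the property that capping with $\tau$ identifies $H_*(T,\partial T)$ with $H_{*-(n-m)}(M)$, the inverse of the Thom isomorphism in homology being cap product with $\tau$.

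Next I would compute each of the three maps $t$, $e^{-1}$, $g$ appearing in the statement against the corresponding homology maps. The Thom isomorphism $t:H^*(M)\to H^{*+n-m}(T,\partial T)$ is cup product with $\tau$ after the identification $H^*(M)\cong H^*(T)$; I would use the naturality of cap product, namely $(\tau\smile\alpha)\frown[T,\partial T]=\alpha\frown(\tau\frown[T,\partial T])$, and the fact that $\tau\frown[T,\partial T]$ is exactly $PD_M^{-1}$ applied to $[M]$ in $H_m(T)$ — this is the core geometric input, that the Thom class caps the relative fundamental class of the tube to the fundamental class of the zero section. For the excision step $e^{-1}$ and the map $g$ in the long exact sequence of $(N,N\setminus M)$, I would invoke the commuting squares relating the homology long exact sequence of $(N,N\setminus M)$ with that of $(T,\partial T)$ via excision, together with the fact that the composite $H_*(T,\partial T)\xrightarrow{\text{exc}} H_*(N,N\setminus M)\to H_{*-1}(N\setminus M)\to\cdots$ sends $[T,\partial T]$ to (the image of) $[N]$ in $H_n(N,N\setminus M)$, which pulls back from $[N]\in H_n(N)$.

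Assembling these, the composite $g\circ e^{-1}\circ t$ applied to $\alpha\in H^*(M)$, after capping with $[N]$, becomes $i_*(\alpha\frown[M])=i_*(PD_M(\alpha))$; applying $PD_N^{-1}$ gives exactly $i^!(\alpha)=PD_N^{-1}\circ i_*\circ PD_M(\alpha)$ by definition, which is the claim. I expect the main obstacle to be bookkeeping rather than conceptual: making the orientation conventions for $\nu$, $T$, $N$ and $M$ mutually consistent so that no stray sign appears, and being careful that the identification $H^*(T,\partial T)\cong H^*(N,N\setminus M)$ by excision is compatible on the nose with the fundamental-class statements used — i.e. that $[T,\partial T]$ and $[N]$ really do correspond under the excision and restriction maps in the degrees that matter. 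Once those compatibilities are pinned down the argument is a diagram chase built on the single identity $(\tau\smile\alpha)\frown[T,\partial T]=\alpha\frown(\tau\frown[T,\partial T])$ and the fact that $\tau\frown[T,\partial T]=[M]$.
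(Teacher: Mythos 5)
Your plan is correct and relies on essentially the same ingredients as the paper's proof: the projection formula $(\pi^*\al\smile\tau)\frown[T,\partial T]=\al\frown(\tau\frown[T,\partial T])$, the identity $\tau\frown[T,\partial T]=i_*[M]$, and the compatibility of excision and of $g$ with the fundamental classes of $(T,\partial T)$ and $N$. The only differences are organizational --- the paper factors $i^!=(i_T^N)^!\circ(i_M^T)^!$, identifies $(i_M^T)^!$ with the Thom isomorphism via exactly your cap-product computation, and gets $(i_T^N)^!=g\circ e^{-1}$ from a duality diagram, whereas you run a single combined chase --- apart from the harmless slip that the Thom class is not ``characterized'' by having the Euler class as its image in $H^{n-m}(M)$ (it is characterized by restricting to a generator on each fibre), a remark your argument never actually uses.
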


\begin{proof}
We assume without loss of generality that $M$ is a submanifold of $N$ and that $i$ is the inclusion. If we denote by $i_M^T,i_T^N$ the obvious inclusions we have that $i=i_T^N\circ i_M^T$ and hence $i^!=(i_T^N)^!\circ(i_M^T)^!$.

Note that $T$ can be regarded as the total space of a rank $n-m$ disk bundle $\pi:T\rightarrow M$ with base space $M$. We shall prove that $(\io_M^T)^!$ is precisely the Thom isomorphism of this bundle:

Since $\pi\circ i_M^T=\mbox{id}_M$ and $i_M^T\circ\pi\sim\mbox{id}_T$ we have that $\pi_*,(i_M^T)_*$ are mutually inverse isomorphisms and so are $\pi^*,(i_M^T)^*$. Hence $(i_M^T)^!$ is a composition of isomorphisms and therefore it is an isomorphism too. The Thom class of the disk bundle is defined by
\[
\tau:=PD_T^{-1}(i_M^T)_*[M]\in H^{n-m}(T,\partial T),
\] 
or, equivalently, $\tau\frown[T]=(i_M^T)_*[M]$. Then the Thom isomorphism is the composition
\[
t:H^*(M)\stackrel{\pi^*}{\longrightarrow}H^*(T)\stackrel{\smile\tau}{\longrightarrow}H^{*+n-m}(T,\partial T),
\]
so we need to show that this composition coincides with $(i_M^T)^!$. If $\al\in H^*(M)$ and $\be=\pi^*\al$ --and hence $(i_M^T)^*\be=\al$-- we have that
\[
\begin{array}{rcl}
PD_T(i_M^T)^!(\al) & = & (i_M^T)_*PD_M(\al)\\
               & = & (i_M^T)_*(\al\frown[M])\\
               & = & (i_M^T)_*((i_M^T)^*\be\frown[M])\\
               & = & \be\frown(i_M^T)_*[M]\\
               & = & \be\frown(\tau\frown[T])\\
               & = & (\be\smile\tau)\frown[T]\\
               & = & PD_T(\be\smile\tau)\\
               & = & PD_T(\pi^*\al\smile\tau)
\end{array},
\]
so $(i_M^T)^!(\al)=\pi^*\al\smile\tau$, which is precisely the Thom isomorphism.

To finish the proof note that the equality $g\circ e^{-1}=(i_T^N)^!$ follows from the commutativity of the diagram
\[
\begin{diagram}
\node{H^{*+n-m}(N,N\setminus M)}\arrow{e,t}{g}\arrow{s,lr}{e}{\simeq}\node{H^{*+n-m}(N)}\arrow[2]{s,lr}{\simeq}{PD_N}\\
\node{H^{*+n-m}(T,\partial T)}\arrow{s,lr}{PD_T}{\simeq}\\
\node{H_{m-*}(T)}\arrow{e,b}{(i_T^N)^*}\node{H_{m-*}(N)}
\end{diagram}
\]
\end{proof}

\begin{lemm}\label{lemm:commutes} Let $P$ a compact connected and oriented manifold of dimension $p$ and let $V,W$ be connected submanifolds of $P$ of dimensions $v,w$. If $V,W$ intersect transversally, the diagram
\[
\begin{diagram}
\node{H^*(V)}\arrow{e,t}{(i_V^P)^!}\arrow{s,l}{(i_{V\cap W}^V)^*}\node{H^{*+p-v}(P)}\arrow{s,r}{(i_W^P)^*}\\
\node{H^*(V\cap W)}\arrow{e,b}{(i_{V\cap W}^W)^!}\node{H^{*+p-v}(W)}
\end{diagram}
\]
induced by the inclusion maps is commutative.
\end{lemm}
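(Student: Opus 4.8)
The plan is to reduce everything to the explicit model for the shriek map of an embedding supplied by Lemma~\ref{lemm:thom}, and to use transversality only once, through its effect on normal bundles and tubular neighbourhoods. Abbreviate $i_V=i_V^P$, $i_W=i_W^P$, $j_V=i_{V\cap W}^V$, $j_W=i_{V\cap W}^W$, and set $q=p-v$. Transversality makes $V\cap W$ a submanifold of $P$ of dimension $v+w-p$; moreover, for $x\in V\cap W$ the composite $T_xW\hookrightarrow T_xP\twoheadrightarrow T_xP/T_xV$ is surjective (since $T_xP=T_xV+T_xW$) with kernel $T_xV\cap T_xW=T_x(V\cap W)$, so it identifies the normal bundle of $V\cap W$ in $W$ with the restriction to $V\cap W$ of the normal bundle of $V$ in $P$; this bundle has rank $w-(v+w-p)=q$. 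Picking a Riemannian metric adapted to the pair, one can then choose a closed tubular neighbourhood $T$ of $V$ in $P$, with projection $\pi\colon T\to V$, such that $T':=T\cap W$ is a closed tubular neighbourhood of $V\cap W$ in $W$ whose projection $\pi'\colon T'\to V\cap W$ is the restriction of $\pi$, and such that the inclusion $j\colon T'\hookrightarrow T$ is a bundle map over $j_V$ realizing the normal-bundle identification above. This compatible choice of tubes is a standard refinement of the tubular neighbourhood theorem in the presence of transversality, and it is the one step that is not formal.

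Granting it, write both shriek maps in the form given by Lemma~\ref{lemm:thom}: $(i_V)^!$ is the composite of $\pi^*\colon H^*(V)\to H^*(T)$, cup product $\smile\tau_V$ with the Thom class $\tau_V\in H^q(T,\partial T)$ of $\pi$, the inverse of the excision isomorphism $H^*(P,P\setminus V)\cong H^*(T,\partial T)$, and the natural map $g\colon H^*(P,P\setminus V)\to H^*(P)$; and $(j_W)^!$ is the corresponding composite for the tube $T'$, built from $\pi'$, the Thom class $\tau_{V\cap W}$, the excision isomorphism for $(W,W\setminus(V\cap W))$, and the natural map $g'\colon H^*(W,W\setminus(V\cap W))\to H^*(W)$. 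I would then check three compatibilities, all with respect to the restriction homomorphisms induced by $W\hookrightarrow P$. First, $j^*\pi^*=\pi'^*j_V^*$, because $\pi\circ j=j_V\circ\pi'$. Second, $j^*\tau_V=\tau_{V\cap W}$: in the compatible setup $T'=\pi^{-1}(V\cap W)$ is the restriction of the disk bundle $T$ along $j_V$, so this is naturality of the Thom class under pullback of bundles, the identification of $\tau_{V\cap W}$ with the Thom class of that restricted bundle being precisely its definition in Lemma~\ref{lemm:thom}; combined with the first point, this says that the Thom isomorphism $\al\mapsto\pi^*\al\smile\tau_V$ restricts to the Thom isomorphism $\al\mapsto\pi'^*\al\smile\tau_{V\cap W}$. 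Third, both the square relating the excision isomorphisms and the square relating $g$ to $g'$ via the restriction maps commute, by naturality of excision and of the long exact sequence of a pair, using $W\setminus(V\cap W)=W\cap(P\setminus V)$, $T'=T\cap W$ and $\partial T'=\partial T\cap W$. Stacking these three commuting squares along the two factorizations yields $(i_W)^*\circ(i_V)^!=(j_W)^!\circ j_V^*$, which is exactly the commutativity in the statement.

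The argument is essentially bookkeeping once the tubular neighbourhoods are chosen compatibly, so the main obstacle is precisely that choice: one must arrange at the same time that $T\cap W$ is a tube around $V\cap W$ in $W$, that its projection is the restriction of $\pi$, and that the resulting bundle map induces the transversality isomorphism of normal bundles. This is where the hypothesis that $V$ and $W$ meet transversally is genuinely consumed, and it is worth setting it up carefully (or citing it from a standard treatment of tubular neighbourhoods, e.g.\ \cite{Bre}) before running the diagram chase.
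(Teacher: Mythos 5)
Your proof is correct and follows essentially the same route as the paper: both express $(i_V^P)^!$ and $(i_{V\cap W}^W)^!$ via Lemma~\ref{lemm:thom} using a tubular neighbourhood $T$ of $V$ with $T\cap W$ a tube around $V\cap W$ in $W$, and then stack the Thom, excision and pair-sequence squares. The only difference is that you spell out the compatibility of tubes and the restriction of the Thom class, which the paper's proof leaves implicit.
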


\begin{proof} Let $T$ be a tubular neighbourhood of $V$ inside $P$. Then $T_0=T\cap W$ is a tubular neighbourhood of $V\cap W$ inside $W$. Put $d=p-v$. We get the commutative diagram
\[
\divide\dgARROWLENGTH by3
\begin{diagram}
\node{H^*(V)}\arrow{e,t}{t}\arrow{s,l}{(i_{V\cap W}^V)*}\node{H^{*+d}(T,\partial T)}\arrow{e,t}{e}\arrow{s,l}{(i_{T_0}^T)^*}\node{H^{*+d}(P,P\setminus V)}\arrow{e,t}{g}\arrow{s,r}{(\io_W^P)^*}\node{H^{*+d}(P)}\arrow{s,r}{(i_W^P)^*}\\
\node{H^*(V\cap W)}\arrow{e,b}{t}\node{H^{*+d}(T_0,T_0\setminus V)}\arrow{e,b}{e}\node{H^{*+d}(W,W\setminus V)}\arrow{e,b}{g}\node{H^{*+d}(W)}
\end{diagram}.
\]
By Lemma \ref{lemm:thom} the top arrow is $(i_V^P)^!$ and the bottom arrow is $(i_{V\cap W}^W)^!$, which proves the result.
\end{proof}

Let $M$ be a $G$-manifold of dimension $m$. Denote by $\io_{\De}:M\rightarrow\MTM$ the diagonal embedding and let $EG_k\rightarrow BG_k$ be $G$-principal bundles approximating the universal bundle $EG\rightarrow BG$. For each $k$ we have an induced map
\[
\io_{\De,k}:M\times_GEG_k\longrightarrow(\MTM)\times_GEG_k,
\]
where the action of $G$ on $\MTM$ is the diagonal action. We get shriek maps
\[
\io_{\De,k}^!:H^*(M\times_GEG_k)\longrightarrow H^{*+m}((\MTM)\times_GEG_k),
\]
which we want to see that stabilise: take the inclusions 
\[
(i_M)_k:M\times_GEG_k\hookrightarrow M\times_GEG_{k+1}, 
\] 
\[
(i_{\MTM})_k:(\MTM)\times_GEG_k\hookrightarrow (\MTM)\times_GEG_{k+1}.
\] 
Applying Lemma \ref{lemm:commutes} to $P=(\MTM)\times_GEG_{k+1}$, $W=(\MTM)\times_GEG_k$ and $V=\De_{\MTM}\times_GEG_{k+1}$ we get that the diagram
\[
\begin{diagram}
\node{H^*(M\times_GEG_{k+1})}\arrow{e,t}{\io_{\De,k+1}^!}\arrow{s,l}{(i_M)_k^*}\node{H^{*+m}((\MTM)\times_GEG_{k+1})}\arrow{s,r}{(i_{\MTM})_k^*}\\
\node{H^*(M\times_GEG_k)}\arrow{e,b}{\io_{\De,k}^!}\node{H^{*+m}((\MTM)\times_GEG_k)}
\end{diagram}
\]
commutes. By means of stabilisation --Lemma \ref{lemm:stabilisation}-- we get a well defined equivariant shriek map
\[
\io_{\De,G}^!:H_G^*(M)\longrightarrow H_G^{*+m}(\MTM).
\] 
Using this map we can define the object claimed at the beginning of this section:

\begin{defi}\label{defi:edc} Let $M$ be an $m$-dimensional $G$-manifold. The equivariant cohomology class
\[
\io_{\De,G}^!(1)\in H_G^m(\MTM)
\]
is called the {\it equivariant diagonal class} of $M$.
\end{defi}

\begin{rema}\label{rema:edcapprox}
Note that the equivariant diagonal class is obtained by stabilisation of the classes
\[
\begin{array}{rl}
\io_{\De,k}^!(1) & =(PD^{-1}_{(\MTM)\times_GEG_k}\circ(\io_{\De,k})_*\circ PD_{M\times_GEG_k})(1)\\
                 & =(PD^{-1}_{(\MTM)\times_GEG_k}\circ(\io_{\De,k})_*)[M\times_GEG_k]\\
                 & =PD^{-1}_{(\MTM)\times_GEG_k}[\De_{\MTM}\times_GEG_k].
\end{array}
\]
\end{rema}


\section{Branched manifolds}\label{sec:branched}

We switch now to a completely different topic. In this section we introduce {\it branched manifolds}, a generalisation of smooth manifolds which appears naturally in our context. This notion is by no means new: the basic definitions are inspired by those in \cite[\S 1]{Wil}. Another source is \cite[\S 5.4]{Sal}, where branched manifolds are defined with a weighting from the very beginning. However, we will only use weightings on compact one-dimensional branched manifolds with boundary --the so called {\it train tracks}--, which is sufficient to our needs.

\subsection{Smooth ramifications}

We begin by defining the basic building blocks we will need to define branched manifolds:

\begin{defi}
A {\it smooth ramification} of dimension $n\geq 1$ and rank $r\geq 1$ is a triple $(X,V,\pi)$ where 
\begin{itemize}
\item $X$ is a topological space
\item $V\subseteq X$ 
\item $\pi:X\rightarrow D^n$ is a continuous map to the standard open disk in $\RR^n$ 
\end{itemize}
satisfying
\begin{enumerate}
\item $\pi_{|V}:V\rightarrow\pi(V)$ is a homeomorphism and $\pi(V)$ is a smooth $n$-dimensional manifold with boundary
\item there exist $r$ sets $D_1,\ldots,D_r$ subject to the following conditions:
\begin{enumerate}
\item $X=D_1\cup\cdots\cup D_r$
\item $D_i\cap D_j=V$, $\forall i\neq j$
\item $\pi_{|D_i}:D_i\rightarrow D^n$ is a homeomorphism
\end{enumerate}
\end{enumerate}
\end{defi}

\begin{figure}[H]
	\centering
		\includegraphics{./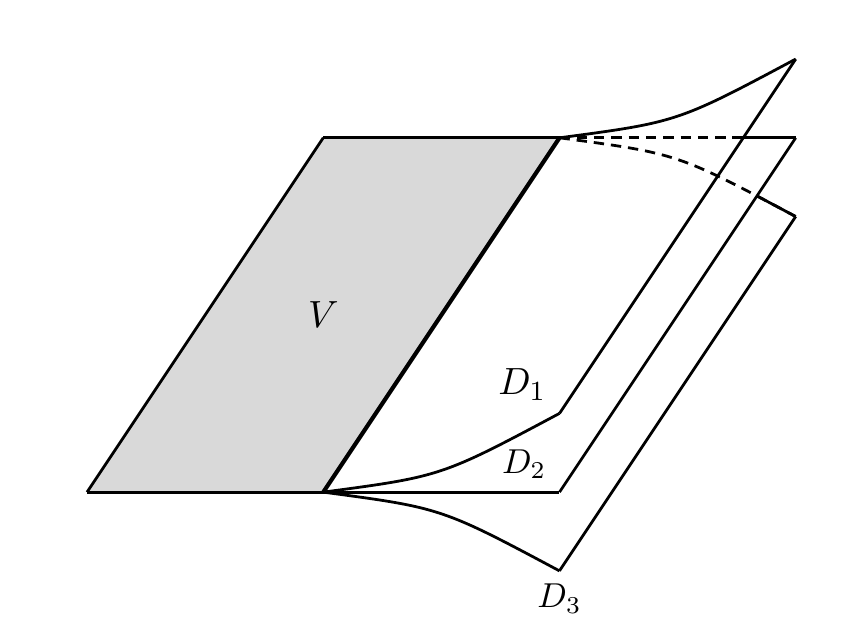}
	\caption{A smooth ramification}
	\label{fig:ramification}
\end{figure}

Later on we will use the following property of the local structure of a smooth ramification:

\begin{prop}\label{prop:lochombran}
Let $(X,V,\pi)$ be a smooth ramification of dimension $n$ and rank $r$ and let $x\in X$ satisfy $\pi(x)\in\partial\pi(V)$. Then the reduced local homology of $x$ satisfies
\[
\tilde{H}_k(X,X\setminus\{x\};\QQ)\simeq\left\{
\begin{array}{ll}
\QQ^r & \mbox{if}\ k=n\\
0 &\mbox{otherwise}  
\end{array}
\right..
\]
\end{prop}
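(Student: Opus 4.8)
The plan is to reduce the computation to the two pieces of the structure that matter: the $r$ disks $D_1,\ldots,D_r$ glued along $V$, and the fact that $\pi(x)$ lies on the boundary of the $n$-manifold $\pi(V)$. Since $\pi_{|D_i}$ is a homeomorphism onto $D^n$, each $D_i$ is (locally at $x$) an $n$-ball; since $\pi_{|V}$ is a homeomorphism onto a manifold-with-boundary and $\pi(x)\in\partial\pi(V)$, the set $V$ is (locally at $x$) a half-$n$-ball sitting inside each $D_i$ as (a neighbourhood of a boundary point of) its own boundary-collar. So locally $X$ looks like $r$ copies of $D^n$ all identified along a common closed half-disk $H\subseteq\partial$-collar. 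I would first make this local picture precise: replace $X$ by $\pi^{-1}(U)$ for a small $U\ni\pi(x)$ biholomorphic to a standard model, so that up to homeomorphism $(X,x)$ is the pointed space $(\bigcup_{i=1}^r D^n_i)/\!\sim$ with $D^n_i\cap D^n_j = H$ for $i\neq j$ and $x$ corresponding to $\pi(x)\in\partial\pi(V)$, i.e. a point of $H$ that is interior to $H$ but on $\partial D^n_i$ for every $i$.

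Next I would compute $\tilde H_*(X,X\setminus\{x\})$ via a Mayer–Vietoris / excision argument. The cleanest route: because the local homology only depends on an arbitrarily small neighbourhood, I can take the cone-like model where $X$ deformation retracts (rel a neighbourhood of $x$) onto $x$ — more precisely, I would set up the long exact sequence of the pair and use that $X$ is locally contractible at $x$, so $\tilde H_k(X,X\setminus\{x\})\simeq \tilde H_{k-1}(X\setminus\{x\})$ for the relevant range, after noting $X$ itself is contractible in the local model. Then the computation becomes: determine the homotopy type of $X\setminus\{x\}$. Removing $x$ from the $i$-th disk $D^n_i$ — where $x\in\partial D^n_i$ — leaves something contractible (a punctured half-ball at a boundary point retracts to a disk), but the disks are still glued along $H\setminus\{x\}$, which is again contractible (an interior point removed from an $(n)$-dimensional half-disk... here I must be careful: $x$ is interior to $H$ as an $n$-manifold-with-boundary only if $\dim H = n$; in fact $H = \pi(V)$ is $n$-dimensional, and $\pi(x)\in\partial\pi(V)$, so $x$ is a boundary point of $H$, hence $H\setminus\{x\}$ is contractible). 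So $X\setminus\{x\}$ is $r$ contractible pieces glued along one contractible piece: by Mayer–Vietoris (iterated, gluing one disk at a time), $\tilde H_*(X\setminus\{x\})$ is concentrated in degree $n-1$ with rank $r-1$. Feeding this back through the pair sequence gives $\tilde H_n(X,X\setminus\{x\})\simeq \QQ^{r-1}$ — so I would need to recheck whether the claimed answer is $\QQ^r$ or $\QQ^{r-1}$; the discrepancy is exactly the "$+1$" from whether one also uses $\tilde H_n(X) = 0$ and the reduced/unreduced bookkeeping, and I expect the correct normalization to yield $\QQ^r$ once $X$ is taken to be the *open* ramification (noncompact, so $\tilde H_{n-1}$ of the complement combined with $\tilde H_n(X,X\setminus x)$ via the pair sequence gives $r$, not $r-1$).

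The main obstacle, then, is getting the local model exactly right and being scrupulous about reduced versus unreduced homology and the exact-sequence bookkeeping, since the answer $\QQ^r$ versus $\QQ^{r-1}$ hinges entirely on that. A robust way to pin it down is to compute two sanity checks: for $r=1$ a smooth ramification is just a chart of a manifold-with-boundary, and the local homology at a boundary point must vanish in all degrees (reduced) — this forces the convention under which the formula reads $0$ at $r=1$, which rules out "$\QQ^r$" unless the formula is meant only for $r\geq 2$, or unless "$\tilde H$" here is being used for a relative group where $\QQ^1$ at $r=1$ is in fact correct because $(D^n,D^n\setminus\{\text{bdry pt}\})$ does have $H_n\simeq\QQ$ by excision to $(D^n,\partial D^n)$-type pairs. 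That last observation is the key: a punctured neighbourhood of a boundary point of an $n$-manifold-with-boundary, when the puncture is at the boundary, is *not* homologically trivial relative to the whole — so I would organize the final proof around excising to the standard pair and using the Thom-isomorphism-style computation $H_n(\text{half-ball},\text{half-ball}\setminus\{\text{bdry pt}\})\simeq \QQ$, then summing the $r$ sheets with the identification along $V$ contributing no extra homology, landing on $\QQ^r$. That is the step I expect to require the most care.
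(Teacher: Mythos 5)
There is a genuine gap, and it sits exactly where you were worried: in the local model. In the definition of a smooth ramification, $\pi$ maps $X$ onto the standard \emph{open} disk $D^n$, and $\pi(V)$ is an $n$-dimensional manifold with boundary sitting \emph{inside} that open disk; the hypothesis $\pi(x)\in\partial\pi(V)$ makes $x$ a boundary point of $V$ only, not of the sheets. Each $D_i$ is homeomorphic to the open disk and $x$ is an \emph{interior} point of every $D_i$, so $D_i\setminus\{x\}$ is homotopy equivalent to $S^{n-1}$, not contractible. Your model, in which $x$ lies on $\partial D^n_i$ for every $i$ and $V$ sits in a boundary collar, is not the structure being defined, and the error propagates through the whole computation (within your own model, $r$ contractible sheets glued along a contractible intersection would in fact be acyclic, not of rank $r-1$ in degree $n-1$). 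With the correct model the bookkeeping you feared simply disappears: $V\setminus\{x\}$ is contractible (this part you got right), so gluing the sheets one at a time by Mayer--Vietoris gives $\tilde{H}_{n-1}(X\setminus\{x\};\QQ)\simeq\QQ^r$, one copy of $\QQ$ per punctured sheet, and after excising so that $X$ is contractible the long exact sequence of the pair yields $\QQ^r$ in degree $n$ and $0$ otherwise. This is precisely the paper's proof, phrased as an induction on $r$ with base case $r=1$ the punctured open disk.

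Your attempted repair at the end relies on a false statement: $H_n(\mathrm{half\text{-}ball},\mathrm{half\text{-}ball}\setminus\{\mathrm{boundary\ point}\})$ is $0$, not $\QQ$, because a half-ball with a boundary point removed is contractible; there is no excision reducing this pair to a $(D^n,\partial D^n)$-type pair. (The vanishing of local homology at boundary points is exactly what the paper uses later to distinguish $\partial M$ from branching points.) Read with the correct model, your $r=1$ sanity check concerns an interior point of an open disk and gives $\QQ$, consistent with $\QQ^r$; read with your model it gives $0$, and the false half-ball computation was introduced only to reconcile that inconsistency. So the final step ``landing on $\QQ^r$'' is not justified as written; the fix is to correct the local picture, after which the argument above (sheets are punctured open disks, intersection contractible, then the pair sequence) gives the stated answer directly.
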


\begin{proof}
By excision we can assume that $X$ is contractible and that $V$ is homeomorphic to the half-disk. Then the reduced homology of $X$ vanishes. Using this fact on the long exact sequence of relative homology of $X$ and $X\setminus\{x\}$ we find out that there is an isomorphism $\tilde{H}_k(X,X\setminus\{x\};\QQ)\simeq\tilde{H}_{k-1}(X\setminus\{x\};\QQ)$.

If $r=1$ then $X\simeq D^n$ and hence 
\[
\tilde{H}_{k-1}(X\setminus\{x\};\QQ)\simeq\left\{
\begin{array}{ll}
\QQ & \mbox{if}\ k=n\\
0 &\mbox{otherwise}  
\end{array}
\right..
\]
This proves the proposition for $r=1$. We will prove the remaining cases by induction on $r$. There are two key observations:
\begin{enumerate}[(i)]
\item $(X':=D_2\cup\ldots\cup D_r,V,\pi_{|X'})$ is a smooth ramification of dimension $n$ and rank $r-1$. 
\item $V\setminus\{x\}$ is contractible because $V$ is homeomorphic to the half-disk and $x\in\partial V$. 
\end{enumerate}

Using the Mayer-Vietoris sequence for the sets $D_1\setminus\{x\}$ and $X'\setminus\{x\}$ --whose union is $X\setminus\{x\}$ and whose intersection is $V\setminus\{x\}$--, excision and observation (ii) we get an isomorphism 
\[
\tilde{H}_{k-1}(X\setminus\{x\};\QQ)\simeq\tilde{H}_{k-1}(D_1\setminus\{x\};\QQ)\oplus\tilde{H}_{k-1}(X'\setminus\{x\};\QQ).
\]
Now from observation (i) and induction it follows that 
\[
\tilde{H}_{k-1}(X\setminus\{x\};\QQ)\simeq\left\{
\begin{array}{ll}
\QQ^r & \mbox{if}\ k=n\\
0 &\mbox{otherwise}  
\end{array}
\right.
\]
as we wished to show.
\end{proof}

It is also convenient to make the following definition:

\begin{defi} Let $(X_1,V_1,\pi_1),\ldots,(X_s,V_s,\pi_s)$ be smooth ramifications of dimension $n$. We say that its fibred product $X_1\times_{D^n}\ldots\times_{D^n}X_s$ is {\it smooth} if $\partial\pi_{i_1}(V_{i_1})\pitchfork\cdots\pitchfork\partial\pi_{i_j}(V_{i_j})$ for all $\{i_1,\ldots,i_j\}\subseteq\{1,\ldots,s\}$. 
\end{defi}

\subsection{Branched atlases}

In this section we define branched manifolds. As in the common definition of a smooth manifold we make use of atlases:

\begin{defi}
A {\it branched $n$-atlas} on a topological space $M$ is a collection of pairs $\{(U_i,h_i)\}$, called {\it branched charts}, where 
\begin{itemize}
\item $\{U_i\}$ is a collection of open subsets of $M$ such that $\bigcup_i U_i=M$
\item $h_i: U_i\rightarrow X_{i1}\times_{D^n}\cdots\times_{D^n}X_{is_i}$ is a homeomorphism from $U_i$ to a smooth fibred product of smooth ramifications $(X_{i1},V_{i1},\pi_{i1}),\ldots,(X_{is},V_{is},\pi_{is_i})$ of dimension $n$
\end{itemize}
subject to the following condition: let $\pi_i:X_{i1}\times_{D^n}\cdots\times_{D^n}X_{is_i}\rightarrow D^n$ denote the projection, then if $U_i\cap U_{i'}\neq\emptyset$ there exists a diffeomorphism
\[
\vp_{i'i}:(\pi_i\circ h_i)(U_i\cap U_{i'})\rightarrow(\pi_{i'}\circ h_{i'})(U_i\cap U_{i'})
\] 
such that $\pi_{i'}\circ h_{i'}=\vp_{i'i}\circ\pi_i\circ h_i$.
\end{defi}

Two branched atlases on the same topological space are {\it equivalent} if their union is again a branched atlas. This is an equivalence relation and so we can make the following definition:

\begin{defi}
An {\it $n$-dimensional branched manifold} is a second countable Hausdorff topological space $M$ together with an equivalence class of smooth branched $n$-atlases.
\end{defi}

On branched manifolds we find a special type of points that we do not find on genuine manifolds:

\begin{defi}\label{defi:branchingpoint}
Let $M$ be a branched manifold. We say that $x\in M$ is a {\it branching point} if there exists a branched chart $(U,h)$ such that $x\in U$ and if $h:U\rightarrow X_1\times_{D^n}\cdots\times_{D^n}X_s$ with $(X_j,V_j,\pi_j)$ smooth ramifications of rank $r_j$, then $x\in\partial V_j$ for some $j$ such that $r_j\geq 2$.
\end{defi}

We denote by $M^\prec$ the set of branching points of a branched manifold. Observe that if $x\in M\setminus M^\prec$ is a not branching point, there exists a chart $(U,h)$ with $x\in U$ and $U\simeq D^n$. Since $(U,h)$ is a chart in the usual sense, we deduce that the complement $M\setminus M^\prec$ of the set of branching points is a genuine manifold. The connected components of $M\setminus M^\prec$ are called {\it branches} of $M$.

On a genuine manifold $M$ of dimension $n$ all the points have the same reduced local homology,
\[
\tilde{H}_k(M,M\setminus\{x\};\QQ)=\left\{
\begin{array}{ll}
\QQ & \mbox{if}\ k=n\\
0 &\mbox{otherwise}  
\end{array}
\right..
\]
However, from Proposition \ref{prop:lochombran} it follows that branching points have more complicated local homology. Therefore local homology can be used to distinguish branching points from non-branching points.

\subsection{Tangent bundle and smooth maps on branched manifolds}

On branched manifolds we can also define a notion of tangent bundle. In order to do so we start by defining the tangent bundle on a smooth ramification:

Let $(X,V,\pi)$ be a smooth ramification of dimension $n$. Its {\it tangent bundle} is defined to be the pull-back $TX:=\pi^*(TD^n)$. Similarly, the tangent bundle on a smooth fibred product $X_1\times_{D^n}\cdots\times_{D^n} X_s$ of smooth ramifications is also defined to be the pull-back of $TD^n$ under the projection.

Since branched atlases are modelled on smooth fibred products of smooth ramifications we can now define the tangent bundle on a branched manifold: let $M$ be a branched manifold with atlas $\{(U_i,h_i)\}_{i\in I}$. If $x\in U_i$ we define the tangent space of $M$ at $x$ as $T_{h_i(x)}h_i(U_i)$. We can glue together these tangent spaces onto a tangent bundle on $M$ by means of the transition functions $\vp_{i'i}$.

We are also interested in defining smooth maps on branched manifolds. To our purposes it will suffice to assume the target manifold is a genuine smooth manifold.

\begin{defi} 
Let $M$ be a branched manifold with atlas $\{(U_i,h_i)\}_{i\in I}$ such that $h_i:U_i\rightarrow X_{i1}\times_{D^n}\cdots\times_{D^n}X_{is_i}$ with $(X_{ij},V_{ij},\pi_{ij})$ a smooth ramification of rank $r_{ij}$ (so that $X_{ij}=D_{ij1}\cup\cdots\cup D_{ijr_{ij}}$). Let $N$ be a smooth manifold. A map $f:M\rightarrow N$ is {\it smooth} if 
\begin{enumerate}
\item The map
\[
\begin{array}{rccc}
f^i_{k_1,\ldots,k_{s_i}}: & D^n&\longrightarrow & N\\
 & y &\longmapsto & f((\pi_{i1|D_{i1k_1}})^{-1}(y),\ldots,(\pi_{is_i|D_{is_ik_{s_i}}})^{-1}(y))
\end{array}
\]
is smooth for all $i\in I$ and for all $k_j\in\{1,\ldots,r_{ij}\}$.
\item For each $x\in U_i$, the maps $f^i_{k_1,\ldots,k_{s_i}}$ for various choices of $k_1,\ldots,k_{s_i}$ have the same germ at $(\pi_i\circ h_i)(x)$.
\end{enumerate}
\end{defi}

Note that if $x\in U_i$, then each $w\in T_xM$ determines a unique vector $\bar{w}\in T_{(\pi_i\circ h_i)(x)}D^n$. If $f:M\rightarrow N$ is a smooth map from a branched manifold $M$ to a smooth manifold $N$, its differential at $x\in U_i$ is defined by:
\[
\begin{array}{rccc}
d_xf: & T_xM & \longrightarrow &T_{f(x)}N\\
 &w&\longmapsto & d_{(\pi_i\circ h_i)(x)}f^i_{k_1\ldots k_{s_i}}(\bar{w})
\end{array},
\]
which is well defined because if also $x\in U_{i'}$, then
\[
\begin{array}{rcl}
d_xf(w')	& = & d_{(\pi_{i'}\circ h_{i'})(x)}f^{i'}_{k_1'\ldots k_{s_{i'}}'}(\bar{w}')\\
            & = & d_{(\pi_{i'}\circ h_{i'})(x)}f^{i'}_{k_1'\ldots k_{s_{i'}}'}(d_{(\pi_i\circ h_i)(x)}\vp_{i'i}(\bar{w}))\\
            & = & d_{(\pi_i\circ h_i)(x)}(f^{i'}_{k_1'\ldots k_{s_{i'}}'}\circ\vp_{i'i})(\bar{w})\\
            & = & d_{(\pi_i\circ h_i)(x)}f^i_{\bar{k}_1\ldots\bar{k}_{s_i}}(\bar{w})
\end{array}
\]
for some $\bar{k}_1\ldots\bar{k}_{s_i}$ and the germ of $f^i_{\bar{k}_1\ldots\bar{k}_{s_i}}$ at $(\pi_i\circ h_i)(x)$ is the same as the germ of $f^i_{k_1\ldots k_s}$.

With the notions of smooth map, tangent bundle and differential of a smooth map at hand, many other common notions of manifolds can be generalised verbatim to branched manifolds. Those of submanifold, immersion, embedding, submersion, regular value, transversality and orientability are examples of such.

\subsection{Train tracks}\label{subsec:traintracks}

Branched manifolds as we defined them have no boundary but, as in the usual situation, allowing charts to be modelled on half-spaces we can extend our definition to get a notion of branched manifold with boundary. As it happens for genuine manifolds, the reduced local homology of a boundary point vanishes. As a consequence, we can decompose $M$ into the union of three disjoint subsets which can be distinguished by their reduced local homology:
\begin{itemize}
\item The boundary $\partial M$ 
\item The set of branching points $M^\prec$
\item The complementary of the two sets above, $M\setminus(\partial M\cup M^\prec)$, which is a genuine manifold without boundary. The connected components of this set are called {\it branches} and the set of branches is denoted $M^B$.
\end{itemize}

In this section we are concerned with the following objects:

\begin{defi}\label{defi:traintrack}
A {\it train track} is a compact one dimensional oriented branched manifold with boundary.
\end{defi}

The following picture shows a train track with its branching points in orange and its boundary points in blue:

\begin{figure}[H]
	\centering
		\includegraphics[scale=0.9]{./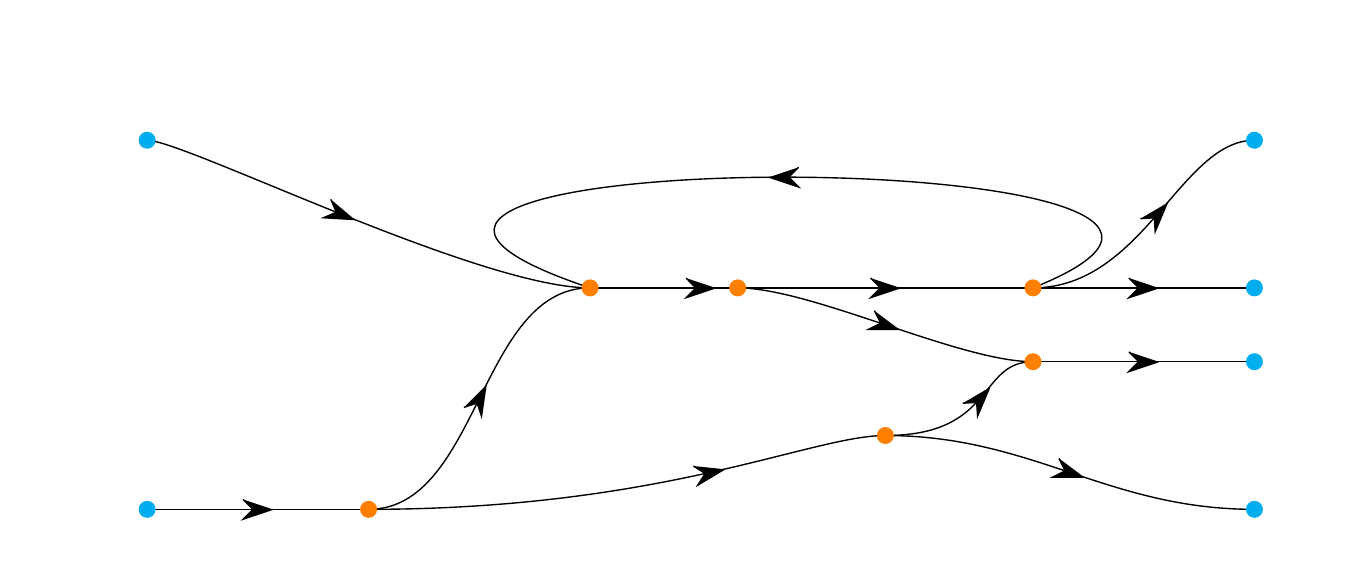}
	\caption{A train track.}
	\label{fig:traintrack}
\end{figure}

If $b\in T^B$ is a branch, either $b$ is diffeomorphic to a circle or $b$ has two extremes $h(b),t(b)\in\partial T\cup T^\prec$, called the {\it head} and the {\it tail} of $b$ according to the orientation of $b$. In these terms we make the following definition:

\begin{defi}\label{defi:weighting}
A {\it weighting} on a train track $T$ is a map $w:T^B\rightarrow\QQ_{>0}$ such that for all $x\in T^\prec$ it holds that
\[
\sum_{h(b)=x}w(b)=\sum_{t(b)=x}w(b).
\]
\end{defi}

The following picture is an example of weighting on the previous train track:

\begin{figure}[H]
	\centering
		\includegraphics[scale=0.9]{./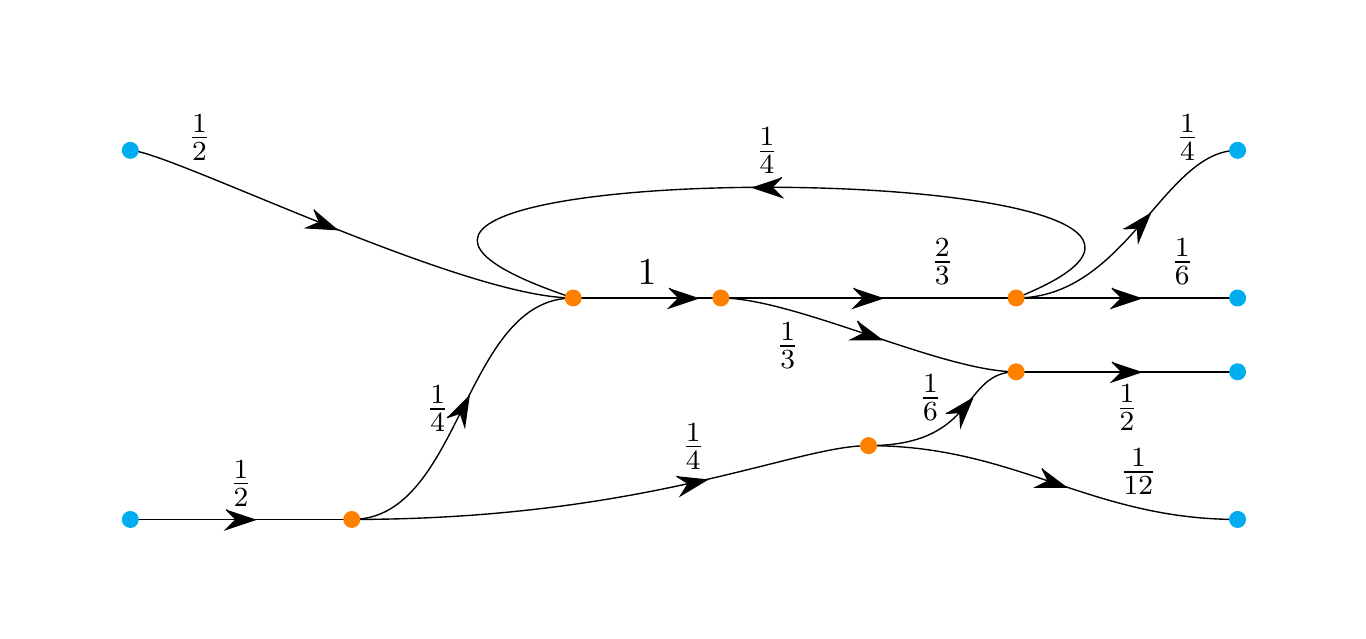}
	\caption{A train track with a weighting.}
	\label{fig:weightedtraintrack}
\end{figure}

\begin{lemm} The boundary set $\partial T$ of a train track is the disjoint union of the sets
\[
\partial T^+=\{x\in\partial T: \exists b\in T^B\ \mbox{s.t.}\ h(b)=x\},
\]
\[
\partial T^-=\{x\in\partial T: \exists b\in T^B\ \mbox{s.t.}\ t(b)=x\}.
\]
\end{lemm}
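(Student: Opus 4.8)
The plan is to show that the boundary $\partial T$ is partitioned by $\partial T^+$ and $\partial T^-$, which amounts to proving two things: that $\partial T = \partial T^+ \cup \partial T^-$, and that $\partial T^+ \cap \partial T^- = \emptyset$.

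First I would establish the covering. Let $x \in \partial T$. Since $x$ is a boundary point, a branched chart around $x$ is modelled on a half-space in dimension $1$, i.e. on a half-open interval $[0,\ep)$; in particular $x$ is not a branching point (its reduced local homology vanishes, unlike that computed in Proposition \ref{prop:lochombran} for branching points, and unlike the interior point homology), so a punctured neighbourhood of $x$ lies in a single branch $b \in T^B$. The orientation of $b$ then forces $x$ to be one of the two extremes of $b$: either the head $h(b)$ or the tail $t(b)$. In the first case $x \in \partial T^+$, in the second $x \in \partial T^-$. Hence $\partial T = \partial T^+ \cup \partial T^-$. Note also that the branch $b$ incident to $x$ cannot be a circle, so the dichotomy head/tail is genuinely available.

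Next I would prove disjointness. Suppose $x \in \partial T^+ \cap \partial T^-$, so there are branches $b, b'$ with $h(b) = x$ and $t(b') = x$. If $b \neq b'$, then $x$ lies in the closure of two distinct branches, which means every neighbourhood of $x$ contains points of both $b$ and $b'$; but a boundary point of a one-dimensional branched manifold has a neighbourhood homeomorphic to $[0,\ep)$, whose only branch-stratum is connected, a contradiction. If $b = b'$, then $b$ is a branch (not a circle) having $x$ as \emph{both} its head and its tail; following the orientation of $b$ we would need $b$ to be a closed loop based at $x$, contradicting that $b$ is an arc with two distinct extremes, or contradicting the local half-interval model at $x$ (in a neighbourhood $[0,\ep)$ of $x$ the single branch points away from $x$ in one direction only, so it cannot approach $x$ from "both sides" with consistent orientation). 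Either way we reach a contradiction, so $\partial T^+ \cap \partial T^- = \emptyset$.

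The main obstacle is being careful about the local model at a boundary point and ruling out the degenerate case where a single branch has $x$ as both head and tail; this requires invoking that $b$, being incident to a boundary point, is an arc (diffeomorphic to an interval) rather than a circle, together with the fact that the orientation distinguishes its two extremes. Everything else is a direct unwinding of Definitions \ref{defi:traintrack} and \ref{defi:weighting} (and the surrounding discussion of branches, heads and tails) combined with the local-homology classification of points of a branched manifold with boundary into boundary points, branching points, and interior manifold points.
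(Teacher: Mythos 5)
Your proof is correct and follows essentially the same route as the paper's: a boundary point is not a branching point, so it is an extreme of a unique branch, and the head/tail dichotomy places it in exactly one of $\partial T^+$, $\partial T^-$. You simply unwind the local half-interval model more explicitly and rule out the degenerate case $h(b)=t(b)=x$, which the paper's shorter argument leaves implicit.
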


\begin{proof}
A point $x\in\partial T$ is not a branching point, so there exists a unique branch $b\in T^B$ such that $x$ is an extreme of $b$. According to if $x=h(b)$ or $x=t(b)$, $x$ lies in the first or the second set.
\end{proof}

The following result is the generalisation of the fact that compact one dimensional manifolds have an even number of boundary points:

\begin{prop}\label{prop:ttboundary} Let $T$ be a train track. For $x\in\partial T$ let $b_x\in T^B$ denote the unique branch such that $x$ is an extreme $b_x$. If $w$ is a weighting on $T$, then
\[
\sum_{x\in\partial T^+}w(b_x)=\sum_{x\in\partial T^-}w(b_x).
\]
\end{prop}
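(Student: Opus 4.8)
The plan is to reduce the statement to a local balancing condition and then sum over all branches. The key observation is that each branch $b \in T^B$ contributes its weight $w(b)$ exactly once to a "head count" and once to a "tail count", so summing the defining weighting condition over all vertices of the train track should telescope, leaving only the boundary contributions.

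More precisely, first I would introduce the two quantities
\[
S^+ = \sum_{x\in T^\prec} \sum_{h(b)=x} w(b), \qquad S^- = \sum_{x\in T^\prec} \sum_{t(b)=x} w(b).
\]
By Definition \ref{defi:weighting}, the inner sums agree at every $x\in T^\prec$, so $S^+ = S^-$. Next I would compute $S^+$ and $S^-$ differently: group the contributions not by vertex but by branch. Each branch $b$ is either diffeomorphic to a circle, in which case it has no extremes and contributes nothing, or it has a well-defined head $h(b)$ and tail $t(b)$, each lying in $\partial T \cup T^\prec$. So $w(b)$ is counted in $S^+$ precisely when $h(b) \in T^\prec$, and is counted instead toward $\sum_{x\in\partial T^+} w(b_x)$ when $h(b)\in\partial T$ (using the previous lemma, which identifies $\partial T^+$ as exactly those boundary points that are heads of some branch). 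Likewise $w(b)$ is counted in $S^-$ when $t(b)\in T^\prec$ and toward $\sum_{x\in\partial T^-} w(b_x)$ when $t(b)\in\partial T$. Summing $w(b)$ over all non-circle branches $b$ in two ways therefore gives
\[
S^+ + \sum_{x\in\partial T^+} w(b_x) = \sum_{b} w(b) = S^- + \sum_{x\in\partial T^-} w(b_x),
\]
and combining this with $S^+ = S^-$ yields the claimed equality.

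The only point requiring a little care — and the main (minor) obstacle — is justifying that the double sum over vertices and the sum over branches really count each $w(b)$ the same total number of times: one must check that a branch $b$ with $h(b)=t(b)$ (a branch whose two extremes coincide at a single branching point) still contributes $w(b)$ once to $S^+$ and once to $S^-$, which is consistent, and that the finiteness of $T^B$ (from compactness, Definition \ref{defi:traintrack}) makes all these sums finite so the rearrangement is legitimate. Once these bookkeeping points are settled the argument is purely combinatorial.
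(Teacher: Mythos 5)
Your argument is correct and is essentially the paper's own proof: both count the weight of each (non-circle) branch once via its head and once via its tail, split the extremes into boundary points and branching points, and cancel the branching-point contributions using the weighting condition, leaving exactly the claimed boundary identity. The only cosmetic difference is that you start from the branching-point identity $S^+=S^-$ and then add the boundary terms (explicitly setting aside circle branches), whereas the paper writes the total weight in two ways and subtracts.
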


\begin{proof}
Note that the total weight of the train track satisfies both
\[
\sum_{b\in T^B}w(b)=\sum_{x\in \partial T\cup T^\prec}\sum_{h(b)=x}w(b)\ \ \mbox{and}\ \ \sum_{b\in T^B}w(b)=\sum_{x\in \partial T\cup T^\prec}\sum_{t(b)=x}w(e),
\]
from where it follows that
\[
\sum_{x\in \partial T\cup T^\prec}\left(\sum_{h(b)=x}w(b)-\sum_{t(b)=x}w(b)\right)=0,
\]
so we get
\[
\sum_{x\in\partial T}\left(\sum_{h(b)=x}w(b)-\sum_{t(b)=x}w(b)\right)+\sum_{x\in T^\prec}\left(\sum_{h(b)=x}w(b)-\sum_{t(b)=x}w(b)\right)=0.
\]
Since $w$ is a weighting, the second term of the sum vanishes and it follows that
\[
\sum_{x\in\partial T}\left(\sum_{h(b)=x}w(b)-\sum_{t(b)=x}w(b)\right)=0.
\]
The proof concludes by observing that
\[
\sum_{x\in\partial T}\sum_{h(b)=x}w(b)=\sum_{x\in\partial T^+}w(b_x)\ \ \mbox{and}\ \ \sum_{x\in\partial T}\sum_{t(b)=x}w(b)=\sum_{x\in\partial T^-}w(b_x).
\qedhere
\]
\end{proof}


\section{Pseudocycles}\label{sec:pseudocycles}

The last part of the toolbox chapter deals with pseudocycles. If $f:D\rightarrow M$ is a smooth map where $D$ is compact and oriented of dimension $d$, this map defines naturally an element $f_*[D]\in H_d(M;\ZZ)$ of the integral singular homology of $M$. In the theory of pseudocycles the compactness condition is substituted by a weaker condition. All along this section we take $M$ to be a compact, connected and oriented manifold.

\begin{defi} Let $f:N\rightarrow M$ be a smooth map. The {\it omega-limit set} of $f$ is the set 
\[
\Om_f=\{p\in M:\ \exists\{q_j\}_{j\geq1}\subseteq N\ \mbox{with no converging subseq. s.t. } p=\lim_{j\rightarrow \infty} f(q_j)\}.
\]
A smooth map $\vp:W\rightarrow M$ is said to be an {\it omega-map} of $f$ if $\Om_f\subseteq\vp(W)$.
\end{defi}

\begin{rema}\label{rema:generalisedpseudo}
Actually we let the source space $W$ of an omega-map have several --but a finite number of-- connected components, not necessarily of the same dimension. In that case when we talk about the ``dimension'' of $W$ we will be referring to the maximum of the dimensions of its components.
\end{rema}

\begin{rema}
When the target manifold is known we usually denote by $(f,N)$ the smooth map and by $(\vp,W)$ the omega-map.
\end{rema}

\begin{defi} Let $D$ be an oriented manifold of dimension $d$. A smooth map $f:D\rightarrow M$ is a $d$-{\it pseudocycle} of $M$ if there is an omega-map $\vp:W\rightarrow M$ of $f$ such that $\dim W\leq d-2$.
\end{defi}

\begin{defi}
Two $d$-pseudocycles $(f_0,D_0)$, $(f_1,D_1)$ are {\it bordant} if there exist a smooth map $\tilde{f}:\widetilde{D}\rightarrow M$, where $\widetilde{D}$ is $(d+1)$-dimensional manifold with boundary such that:
\begin{enumerate}
\item $\partial\widetilde{D}=D_1-D_0$ and $\tilde{f}_{|D_0}=f_0$, $\tilde{f}_{|D_1}=f_1$,
\item There exists an omega-map $\tilde{\vp}:\widetilde{W}\rightarrow M$ of $\tilde{f}$ with $\dim\widetilde{W}\leq d-1$.
\end{enumerate}
\end{defi}

Bordism gives an equivalence relation: we denote the set of its classes by $\BB_*(M)$. This is a $\ZZ$-graded $\ZZ$-module with operation given by disjoint union of pseudocycles: the inverse element is given by reversing the orientation and the neutral element is the empty pseudocycle. It turns out that there is a natural isomorphism of graded $\ZZ$-modules between $\BB_*(M)$ and  $H_*(M;\ZZ)$. A detailed proof of this fact can be found in \cite{Zin} (see also \cite{Kah}):

\begin{theo}\label{theo:mainpseudo}
There exist natural homomorphisms of graded $\ZZ$-modules
\[
\Psi_*^M:H_*(M;\ZZ)\longrightarrow\BB_*(M)\ \mbox{and}\ \Phi_*^M:\BB_*(M)\longrightarrow H_*(M;\ZZ),
\]
such that $\Phi_*^M\circ\Psi_*^M=\mbox{id}_{H_*(M;\ZZ)}$ and $\Psi_*^M\circ\Phi_*^M=\mbox{id}_{\BB_*(M)}$.
\end{theo}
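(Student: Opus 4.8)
The plan is to construct the two maps $\Psi_*^M$ and $\Phi_*^M$ explicitly and then verify they are mutually inverse. The map $\Phi_*^M$ is the geometrically obvious one: given a $d$-pseudocycle $(f,D)$, one wants to send it to a genuine singular homology class. The idea is that although $D$ need not be compact, the omega-map $(\vp,W)$ has image of dimension at most $d-2$, which is too small to carry any $(d-1)$-cycle; hence $f$ can be "capped off" at infinity. Concretely, I would triangulate $M$ compatibly with a neighborhood of $\Om_f$, choose a compact submanifold-with-boundary $D'\subseteq D$ large enough that $f(D\setminus D')$ lies in a small neighborhood of $\Om_f$, and then push $f|_{\partial D'}$ into the low-dimensional set $\vp(W)$ and contract it there (using that a neighborhood of a codimension-$\geq 2$ set is homologically trivial in the relevant degrees). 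The resulting capped-off chain is a genuine $d$-cycle whose class is $\Phi_*^M[f,D]$; one checks independence of all choices by a similar argument applied to bordisms.

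The map $\Psi_*^M$ goes the other way: represent a class in $H_d(M;\ZZ)$ by a singular cycle, and upgrade it to a pseudocycle. The cleanest route is to use that every integral homology class of a smooth manifold is represented by a smooth map from a pseudomanifold, then resolve the singularities of the pseudomanifold: the singular locus has codimension $\geq 2$, so it can serve (after a small smoothing) as the source of an omega-map, and removing it produces an honest oriented $d$-manifold $D$ with a smooth map $f:D\to M$ whose omega-limit set sits inside the image of that low-dimensional piece. This gives a well-defined pseudocycle, and one checks that bordant cycles yield bordant pseudocycles.

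**The verification** that $\Phi_*^M\circ\Psi_*^M=\mathrm{id}$ and $\Psi_*^M\circ\Phi_*^M=\mathrm{id}$ is then a matter of tracing through the two constructions. For $\Phi\circ\Psi$ one observes that capping off the pseudocycle produced from a cycle recovers (up to boundary) the original cycle, because the capping happens inside a contractible-enough neighborhood of the codimension-$\geq 2$ singular set. For $\Psi\circ\Phi$ one runs the two operations in the opposite order on a pseudocycle and exhibits an explicit bordism between the result and the original $(f,D)$; this is where the definition of bordism of pseudocycles (allowing an omega-map of dimension $\leq d-1$ on the bordism) is used in an essential way. Naturality in $M$ is immediate from the fact that both constructions are built out of composition with smooth maps and choices of neighborhoods, which can be made compatibly.

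**The main obstacle** I expect is not any single clever idea but the bookkeeping in the "capping off at infinity" step: one must arrange the compact exhaustion $D'\subseteq D$, the neighborhood of $\Om_f$, and the contraction of $f|_{\partial D'}$ inside $\vp(W)$ all simultaneously and in a way that survives being run on a one-parameter family (for the bordism invariance of $\Phi_*^M$ and for checking $\Psi_*^M\circ\Phi_*^M=\mathrm{id}$). Making "the image is too thin to support a cycle" into a rigorous statement — that is, the vanishing of the relevant relative homology of $M$ modulo a neighborhood of a codimension-$\geq 2$ set in the degrees that matter — together with its parametrized version, is the technical heart; since this is a known theorem we cite \cite{Zin} and \cite{Kah}, I would state it here and defer the details to those references rather than reprove it.
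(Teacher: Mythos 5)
Your proposal is correct in outline and, in effect, takes the same route as the paper: the paper states this theorem without proof, deferring entirely to \cite{Zin} (see also \cite{Kah}), and your sketch is essentially the construction carried out there (capping off $f$ near $\Om_f$ using that a suitable neighbourhood of a set of dimension $\leq d-2$ has vanishing homology in degrees $d-1$ and $d$, and producing pseudocycles from singular cycles by deleting the codimension-two singular part of a pseudomanifold representative, which then serves as the omega-map). Since you also defer the key vanishing statement and its parametrized version to \cite{Zin} and \cite{Kah}, your treatment coincides with the paper's.
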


\begin{rema}\label{rema:pseudocat} If $N$ is another compact, connected and oriented manifold, a smooth map $h:M\rightarrow N$ induces a map $h_*^{\BB}:\BB_*(M)\rightarrow\BB_*(N)$ via $[f]\mapsto[h\circ f]$ because $\Om_{h\circ f}=h(\Om_f)$ and, hence, $h\circ f$ is a pseudocycle of $N$. This construction commutes with the map $h_*:H_*(M;\ZZ)\rightarrow H_*(N;\ZZ)$ induced in cohomology under the correspondences of the theorem: $\Phi_*^N\circ h_*^\BB=h_*\circ\Phi_*^M$ and, hence, $\Psi_*^N\circ h_*=h_*^\BB\circ\Psi_*^M$. In other words $\BB_*$ and $H_*(\cdot;\ZZ)$ are equivalent as functors from the category of compact, connected and oriented manifolds to the category of $\ZZ$-graded $\ZZ$-modules.\end{rema}

\subsection{Strong transversality and intersection pairing of\\ pseudocycles}\label{sec:intersectionpairing}

We need some transversality results for pseudocycles. We begin defining the right transversality notion in this context:

\begin{defi}\label{defi:stronglytrans}
Let $f:N\rightarrow M$, $g:P\rightarrow M$ be smooth maps. We say that $f$ and $g$ are {\it strongly transverse} if
they are transverse and $\Om_f\cap\ov{g(P)}=\emptyset$, $\ov{f(N)}\cap\Om_g=\emptyset$.
\end{defi}

\begin{lemm}\label{lemm:stcsfinite}
If $f:N\rightarrow M$, $g:P\rightarrow M$ are strongly transverse maps, then the Cartesian square
\[
CS(f,g)=\{(x,y)\in N\times P:\ f(x)=g(y)\}
\] 
is a compact manifold of dimension $\dim N+\dim P-\dim M$.
\end{lemm}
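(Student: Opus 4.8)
The plan is to realise $CS(f,g)$ as a transverse preimage, which gives both the manifold structure and the dimension count for free, and then to upgrade ``manifold'' to ``compact manifold'' by a sequential--compactness argument that is precisely where strong transversality is used.

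First I would note that $CS(f,g)=(f\times g)^{-1}(\Delta_M)$, where $f\times g\colon N\times P\to\MTM$ and $\Delta_M\subseteq\MTM$ is the diagonal. Unwinding the definitions on tangent spaces, transversality of $f$ and $g$ is exactly the assertion that $f\times g$ is transverse to $\Delta_M$: at $(x,y)$ with $f(x)=g(y)=z$ one has $d(f\times g)_{(x,y)}(T_xN\oplus T_yP)+T_{(z,z)}\Delta_M=T_zM\oplus T_zM$ if and only if $df_x(T_xN)+dg_y(T_yP)=T_zM$. Hence, by the transverse preimage theorem, $CS(f,g)$ is an embedded submanifold of $N\times P$ whose codimension equals $\operatorname{codim}_{\MTM}\Delta_M=\dim M$, so $\dim CS(f,g)=\dim N+\dim P-\dim M$.

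Next I would prove compactness. Since the ambient $N\times P$ is metrizable it suffices to show $CS(f,g)$ is sequentially compact. Take $(x_j,y_j)_{j\geq 1}\subseteq CS(f,g)$. As $M$ is compact, after passing to a subsequence we may assume $f(x_j)=g(y_j)\to p$ for some $p\in M$. If $\{x_j\}$ had no convergent subsequence in $N$, then $p\in\Om_f$ by definition of the omega-limit set, while $p=\lim g(y_j)\in\ov{g(P)}$, contradicting $\Om_f\cap\ov{g(P)}=\emptyset$ from Definition \ref{defi:stronglytrans}; so pass to a subsequence with $x_j\to x\in N$. Then $g(y_j)=f(x_j)\to f(x)$, and if $\{y_j\}$ had no convergent subsequence in $P$ we would get $f(x)\in\Om_g\cap\ov{f(N)}=\emptyset$, again a contradiction; so pass to a further subsequence with $y_j\to y\in P$. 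By continuity $f(x)=g(y)$, hence $(x,y)\in CS(f,g)$ and $(x_j,y_j)\to(x,y)$ inside $CS(f,g)$, proving sequential compactness and therefore compactness.

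The bulk of this is bookkeeping; the only real subtlety --- and the step I would flag as the main obstacle --- is ordering the three successive passages to subsequences correctly (first force $f(x_j)$ to converge, then $x_j$, then $y_j$), so that each potential failure of convergence is matched against exactly one of the two disjointness conditions defining strong transversality, and so that the limit lies genuinely in $CS(f,g)$ rather than just in its closure. I would also remark that if $N$ or $P$ is not pure-dimensional, as permitted for omega-maps by Remark \ref{rema:generalisedpseudo}, the argument applies componentwise and the stated dimension is read as the maximum over components.
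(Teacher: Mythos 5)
Your proof is correct and follows essentially the same route as the paper: transversality of $f$ and $g$ gives $CS(f,g)=(f\times g)^{-1}(\De_{\MTM})$ the structure of a manifold of the stated dimension, and strong transversality rules out non-convergent sequences, yielding compactness. Your version is merely a more carefully ordered form of the paper's sequential argument (passing to subsequences via compactness of $M$ and checking that the limit lies in $CS(f,g)$), which the paper leaves implicit.
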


\begin{proof}
Since $f,g$ are transverse, $CS(f,g)$ is a manifold of the aforementioned dimension. To see that it is compact we use the extra condition of strong transversality:

Suppose that $(x_j,y_j)\in CS(f,g)$ has no convergent subsequences. Then either $x_j\in N$ or $y_j\in P$ have no convergent subsequences. Say it is $x_j$. Then $x:=\lim_j f(x_j)\in M$ is a point of $\Om_f$. On the other hand $y:=\lim_j g(y_j)\in\ov{g(P)}$. Since $\Om_f\cap\ov{g(P)}=\emptyset$ we have that $x\neq y$. However, since $f(x_j)=g(y_j)$ for all $j$, their limits have to coincide. This is a contradiction. The same argument proves the case where $y_j$ has no convergent subsequences.
\end{proof}

For the seek of completeness we now prove a couple of general results on transversality that the reader may be familiar with:

\begin{lemm}\label{lemm:surjev} There exists a finite-dimensional linear subspace $A\subseteq C^\infty(TM)$ such that the evaluation map
\[
\begin{array}{rccc}
ev_p^A:&A&\longrightarrow&T_pM\\
     &X&\longmapsto&X_p
\end{array}
\]
is surjective for all $p\in M$.
\end{lemm}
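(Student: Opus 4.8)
The plan is to produce the subspace $A$ as a finite sum of ``bump-field'' subspaces indexed by a finite cover of $M$, using compactness of $M$ at the very end. First I would fix a point $p\in M$ and choose local coordinates on a chart $U_p\ni p$ identified with an open ball in $\RR^{2m}$ centered at $p$. On this chart the coordinate vector fields $\partial/\partial x^1,\dots,\partial/\partial x^{2m}$ span $T_qM$ for every $q\in U_p$. Multiplying each of them by a smooth bump function $\beta_p:M\to[0,1]$ that equals $1$ on a smaller neighbourhood $U_p'\ni p$ and is supported in $U_p$, I obtain $2m$ globally defined smooth vector fields $\beta_p\,\partial/\partial x^i\in C^\infty(TM)$ whose values at any $q\in U_p'$ still span $T_qM$. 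Let $A_p\subseteq C^\infty(TM)$ be the span of these $2m$ fields; then $ev_q^{A_p}$ is surjective for every $q\in U_p'$.

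Next I would invoke compactness: the open sets $\{U_p'\}_{p\in M}$ cover $M$, so there is a finite subcover $U_{p_1}',\dots,U_{p_k}'$. I then set $A:=A_{p_1}+\cdots+A_{p_k}\subseteq C^\infty(TM)$, a finite-dimensional linear subspace since it is a sum of finitely many finite-dimensional subspaces. For any $q\in M$ there is some $j$ with $q\in U_{p_j}'$, and already $ev_q^{A_{p_j}}$ is surjective onto $T_qM$; since $A_{p_j}\subseteq A$, a fortiori $ev_q^A$ is surjective. This establishes the claim.

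The only mildly delicate point — and the one I would be most careful about — is the passage from a vector field defined on a chart $U_p$ to a globally defined element of $C^\infty(TM)$: one must check that $\beta_p\,\partial/\partial x^i$, extended by zero outside $U_p$, is genuinely smooth on all of $M$, which follows because $\beta_p$ vanishes identically near $\partial U_p$ so the extension is locally constant ($=0$) there. Everything else is routine: finite-dimensionality of $A$ is immediate, and surjectivity of the evaluation map at each point is inherited from the surjectivity on the relevant chart. No orientation or connectedness of $M$ is actually used here; only smoothness and compactness (to extract the finite subcover) matter.
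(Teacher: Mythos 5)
Your proof is correct and follows essentially the same strategy as the paper: cut off coordinate frames by compactly supported functions to obtain global vector fields, use compactness to extract a finite cover, and let $A$ be the (finite-dimensional) span of the resulting fields. The only cosmetic difference is that you arrange the bump functions to equal $1$ on shrunken neighbourhoods, whereas the paper uses a partition of unity and rescales by the nonvanishing value $1/\lm_{i_j}(p)$ at the given point; both yield surjectivity of $ev_p^A$ at every $p\in M$.
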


\begin{proof} Let $m=\dim M$. Take an atlas $\{(U_i,\psi_i)\}_{i\in I}$ of $M$. Then, for every $i\in I$ there exist vector fields $X^{i,1},\ldots,X^{i,m}\in C^\infty(TM)$ such that $X_p^ {i,1},\ldots,X_p^{i,m}$ is a base of $T_pM$ for all $p\in U_i$ (one only needs to pull-back coordinate vector fields by $\psi_i$). Since $M$ is compact we can take a finite subcover $U_{i_1},\ldots,U_{i_N}$ of the open cover $\{U_i\}_{i\in I}$. Let $\lm_{i_1},\ldots,\lm_{i_N}$ be a partition of unity subordinate to this subcover. Define $A$ to be the vector space generated by the vector fields of the form $\lm_{i_j}X^{i_j,k}=:Y^{j,k}$ for $j=1,\ldots,N$ and $k=1,\ldots,m$. We claim that $A$ satisfies the desired conditions:

Let $p\in M$ and $v\in T_pM$. There exists $j$ such that $p\in U_{i_j}$ and $\lm_{i_j}(p)\neq 0$. Since $p\in U_{i_j}$ there exist real numbers $r_1,\ldots,r_m$ such that $v=r_1X_p^{i_j,1}+\cdots+r_mX_p^{i_j,m}.$ Then 
\[
v=ev_p\left(\frac{1}{\lm_{i_j}(p)}(r_1Y^{j,1}+\cdots+r_dY^{j,d})\right).\qedhere
\]
\end{proof} 

If $X\in C^\infty(TM)$ is a vector field and $\xi_t^X$ is its flow, then $\exp X:=\xi_1^X$ is a diffeomorphism of $M$. If $A\subseteq C^\infty(TM)$ is as in the lemma, the set $\CA=\{\exp X:\ X\in A\}\subseteq\mbox{Diff}(M)$ admits a smooth manifold structure with tangent spaces isomorphic to $A$. In these terms the differential of the evaluation map
\[
\begin{array}{rccc}
ev_p^{\CA}:&\CA&\longrightarrow & M\\
 &\eta&\longmapsto &\eta(p)
\end{array}
\]
is precisely $ev_p^A$. From the lemma it follows that $ev_p^{\CA}$ is a submersion.

\begin{lemm}\label{lemm:generictransverse}
Let $f:N\rightarrow M$, $g:P\rightarrow M$ be smooth and let $A\subseteq C^\infty(TM)$ be as in Lemma \ref{lemm:surjev}. Denote $\CA=\exp A$. There exists a residual subset $R\subseteq\CA$ such that for all $\eta\in R$, $f$ and $\eta\circ g$ are transverse.
\end{lemm}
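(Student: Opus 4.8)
The plan is to apply the parametric transversality theorem (the Sard--Smale / Thom transversality argument) to the evaluation construction set up just before the statement. The key point is that we have a finite-dimensional manifold $\CA = \exp A$ of diffeomorphisms of $M$ such that the evaluation map $ev_p^\CA : \CA \to M$ is a submersion for every $p \in M$, as noted immediately after Lemma \ref{lemm:surjev}.

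First I would form the ``master'' map
\[
F : N \times P \times \CA \longrightarrow M \times M, \qquad F(x,y,\eta) = \big(f(x),\ \eta(g(y))\big),
\]
or, more efficiently, work directly with $G : N \times P \times \CA \to M$, $G(x,y,\eta) = (\eta \circ g)(y)$ relative to the graph of $f$ — but the cleanest route is: consider $H : N \times P \times \CA \to M \times M$ with $H(x,y,\eta) = (f(x), \eta(g(y)))$ and show $H$ is transverse to the diagonal $\De_M \subseteq M \times M$. The next step is to verify this transversality: at a point $(x,y,\eta)$ with $f(x) = \eta(g(y)) =: q$, the differential of $H$ already surjects onto the second factor $T_qM$ just by varying $\eta$ (since $ev_{g(y)}^\CA$ is a submersion and $\eta \mapsto \eta(g(y))$ is a reparametrisation of it by a diffeomorphism on the target), while the first factor $f(x)$ is unaffected by $\eta$. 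Hence the image of $dH$ contains $\{0\} \times T_qM$, and together with $T_q\De_M$ (the diagonal) this spans $T_{(q,q)}(M\times M)$. So $H \pitchfork \De_M$.

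Then I would invoke the parametric transversality theorem: since $H$ is transverse to $\De_M$, for a residual set $R \subseteq \CA$ the restricted map $H_\eta = H(\cdot,\cdot,\eta) : N \times P \to M \times M$ is transverse to $\De_M$. But $H_\eta \pitchfork \De_M$ is exactly the statement that $f$ and $\eta \circ g$ are transverse as maps into $M$ (the standard reformulation of transversality of two maps as transversality of the product map to the diagonal). One must check the mild hypotheses of the theorem — $N$, $P$, $\CA$ are finite-dimensional manifolds (the paper may allow $N$, $P$ to have boundary, in which case one also applies the theorem to the restriction to $\partial(N\times P)$, but transversality of a single map into $M$ with no boundary constraint is the relevant notion here) and everything is smooth — and note that $\CA$ being finite-dimensional means ``residual'' can even be upgraded, but residual suffices for the statement.

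The main obstacle is bookkeeping rather than conceptual: one needs $\CA$ to genuinely be a smooth finite-dimensional manifold with $T_\eta\CA \cong A$ and with $ev_p^\CA$ a submersion for \emph{all} $p$ simultaneously — this is precisely what Lemma \ref{lemm:surjev} and the remark following it provide, so I would cite them — and one needs the map $\eta \mapsto \eta(g(y))$ to be a submersion onto a neighbourhood of $q$, which follows because it is $ev_{g(y)}^\CA$ and that is a submersion. A secondary subtlety, if $N$ or $P$ is non-compact or has a boundary, is ensuring the parametric transversality theorem applies; but since we are only asserting genericity of transversality (an open-dense-in-each-chart / residual condition) this is handled by the usual second-countability argument, exhausting $N \times P$ by countably many compact pieces and intersecting the resulting residual sets.
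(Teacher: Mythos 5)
Your proposal is correct and follows essentially the same route as the paper: the same master map $H(x,y,\eta)=(f(x),(\eta\circ g)(y))$, the same verification that $H\pitchfork\De_{\MTM}$ using the fact that $ev^{\CA}_{g(y)}$ is a submersion (the paper's decomposition $(a,b)=(a,a)+d H(0,0,w)$ is exactly your ``image contains $\{0\}\times T_qM$ plus the diagonal'' argument), and then genericity in $\eta$. The only cosmetic difference is that you quote the parametric transversality theorem as a black box, whereas the paper carries out its standard proof directly, applying Sard's theorem to the projection $H^{-1}(\De_{\MTM})\rightarrow\CA$ and checking that regular values $\eta$ give transversality of $f$ and $\eta\circ g$.
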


\begin{proof}
The differential of the map
\[
\begin{array}{rccc}
H:&N\times P\times\CA & \longrightarrow & \MTM\\
 &(x,y,\eta)&\longmapsto & (f(x),(\eta\circ g)(y))
\end{array}
\]
is $d_{(x,y,\eta)}H(u,v,w)=(d_xf(u),d_y(\eta\circ g)(v)+d_\eta ev_{g(y)}^{\CA}(w))$. 

Let us show that $H$ is transverse to $\De_{\MTM}$: suppose $f(x)=(\eta\circ g)(y)=:z$ and let $a,b\in T_zM$. Since $ev_{g(y)}^{\CA}$ is a submersion there exists $w\in T_\eta\CA$ such that $d_\eta ev_{g(y)}^{\CA}(w)=b-a$. Then $(a,b)=(a,a)+d_{(x,y,\eta)}H(0,0,w)$.

Now, since $H$ is tranverse to the diagonal, we have that $H^{-1}(\De_{\MTM})$ is a smooth submanifold of $N\times P\times\CA$. Consider the projection $H^{-1}(\De_{\MTM})\rightarrow\CA$. From Sard's theorem it follows that the regular values of this map form a residual subset $R\subseteq \CA$. Then, given $\eta\in R$, for all $(x,y)\in N\times P$ such that $f(x)=\eta(g(y))=:z$ and for all $X\in A$ there exist $u\in T_xN$, $v\in T_yP$ such that $d_xf(u)=d_y(\eta\circ g)(v)+X_z$. Hence
\[
\mbox{im}\,d_xf+\mbox{im}\,d_y(\eta\circ g)=T_zM,
\] 
which proves that $f$ and $\eta\circ g$ are transverse maps.
\end{proof}

With suitable extra conditions, this result can be generalised to obtain strong transversality:

\begin{lemm}\label{lemm:genericstronglytransverse}
Let $(f,N),(g,P)$ be smooth maps to $M$ and let  $(\vp,W),(\ga,Z)$ be respective omega-maps such that the values $\dim W+\dim P$, $\dim N+\dim Z$ and $\dim W+\dim Z$ are all strictly smaller than $\dim M$. Let $A\subseteq C^\infty(TM)$ be as in Lemma \ref{lemm:surjev} and take $\CA=\exp A$. There exists a residual subset $R\subseteq\CA$ such that for all $\eta\in R$, the maps $f$ and $\eta\circ g$ are strongly transverse.
\end{lemm}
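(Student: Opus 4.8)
\emph{Proof strategy.} The plan is to reduce the strong transversality of $f$ and $\eta\circ g$ to finitely many conditions on $\eta$, each of which is generic by Lemma~\ref{lemm:generictransverse}, and then intersect the resulting residual sets.

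First I would record two elementary observations. For any smooth map $h\colon Q\to M$ one has $\ov{h(Q)}=h(Q)\cup\Om_h$ (a point of the closure not already in $h(Q)$ is the limit of a sequence with no convergent subsequence), and since $\eta$ is a diffeomorphism of $M$, $\Om_{\eta\circ h}=\eta(\Om_h)$. In particular $\Om_{\eta\circ g}=\eta(\Om_g)\subseteq\eta(\ga(Z))$, while $\Om_f\subseteq\vp(W)$ because $(\vp,W)$ is an omega-map of $f$. Unwinding Definition~\ref{defi:stronglytrans} with these identities, I claim that $f$ and $\eta\circ g$ are strongly transverse as soon as the following hold: (a) $f$ and $\eta\circ g$ are transverse; (b) $\vp(W)\cap\eta(g(P))=\emptyset$; (c) $f(N)\cap\eta(\ga(Z))=\emptyset$; (d) $\vp(W)\cap\eta(\ga(Z))=\emptyset$. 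Indeed $\Om_f\cap\ov{(\eta\circ g)(P)}=(\Om_f\cap\eta(g(P)))\cup(\Om_f\cap\Om_{\eta\circ g})$ is empty by (b) and (d), and $\ov{f(N)}\cap\Om_{\eta\circ g}=(f(N)\cap\Om_{\eta\circ g})\cup(\Om_f\cap\Om_{\eta\circ g})$ is empty by (c) and (d); together with (a) this is exactly Definition~\ref{defi:stronglytrans}.

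Next I would show that each of (a)--(d) holds for $\eta$ in a residual subset of $\CA$. Condition (a) is Lemma~\ref{lemm:generictransverse} applied verbatim, giving a residual $R_0\subseteq\CA$. For (b), apply Lemma~\ref{lemm:generictransverse} with $\vp$ in the role of $f$ and $g$ in the role of $g$: there is a residual $R_1\subseteq\CA$ with $\vp$ and $\eta\circ g$ transverse for every $\eta\in R_1$. Here is the one point worth isolating: two smooth maps whose source dimensions sum to strictly less than $\dim M$ can be transverse only if their images are disjoint, since at a common value $p=\vp(u)=(\eta\circ g)(y)$ transversality would force $\im d_u\vp+\im d_y(\eta\circ g)=T_pM$, impossible because $\dim W+\dim P<\dim M$ bounds the dimension of the left side. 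Hence (b) holds on $R_1$. The same device, using the hypotheses $\dim N+\dim Z<\dim M$ and $\dim W+\dim Z<\dim M$ and applying Lemma~\ref{lemm:generictransverse} to the pairs $(f,\ga)$ and $(\vp,\ga)$, produces residual sets $R_2,R_3\subseteq\CA$ on which (c), respectively (d), hold; when $W$ or $Z$ has components of several dimensions one argues component by component, using the convention of Remark~\ref{rema:generalisedpseudo}. Finally $R:=R_0\cap R_1\cap R_2\cap R_3$ is a finite intersection of residual subsets of $\CA$, hence residual, and every $\eta\in R$ satisfies (a)--(d), so $f$ and $\eta\circ g$ are strongly transverse. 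The argument introduces no genuinely new transversality input beyond the lemma already proved; the main (mild) obstacle is purely organizational: recognizing that below the critical dimension ``generically transverse'' upgrades for free to ``generically disjoint images,'' which is what turns each of the three omega-disjointness requirements into an instance of Lemma~\ref{lemm:generictransverse}.
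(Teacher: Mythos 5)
Your proof is correct and follows essentially the same route as the paper: apply Lemma \ref{lemm:generictransverse} to the four pairs $(f,g)$, $(\vp,g)$, $(f,\ga)$, $(\vp,\ga)$, use the dimension hypotheses to upgrade transversality of the auxiliary pairs to emptiness of the corresponding Cartesian squares (hence disjoint images), and conclude via $\ov{(\eta\circ g)(P)}\subseteq(\eta\circ g)(P)\cup\Om_{\eta\circ g}$ on the intersection of the residual sets. No gaps.
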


\begin{proof}
From Lemma \ref{lemm:generictransverse} we deduce that there exists a residual subset $R\subseteq\CA$ such that for all $\eta\in R$ the following pairs of maps are transverse: $(f,g)$, $(\vp,\eta\circ g)$, $(f,\eta\circ\ga)$ and $(\vp,\eta\circ\ga)$.

The transversality of the second pair tells us that $CS(\vp,\eta\circ g)$ is a smooth manifold of dimension $\dim W+\dim P-\dim M<0$, so it must be empty. Thus $\vp(W)\cap(\eta\circ g)(P)=\emptyset$. Since $\vp$ is an omega-map of $f$ it also holds that $\Om_f\cap(\eta\circ g)(P)=\emptyset$. Similarly, from the third pair we deduce that $f(N)\cap\Om_{\eta\circ g}=\emptyset$ and from the fourth that $\Om_f\cap\Om_{\eta\circ g}=\emptyset$. 

Finally we have that
\[
\left.
\begin{array}{r}
\Om_f\cap(\eta\circ g)(P)=\emptyset\\
\Om_f\cap\Om_{\eta\circ g}=\emptyset
\end{array}
\right\}
\Rightarrow\Om_f\cap((\eta\circ g)(P)\cup\Om_{\eta\circ g})=\emptyset\Rightarrow\Om_f\cap\ov{(\eta\circ g)(P)}=\emptyset,
\]
and $\ov{f(N)}\cap\Om_{\eta\circ g}=\emptyset$ is proved in the same way.
\end{proof}

From this result it follows that we can always assume two pseudocycles of complementary dimension to be strongly transverse by composing one of them with a generic diffeomorphism. Then we can make the following definition:

\begin{prop}\label{prop:intersectionproduct}
Let $(f,D),(g,E)$ be strongly transverse pseudocycles of $M$ such that $\dim D+\dim E=\dim M$. The value
\[
f\cdot g:=\sum_{(x,y)\in CS(f,g)}\nu(x,y),
\]
where $\nu(x,y)$ is the intersection number of $f(D)$ and $g(E)$ at $f(x)=g(y)$, is an integer that only depends on the bordism classes of $f$ and $g$.
\end{prop}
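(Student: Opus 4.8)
The plan is to prove the proposition in two stages: first that $f\cdot g$ is a well-defined integer, and then that it is a bordism invariant. The first stage is essentially a counting argument. Since $f$ and $g$ are strongly transverse and $\dim D+\dim E=\dim M$, Lemma \ref{lemm:stcsfinite} tells us that the Cartesian square $CS(f,g)$ is a compact $0$-dimensional manifold, hence a finite set of points. At each point $(x,y)\in CS(f,g)$ transversality gives $\im d_xf\oplus\im d_yg=T_{f(x)}M$ (a direct sum by dimension count), so the local intersection number $\nu(x,y)\in\{+1,-1\}$ is well defined using the orientations of $D$, $E$ and $M$. Summing over the finite set $CS(f,g)$ therefore gives a well-defined integer $f\cdot g$.

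For the invariance, I would argue that $f\cdot g$ depends only on the bordism class of each argument separately, fixing the other. So suppose $(f_0,D_0)$ and $(f_1,D_1)$ are bordant $d$-pseudocycles via $(\tilde f,\widetilde D)$ with omega-map $(\tilde\vp,\widetilde W)$, $\dim\widetilde W\le d-1$, and let $(g,E)$ be a pseudocycle of complementary dimension with omega-map $(\ga,Z)$. First I would perturb: by Lemma \ref{lemm:genericstronglytransverse} (applied with $M$, the relevant omega-maps, and the dimension inequalities $\dim\widetilde W+\dim E<\dim M$, $\dim\widetilde D+\dim Z\le\dim M$ — here the boundary case needs a small extra remark — and $\dim\widetilde W+\dim Z<\dim M$) we may replace $g$ by $\eta\circ g$ for a generic $\eta\in\CA$ so that $\eta\circ g$ is strongly transverse to $\tilde f$, to $f_0=\tilde f|_{D_0}$ and to $f_1=\tilde f|_{D_1}$ simultaneously; since $\eta$ may be taken close to the identity and is isotopic to it, this changes neither $f_i\cdot g$ nor the bordism class of $g$. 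Then $CS(\tilde f,\eta\circ g)$ is a compact $1$-manifold with boundary, and its boundary is exactly $CS(f_1,\eta\circ g)-CS(f_0,\eta\circ g)$ by the hypothesis $\partial\widetilde D=D_1-D_0$. A compact oriented $1$-manifold has signed boundary count zero, and one checks the local intersection signs match the boundary orientation; hence $f_1\cdot(\eta\circ g)-f_0\cdot(\eta\circ g)=0$, i.e. $f_0\cdot g=f_1\cdot g$. The symmetric argument (bordism in the second variable) completes the proof.

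The main obstacle I expect is bookkeeping rather than conceptual: one must be careful that the boundary case of the dimension inequality, $\dim\widetilde D+\dim Z=\dim D+\dim Z+1=\dim M$ (not strictly less), still lets Lemma \ref{lemm:genericstronglytransverse} apply — the point is that $\widetilde D$ is a manifold with boundary and one works with its interior, or equivalently one invokes a parametric/relative version of the genericity lemma so that transversality of $\tilde f$ and $\eta\circ g$ over the interior, together with transversality of each $f_i$ and $\eta\circ g$ on the boundary pieces, holds for generic $\eta$. The other delicate point is the orientation/sign check identifying the induced boundary orientation on $\partial\,CS(\tilde f,\eta\circ g)$ with $\pm\nu$; this is the standard compatibility of intersection signs under bordism and I would simply cite it rather than rederive the sign conventions.
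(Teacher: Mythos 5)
Your proposal is correct and follows essentially the same route as the paper's proof: finiteness of $CS(f,g)$ via Lemma \ref{lemm:stcsfinite} makes $f\cdot g$ a well-defined integer, and bordism invariance (in one variable at a time, the other being symmetric) is obtained by applying Lemma \ref{lemm:genericstronglytransverse} to make the bordism strongly transverse, so that the resulting Cartesian square is a compact oriented $1$-manifold whose boundary realises the difference of the two counts. Your worry about a borderline dimension inequality is in fact unnecessary: since $\dim Z\le\dim E-2$, one gets $\dim\widetilde D+\dim Z\le\dim M-1<\dim M$, so the genericity lemma applies as stated.
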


\begin{proof}
Since $f$ and $g$ are transverse, the intersection numbers $\nu(x,y)$ are well defined. Moreover, since $f$ and $g$ are strongly transverse and of complementary dimension, according to Lemma \ref{lemm:stcsfinite}, $CS(f,g)$ is a finite set, so the sum that defines $f\cdot g$ is finite and hence $f\cdot g$ is an integer number.

Let $(g',E')$ be a pseudocycle and let $(\tilde{g},\widetilde{E})$ be a bordism between $g$ and $g'$. Lemma \ref{lemm:genericstronglytransverse} shows that we can take $\tilde{g}$ strongly transverse to $f$. Then, from Lemma \ref{lemm:stcsfinite} it follows that $CS(f,\tilde{g})$ is a compact and oriented $1$-dimensional manifold. Its boundary is $CS(f,g')\cup-CS(f,g)$ because $\partial\widetilde{E}=E'\cup E$. Therefore $f\cdot g=f\cdot g'$. This proves that $f\cdot g$ only depends on the bordism class of $g$ but not on $g$ itself. A symmetric argument shows that it only depends on the bordism class of $f$ but not in $f$ itself.
\end{proof}

\begin{rema} 
Since the number $f\cdot g$ does not depend on the bordism class, we shall write it $[f]\cdot[g]$ from now on.
\end{rema}

Given $a\in H_q(M;\ZZ)$ we can consider the homomorphism
\[
\begin{array}{rccc}
I_a:& H_{m-q}(M;\ZZ) &\longrightarrow & \ZZ\\
    & b & \longmapsto & \Psi_*^M(a)\cdot\Psi_*^M(b)
\end{array}.
\]
Hence $I_a$ can be seen as an integral cohomology class modulo torsion:
\[
I_a\in\mbox{Hom}(H_{m-q}(M;\ZZ);\ZZ)\simeq H^{m-q}(M;\ZZ)/\mbox{Tor}.
\]
For integral (co)homology this is not a gain because we can take $PD_M^{-1}(a)\in H^{m-q}(M;\ZZ)$ and we do not lose any torsion information: note that when $(f,D)$ is a cycle of $M$ --i.e. $D$ is compact-- then $I_{\Phi_*^M[f]}=PD_M^{-1}f_*[D]$ or, more briefly, $\Phi^M_*[f]=f_*[D]$. 

On the other hand, for rational cohomology things are different because there is no torsion: via the natural inclusion of $\ZZ$ in $\QQ$ we can think 
\[
I_a\in\mbox{Hom}(H_{m-q}(M;\ZZ);\QQ)\simeq H^{m-q}(M;\QQ).
\]
Then any rational cohomology class is a linear combination with rational coefficients of elements of the form $I_{\Phi_*^M(\be)}$ with $\be\in\BB_*(M)$.

\begin{rema}
The notions of omega-limit sets and strong transversality can be generalised to branched manifolds and the transversality results proved here also apply in that case. We will use this in Chapter \ref{ch:biinvariant}.
\end{rema}

\subsection{Pseudocycles and products of $G$-manifolds}

It is very convenient to our purposes to understand how do pseudocycles behave with respect to Cartesian products of manifolds and also with respect to $G$-manifolds. We study these situations here.

\begin{prop}
Let $(f\times h,D)$ be a pseudocycle of $\MTN$. Then $(f,D)$ is a pseudocycle of $M$ and $(h,D)$ is a pseudocycle of $N$.
\end{prop}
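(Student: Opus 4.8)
The plan is to turn an omega-map for $f\times h$ into an omega-map for $f$ of the required codimension by post-composing with the projection $\pi_M\colon\MTN\to M$, and symmetrically for $h$ using $\pi_N$; the only non-formal ingredient is the compactness of the factor one projects away.

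First I would unwind the hypothesis. Write $d=\dim D$. Saying that $(f\times h,D)$ is a $d$-pseudocycle of $\MTN$ means there is a smooth omega-map $\psi\colon W\to\MTN$ of $f\times h$ with $\dim W\le d-2$, i.e.\ $\Om_{f\times h}\subseteq\psi(W)$. The key step is then the set-theoretic inclusion $\Om_f\subseteq\pi_M(\Om_{f\times h})$. To prove it, take $p\in\Om_f$ and a sequence $\{q_j\}\subseteq D$ with no convergent subsequence such that $f(q_j)\to p$. Since $N$ is compact, after passing to a subsequence we may assume $h(q_j)\to p'$ for some $p'\in N$; a subsequence of a sequence without convergent subsequences still has none, so
\[
(f\times h)(q_j)=(f(q_j),h(q_j))\longrightarrow(p,p')
\]
exhibits $(p,p')\in\Om_{f\times h}$, and hence $p=\pi_M(p,p')\in\pi_M(\Om_{f\times h})$.

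Combining the two facts gives $\Om_f\subseteq\pi_M(\Om_{f\times h})\subseteq\pi_M(\psi(W))=(\pi_M\circ\psi)(W)$. Since $\pi_M\circ\psi\colon W\to M$ is smooth and $\dim W\le d-2=\dim D-2$, it is an omega-map of $f$ of admissible dimension, so $(f,D)$ is a $d$-pseudocycle of $M$. The assertion for $(h,D)$ follows verbatim, using the compactness of $M$ to extract a convergent subsequence of $f(q_j)$ and the projection $\pi_N\colon\MTN\to N$.

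I do not expect a genuine obstacle here: the argument is essentially a diagram chase on omega-limit sets, and the only point requiring care is invoking compactness of the discarded factor to extract the convergent subsequence. It is the pseudocycle analogue of the functoriality noted in Remark~\ref{rema:pseudocat}.
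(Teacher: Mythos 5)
Your proof is correct and takes essentially the same route as the paper: since $f=\pi_M\circ(f\times h)$ and $h=\pi_N\circ(f\times h)$, the paper simply invokes functoriality of pseudocycles under post-composition with smooth maps to compact targets (Remark \ref{rema:pseudocat}), whose justification is exactly the compactness-and-subsequence argument on omega-limit sets that you spell out. In effect you have reproved that remark inline for the projections instead of citing it; there is no gap.
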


\begin{proof}
Let $\pi_M,\pi_N$ denote the projections from $\MTN$ onto $M,N$. Then $\pi_M\circ(f\times h)=f$ and $\pi_N\circ(f\times h)=h$ are pseudocycles (see Remark \ref{rema:pseudocat}).
\end{proof}

For the converse of this proposition to hold we need to add some extra conditions:

\begin{prop}\label{prop:productpseudo}
Let $(f,D)$ with omega-map $(W_f,\vp)$ be a pseudocycle of $M$ and let $(h,D)$ with omega-map $(W_h,\ga)$ be a pseudocycle of $N$. Then $(f\times h,D)$ is a pseudocycle of $\MTN$ with omega-map $(\vp\times\ga,W_f\times W_h)$ provided that $\dim W_f+\dim W_h\leq\dim D-2$.
\end{prop}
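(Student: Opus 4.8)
The plan is to verify directly that $(\vp\times\ga, W_f\times W_h)$ meets the two requirements of an omega-map for $f\times h$: namely that its domain has the right dimension bound, and that its image contains the omega-limit set $\Om_{f\times h}$. The dimension condition is immediate, since $\dim(W_f\times W_h)=\dim W_f+\dim W_h\le\dim D-2$ by hypothesis, and this is exactly the inequality needed for $f\times h$ to be a $\dim D$-pseudocycle once the omega-map property is established.

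The core of the argument is therefore the inclusion $\Om_{f\times h}\subseteq(\vp\times\ga)(W_f\times W_h)=\vp(W_f)\times\ga(W_h)$. First I would unwind the definition: a point $(p,q)$ lies in $\Om_{f\times h}$ precisely when there is a sequence $\{d_j\}\subseteq D$ with no convergent subsequence such that $(f(d_j),h(d_j))\to(p,q)$, i.e. $f(d_j)\to p$ and $h(d_j)\to q$ simultaneously. The natural move is to show $p\in\Om_f$ and $q\in\Om_h$, whence $p\in\vp(W_f)$ and $q\in\ga(W_h)$ because $\vp,\ga$ are omega-maps of $f,h$ respectively, giving $(p,q)\in\vp(W_f)\times\ga(W_h)$ as desired.

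The one subtlety — and I expect this to be the main (though still minor) obstacle — is that a priori the sequence $\{d_j\}$ witnessing $(p,q)\in\Om_{f\times h}$ has no convergent subsequence in $D$, but to conclude $p\in\Om_f$ I need a sequence witnessing that fact for $f$ alone, again with no convergent subsequence; the same sequence $\{d_j\}$ works verbatim since it already has no convergent subsequence and $f(d_j)\to p$. So there is actually nothing to do beyond observing that the non-convergence hypothesis on $\{d_j\}$ is inherited unchanged by each factor. (Contrast this with the converse direction in the preceding proposition, where one cannot reconstruct a bad sequence in $D$ from bad sequences in $M$ and $N$ separately — which is exactly why extra hypotheses are needed here but not there.)

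In summary: I would state that $\Om_{f\times h}\subseteq\Om_f\times\Om_h$ (the reverse inclusion is false in general, but only one direction is needed), deduce $\Om_{f\times h}\subseteq\vp(W_f)\times\ga(W_h)=(\vp\times\ga)(W_f\times W_h)$, note that $\vp\times\ga$ is smooth and $W_f\times W_h$ is a manifold with finitely many components of dimension at most $\dim W_f+\dim W_h\le\dim D-2$, and conclude that $(f\times h,D)$ is a pseudocycle of $\MTN$ with the claimed omega-map. No genuine calculation is involved; the proof is essentially a bookkeeping exercise with the definitions.
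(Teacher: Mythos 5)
Your proof is correct and follows essentially the same route as the paper's: take the non-convergent sequence $\{d_j\}$ witnessing $(p,q)\in\Om_{f\times h}$, observe that the very same sequence witnesses $p\in\Om_f$ and $q\in\Om_h$, hence $(p,q)\in\vp(W_f)\times\ga(W_h)$, and finish with the dimension count $\dim(W_f\times W_h)\le\dim D-2$. Nothing further is needed.
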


\begin{proof}
Let $(p,q)\in\Om_{f\times h}$. Then there exists $d_j\in D$ with no convergent subsequences such that
\[
(f\times h)(d_j)=(f(d_j),h(d_j))\mathop{\longrightarrow}\limits_j(p,q).
\]
Therefore $f(d_j)\rightarrow p$ and $h(d_j)\rightarrow q$, so $p\in\Om_f\subseteq\vp(W_f)$ and $q\in\Om_h\subseteq\ga(W_h)$. In other words $\Om_{f\times h}$ is contained in the image of $\vp\times\ga:W_f\times W_h\rightarrow\MTN$. Since $\dim W_f\times W_h=\dim W_f+\dim  W_h\leq\dim D-2$, $f\times h$ is a pseudocycle.
\end{proof}

\begin{prop}\label{prop:equivpseudo}
Let $M$ be a $G$-manifold and let $(f,D)$ be a pseudocycle of $M$ with omega-map $(\vp,W)$. Assume that $G$ also acts on $D$ and $W$ and that $f$ and $\vp$ are equivariant maps. If $P$ is a compact smooth manifold where $G$ acts freely, the map
\[
\begin{array}{rccc}
f_G:&D\times_GP&\longrightarrow & M\times_GP\\
    &[d,p] &\longmapsto & [f(d),p]
\end{array}
\]
is a pseudocycle with omega-map
\[
\begin{array}{rccc}
\vp_G:&W\times_GP&\longrightarrow & M\times_GP\\
    &[w,p] &\longmapsto & [\vp(w),p]
\end{array}.
\]
\end{prop}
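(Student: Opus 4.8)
The plan is to check directly the three conditions in the definition of a pseudocycle for $(f_G,D\times_G P)$ with $(\vp_G,W\times_G P)$ as omega-map: that the two quotient spaces are oriented smooth manifolds and that $f_G,\vp_G$ are well defined and smooth; that $\dim(W\times_G P)\le\dim(D\times_G P)-2$; and that $\Om_{f_G}\subseteq\vp_G(W\times_G P)$.

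First I would set up the smooth structures. Since $G$ is compact and acts freely on $P$, the diagonal actions of $G$ on $D\times P$ and on $W\times P$ are free and proper, so $D\times_G P$ and $W\times_G P$ are smooth manifolds and the quotient maps $q_D\colon D\times P\to D\times_G P$, $q_W\colon W\times P\to W\times_G P$, $q_M\colon M\times P\to M\times_G P$ are principal $G$-bundles; in particular they admit local sections. Equivariance of $f$ and $\vp$ makes $f_G$ and $\vp_G$ well defined, and they are smooth because they are induced by the smooth equivariant maps $f\times\mathrm{id}_P$ and $\vp\times\mathrm{id}_P$. Assuming $P$ oriented (as in all our applications, where $P=ES^1_k=S^{2k+1}$), the product orientations on $D\times P$ and $W\times P$ descend to the quotients, using that $G$ is compact and connected. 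For the dimensions, $\dim(D\times_G P)=\dim D+\dim P-\dim G$ and $\dim(W\times_G P)=\dim W+\dim P-\dim G$ componentwise in the sense of Remark \ref{rema:generalisedpseudo}, so the hypothesis $\dim W\le\dim D-2$ gives exactly $\dim(W\times_G P)\le\dim(D\times_G P)-2$.

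The main content is the inclusion $\Om_{f_G}\subseteq\vp_G(W\times_G P)$. Let $[m,p]\in\Om_{f_G}$ and choose $z_j=[d_j,p_j]$ in $D\times_G P$ with no convergent subsequence and $f_G(z_j)=[f(d_j),p_j]\to[m,p]$. By compactness of $P$, after passing to a subsequence $p_j\to p_0$; then $\{d_j\}$ can have no convergent subsequence, since $d_{j_k}\to d$ together with $p_{j_k}\to p_0$ would force $z_{j_k}\to[d,p_0]$. Pick a local section $\sigma$ of $q_M$ with $\sigma([m,p])=(m,p)$; for $j$ large we may write $\sigma([f(d_j),p_j])=(g_j\cdot f(d_j),\,g_j\cdot p_j)$ with $g_j\in G$ uniquely determined (the action on $M\times P$ is free), and this tends to $(m,p)$. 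By compactness of $G$, along a further subsequence $g_j\to g$, hence $f(d_j)\to g^{-1}\cdot m$; since $\{d_j\}$ has no convergent subsequence this limit belongs to $\Om_f\subseteq\vp(W)$, say $g^{-1}\cdot m=\vp(w)$ with $w\in W$. Moreover $p=\lim g_j\cdot p_j=g\cdot p_0$, so $p_0=g^{-1}\cdot p$, and therefore
\[
[m,p]=[g^{-1}\cdot m,\,g^{-1}\cdot p]=[\vp(w),p_0]=\vp_G([w,p_0])\in\vp_G(W\times_G P),
\]
which is what we wanted.

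I expect the delicate point to be precisely this lifting step: converting convergence of $[f(d_j),p_j]$ downstairs into convergence of a $G$-translate of $(f(d_j),p_j)$ upstairs, so that the omega-limit set of $f$ becomes available. Local triviality of the principal bundle $q_M$ together with compactness of $G$ is exactly what makes it work; everything else is routine bookkeeping with subsequences and with the componentwise dimension convention of Remark \ref{rema:generalisedpseudo}.
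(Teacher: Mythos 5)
Your proof is correct and follows essentially the same route as the paper's: lift the convergent sequence in $M\times_GP$ to $M\times P$ using compactness of $G$ (you via a local section of the principal bundle $M\times P\to M\times_GP$, the paper by directly choosing representatives $g_j$ with $g_jf(d_j)\to m$, $g_jp_j\to p$), observe that the $D$-component has no convergent subsequence, and invoke $\Om_f\subseteq\vp(W)$. The only cosmetic differences are that you extract a limit $g$ of the $g_j$ and land on $g^{-1}\cdot m\in\Om_f$, whereas the paper uses equivariance of $f$ to conclude $m\in\Om_f$ directly, and that you make explicit the orientation and smooth-structure bookkeeping that the paper leaves implicit; both versions are fine.
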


\begin{proof}
The maps $f_G$, $\vp_G$ are smooth maps between smooth manifolds because $G$ acts freely on $P$ and they are well-defined because $f$ and $\vp$ are equivariant. Since 
\[
\begin{array}{rcl}
\dim W\times_GP &  = & \dim W+\dim P-\dim G\\
                &\leq& \dim D-2+\dim P-\dim G\\
                &  = & \dim D\times_GP-2
\end{array}
\]
we only need to show that $\Om_{f_G}\subseteq\im\vp_G$:

Let $[m,p]\in\Om_f$. Then, there exist $[d_j,p_j]\in D\times_GP$ with no converging subsequences satisfying
\[
[f(d_j),p_j]\mathop{\longrightarrow}\limits_j[m,p].
\]
Thus there are $g_j\in G$ such that $g_jf(d_j)\rightarrow m$ and $g_jp_j\rightarrow p$. In particular $f(g_jd_j)\rightarrow m$ because $f$ is equivariant. We will see that the sequence $g_jd_j\in D$ has no convergent subsequences, implying that $m\in\Om_f$. Hence, there exists $w\in W$ such that $\vp(w)=m$, and we conclude that $\vp_G([w,p])=[m,p]$.

It only remains to justify why $g_jd_j$ has no convergent subsequences: since $G$ and $P$ are compact, the sequences $g_j\in G$ and $p_j\in P$ have some convergent subsequence with limit $g\in G$ and $p'\in P$, respectively. Assume that $g_jd_j$ had limit $d$. Then $[d_j,p_j]$ would converge to $[g^{-1}d,p']$, giving a contradiction with our assumptions.
\end{proof}

Now we prove a result that is, in a way, a combination of the two previous results:

\begin{prop}\label{prop:pseudocombi}
Let $M,N$ be $G$-manifolds. Let $(f,D)$ with omega-map $(W_f,\vp)$ be a pseudocycle of $M$ and let $(h,D)$ with omega-map $(W_h,\ga)$ be a pseudocycle of $N$.  Let $G$ act on $D$, $W_f$ and $W_h$ making $f$, $h$, $\vp$ and $\ga$ into equivariant maps. If $P$ is a compact smooth manifold where $G$ acts freely and $\dim W_f+\dim W_h\leq\dim D-2$, the map
\[
\begin{array}{rccc}
f_G\times h_G:&D\times_G P&\longrightarrow &(M\times_GP)\times(N\times_GP)\\
    & [d,p] &\longmapsto & ([f(d),p],[h(d),p])
\end{array}
\]
is a pseudocycle with omega-map
\[
\begin{array}{rccc}
(\vp\times\ga)_G:&(W_f\times W_h)\times_GP & \longrightarrow & (M\times_GP)\times(N\times_GP)\\
 &[(a,b),p]&\longrightarrow &([\vp(a),p],[\ga(b),p])
\end{array}.
\]
\end{prop}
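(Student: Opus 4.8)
The plan is to obtain Proposition~\ref{prop:pseudocombi} by assembling the two previous propositions with the functoriality of pseudocycles from Remark~\ref{rema:pseudocat}, rather than repeating an omega-limit bookkeeping argument from scratch.

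\emph{Step 1.} Since $\dim W_f+\dim W_h\leq\dim D-2$, Proposition~\ref{prop:productpseudo} applies and gives that $(f\times h,D)$ is a pseudocycle of $\MTN$ with omega-map $(\vp\times\ga,W_f\times W_h)$.

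\emph{Step 2.} Give $\MTN$ and $W_f\times W_h$ the diagonal $G$-actions; then $f\times h$ and $\vp\times\ga$ are equivariant because $f,h,\vp,\ga$ are. Proposition~\ref{prop:equivpseudo}, applied to the $G$-manifold $\MTN$, the free $G$-manifold $P$, the pseudocycle $(f\times h,D)$ and its omega-map $(\vp\times\ga,W_f\times W_h)$, then shows that
\[
(f\times h)_G : D\times_GP\longrightarrow(\MTN)\times_GP,\qquad [d,p]\longmapsto[(f(d),h(d)),p],
\]
is a pseudocycle with omega-map $(\vp\times\ga)_G : (W_f\times W_h)\times_GP\to(\MTN)\times_GP$ sending $[(a,b),p]$ to $[(\vp(a),\ga(b)),p]$.

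\emph{Step 3.} Consider the map
\[
\iota : (\MTN)\times_GP\longrightarrow(M\times_GP)\times(N\times_GP),\qquad [(m,n),p]\longmapsto([m,p],[n,p]).
\]
It is well defined and smooth, and in fact injective precisely because $G$ acts freely on $P$: if $([m,p],[n,p])=([m',p'],[n',p'])$ then the group elements witnessing the two identifications both send $p$ to $p'$, hence agree. By Remark~\ref{rema:pseudocat}, post-composition with the smooth map $\iota$ sends pseudocycles to pseudocycles and satisfies $\Om_{\iota\circ g}=\iota(\Om_g)$, so it also sends omega-maps to omega-maps without changing the dimension of their source. Applying $\iota$ to the conclusion of Step~2 and observing that $\iota\circ(f\times h)_G=f_G\times h_G$ and $\iota\circ(\vp\times\ga)_G=(\vp\times\ga)_G$ in the notation of the statement yields exactly the claim.

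The only points requiring care are routine: that $\iota$ is well defined and smooth (this is where freeness of the $G$-action on $P$ enters, exactly as in Proposition~\ref{prop:equivpseudo}) and that the two composites literally coincide with the maps $f_G\times h_G$ and $(\vp\times\ga)_G$ written in the statement. There is no genuine obstacle beyond this; alternatively one could argue directly by lifting a sequence realising a point of $\Om_{f_G\times h_G}$, passing to subsequences using compactness of $G$ and $P$, and using freeness on $P$ to see that the two group elements produced by the two projections share a common limit, but the compositional route above is shorter and reuses the work already done.
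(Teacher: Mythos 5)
Your argument is correct, but it takes a genuinely different route from the paper's. The paper explicitly remarks that the statement is ``not an immediate combination'' of Propositions \ref{prop:productpseudo} and \ref{prop:equivpseudo} --- but the combination it rules out is the opposite order to yours: applying \ref{prop:equivpseudo} to $f$ and $h$ separately and then trying to form a product, which fails because $\dim(W_f\times_GP)+\dim(W_h\times_GP)$ is too large. It then argues directly: given a nonconvergent sequence $[d_j,p_j]$ whose image converges to $([m,p],[n,p'])$, it extracts group elements $g_j,g_j'$, uses compactness of $G$ and $P$ to pass to limits $g,g',q$, deduces $p'=g'g^{-1}p$, and transfers $g(g')^{-1}$ onto the $W_h$-coordinate by equivariance of $\ga$. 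You instead apply \ref{prop:productpseudo} first (legitimate precisely because of the hypothesis $\dim W_f+\dim W_h\le\dim D-2$), then \ref{prop:equivpseudo} for the diagonal action on $\MTN$, and finally push forward along the smooth map $\iota:(\MTN)\times_GP\to(M\times_GP)\times(N\times_GP)$, checking $\iota\circ(f\times h)_G=f_G\times h_G$ and that $\iota$ carries the Step-2 omega-map to the claimed one; Remark \ref{rema:pseudocat} (whose equality $\Om_{\iota\circ g}=\iota(\Om_g)$ uses compactness of the intermediate space $(\MTN)\times_GP$, which holds here) gives the containment of the omega-limit set, and the dimension bound is inherited from Step 2. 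The geometric content is the same: the image of $\iota$ consists exactly of pairs whose $P$-slots are related by the single group element matching the two identifications, which is what the paper's limit bookkeeping establishes by hand. What the paper's direct proof buys is a self-contained argument displaying where compactness of $G$ and $P$ enters; what yours buys is brevity and reuse of the earlier propositions --- note only that injectivity of $\iota$, though true, is not needed (smoothness suffices for Remark \ref{rema:pseudocat}).
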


\begin{proof}
The proof of this result is not an immediate combination of propositions \ref{prop:productpseudo} and \ref{prop:equivpseudo}: from Proposition \ref{prop:equivpseudo} we deduce that $f_G$, $h_G$ are pseudocycles with omega-maps $\vp_G$, $\ga_G$ respectively. We cannot apply \ref{prop:productpseudo} to conclude that $f_G\times h_G$ is a pseudocycle because dimension count fails: $\Om_{f_G\times h_G}$ is contained in the image of the map $\vp_G\times\ga_G$, but the dimension of the source space of this map is too large. We shall see instead that $\Om_{f_G\times h_G}$ is contained in the image of $(\vp\times\ga)_G$:

Let $([m,p],[n,p'])\in\Om_{f_G\times h_G}$. Then there exists $[d_j,p_j]\in D\times_GP$ with no converging subsequences such that
\[
(f_G\times h_G)([d_j,p_j])=([f(d_j),p_j],[h(d_j),p_j])\mathop{\longrightarrow}\limits_j([m,p],[n,p']).
\]
Therefore there exist $g_j,g_j'\in G$ such that
\[
\left\{
\begin{array}{ll}
g_jf(d_j)=f(g_jd_j)\rightarrow m, &g_jp_j\rightarrow p\\
g'_jh(d_j)=h(g'_jd_j)\rightarrow n, &g'_jp_j\rightarrow p'\\
\end{array}.
\right.
\]
Since neither $g_jd_j$ nor $g_j'd_j$ are have convergent subsequences (see the argument at the end of the proof of Proposition \ref{prop:equivpseudo}) we deduce that $m\in\Om_f$ and $n\in\Om_h$, so there exist $a\in W_f,b\in W_h$ such that $\vp(a)=m,\ga(b)=n$. From this we deduce that $[m,p]=[\vp(a),p]$ and $[n,p']=[\ga(b),p']$. 

Now, since $G$ is compact, taking subsequences if necessary, we can assume that $g_j\rightarrow g$, $g_j'\rightarrow g'$. Similarly, since $P$ is also compact we can assume that $p_j\rightarrow q$. Then $gq=p$ and $g'q=p'$, from what it follows that $p'=g'g^{-1}p$. Hence
\[
[n,p']=[\ga(b),g'g^{-1}p]=[g(g')^{-1}\ga(b),p]=[\ga(g(g')^{-1}b),p].
\]
In conclusion, we have $([m,p],[n,p'])=(\vp\times\ga)_G([(a,g(g')^{-1}b),p])$.
\end{proof}

Suppose that the gradient line $\ga:=(\vp_q^{JX})_{|[0,\tau]}:[0,\tau]\rightarrow M$ intersects the interior of $R(z,\de_z)$. Take $U\subseteq R(z,\de_z)$ a connected and simply connected neighbourhood of $\im\ga\cap R(z,\de_z)$, which exists thanks to the deduction before Lemma \ref{lemm:monodecreasing} and Proposition \ref{prop:goodintersec}. Choose a lift $\sig:U\rightarrow R^\#(z,\de_z)$. If $E\in W_z$ is sufficiently small we can define $\ga_E:[0,\tau]\rightarrow M$ by
\[
\left\{
\begin{array}{l}
\ga_E(0)=q\\
\ga_E'(t)=-(J+\tilde{\pi}E_{\sig(\ga_E(t))})X_{\ga_E(t)}
\end{array}
\right.,
\]
where $\pi^*{\cal D}\stackrel{\tilde{\pi}}{\rightarrow}{\cal D}$ is the natural projection: the composition 
\[
U\stackrel{\sig}{\rightarrow}R^\#(z,\de_z)\stackrel{E}{\rightarrow}\pi^*{\cal D}\stackrel{\tilde{\pi}}{\rightarrow}{\cal D}
\] 
is an element of $C^\infty(U,{\cal D})$. We need $E$ to be small in order that $\ga_E(t)$ stays in $U$ for all $t\in\ga^{-1}(R(z,\de_z))$. Then, for $E\in W_z^0$, a small neighbourhood of $0$ in $W_z$, we can define the map 
\[
\begin{array}{rccc}
\Phi_{q,\tau}: & W_z^0 & \longrightarrow & M\\
 &E&\longmapsto&\ga_E(t)
\end{array}.
\]
The same arguments as in the proof of Proposition \ref{prop:phisurj} show that $d_0\Phi_{q,\tau}$ is surjective. Hence, shrinking $W_z^0$ if necessary, we can assume that $\Phi_{q,\tau}$ is a submersion.

If $E$ is sufficiently small, $\ga_E$ is an $\ep$-perturbed gradient segment. Namely we need $\sup_t|d_{\ga_E(t)}\mu(\tilde{\pi}E_{\sig(\ga_E(t))}X_{\ga_E(t)})|<\ep$ and $\sup_t \|\tilde{\pi}E_{\sig(\ga_E(t))}X_{\ga_E(t)}\|^2<\ep$. We shall then define the following norm on $W_z$:
\[
\|E\|_z=\sup_{(\te,p)\in R(z,\de_z)}(|d_p\mu(\tilde{\pi}E_{(\te,p)}X_p)|+\|\tilde{\pi}E_{(\te,p)}X_p\|^2).
\]

The union of the sets $\stackrel{\circ}{R}(z,\de_z)\cap Z$ as $z$ runs over all points of $Z$ is an open covering of $Z$. Since $Z$ is compact there exists a finite number of points $z_1,\ldots,z_\ell\subseteq Z$ such that $Z$ is contained in the union of the interiors of the sets $R(z_i,\de_{z_i})$. To simplify the notation let $R_i=R(z_i,\de_{z_i})$, $R^\#_i=R^\#(z_i,\de_{z_i})$ and $\pi_i:R_i^\#\rightarrow R_i$ be the projection. 

Take $\ep$ small enough so that all the results in this section hold and such that the ball $B(0,\ep)_{\|\cdot\|_z}$ is contained in $W_z^0$. Define
\[
\PP:=\{(E_1,\ldots,E_\ell)\in W_{z_1}\oplus\cdots\oplus W_{z_\ell}:\ \|E_i\|_{z_i}<\ep/\ell\ \forall i\}.
\]
This is the space of perturbations we will work with. In particular, we make the following definition:

\begin{defi}\label{defi:pperturbed} Let $P=(E_1,\ldots,E_\ell)\in\PP$. A $P${\it -perturbed gradient segment} is a tuple $(\ga,\ga_1^\#,\ldots,\ga_\ell^\#)$ where
\begin{enumerate}
\item $\ga:A\rightarrow M$ is a curve on $M$. 
\item $\ga_i^\#:\ga^{-1}R_i\rightarrow R_i^\#$ is a lift of the restriction of $\ga$, i.e. $\pi_i\circ\ga_i^\#=\ga$ on $\ga^{-1}R_i$.
\item The equation
\[
\ga'=-\left(J+\sum_{i=1}^\ell\tilde{\pi}_i(E_i)_{\ga^\#_i}\right)X_\ga.
\]
is satisfied.
\end{enumerate}
\end{defi}

\begin{rema}
We have in particular that $\ga$ is an $\ep$-perturbed gradient segment.
\end{rema}

\begin{rema}\label{rema:actionpgs}
$P$-perturbed gradient segments admit an $S^1$-action given by
\[
\te\cdot(\ga,\ga_1^\#,\ldots,\ga_\ell^\#)=(\te\cdot\ga,\te\cdot\ga_1^\#,\ldots,\te\cdot\ga_\ell^\#)
\]
because the invariance of $\pi_i$ makes $\te\cdot\ga_i^\#$ into a lift of $\te\cdot\ga$ and the equation is preserved by the equivariance of $J$ and $E_i$.
\end{rema}

\section{The moduli space of perturbed broken gradient lines}\label{sec:modpbgd}

In this section we generalise the results of Section \ref{subsec:bgd} where we studied moduli spaces of the form $\MM^{\mu,(J+E)X}$ where $E$ was a suitable perturbation of the almost complex structure $J$. We define analogues of these spaces and we study their structure. Similarly to what happened in Chapter \ref{ch:standard} these analogous moduli spaces admit a stratification indexed by the number of breaking points, but they have the structure of a branched manifold as a consequence of the different possible choices of a lift.

\begin{defi}\label{defi:pbgd}
Let $P\in\PP$. An {\it oriented $P$-perturbed broken gradient line} of $(\mu,JX)$ is a tuple $G=(K;K_1,\ldots,K_\ell;b)$ where $K\subseteq M$ is a compact subset, each $K_j\subseteq R^\#_j$ is a compact (possibly empty) subset and $b\in K$ such that either
\[
K=\{b\}\ \mbox{and}\ K_j=
\left\{
\begin{array}{lr}
\emptyset &\mbox{if}\ b\not\in R_j\\
\{b_j^\#\}\ \mbox{s.t.}\ \pi_j(b_j^\#)=b\ &\mbox{if}\ b\in R_j
\end{array}
\right.
\]
or, if not, then:
\begin{enumerate}
\item $K\cap F$ is finite: let $K\cap F=\{c_1,\ldots,c_r\}$ with $\mu(c_1)>\cdots>\mu(c_r)$ (note that $K\cap F$ may be empty).
\item There exists a homeomorphism $h_K:[0,1]\rightarrow K$ and $P$-perturbed gradient segments
\[
(\ga_k:A_k\rightarrow M,\ga_{k,1}^\#,\ldots,\ga_{k,\ell}^\#),\ k=0,\ldots,r
\]
such that
\begin{itemize}
\item $A_0$ is of the form $[t_0,+\infty)$ and $h_K^{-1}\circ\ga_0:A_0\rightarrow[0,h_K^{-1}(c_1))$ is an increasing homeomorphism.
\item If $0<k<r$, $A_k=\RR$ and $h_K^{-1}\circ\ga_k:A_k\rightarrow(h_K^{-1}(c_k),h_K^{-1}(c_{k+1}))$ is an increasing homeomorphism.
\item $A_r$ is of the form $(-\infty,t_1]$ and $h_K^{-1}\circ\ga_r:A_r\rightarrow(h_K^{-1}(c_r),1]$ is an increasing homeomorphism.
\end{itemize}
and such that
\[
K_j=
\left\{
\begin{array}{lcl}
\im\ga_{0,j}^\# & \mbox{if} & \inf\mu(R_j)>\mu(c_1)\\
\im\ga_{k,j}^\# & \mbox{if} & \mu(c_k)>\sup\mu(R_j)>\inf\mu(R_j)>\mu(c_{k+1}),\ 0<k<r.\\
\im\ga_{r,j}^\# & \mbox{if} & \mu(c_r)>\sup\mu(R_j)\\
\end{array}
\right.
\]
\item Either $b=h_K(0)$ or $b=h_K(1)$.
\end{enumerate}

The point $b$ is called the {\it beginning} of $G$. Similarly, we define the {\it end} of $G$ to be the point
\[
e=\left\{
\begin{array}{llccr}
     b &\mbox{if} & K=\{b\}    &            &  \\
h_K(1) &\mbox{if} & K\neq\{b\} & \mbox{and} & b=h_K(0)\\
h_K(0) &\mbox{if} & K\neq\{b\} & \mbox{and} & b=h_K(1)\\
\end{array}
\right..
\]
In the first case we say that $(K,b)$ is {\it point-like}, in the second case that it is {\it descending} and in the last case that it is {\it ascending}.
\end{defi}

\begin{rema} The picture one should have in mind is the same as in Figure \ref{fig:obroken}, but now with the pieces of $K$ between two breaking points being the image of a $P$-perturbed gradient segment that lifts to the sets $K_j$.
\end{rema}
\begin{rema}\label{rema:singleparameter}
If $K\cap F=\emptyset$ then the parametrisation is given by a single $P$-perturbed gradient segment with $\ga_0:[t_0,t_1]\rightarrow M$ such that the composition $h_K^{-1}\circ\ga_0:[t_0,t_1]\rightarrow[0,1]$ is an increasing homeomorphism.
\end{rema}

Denote by $d_J$ the distance on $M$ induced by the Riemanninan metric $\rho_J$. We can pull $\rho_J$ back to $R^\#_j$ by means of $\pi_j$, and this induces a distance $d_{J,j}$ on $R_j^\#$. Let $d(K_j):=\sup\{d_{J,j}(x,\partial R_j^\#):\ x\in K_j\}$. Given two $P$-perturbed broken gradient lines $G=(K;K_1,\ldots,K_\ell,b)$ and $G'=(K';K_1',\ldots,K_\ell';b')$ define
\[
d^*(G,G'):=d_J^H(K,K')+d_J(b,b')+\sum_{j=1}^\ell d_{J,j}^H(K_j,K_j')(d(K_j)+d(K_j')),
\]
where the superscript $H$ denotes de Hausdorff distance between subsets. This defines a pseudodistance in the set of $P$-perturbed broken gradient lines because there exist $G\neq G'$ such that $d^*(G,G')=0$. This happens when $K=K'$ and for each $j$ such that $K_j\neq K_j'$ both $K_j$ and $K_j'$ are contained in $\partial R_j^\#$, which according to the second part of Proposition \ref{prop:goodintersec}, means that each of them consists of a unique point. We denote by $(\GG^P,d)$ the metric space obtained from this pseudodistance, i.e. $\GG^P$ is obtained by identifying $G$ and $G'$ if $d^*(G,G')=0$ and for $[G],[G']\in\GG^P$ we take $d([G],[G']):=d^*(G,G')$. The bracket notation will be dropped from now on, and by abuse of notation $G$ will represent any element of its class.

We shall now define a stratification of $\GG^P$. The following definitions resemble those in Section \ref{subsec:bgd}: given $I\subseteq\{1,\ldots,n\}$, define 
\[
\GG^P_I:=\{(K;\cdots)\in\GG^P:\ i\in I\Leftrightarrow K\cap C_i\neq\emptyset\}. 
\]
and split this set into
\[
\GG_I^P=\GG_{I\downarrow}^P\sqcup\GG_{I\uparrow}^P\sqcup\GG_{I-}^P
\]
according to its elements being descending, ascending or point-like. Note that if $\#I>1$ there are no point-like elements while there is a homeomorphism
\[
\GG_{\{i\}-}^P=\{(\{b\};\emptyset,\cdots,\emptyset;b):\ b\in C_i\}\simeq C_i
\]
because if $b\in C_i$ it cannot belong to any $R_j$.

\begin{rema}\label{rema:genflip} 
(cf. Remark \ref{rema:flip}) $\GG_{I\downarrow}^P$ and $\GG_{I\uparrow}^P$ are homeomorphic via 
\[
G=(K;K_1,\ldots,K_\ell;b)\mapsto(K;K_1,\ldots,K_\ell;e),
\] 
where $e$ is the end of $G$. The inverse is given by the same expression. These transformations consist in flipping the orientation of the $P$-perturbed gradient line.
\end{rema}

\begin{rema}\label{rema:actionpbgl}
In Remark \ref{rema:actionpgs} we defined an $S^1$-action on $P$-perturbed gradient segments which makes
\[
\te\cdot(K;K_1,\ldots,K_\ell;b)=(\te\cdot K;\te\cdot K_1,\ldots,\te\cdot K_\ell;\te\cdot b)
\]
into an $S^1$-action on $\GG^P$ (if $K\neq\{b\}$ we only need to take $h_{\te\cdot K}=\te\cdot h_K$). The sets $\GG_{I\downarrow}^P$, $\GG_{I\uparrow}^P$ and $\GG_{I-}^P$ are invariant under this action.
\end{rema}

We shall begin studying $\GG_\emptyset^P$. If $G=(K;\ldots;b)\in\GG_\emptyset^P$ is such that $K\neq\{b\}$, then, according to Definition \ref{defi:pbgd}, $G$ is parametrised by a single $P$-perburbed gradient segment with first component $\ga:=\ga_0:[t_0,t_1]\rightarrow M$ such that $h_K(0)=\ga(t_0)$ and $h_K(1)=\ga(t_1)$. With this notation consider the map (cf. Lemma \ref{lemm:densestratum})
\[
\begin{array}{rccc}
\al_\emptyset^P:&\GG_\emptyset^P &\longrightarrow & \RR\times(M\setminus F)\\
 &G=(K;\cdots;b)&\longmapsto&(\ga^{-1}(e)-\ga^{-1}(b),b)
\end{array},
\]
where it is implicitly understood that if $G$ is point-like then $\ga^{-1}(e)-\ga^{-1}(b)=0$, although $\ga$ is not defined. Note that the image of a descending element is $(t_1-t_0,b)$ while the image of an ascending element is $(t_0-t_1,b)$. This is well defined because the difference $t_1-t_0$ does not depend on the chosen parametrisation $\ga$. Another obvious observation is that this map does not depend on $K_1,\ldots,K_\ell$, so it is not injective in general: loosely speaking, given an element of $\RR\times(M\setminus F)$, it has as many preimages as choices of lifts we can make. Therefore it is important to identify the points where the number of lifts we can take changes: 
\begin{defi} 
An oriented $P$-perturbed gradient line $G=(K;\ldots;b)$ is said to be {\it tangent to $R_j$} if $\emptyset\neq K\cap R_j\subseteq\partial R_j$, which by Proposition \ref{prop:goodintersec} is a unique point. 
\end{defi}
It is precisely a these tangent points that the number of lifts varies. We illustrate this in the following figure, where $o_j$ denotes the number of elements of the stabiliser of $z_j$:

\begin{figure}[H]
	\centering
		\includegraphics{./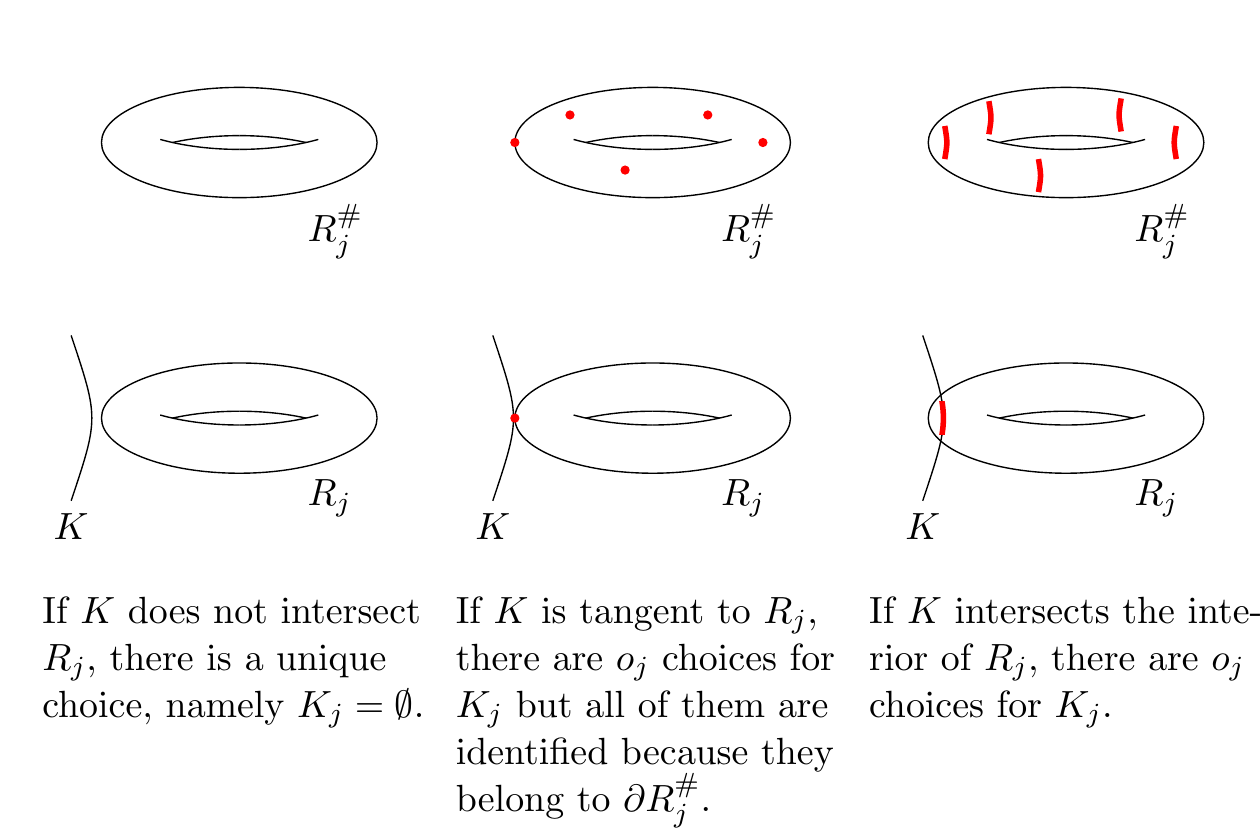}
	\caption{Lifts of a perturbed gradient line.}
	\label{fig:branching}
\end{figure}

The figure shows that if $G$ is tangent to $R_j$, in a neighbourhood of $G$ in $\GG_\emptyset^P$ there are elements with only one choice for $K_j$ and elements with $o_j$ choices for $K_j$. Not surprisingly $G$ will turn out to be a branching point as explained in Section \ref{sec:branched}. The following proposition describes the local structure of $\GG_\emptyset^P$:

\begin{prop}\label{prop:gzstruc}
Let $G=(K;\cdots;b)\in\GG_\emptyset^P$ and let
\begin{itemize}
\item $\Lm_0$ be the set of $j$ such that $K$ intersects the interior of $R_j$ 
\item $\Lm_1$ be the set of $j$ such that $K$ is tangent to $R_j$
\end{itemize}
If $\Lm_1=\emptyset$ --if $G$ is not tangent to any $R_j$-- let $N=1$. Otherwise, if $\Lm_1=\{j_1,\ldots,j_s\}$, let $N=o_{j_1}\cdots o_{j_s}$. There exist open subsets $W_1,\ldots,W_N\subseteq\RR\times(M\setminus F)$ and continuous maps $\vp_k:W_k\rightarrow\GG_\emptyset^P$ such that
\begin{enumerate}
\item $\al_\emptyset^P\circ\vp_k:W_k\longrightarrow\RR\times(M\setminus F)$ is the identity on $W_k$.
\item $\vp_1(W_1)\cup\cdots\cup\vp_N(W_N)$ is a neighbourhood of $G$ in $\GG_\emptyset^P$.
\end{enumerate}
\end{prop}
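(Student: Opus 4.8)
The plan is to build the charts $\vp_k$ by "choosing lifts consistently" along a gradient segment, and then to show these charts cover a neighbourhood of $G$. First I would treat the generic case $\Lm_1=\emptyset$ (so $N=1$), where $G$ is not tangent to any $R_j$. For each $j\in\Lm_0$ the intersection $K\cap R_j$ meets the interior of $R_j$, so by Proposition \ref{prop:goodintersec}(1) the set $\ga^{-1}(R_j)$ is a nondegenerate closed interval; using the deduction just before Lemma \ref{lemm:monodecreasing} (that the image of an $\ep$-perturbed gradient segment is an embedded simply connected curve) together with Proposition \ref{prop:goodintersec}, one can take for each $j\in\Lm_0$ a connected simply connected neighbourhood $U_j\subseteq R_j$ of $K\cap R_j$ and a lift $\sig_j:U_j\to R_j^\#$. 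Now given $(t,p)$ close to $\al_\emptyset^P(G)$, the gradient segment $\ga_{(t,p)}$ obtained by flowing from $p$ for "time" $t$ (using the perturbed equation of Definition \ref{defi:pperturbed}) stays $C^0$-close to $K$ by continuous dependence on initial conditions and on the perturbation zone, hence its intersection with each $R_j$ lands inside $U_j$ for $j\in\Lm_0$ and misses $R_j$ entirely for $j\notin\Lm_0\cup\Lm_1$; restricting $\sig_j$ to $\ga_{(t,p)}^{-1}(R_j)$ produces the lifts $K_j$, and I set $\vp_1(t,p)$ to be the resulting $P$-perturbed broken gradient line. Openness of the set $W_1$ of such $(t,p)$ and continuity of $\vp_1$ follow from the continuous dependence just invoked, and $\al_\emptyset^P\circ\vp_1=\mathrm{id}_{W_1}$ is immediate from the construction. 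That $\vp_1(W_1)$ is a neighbourhood of $G$ comes from the fact that any $G'$ near $G$ in the metric $d$ has $K'$ near $K$, so its lifts over the $U_j$ are forced (up to the $d^*=0$ identification when a piece lies in $\partial R_j$, which does not occur here since $G$ is not tangent).

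Next I would handle the general case $\Lm_1=\{j_1,\dots,j_s\}$. The point is that for $j\in\Lm_1$ the curve $K$ is tangent to $R_j$, i.e. $K\cap R_j$ is a single boundary point $x_j\in\partial R_j$ (Proposition \ref{prop:goodintersec}(2)); the fibre $\pi_j^{-1}(x_j)$ has $o_j$ points, and as Figure \ref{fig:branching} indicates, a nearby gradient segment either misses $R_j$ or meets it in a short interval near $x_j$ over which there are $o_j$ possible lifts (the covering $\pi_j$ restricted to a small enough neighbourhood of $x_j$ in $R_j$ being a trivial $o_j$-sheeted cover, since such a neighbourhood is simply connected). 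So for each choice of sheet over each $j\in\Lm_1$ — there are $N=o_{j_1}\cdots o_{j_s}$ such choices — I would repeat the construction of the previous paragraph, using for $j\in\Lm_0$ the fixed lifts $\sig_j$ as before and for $j\in\Lm_1$ the chosen sheet, obtaining $W_k\subseteq\RR\times(M\setminus F)$ and $\vp_k:W_k\to\GG_\emptyset^P$ with $\al_\emptyset^P\circ\vp_k=\mathrm{id}_{W_k}$. The verification that $\vp_1(W_1)\cup\cdots\cup\vp_N(W_N)$ is a neighbourhood of $G$ amounts to: given $G'=(K';K_1',\dots;b')$ close to $G$, the lifts $K_j'$ over the interiors $U_j$ ($j\in\Lm_0$) are uniquely determined (matching one of the $\sig_j$), the $K_j'$ for $j\notin\Lm_0\cup\Lm_1$ are empty, and each $K_j'$ for $j\in\Lm_1$ — if nonempty — is a small arc (or single boundary point, identified to nothing new under $d^*=0$) lying over exactly one of the $o_j$ sheets; thus $G'$ lies in the image of the corresponding $\vp_k$.

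The main obstacle I expect is the bookkeeping at the tangency points in $\Lm_1$: one must be careful that the metric $d$ on $\GG^P$ (built from the pseudodistance $d^*$, which identifies configurations differing only by a lift lying in $\partial R_j^\#$) makes the "$o_j$ sheets merging into one" picture precise, so that the $\vp_k$'s genuinely patch together continuously across the tangency locus rather than being defined on disjoint pieces. Concretely, the weighting factor $d(K_j)+d(K_j')$ in $d^*$ is exactly what kills the discrepancy between the sheets as a segment becomes tangent, and checking continuity of each $\vp_k$ up to and across that locus — i.e. that a sequence $(t_n,p_n)\to(t,p)$ with $\ga_{(t,p)}$ tangent to $R_j$ has $\vp_k(t_n,p_n)\to\vp_k(t,p)$ in $(\GG_\emptyset^P,d)$ — is the one genuinely delicate estimate; it reduces to the fact, from Lemma \ref{lemm:hull} and Proposition \ref{prop:goodintersec}, that $\mathrm{diam}\,\bigl(\ga_{(t_n,p_n)}\cap R_j\bigr)\to 0$, so the Hausdorff distance between the (possibly different-sheet) lifts is controlled by $d(K_j)+d(K_j')\to 0$. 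Everything else is a routine application of smooth dependence of flows on parameters together with the structural results of Section \ref{sec:gradseg}.
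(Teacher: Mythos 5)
Your proposal follows essentially the same route as the paper's proof: fix the lift $\sig_j$ through the given $K_j$ for each $j\in\Lm_0$, make one of the $o_j$ sheet choices at each tangency $j\in\Lm_1$ (giving $N$ single-valued local perturbed flows near $K$), define $\vp_k(t,p)$ by flowing from $p$ for time $t$ and lifting via the chosen sections, and then use the connectedness statement of Proposition \ref{prop:goodintersec} to see that any $G'$ close to $G$ is the image of an integral curve of one of these flows, so the images cover a neighbourhood. The only small precision to add is that for $j\in\Lm_0$ the section $\sig_j$ must be the lift extending the given $K_j$ (as in the paper, where $U_j^\#$ is taken as a neighbourhood of $K_j$ in $R_j^\#$), so that $G$ itself lies in the image; otherwise the argument is the same.
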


\begin{proof}
For each $j\in\Lm_0$, let $U_j^\#\subseteq R_j^\#$ be a small open neighbourhood of $K_j$ such that $\pi_j:U_j^\#\rightarrow U_j:=\pi_j(U_j^\#)$ is a diffeomorphism of open manifolds with boundary and let $\sig_j$ be the inverse.

Similarly, for each $j\in\Lm_1$, choose $q_j\in\pi^{-1}(K\cap R_j)$ --there are $o_j$ choices-- and let $U_j^\#\subseteq R_j^\#$ be a small open neighbourhood of $q_j$ such that the projection $\pi_j:U_j^\#\rightarrow U_j:=\pi_j(U_j^\#)$ is a diffeomorphism of open manifolds with boundary. Let $\sig_j$ be the inverse of this diffeomorphism.

Now let $B\subseteq M$ be an open neighbourhood of $K$ such that $B\cap R_j\subseteq U_j$ for every $j\in \Lm_0\cup \Lm_1$. On $B$ we define the vector field
\[
V=\left(J+\sum_{j\in \Lm_0\cup \Lm_1}\tilde{\pi}_j(E_j)_{\sig_j}\right)X
\]
where, remember, $P=(E_1,\ldots,E_\ell)$. Then all the integral curves of $-V$ are $\ep$-perturbed gradient segments.

Assume $G=(K;K_1,\ldots,K_\ell; b)$ is descending. Then it is parametrised by an $\ep$-perturbed gradient segment $(\ga:[t_0,t_1]\rightarrow M,\ga_1^\#,\ldots,\ga_\ell^\#)$ with $\ga(t_0)=b$. Note that $\ga$ is an integral curve of $-V$.

Now, for every $(t,p)$ in a sufficiently small neighbourhood $W$ of $(t_1-t_0,p)$ in $\RR\times(M\setminus F)$ there is a unique integral curve $\ga_{t,p}:[0,t]\rightarrow M$ of $-V$ such that $\ga_{t,p}(0)=p$. Define $K_{t,p}=\im\ga_{t,p}$. Taking $W$ small enough we can assume that $K_{t,p}\cap R_j$ is non-empty if and only if $j\in \Lm_0\cup \Lm_1$. In this case let $K_{t,p,j}=\sig_j(K_{t,p}\cap R_j)$. Otherwise let $K_{t,p,j}=\emptyset$. Define
\[
\vp(t,p)=(K_{t,p};K_{t,p,1},\ldots,K_{t,p,\ell};p),
\]
which is an element of $\GG_{\emptyset\downarrow}^P$ because it is parametrised by the $P$-perturbed gradient segment $(\ga_{t,p},\ga_{t,p,1}^\#,\ldots,\ga_{t,p,\ell}^\#)$ where, if $j\in \Lm_0\cup \Lm_1$ --so $K_{t,p}\cap R_j$ is non-empty--, $\ga_{t,p,j}^\#:\ga_{t,p}^{-1}(R_j)\rightarrow R_j^\#$ equals $\sig_j\circ\ga_{t,p}$.

Since $\al_\emptyset^P(K_{t,p};\ldots;p)=(t,p)$ we have that $\al_\emptyset^P\circ\vp$ is the identity on $W$.

If $\Lm_1=\emptyset$ this already proves the first claim (for descending elements) because there are no choices involved. Otherwise for each $j\in\Lm_1=\{j_1,\ldots,j_s\}$ we can make $o_j$ choices for a total of $o_{j_1}\cdots o_{j_s}=N$ choices. Namely, the set $Q_i:=\pi^{-1}(K\cap R_{j_i})$ has $o_{j_i}$ points and for each of the $N$ elements of the form $q:=(q_1,\ldots,q_s)\in Q_1\times\cdots\times Q_s=:Q$ we can construct a neighbourhood $B_q$, a vector field $V_q$, an open subset $W_q$ and a map $\vp_q$.

Analogous arguments prove the first claim for the cases in which $G$ is either ascending or point-like.

In order to prove the second claim observe that thanks to Proposition \ref{prop:goodintersec}, if $G'=(K';K_1',\ldots,K_\ell';b')\in\GG_\emptyset^P$, each intersection $K'\cap R_j$ is connected, so if $G'$ lies near $G$, the compact $K'$ must be the image of an integral curve of $V_q$ for some $q\in Q$.
\end{proof}

From this proposition we deduce that $\GG_\emptyset^P$ is a $2m+1$-dimensional branched manifold (this concept was explained in Section \ref{sec:branched}) with $G$ being a branching point if and only if $\Lm_1\neq\emptyset$. If $G$ is not a branching point, i.e. it lies on a branch, we make the following definition: 

\begin{defi}\label{defi:weight}
The {\it weight} of $G\in\GG_\emptyset^P\setminus(\GG_\emptyset^P)^\prec$ is the rational number $\prod_{j\in\Lm_0}\frac{1}{o_j}$.
\end{defi}

\begin{rema}
The intuitive idea behind this definition is that when a branch reaches a branching point and it splits into $o_j$ branches its weight is equally distributed among the new branches. This notion will be used in Chapter \ref{ch:biinvariant} to define weightings on certain train tracks (see Definition \ref{defi:weighting}).
\end{rema}

We move forward on to studying the structure of $\GG_{\{i\}}^P$. The techniques we use are very similar to those in Proposition \ref{prop:gzstruc}. Compare also with Lemma \ref{lemm:codimonestratum}.

\begin{prop}\label{prop:gostruc} There is homeomorphism $\GG_{\{i\}-}^P\simeq C_i$. Otherwise, given $G=(K;\cdots;b)\in\GG_{\{i\}}^P$ not point-like define $\Lm_0,\Lm_1$ and $N$ as in Proposition \ref{prop:gzstruc}. Then there exist $2m$-dimensional submanifolds $W_1,\ldots,W_N\subseteq\MTM$ and continuous maps $\vp_k:W_k\rightarrow\GG_{\{i\}\downarrow}^P$ (resp. $\GG_{\{i\}\uparrow}^P$) such that
\begin{enumerate}
\item If $b_{(p_0,p_1)}, e_{(p_0,p_1)}$ are the beginning and the end of $\vp_k(p_0,p_1)$ then
\[
\begin{array}{ccc}
W_k &\longrightarrow&\MTM\\ 
(p_0,p_1)&\longmapsto & (b_{(p_0,p_1)},e_{(p_0,p_1)})
\end{array}
\] 
is the identity on $W_k$.
\item $\vp_1(W_1)\cup\cdots\cup\vp_N(W_N)$ is a neighbourhood of $G$ in $\GG_{\{i\}\downarrow}^P$ (resp. $\GG_{\{i\}\uparrow}^P$).
\end{enumerate}
\end{prop}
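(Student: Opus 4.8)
The plan is to follow closely the strategy of Proposition \ref{prop:gzstruc}, since $\GG_{\{i\}}^P$ differs from $\GG_\emptyset^P$ only in that the perturbed gradient line now has exactly one breaking point, sitting on the critical component $C_i$. First I would dispose of the point-like case: if $G=(\{b\};\emptyset,\dots,\emptyset;b)$ with $b\in C_i$, then (as already observed, because a fixed point cannot lie in any solid torus $R_j$, whose closures are contained in $M\setminus F$) the assignment $G\mapsto b$ is a homeomorphism $\GG_{\{i\}-}^P\simeq C_i$; this is the same argument used right after the definition of $\GG_{\{i\}-}^P$.

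For the non-point-like case, fix a descending $G=(K;K_1,\dots,K_\ell;b)$ with $K\cap F=\{c\}\subseteq C_i$. By Definition \ref{defi:pbgd}, $K$ is the union of two $P$-perturbed gradient segments glued at $c$: a ``top'' segment $\ga_0:[t_0,+\infty)\to M$ with $\ga_0(t_0)=b$ and $\lim_{t\to+\infty}\ga_0(t)=c$, and a ``bottom'' segment $\ga_1:(-\infty,t_1]\to M$ with $\ga_1(t_1)=e$ and $\lim_{t\to-\infty}\ga_1(t)=c$, together with the corresponding lifts. Exactly as in the proof of Proposition \ref{prop:gzstruc} I would introduce $\Lm_0$ (the $j$ with $K$ meeting the interior of $R_j$) and $\Lm_1$ (the $j$ with $K$ tangent to $R_j$), choose for each $j\in\Lm_0\cup\Lm_1$ a local section $\sig_j$ of $\pi_j$ over a neighbourhood $U_j$ of $K_j$ (making $o_j$ choices at each $j\in\Lm_1$), and on a neighbourhood $B$ of $K$ in $M$ build the vector field $V=\bigl(J+\sum_{j\in\Lm_0\cup\Lm_1}\tilde\pi_j(E_j)_{\sig_j}\bigr)X$, whose integral curves are $\ep$-perturbed gradient segments. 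The new ingredient relative to $\GG_\emptyset^P$ is that the (un)stable manifolds $S_i^V$, $U_i^V$ of $V$ at $C_i$ re-enter the picture: a descending $P$-perturbed broken gradient line near $G$ is determined by a pair $(p_0,p_1)\in S_i^V\times_{C_i}U_i^V$ with $p_0\ne p_1$, namely one follows $-V$ forward from $p_0$ until it reaches $C_i$ and backward from $p_1$ until it reaches $C_i$, the two limits agreeing because $s_i^V(p_0)=u_i^V(p_1)$, and then one concatenates, carrying along the lifts $\sig_j\circ(\text{curve})$ over each $R_j$. Since $s_i^V,u_i^V$ are submersions, $S_i^V\times_{C_i}U_i^V$ is an $m$-dimensional manifold — wait, I should double-check: $\dim S_i^V=2m-k_i$, $\dim U_i^V=k_i+c_i$, so the fibred product has dimension $2m-k_i+k_i+c_i-c_i=2m$, so $W_k$ is an open subset of $(S_i^V\times_{C_i}U_i^V)\setminus\De_{C_i\times C_i}$ of dimension $2m$, embedded in $\MTM$ via $(p_0,p_1)\mapsto$ (beginning, end) exactly because $p_0$ is the beginning and $p_1$ is the end of the resulting line. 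Taking $B$ small enough that $K_{(p_0,p_1)}\cap R_j\ne\emptyset$ iff $j\in\Lm_0\cup\Lm_1$, and invoking Proposition \ref{prop:goodintersec} so that each such intersection is connected, the maps $\vp_k$ are well defined and continuous, $(\text{beginning},\text{end})\circ\vp_k=\mathrm{id}$ on $W_k$, and $\bigcup_k\vp_k(W_k)$ is a neighbourhood of $G$; the ascending case follows by the flipping homeomorphism of Remark \ref{rema:genflip}, and the point-like reduction has already been handled.

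The main obstacle I anticipate is purely bookkeeping rather than conceptual: one must check carefully that near a line with a breaking point the local picture really is governed by the single fibred product $S_i^V\times_{C_i}U_i^V$ and not by something larger — i.e. that a nearby perturbed broken gradient line cannot acquire an extra breaking point or lose the one it has (this is where one uses that $C_i$ is isolated among the critical values and that $G$ is not point-like, so $b\ne e$ and the line genuinely passes through $C_i$), and that the lift data on the two segments $\ga_0,\ga_1$ patch to give a well-defined element of $\GG_{\{i\}}^P$ with the pseudodistance identifications of Section \ref{sec:modpbgd} respected. Granting these points, which are direct analogues of arguments already made for $\GG_\emptyset^P$, the proof is complete. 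In particular it follows, just as for $\GG_\emptyset^P$, that $\GG_{\{i\}\downarrow}^P$ and $\GG_{\{i\}\uparrow}^P$ are $2m$-dimensional branched manifolds whose branching points are exactly the lines tangent to some $R_j$.
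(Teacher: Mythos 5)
Your proposal follows essentially the same route as the paper's proof: handle $\GG_{\{i\}-}^P\simeq C_i$ as already established, then for each choice $q$ of lifts at the tangent tori build the local vector field $V_q$ on a neighbourhood of $K$, parametrise nearby descending lines by pairs in a small neighbourhood of $(b,e)$ inside the fibred product $S_q\times_{C_i}U_q$ of the (un)stable manifolds (which has the correct dimension $2m$), attach the lifts via the sections $\sig_j$, invoke Proposition \ref{prop:goodintersec} for the neighbourhood claim, and dispose of the ascending case by Remark \ref{rema:genflip}. The only detail the paper spells out that you leave implicit is the explicit reparametrising homeomorphism $h_{K_{(p_0,p_1)}}:[0,1]\rightarrow K_{(p_0,p_1)}$ certifying that the concatenated object satisfies Definition \ref{defi:pbgd}, but this is exactly the bookkeeping you flag, so the argument is correct.
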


\begin{proof}
We already proved the existence of a homeomorphism $\GG_{\{i\}-}^P\simeq C_i$. 

Suppose now that $G=(K;\ldots;b)$ is descending. Then $G$ is parametrised by 
\[
\left\{
\begin{array}{lcl}
(\ga_0:[t_0,+\infty)\rightarrow M,\ga_{0,1}^\#,\ldots,\ga_{0,\ell}^\#)& \mbox{with} & \ga_0(t_0)=b\\
(\ga_1:(-\infty,t_1]\rightarrow M,\ga_{1,1}^\#,\ldots,\ga_{1,\ell}^\#)& \mbox{with} & \ga_1(t_1)=e\\
\end{array}
\right.
\]
such that $\lim_{t\rightarrow+\infty}\ga_0(t)=\lim_{t\rightarrow-\infty}\ga_1(t)\in C_i$.

Define $Q$, $B_q$ and $V_q$ as in Proposition \ref{prop:gzstruc}. Then $\ga_0,\ga_1$ are gradient lines of the vector field $-V_q$. Let $\xi_q$ be the flow of $-V_q$, which is defined on $B_q$, and take the corresponding (un)stable manifolds of $C_i$:
\[
S_q=\{p\in B_q:\ \lim_{t\rightarrow +\infty}(\xi_q)_t(p)\in C_i\},
\]
\[
U_q=\{p\in B_q:\ \lim_{t\rightarrow -\infty}(\xi_q)_t(p)\in C_i\}.
\]

For every $(p_0,p_1)$ in a small enough neighbourhood $W_q$ of $(h_K(0),h_K(1))=(b,e)$ in $S_q\times_{C_i}U_q$ there exist unique integral curves $\ga_{p_0}:[0,+\infty)\rightarrow M$, $\ga_{p_1}:(-\infty,0]\rightarrow M$ of $-V_q$ such that $\ga_{p_0}(0)=p_0$, $\ga_{p_1}(0)=p_1$. They satisfy
\[
c_i:=\lim_{t\rightarrow+\infty}\ga_{p_0}(t)=\lim_{t\rightarrow-\infty}\ga_{p_1}(t)\in C_i.
\]
Define 
\[
K_{(p_0,p_1)}=\im\ga_{p_0}\sqcup\{c_i\}\sqcup\im\ga_{p_1}.
\]
Taking $W_q$ small enough we can assume that $K_{(p_0,p_1)}\cap R_j$ is non-empty if and only if $j\in \Lm_0\cup \Lm_1$. In this case let $K_{(p_0,p_1),j}=\sig_j(K_{(p_0,p_1)}\cap R_j)$. Otherwise let $K_{(p_0,p_1),j}=\emptyset$. Define
\[
\vp_q(p_0,p_1)=(K_{(p_0,p_1)};K_{(p_0,p_1),1},\ldots,K_{(p_0,p_1),\ell};p).
\]
Using the homeomorphism 
\[
\begin{array}{rccc}
h_{K_{(p_0,p_1)}}:&[0,1]&\longrightarrow & K_{(p_0,p_1)}\\
 & t &\longmapsto &\left\{
\begin{array}{lcl}
\ga_{p_0}(\tan(\pi t)) &\mbox{if} & 0\leq t<1/2\\
c_i &\mbox{if} &t=1/2\\
\ga_{p_1}(-\tan(\pi(1-t))) &\mbox{if} & 1/2 <t\leq 1
\end{array}
\right.
\end{array}
\]
and the same arguments than in Proposition \ref{prop:gzstruc} we can see that $\vp_q(p_0,p_1)$ is an element of $\GG_{\{i\}\downarrow}^P$. The definition of $h_{K_{(p_0,p_1)}}$ already shows that the first claim of the proposition is satisfied. The second part is again proved by the same arguments as in Proposition \ref{prop:gzstruc}.

Reversing the proof or by means of Remark \ref{rema:genflip} we get the result for ascending $G$.
\end{proof}

Finally we study the case in which $\#I>1$. The result we want to prove is the analogue of Lemma \ref{lemm:codimtwostrata} and again we will face some transversality issues that will need to be addressed. To do this we need to take auxiliary regular levels $Y_{a_1}^-,\cdots,Y_{a_{\nu-1}}^-$ and $Y_{a_2}^+,\ldots,Y_{a_\nu}^+$ of $\mu$ such that, for $i=2,\ldots,\nu$ (cf. Figure \ref{fig:auxlevelspertzone})
\[
a_{i-1}>\mu(Y_{a_{i-1}})>\sup_{Z_{a_i}'}\mu>\inf_{Z_{a_i}'}\mu>\mu(Y_{a_i}^+)>a_i.
\]
Assuming some transversality conditions on $P$ that will be discussed later, the proposition is

\begin{prop}Let $I=\{i_1,\ldots,i_r\}$ with $r>1$ and $\mu(C_{i_1})>\cdots>\mu(C_{i_r})$. Given $G=(K;\cdots;b)\in\GG_I^P$ define $\Lm_0,\Lm_1$ and $N$ as in Proposition \ref{prop:gzstruc}. Then there exist $(2m-r+1)$-dimensional submanifolds $W_1,\ldots,W_N\subseteq M\times\stackrel{r+1}{\cdots}\times M$ and continuous maps $\vp_k:W_k\rightarrow\GG_{I\downarrow}^P$ (resp. $\GG_{I\uparrow}^P$) such that
\begin{enumerate}
\item If $\vp_k(p_0,\cdots,p_r)=(K_{(p_0,\cdots,p_r)};\ldots)$ and $K_{(p_0,\ldots,p_r)}\cap Y_{a_i}^+=:\{y_i\}$, then
\[
\begin{array}{ccc}
W_k &\longrightarrow& M\times\stackrel{r+1}{\cdots}\times M\\ 
(p_0,\ldots,p_r)&\longmapsto & (h_{K_{(p,\ldots,p_r)}}(0),y_2,\ldots,y_r,h_{K_{(p_0,\ldots,p_r)}}(1))
\end{array}
\]
is the identity on $W_k$.
\item $\vp_1(W_1)\cup\cdots\cup\vp_N(W_N)$ is a neighbourhood of $G$ in $\GG_{I\downarrow}^P$ (resp. $\GG_{I\uparrow}^P$).
\end{enumerate}
\end{prop}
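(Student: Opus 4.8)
The plan is to combine the local-chart constructions of Propositions \ref{prop:gzstruc} and \ref{prop:gostruc} with the fibred-product description of broken gradient lines given in Lemma \ref{lemm:codimtwostrata}, working with a locally defined vector field $V_q$ attached to each choice of lift rather than with $JX$ itself. Fix $G=(K;K_1,\ldots,K_\ell;b)\in\GG_I^P$, say descending, with breaking points $K\cap F=\{c_1,\ldots,c_r\}$ lying on $C_{i_1},\ldots,C_{i_r}$, and let $(\ga_k,\ga_{k,1}^\#,\ldots,\ga_{k,\ell}^\#)$, $k=0,\ldots,r$, be the parametrising $P$-perturbed gradient segments of Definition \ref{defi:pbgd}. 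Exactly as in Proposition \ref{prop:gzstruc}, for every tuple $q=(q_1,\ldots,q_s)\in Q:=Q_1\times\cdots\times Q_s$ of choices of lifts above the tori $R_j$ ($j\in\Lm_1$) to which $K$ is tangent, I choose an open neighbourhood $B_q$ of $K$ in $M$, local sections $\sig_j$ of $\pi_j$ over $B_q\cap R_j$ for $j\in\Lm_0\cup\Lm_1$, and form on $B_q$ the vector field
\[
V_q=\left(J+\sum_{j\in\Lm_0\cup\Lm_1}\tilde{\pi}_j(E_j)_{\sig_j}\right)X ,
\]
whose integral curves are $\ep$-perturbed gradient segments and which has $\ga_0,\ldots,\ga_r$ among its integral lines. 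Write $\xi_q$ for the local flow of $-V_q$ and $S_q^{(k)},U_q^{(k)}$ for the stable and unstable manifolds of $C_{i_k}$ with respect to $\xi_q$, controlled along the relevant segments of $K$.

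Using the auxiliary regular levels $Y^-_{a_{i_1}},\ldots$ and $Y^+_{\mu(C_{i_2})},\ldots,Y^+_{\mu(C_{i_r})}$ chosen away from the perturbation zone as displayed just before the statement, I form for each $q$ the fibred product
\[
\WW_I^{\mu,V_q}:=S_q^{(1)}\times_{C_{i_1}}(U_q^{(1)}\cap S_q^{(2)}\cap Y^+_{\mu(C_{i_2})})\times_{C_{i_2}}\cdots\times_{C_{i_{r-1}}}(U_q^{(r-1)}\cap S_q^{(r)}\cap Y^+_{\mu(C_{i_r})})\times_{C_{i_r}}U_q^{(r)} ,
\]
exactly as in Lemma \ref{lemm:codimtwostrata} applied to the Morse--Bott pair $(\mu,V_q)$. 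Here I invoke the transversality hypothesis on $P$ referred to in the statement: it is precisely the requirement that each such $(\mu,V_q)$ be $I$-regular in the sense of Definition \ref{defi:regularity}. Granting it, Lemma \ref{lemm:homeo} shows that $\WW_I^{\mu,V_q}$ is a smooth manifold of dimension $2m-r+1$, homeomorphic to $(H_I^{\mu,V_q})^{-1}({\bf\De}_I)$, and that the broken lines of $V_q$ with breaking pattern $I$ near $K$ are identified, via the tuple assignment $G'\mapsto(b',y_2',\ldots,y_r',e')$ (beginning, points on the auxiliary levels, end), with an open subset of $\WW_I^{\mu,V_q}$. I then let $W_q\subseteq M\times\stackrel{r+1}{\cdots}\times M$ be the image of a small enough neighbourhood of $(b,y_2,\ldots,y_r,e)$ under this assignment; relabelling $Q=\{q^{(1)},\ldots,q^{(N)}\}$ with $N=o_{j_1}\cdots o_{j_s}$ gives the desired submanifolds $W_1,\ldots,W_N$.

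The chart $\vp_q:W_q\to\GG_{I\downarrow}^P$ is defined by reversing the homeomorphism of Lemma \ref{lemm:codimtwostrata}: for $(p_0,\ldots,p_r)\in W_q$, follow $-V_q$ forward from $p_0$ and forward/backward through each $p_k\in Y^+_{\mu(C_{i_{k+1}})}$, obtaining limit points $\ell_k\in C_{i_k}$ and the compact set
\[
K_{(p_0,\ldots,p_r)}=\vp^{V_q}_{p_0}(\RR_{\geq0})\sqcup\{\ell_1\}\sqcup\vp^{V_q}_{p_1}(\RR)\sqcup\cdots\sqcup\{\ell_r\}\sqcup\vp^{V_q}_{p_r}(\RR_{\leq0}) ;
\]
I set the lifts $K_{(p_0,\ldots,p_r),j}:=\sig_j(K_{(p_0,\ldots,p_r)}\cap R_j)$ for $j\in\Lm_0\cup\Lm_1$ and $\emptyset$ otherwise, and equip $K_{(p_0,\ldots,p_r)}$ with the homeomorphism $h$ obtained by the arctangent reparametrisation used in Proposition \ref{prop:gostruc}, now dividing $[0,1]$ into $r+1$ subintervals. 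This produces an element $\vp_q(p_0,\ldots,p_r)=(K_{(p_0,\ldots,p_r)};K_{(p_0,\ldots,p_r),1},\ldots;p_0)$ of $\GG_{I\downarrow}^P$, and the construction of $h$ makes the composition with $(p_0,\ldots,p_r)\mapsto(h(0),y_2,\ldots,y_r,h(1))$ the identity on $W_q$, which is claim (1). For claim (2), if $G'$ is close to $G$ in $\GG_{I\downarrow}^P$ then by Proposition \ref{prop:goodintersec} each intersection of its compact with a torus $R_j$ is connected, so each piece of $K'$ between consecutive breaking points is an integral curve of $V_q$ for some $q\in Q$; hence $G'$ lies in some $\vp_q(W_q)$ and the $N$ charts cover a neighbourhood of $G$. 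The ascending case follows from Remark \ref{rema:genflip}, and point-like elements do not occur since $\#I>1$.

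The main obstacle is the transversality input. One must show that $I$-regularity of the Morse--Bott pairs $(\mu,V_q)$ — for every $I$, every $G$, and every choice of lift $q$ simultaneously — can be arranged for a generic $P\in\PP$. This is the multivalued analogue of Theorem \ref{theo:genericity} (and of the semi-free Theorem \ref{theo:generictranssemifree}), complicated by the facts that each $V_q$ is only defined on a neighbourhood $B_q$, so one works with local flows and must control the relevant (un)stable manifolds along the segments of $K$ rather than globally, and that the regularity conditions for distinct lifts $q$ interact through the shared perturbation $P$. Establishing this genericity statement is exactly what the present proposition's reference to ``transversality conditions on $P$ that will be discussed later'' defers, and the proof above is to be read as conditional on it.
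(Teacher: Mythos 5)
Your proposal is correct and follows essentially the same route as the paper: for each choice of lifts $q\in Q$ you build the local vector field $V_q$ as in Proposition \ref{prop:gzstruc}, take the fibred product of its (un)stable manifolds cut with the auxiliary levels as in Lemma \ref{lemm:codimtwostrata}, define the charts $\vp_q$ by flowing and reparametrising with the arctangent trick of Proposition \ref{prop:gostruc}, and use Proposition \ref{prop:goodintersec} plus Remark \ref{rema:genflip} for the covering claim and the ascending case. Your deferral of the genericity of the transversality conditions (the multivalued analogue of Theorem \ref{theo:genericity}) matches the paper, which likewise states the proposition conditionally and handles that genericity in the discussion immediately afterwards.
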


\begin{proof}
The proof is almost identical to that of Proposition \ref{prop:gostruc}:

Suppose that $G$ is descending. Then it is parametrised by
\[
\left\{
\begin{array}{lcl}
(\ga_0:[t_0,+\infty)\rightarrow M,\ga_{0,1}^\#,\ldots,\ga_{0,\ell}^\#)& \mbox{with} & \ga_0(t_0)=h_K(0)=b\\
(\ga_k:(-\infty,+\infty)\rightarrow M,\ga_{k,1}^\#,\ldots,\ga_{k,\ell}^\#) &\mbox{for}& 1\leq k\leq r-1\\
(\ga_r:(-\infty,t_1]\rightarrow M,\ga_{r,1}^\#,\ldots,\ga_{r,\ell}^\#)& \mbox{with} & \ga_r(t_1)=h_K(1)\\
\end{array}
\right.
\]
such that $\lim_{t\rightarrow+\infty}\ga_{k-1}(t)=\lim_{t\rightarrow-\infty}\ga_k(t)\in C_{i_k}$, for $k=1,\ldots,r$.

Define $Q$, $B_q$ and $V_q$ as in Proposition \ref{prop:gzstruc} and $\xi_q$ as in Proposition \ref{prop:gostruc}. Consider the (un)stable manifolds
\[
S_{i_k,q}=\{p\in B_q:\ \lim_{t\rightarrow +\infty}(\xi^q)_t(p)\in C_{i_k}\},
\]
\[
U_{i_k,q}=\{p\in B_q:\ \lim_{t\rightarrow -\infty}(\xi^q)_t(p)\in C_{i_k}\}
\]
and let
\[
\begin{array}{l}
\WW_{I,q}:=S_{i_1,q}\times_{C_{i_1}}(U_{i_1,q}\cap S_{i_2,q}\cap Y_{f(C_{a_{i_2}})}^+)\times_{C_{i_2}}\cdots\\
\hspace{1.5cm}\cdots\times_{C_{i_{r-1}}}(U_{i_{r-1},q}\cap S_{i_r,q}\cap Y_{f(C_{i_r})}^+)\times_{C_{i_r}}U_{i_r,q},
\end{array}
\]
which is a $(2m-r+1)$-dimensional submanifold of $B_q\times\stackrel{r+1}{\cdots}\times B_q$ provided that some transversality conditions are satisfied.

For $k=2,\ldots,r$, let $y_k^K$ be the unique point in $K\cap Y_{a_{i_k}}^+$. Then for every $(p_0,\ldots,p_r)$ in a small enough neighbourhood $W_q$ of $(h_K(0),y_2^K,\ldots,y_r^K,h_K(1))$ in $\WW_{I,q}$ there are unique integral curves 
$\ga_{p_0}:\RR_{\geq 0}\rightarrow M$, $\ga_{p_r}:\RR_{\leq 0}\rightarrow M$, $\ga_{p_k}:\RR\rightarrow M$ ($0<k<r$) of $-V_q$ such that $\ga_{p_0}(0)=p_0$, $\ga_{p_r}(0)=p_r$ and $(\im\ga_{p_k})\cap Y_{i_{k+1}}^+=\{p_k\}$. For $k=1,\ldots,r$ they satisfy 
\[
\lim_{t\rightarrow+\infty}\ga_{p_{k-1}}(t)=\lim_{t\rightarrow-\infty}\ga_{p_k}(t)=:c_k\in C_{i_k}.\]
Define 
\[
K_{(p_0,\ldots,p_r)}=\im\ga_{p_0}\sqcup\{c_1\}\sqcup\cdots\sqcup\im\ga_{p_{r-1}}\sqcup\{c_r\}\sqcup\im\ga_{p_r}.
\]
Taking $W_q$ small enough we can assume that $K_{(p_0,\ldots,p_r)}\cap R_j$ is non-empty if and only if $j\in \Lm_0\cup \Lm_1$. In this case let $K_{(p_0,\ldots,p_r),j}=\sig_j(K_{(p_0,\ldots,p_r)}\cap R_j)$. Otherwise let $K_{(p_0,\ldots,p_r),j}=\emptyset$. Let
\[
\vp_q(p_0,\ldots,p_r)=(K_{(p_0,\ldots,p_r)};K_{(p_0,\ldots,p_r),1},\ldots,K_{(p_0,\ldots,p_r),\ell};p_0),
\]
and consider the homeomorphism 
\[
\begin{array}{rccc}
h_{K_{(p_0,\ldots,p_r)}}:&[0,1]&\longrightarrow & K_{(p_0,\ldots,p_r)}\\
 & t &\longmapsto &\left\{
\begin{array}{lcl}
\ga_{p_0}(\tan(\frac{\pi}{2}(r+1)t)) &\mbox{if} & 0\leq t<\frac{1}{r+1}\\
\ga_{p_k}(\tan(\pi(r+1)(t-\frac{2k+1}{2(r+1)})) &\mbox{if} & \frac{k}{r+1}<t<\frac{k+1}{r+1}\\
\ga_{p_r}(-\tan(\frac{\pi}{2}(r+1)(1-t))) &\mbox{if} &\frac{r}{r+1}<t\leq 1\\
c_k &\mbox{if} &t=\frac{k}{r+1}\\
\end{array}
\right.
\end{array}
\]
where $0<k<r$ in the definition at the intervals $(\frac{k}{r+1},\frac{k+1}{r+1})$ and $0<k\leq r$ in the definition at the points $\frac{k}{r+1}$.

The same arguments of propositions \ref{prop:gzstruc}, \ref{prop:gostruc} conclude the proof for descending $G$. The result also applies for an ascending $G$ by means of Remark \ref{rema:genflip}.
\end{proof}

The validity of this proposition is subject to the fact that the fibred products $\WW_{I,q}$ are indeed submanifolds. Note that $B_q$ does not depend on $P$, but the vector field $V_q$ does and hence so does $\WW_{I,q}$. For that reason let us write $V^P_q$ and let $\xi^P_q$ be the flow of $-V^P_q$. We define (un)stable manifolds
\[
S^P_{i_k,q}=\{p\in B_q:\ \lim_{t\rightarrow +\infty}(\xi^P_q)_t(p)\in C_{i_k}\},
\]
\[
U^P_{i_k,q}=\{p\in B_q:\ \lim_{t\rightarrow -\infty}(\xi^P_q)_t(p)\in C_{i_k}\}
\]
and the fibred product
\[
\begin{array}{l}
\WW^P_{I,q}:=S_{i_1,q}^P\times_{C_{i_1}}(U_{i_1,q}^P\cap S_{i_2,q}^P\cap Y_{f(C_{i_2})}^+)\times_{C_{i_2}}\cdots\\
\hspace{1.5cm}\cdots\times_{C_{i_{r-1}}}(U_{i_{r-1},q}^P\cap S_{i_r,q}^P\cap Y_{f(C_{i_r})}^+)\times_{C_{i_r}}U_{i_r,q}^P,
\end{array}
\]

Similarly to Definition \ref{defi:regularity}, let
\[
\begin{array}{l}
{\cal S}_{I,q}^P:=S_{i_1,q}^P\times(U_{i_1,q}^P\cap Y_{f(C_{i_1})f(C_{i_2}),q}^P)\times (S_{i_2,q}^P\cap Y_{f(C_{i_2}),q}^+)\times\cdots\\
\hspace{1.5cm}\cdots\times(U_{i_{r-1},q}^P\cap Y_{f(C_{i_{r-1}})f(C_{i_r}),q}^P)\times(S_{i_r,q}^P\cap Y_{f(C_{i_r}),q}^+)\times U_{i_r,q}^P,
\end{array}
\]
where $Y_{a_ia_j,q}^P=\{p\in B_q\cap Y_{a_i}^-:\ \exists t\ \mbox{s.t.}\ (\xi^P_q)_t(p)\in Y_{a_j}^+\}$ and also let
\[
{\cal T}_I:=(C_{i_1}\times C_{i_1})\times\cdots\times(C_{i_r}\times C_{i_r})\times(Y_{a_{i_2}}^+\times Y_{a_{i_2}}^+)\times\cdots\times(Y_{a_{i_r}}^+\times Y_{a_{i_r}}^+)
\]
and
\[
{\bf\De}_I:=\De_{C_{i_1}\times C_{i_1}}\times\cdots\times\De_{C_{i_r}\times C_{i_r}}\times\De_{Y_{a_{i_2}}^+\times Y_{a_{i_2}}^+}\times\cdots\times\De_{Y_{a_{i_r}}^+\times Y_{a_{i_r}}^+}.
\]
Define the map $H_{I,q}^P:{\cal S}_{I,q}^P\rightarrow{\cal T}_I$ in the same fashion as in Definition \ref{defi:regularity}. If this map is transverse to ${\bf\De}_I$, then the preimage $(H_{I,q}^P)^{-1}({\bf\De}_I)$ is a smooth manifold homeomorphic to $\WW_{I,q}^P$ (see Lemma \ref{lemm:homeo}).

We can apply arguments analogous to those of Section \ref{subsec:perturb} to prove that for a generic choice of $P\in\PP$ the transversality condition above is satisfied: let
\[
S_{i_k,q}=\{(p,P)\in V_q\times\PP:\ p\in S_{i_k,q}^P\},
\]
\[
U_{i_k,q}=\{(p,P)\in V_q\times\PP:\ p\in U_{i_k,q}^P\}
\]
and also let $Y_{a_ia_j,q}:=\{(p,P)\in (V_q\cap Y_i^-)\times\PP:\ p\in Y_{a_ia_j,q}^P\}.$
Finally we consider the fibre bundle over $\PP$ defined by
\[
\begin{array}{l}
{\cal S}_{I,q}:=S_{i_1,q}\times_\PP(U_{i_1,q}\cap Y_{f(C_{i_1})f(C_{i_2}),q})\times_\PP(S_{i_2,q}\cap (Y_{f(C_{i_2})}^+\times\PP))\times_\PP\cdots\\
\hspace{1.5cm}\cdots\times_\PP(U_{i_{r-1},q}\cap Y_{f(C_{i_{r-1}})f(C_{i_r}),q})\times_\PP(S_{i_r,q}\cap (Y_{f(C_{i_r})}^+\times\PP))\times_\PP U_{i_r,q}.
\end{array}
\]
Let $H_{I,q}:{\cal S}_{I,q}\rightarrow T_I$ be the maps whose restriction to the fibre over $P$ is the map $H_{I,q}^P$. Then Theorem \ref{theo:genericity} shows that $H_{I,q}$ is transverse to ${\bf \De}_I$ and, as a consequence, for a generic choice of $P\in\PP$, the map $H_{I,q}^P$ is also transverse to ${\bf \De}_I$.

\chapter{Biinvariant diagonal classes}\label{ch:biinvariant}


Following the notation used in previous chapters let $(M,\om)$ be a compact connected symplectic manifold of dimension $2m$, endowed with an effective Hamiltonian $S^1$-action with moment map $\mu:M\rightarrow\RR$. Let $J$ be an invariant almost complex structure compatible with $\om$ and $\rho_J:=\om(\cdot,J(\cdot))$ be the corresponding Riemmanian metric. Finally let $X$ be the vector field generated by the infinitesimal action and let $\xi_t:=\xi_t^{JX}$ denote the flow of $-JX$ at time $t$.

Consider the $\STS$ actions on $\MTM$ and $\CC P^1$ defined respectively by
\[
(\te,\ze)(p,q)=(\te\cdot p,\ze\cdot q),
\]
\[
(\te,\ze)[z:w]=[\te z:\ze w].
\]
where we are identifying $S^1$ with the complex numbers of norm $1$.

In the first part of this chapter is we construct an $H^*(B(\STS);\QQ)$-module homomorphism
\[
\lm:H_{\STS}^*(\CC P^1)\longrightarrow H_{\STS}^{*+2m-2}(\MTM).
\]
that we will call the {\it lambda-map}. We will deal first with the easier case in which the $S^1$-action on $M$ is semi-free (as in Chapter \ref{ch:standard}) to provide a model for the general case in which the action has non-trivial finite stabilisers (as in Chapter \ref{ch:multivalued}). For semi-free actions the lambda-map can be defined with integer coefficients while in the general case it will be defined with rational coefficients. 

Later we introduce some objects called {\it global biinvariant diagonal classes}, which allow to recover Kirwan surjectivity \cite{Kir} in our context. The lambda-map will be used to construct such objects: $\lm(1)$ turns out to be a biinvariant diagonal class. By further studying the lambda-map we can get some extra results, notably we can describe a global biinvariant diagonal class of a product manifold in terms of the lambda-maps of its components. Finally, we address the question of uniqueness of global biinvariant diagonal classes; we will show that they are not unique in general by means of very explicit computations.

\section{The lambda-map for semi-free actions}\label{sec:lambdasimple}

In this section we assume that the action of $S^1$ on $M$ is semi-free, and so we can apply the results of Chapter \ref{ch:standard}. In particular there exists a space $\PP$ of $S^1$-invariant perturbations of $J$ and a residual subset $R\subseteq\PP$ such that for all $E\in R$, the Morse-Bott pair $(\mu,(J+E)X)$ is regular (Theorem \ref{theo:generictranssemifree}). Then for $E\in R$ the moduli space of broken gradient lines $\MM^E:=\MM^{\mu,(J+E)X}$ satisfies the conditions of Theorem \ref{theo:stratification}. According to this theorem $\MM^E$ admits a stratification indexed by the critical components of $\mu$. In this section we denote by $\xi_t^E$ the flow of the vector field $-(J+E)X$ at time $t$.

Endow $D:=M\times\RR\times S^1$ with the $\STS$-action
\[
(\te,\ze)(p,t,\al)=(\te\cdot p,t,\ze\al\bar{\te}).
\]

\begin{lemm}\label{lemm:fgareequivsimple} 
The maps 
\[
\begin{array}{rccc}
f^E:&D&\longrightarrow&\MTM\\
 &(p,t,\al)&\longmapsto&(p,\al\cdot\xi^E_t(p))
\end{array},
\]
\[
\begin{array}{rccc}
g:&D&\longrightarrow&\CC P^1\\
 &(p,t,\al)&\longmapsto&[1:2^t\al]
\end{array}
\]
are $\STS$-equivariant.
\end{lemm}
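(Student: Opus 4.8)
The plan is to verify equivariance of each map directly from the definitions, checking that applying a group element $(\te,\ze)\in\STS$ before or after the map yields the same result. Recall the $\STS$-action on $D=M\times\RR\times S^1$ is $(\te,\ze)(p,t,\al)=(\te\cdot p,t,\ze\al\bar\te)$, on $\MTM$ it is $(\te,\ze)(p,q)=(\te\cdot p,\ze\cdot q)$, and on $\CC P^1$ it is $(\te,\ze)[z:w]=[\te z:\ze w]$.

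First I would handle $g$, which is purely formal. Compute $g((\te,\ze)(p,t,\al))=g(\te\cdot p,t,\ze\al\bar\te)=[1:2^t\ze\al\bar\te]$. On the other hand $(\te,\ze)\cdot g(p,t,\al)=(\te,\ze)[1:2^t\al]=[\te:\ze 2^t\al]=[1:2^t\ze\al\bar\te]$, where the last equality multiplies both homogeneous coordinates by $\bar\te$ (legitimate since $\bar\te\neq 0$). These agree, so $g$ is equivariant. No Morse theory is needed here.

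Next I would treat $f^E$. The first component is immediate: $f^E((\te,\ze)(p,t,\al))$ has first coordinate $\te\cdot p$, which matches the first coordinate of $(\te,\ze)\cdot f^E(p,t,\al)=(\te\cdot p,\ze\cdot(\al\cdot\xi^E_t(p)))$. The content is in the second coordinate: I must show $(\ze\al\bar\te)\cdot\xi^E_t(\te\cdot p)=\ze\cdot(\al\cdot\xi^E_t(p))$, i.e. that $\bar\te\cdot\xi^E_t(\te\cdot p)=\xi^E_t(p)$, or equivalently $\xi^E_t(\te\cdot p)=\te\cdot\xi^E_t(p)$. This is precisely the statement that the flow $\xi^E_t$ of $-(J+E)X$ is $S^1$-equivariant, which holds because $J$ and $E$ are $S^1$-invariant (so $(J+E)X$ is an invariant vector field, and the flow of an invariant vector field commutes with the action). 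Once this identity is in hand, the computation closes: $(\ze\al\bar\te)\cdot\xi^E_t(\te\cdot p)=(\ze\al\bar\te)\cdot(\te\cdot\xi^E_t(p))=\ze\al\cdot\xi^E_t(p)=\ze\cdot(\al\cdot\xi^E_t(p))$, using commutativity of $S^1$.

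The main obstacle, such as it is, is simply making explicit the equivariance of the perturbed gradient flow $\xi^E_t$; everything else is bookkeeping with homogeneous coordinates and the abelian group law. I would state the flow-equivariance as a short preliminary observation (citing the invariance of $J$ built into the construction of $\PP$ in Chapter \ref{ch:standard}, together with $E$ being an $S^1$-invariant section of $\mathcal D$), and then the two verifications above finish the lemma.
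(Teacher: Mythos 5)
Your proposal is correct and follows essentially the same route as the paper: a direct check in homogeneous coordinates for $g$, and for $f^E$ a reduction to the identity $\xi^E_t(\te\cdot p)=\te\cdot\xi^E_t(p)$, which the paper uses implicitly and which you rightly justify by the $S^1$-invariance of $J$ and of the perturbations $E\in\PP$ built in Chapter \ref{ch:standard}. Making that flow-equivariance explicit is a harmless (and arguably helpful) elaboration, not a different argument.
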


\begin{proof} First we have
\[
\begin{array}{rcl}
f^E((\te,\ze)(t,p,\al)) & = & f^E(\te\cdot p,t,\ze\al\bar{\te})\\
                        & = & (\te\cdot p,\ze\al\bar{\te}\cdot\xi^E_t(\te\cdot p))\\
                        & = & (\te\cdot p,\ze\al\bar{\te}\te\cdot\xi^E_t(p))\\
                        & = & (\te\cdot p,\ze\al\cdot\xi^E_t(p))\\
                        & = & (\te,\ze)(p,\al\cdot\xi^E_t(p))\\
                        & = & (\te,\ze)f(p,t,\al)
\end{array}.
\]
On the other hand,
\[
\begin{array}{rcl}
g((\te,\ze)(t,p,\al)) & = & g(\te\cdot p,t,\ze\al\bar{\te})\\
 & = & \left[1:2^t\ze\al\bar{\te}\right]\\
 & = & \left[\te:2^t\ze\al\right]\\
 & = & (\te,\ze)\left[1:2^t\al\right]\\
 & = & (\te,\ze)g(p,t,\al)\\
\end{array}.
\]
\end{proof}

The $\STS$-principal bundles $S^{2k+1}\times S^{2k+1}\rightarrow\CC P^k\times\CC P^k$ approximate the classifying bundle of $\STS$. Consider the spaces
\[D_k:=D\times_{\STS}(S^{2k+1}\times S^{2k+1}),\]
\[(\MTM)_k:=(\MTM)\times_{\STS}(S^{2k+1}\times S^{2k+1}),\]
\[\CC P^1_k:=\CC P^1\times_{\STS}(S^{2k+1}\times S^{2k+1}).\]
Since the maps $f^E,g$ are $\STS$-equivariant they induce maps 
\[
\begin{diagram}
\node{}\node{D_k}\arrow{sw,t}{g_k}\arrow{se,t}{f_k^E}\node{}\\
\node{\CC P^1_k}\node{}\node{(\MTM)_k}
\end{diagram}.
\]

What we want to do is to construct a map in equivariant cohomology by stabilisation of the maps $(f_k^E)^!g_k^*$. Since $D_k$ is not compact we cannot define $(f_k^E)^!$ directly and we will need to rely on pseudocycles. We shall first study how to define an $\STS$-action on broken gradient lines which have at least one breaking point:

Let $I=\{i_1,\ldots,i_r\}\neq\emptyset$ with $\mu(C_{i_1})>\cdots>\mu(C_{i_r})$. For $(K,b)\in\MM_I^E$ we define
\begin{itemize}
\item $K^+:=\{p\in K:\ \mu(p)\geq C_{i_1}\}$
\item $K^-:=\{p\in K:\ \mu(p)\leq C_{i_r}\}$
\item $K^0:=\{p\in K:\ \mu(C_{i_1})\geq\mu(p)\geq\mu(C_{i_r})\}$
\end{itemize}
so that $K^+$ consists of the points of $K$ above the top breaking point and $K^-$ consist of the points below the bottom breaking point, while $K^0$ consists of the points in between (see Figure \ref{fig:obroken}). If $(K,b)$ is descending let $K_b=K^+$ and $K_e=K^-$ and viceversa if $(K,b)$ is ascending. If $(K,b)$ is point-like, just take $K_b=K_e=\{b\}$.

Given $(\te,\ze)\in\STS$ consider the set $K_{(\te,\ze)}=\te K_b\cup K^0\cup \ze K_e$. Then $(K_{(\te,\ze)},\te\cdot b)$ is also an element of $\MM_I^E$ (compare with Definition \ref{defi:brokenline}):

If $(K,b)$ is point-like, then $K=\{b\}$ and $b$ is a fixed point of the action, so $(K_{(\te,\ze)},\te\cdot b)=(K,b)$. Otherwise we have that $K_{(\te,\ze)}\cap F=K\cap F$ and that the map
\[
\begin{array}{rccc}
h_{K_{(\te,\ze)}}: & [0,1] & \longrightarrow & K_{(\te,\ze)}\\
 &t&\longmapsto &\left\{
\begin{array}{ll}
\te\cdot h_K(t) & \mbox{if}\ h_K(t)\in K_b\\
         h_K(t) & \mbox{if}\ h_K(t)\in K^0\\
\ze\cdot h_K(t) & \mbox{if}\ h_K(t)\in K_e
\end{array}
\right.
\end{array}
\]
satisfies all the required conditions because the flow $\xi^E$ is $S^1$-equivariant. Finally we have that $\te\cdot b=\te\cdot h_K(0)=h_{K_{(\te,\ze)}}(0)$ if $(K,b)$ is descending and that $\te\cdot b=\te\cdot h_K(1)=h_{K_{(\te,\ze)}}(1)$ if $(K,b)$ is descending.

In this way we have defined an action of $\STS$ on $\MM_I^E$ which moreover preserves descending, ascending and point-like elements. Then the map
defined by
\[
\begin{array}{rccc}
\Ga_{I\downarrow}^E: &\MM_{I\downarrow}^E &\longrightarrow &\MTM\\
 & (K,b) &\longmapsto & (b,e)
\end{array}
\]
is $\STS$-equivariant and the same holds for the analogous maps $\Ga_{I\uparrow}^E,\Ga_{I-}^E$.

We use these maps to prove

\begin{lemm}\label{lemm:omlimf} For $E\in R$, the union of the maps $\Ga_{I\downarrow}^E$, $\Ga_{I\uparrow}^E$, $\Ga_{I-}^E$ is an omega-map of $f^E:D\rightarrow\MTM$ and as a consequence $f^E$ is a pseudocycle.
\end{lemm}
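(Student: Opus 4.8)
The plan is to identify the omega-limit set $\Om_{f^E}$ of $f^E$ and then show it is contained in the image of the union of the maps $\Ga_{I\downarrow}^E$, $\Ga_{I\uparrow}^E$, $\Ga_{I-}^E$ over all nonempty $I\subseteq\{1,\ldots,n\}$, after which the dimension count will make $f^E$ into a pseudocycle. First I would take a sequence $(p_j,t_j,\al_j)\in D=M\times\RR\times S^1$ with no convergent subsequence such that $f^E(p_j,t_j,\al_j)=(p_j,\al_j\cdot\xi^E_{t_j}(p_j))$ converges to some $(x,y)\in\MTM$. Since $M$ and $S^1$ are compact, by passing to a subsequence we may assume $p_j\to x$ and $\al_j\to\al$; hence the only way the original sequence fails to have a convergent subsequence is that $t_j$ has no convergent subsequence, so (up to subsequence) $t_j\to+\infty$ or $t_j\to-\infty$. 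By the symmetry $t\mapsto -t$ between ascending and descending configurations (Remark \ref{rema:flip}) I would treat $t_j\to+\infty$, the other case being identical.

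The heart of the argument is the standard compactness statement of Morse-Bott theory: following the gradient flow of $-(J+E)X$ for diverging time, the flow segments $\vp^{E}_{p_j}|_{[0,t_j]}$ converge (in the Hausdorff metric, after passing to a subsequence) to a broken gradient line $(K,x)$ with breaking points on some nonempty collection $\{C_i : i\in I\}$ of critical components; this is precisely the convergence of gradient lines to broken gradient lines invoked in the proof of Theorem \ref{theo:stratification} (citing \cite[Appendix A]{AuBr}, and using that $(\mu,(J+E)X)$ is regular for $E\in R$). In particular the endpoint $\xi^E_{t_j}(p_j)$ converges to the end $e$ of that broken line, so $\al_j\cdot\xi^E_{t_j}(p_j)\to\al\cdot e$. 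If $I$ is nonempty this means $(x,\al\cdot e)$ is the image under $\Ga_{I\downarrow}^E$ of the broken gradient line $(K_{(1,\al)},x)$ obtained by rotating the lower piece $K_e$ by $\al$ — which is a well-defined element of $\MM_{I\downarrow}^E$ by the $\STS$-action construction carried out just before the lemma. Thus $(x,y)=(x,\al\cdot e)$ lies in $\im\Ga_{I\downarrow}^E$. I should also dispose of the degenerate possibility that the limiting broken line is point-like, i.e. $K=\{x\}$ with $x\in F$: in that case $x$ is a fixed point, $\al\cdot x=x$, and $(x,x)=\Ga_{\{i\}-}^E(\{x\},x)$ for the component $C_i$ containing $x$; this is why the point-like maps must be included in the omega-map. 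So in all cases $\Om_{f^E}$ is contained in $\bigcup_{I\neq\emptyset}(\im\Ga_{I\downarrow}^E\cup\im\Ga_{I\uparrow}^E\cup\im\Ga_{I-}^E)$.

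Finally I would check the dimension condition required for $(\vp,W)$ to witness that $f^E$ is a pseudocycle, where $W$ is the disjoint union of the source spaces $\MM_{I\downarrow}^E$, $\MM_{I\uparrow}^E$, $\MM_{\{i\}-}^E$ and $\vp$ is the union of the $\Ga$ maps. By Theorem \ref{theo:stratification} these strata have dimensions $m-\#I+1$ (for the ascending/descending strata with $I\neq\emptyset$, so at most $m$, attained when $\#I=1$) and $c_i=\dim C_i\leq m$ (for the point-like strata). Meanwhile $D=M\times\RR\times S^1$ has dimension $2m+2$, so $d:=\dim D=2m+2$ and we need $\dim W\leq d-2=2m$. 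Here the key point — which needs a short justification — is that the relevant critical components have index at least $2$ and coindex at least $2$, so $\dim C_i=c_i\leq m$ and similarly $m-\#I+1\leq m<2m$; more to the point, one uses $\dim\MM^E_\emptyset=m+1$ and that every boundary stratum has codimension at least $1$ in $\MM^E$, whose total dimension is $m+1\le 2m$ since $m\ge 1$. Actually, the cleanest route: each stratum of $W$ has dimension at most $m\le 2m-1 = \dim D - 3 < \dim D - 2$ when $m\ge 1$, so $\dim W\le \dim D-2$ holds comfortably. I expect the main obstacle to be stating the Morse-Bott compactness/convergence step carefully enough — matching the convergent flow segments to the parametrization of the limiting broken gradient line and checking the endpoints converge as claimed — since everything else is bookkeeping with the already-established $\STS$-equivariance and the stratification dimensions; I would lean on \cite[Appendix A]{AuBr} and the setup of Theorem \ref{theo:stratification} rather than reproving it.
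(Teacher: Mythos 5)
Your identification of $\Om_{f^E}$ is the same as the paper's: limit pairs are the (rotated) extremal points of broken gradient lines, hence lie in the images of the maps $\Ga_{I\downarrow}^E$, $\Ga_{I\uparrow}^E$, $\Ga_{I-}^E$, the rotation by $\al$ being absorbed because the $\STS$-action defined just before the lemma turns the rotated configuration into another element of $\MM_I^E$; your compactness step (passing to subsequences in $M$ and $S^1$, forcing $t_j\to\pm\infty$, and invoking \cite[Appendix A]{AuBr} as in Theorem \ref{theo:stratification}) is in fact spelled out more carefully than in the paper, and is fine.

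The dimension count, however, contains a concrete error coming from mixing the conventions of Chapter \ref{ch:standard} (where $\dim M=m$) with those of this chapter (where $\dim M=2m$). Translating Theorem \ref{theo:stratification} to the present setting, the strata $\MM_{I\downarrow}^E,\MM_{I\uparrow}^E$ have dimension $2m-\#I+1$, which for $\#I=1$ equals $2m$, not ``at most $m$''; likewise the claim that the critical components have index and coindex at least $2$, so $c_i\le m$, is false in general (the extremal components of $\mu$ have index or coindex $0$ and can have dimension as large as $2m-2$). Consequently there is no slack: the correct bound is $\dim W\le 2m=\dim D-2$, exactly the threshold in the definition of a pseudocycle, so the statement ``holds comfortably'' with $\dim W\le\dim D-3$ is wrong, although the lemma itself survives. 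The paper's proof does precisely this: all the omega-strata have dimension at most $2m$ while $\dim D=2m+2$, and this also uses $E\in R$ so that these strata are genuine smooth manifolds of the stated dimensions. Replace your last paragraph by this tight count and the argument is complete.
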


\begin{proof}
Let $(p,q)\in\Om_{f^E}$. If $p\in C_i$ is a fixed point, then $f^E$ maps the whole $D$ to $(p,p)\in\De_{C_i\times C_i}=\im\Ga_{\{i\}-}^E$. Otherwise, $p$ and $q$ are the extremal points of a broken gradient line $(K,b)$, so either $h_K(0)=p$ and $h_K(1)=q$ --in which case $(p,q)\in\im\Ga_{I\downarrow}^E$ for some $I$-- or viceversa --in which case $(p,q)\in\im\Ga_{I\uparrow}^E$ for some $I$.

We have seen that $\Om_{f^E}$ is contained in the union of the images of the smooth (because $E\in R$) manifolds $\MM_{I\downarrow}^E$, $\MM_{I\uparrow}^E$, $\MM_{I-}^E$ all of which have at most dimension $2m$. Since $\dim D=2m+2$, we have that $f^E$ is a pseudocycle.
\end{proof}

\begin{lemm}\label{lemm:omlimg} The omega-limit-set of the map $g:D\rightarrow\CC P^1$ is contained in $\{[1:0],[0:1]\}$. As a consequence the union of the two maps $c^\pm:\{pt\}\rightarrow\CC P^1$ defined by $c^+(pt)=\{[1:0]\}$ and $c^-(pt)=\{[0:1]\}$ is an omega-map for $g$ and $g$ is a pseudocycle.
\end{lemm}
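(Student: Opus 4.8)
The plan is to analyze the map $g(p,t,\al)=[1:2^t\al]$ directly, since it does not depend on the point $p\in M$ at all and its behaviour is entirely controlled by the real parameter $t$. First I would recall that the omega-limit set $\Om_g$ consists of those points $[z:w]\in\CC P^1$ that arise as limits $\lim_{j\to\infty} g(q_j)$ for sequences $q_j=(p_j,t_j,\al_j)\in D$ that admit no convergent subsequence. Because $M$ and $S^1$ are compact, the only way a sequence $(p_j,t_j,\al_j)$ can fail to have a convergent subsequence is for $t_j$ to be unbounded; passing to a subsequence we may assume either $t_j\to+\infty$ or $t_j\to-\infty$ (and, again by compactness, that $\al_j\to\al$ for some $\al\in S^1$). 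Then in the first case $g(p_j,t_j,\al_j)=[1:2^{t_j}\al_j]=[2^{-t_j}:\al_j]\to[0:\al]=[0:1]$, and in the second case $g(p_j,t_j,\al_j)=[1:2^{t_j}\al_j]\to[1:0]$, using continuity of the projection $\CC^2\setminus\{0\}\to\CC P^1$ and the fact that $2^{\pm t_j}\to 0$. Hence every element of $\Om_g$ is either $[1:0]$ or $[0:1]$, i.e. $\Om_g\subseteq\{[1:0],[0:1]\}$.

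Next I would package this into the omega-map statement: taking $W$ to be the disjoint union of two points and $c^+\sqcup c^-:W\to\CC P^1$ the map sending one point to $[1:0]$ and the other to $[0:1]$, we get $\Om_g\subseteq\{[1:0],[0:1]\}=\im(c^+\sqcup c^-)$, so $c^+\sqcup c^-$ is indeed an omega-map of $g$. Finally, since $\dim W=0$ while $\dim D=\dim(M\times\RR\times S^1)=2m+2$, we have $\dim W=0\leq (2m+2)-2=\dim D-2$, so by definition $g$ is a $(2m+2)$-pseudocycle of $\CC P^1$.

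I do not expect any genuine obstacle here: the argument is essentially the observation that $2^t\to 0$ or $\infty$ and a compactness reduction on the $M$ and $S^1$ factors. The only point requiring a modicum of care is the passage to subsequences to guarantee that $\al_j$ converges (so that the limit in $\CC P^1$ exists on the nose rather than merely along a sub-subsequence) and to reduce the behaviour of $t_j$ to one of the two monotone cases; both are immediate from sequential compactness of $M\times S^1$ and the fact that an unbounded real sequence has a subsequence tending to $+\infty$ or to $-\infty$.
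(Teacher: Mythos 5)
Your proof is correct and is essentially the paper's argument in contrapositive form: the paper supposes the limit is neither $[1:0]$ nor $[0:1]$, deduces $t_j\to\log_2|w/z|$ and uses compactness of $M\times S^1$ to extract a convergent subsequence, contradicting the hypothesis, whereas you argue directly that absence of convergent subsequences forces $t_j$ unbounded and compute the limit along a subsequence with $t_j\to\pm\infty$. The dimension count $\dim W=0\leq\dim D-2$ closing the pseudocycle claim matches the paper as well.
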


\begin{proof} Let $(p_j,t_j,\al_j)\in D$ with $j\geq 1$ be a sequence without convergent subsequences but such that 
\[
g(p_j,t_j,\al_j)=[1:2^{t_j}\al_j]\mathop{\longrightarrow}\limits_{j}[z:w]\in\CC P^1.
\] 
Suppose that $[z:w]\neq[1:0],[0:1]$. Then $[z:w]=[1:w/z]$ with $w/z\neq 0$ and $2^{t_j}\al_j\rightarrow w/z$. Hence $2^{t_j}=|2^{t_j}\al_j|\rightarrow|w/z|$. This implies that $t_j\rightarrow\log_2|w/z|$. Since both $M$ and $S^1$ are compact, the sequences $p_j$ and $\al_j$ have subsequences converging to some $p\in M$ and $\al\in S^1$ respectively. Then $(p_j,t_j,\al_j)$ has a subsequence converging to $(p,\log_2|w/z|,\al)\in D$, a contradiction.
\end{proof}

\begin{coro}
For $E\in R$, the map $f^E\times g$ is a pseudocycle. Also the maps $f_k^E$, $g_k$ and $f_k^E\times g_k$ are pseudocycles for every $k$.
\end{coro}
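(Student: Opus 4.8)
The corollary states: for $E\in R$, the product map $f^E\times g$ is a pseudocycle of $\CC P^1\times(\MTM)$, and moreover the induced maps $f_k^E$, $g_k$ and $f_k^E\times g_k$ are pseudocycles of the corresponding Borel constructions.

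**The plan.** The first assertion follows immediately from Proposition \ref{prop:productpseudo} once one checks the dimension hypothesis. Indeed, Lemma \ref{lemm:omlimf} gives an omega-map of $f^E$ whose source is (a finite union of) smooth manifolds of dimension at most $2m$, and Lemma \ref{lemm:omlimg} gives an omega-map of $g$ with source of dimension $0$. Proposition \ref{prop:productpseudo} then applies with $D=M\times\RR\times S^1$ of dimension $2m+2$, provided $\dim W_f+\dim W_h\leq\dim D-2$, i.e. $2m+0\leq 2m$, which holds. Hence $f^E\times g$ is a pseudocycle with omega-map obtained by taking the products of the omega-maps of $f^E$ and of $g$ (here one uses Remark \ref{rema:generalisedpseudo} to allow the disconnected, uneven-dimensional source $W$). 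So the first claim is essentially a bookkeeping of dimensions.

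**The equivariant version.** For $f_k^E$, $g_k$, $f_k^E\times g_k$ I would invoke the machinery of Section on pseudocycles and $G$-manifolds. First, one upgrades the omega-maps of $f^E$ and $g$ to \emph{equivariant} omega-maps for the $\STS$-action on $D$: the maps $\Ga_{I\downarrow}^E,\Ga_{I\uparrow}^E,\Ga_{I-}^E$ were shown above to be $\STS$-equivariant, and $c^\pm$ are constant maps onto the fixed points $[1:0],[0:1]$ of the $\STS$-action on $\CC P^1$, hence trivially equivariant. Taking $P=S^{2k+1}\times S^{2k+1}$ — a compact manifold on which $\STS$ acts freely — Proposition \ref{prop:equivpseudo} then shows that $f_k^E=(f^E)_{\STS}$ and $g_k=g_{\STS}$ are pseudocycles, with omega-maps $(\Ga^E_\bullet)_{\STS}$ and $(c^\pm)_{\STS}$ respectively; the dimension bound needed there, $\dim W_{f^E}+\dim P-2\leq\dim D+\dim P-\dim(\STS)-2$, i.e. $2m+(4k+2)\leq (2m+2)+(4k+2)-2$, again reduces to $2m\le 2m$. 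Finally, for $f_k^E\times g_k$ I would apply Proposition \ref{prop:pseudocombi} directly to $M\times\RR\times S^1$ with the two pseudocycles $f^E$ (into $\MTM$) and $g$ (into $\CC P^1$), both equipped with equivariant omega-maps and with $\STS$ acting freely on $P=S^{2k+1}\times S^{2k+1}$; the hypothesis $\dim W_f+\dim W_h\leq\dim D-2$ is exactly $2m+0\le 2m$ as before, and the proposition yields that $f_k^E\times g_k$ is a pseudocycle with omega-map $((\Ga^E_\bullet)\times(c^\pm))_{\STS}$.

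**Main obstacle.** There is essentially no hard analytic content here — the only thing to be careful about is handling the omega-maps whose sources are disjoint unions of manifolds of varying dimension (the strata $\MM^E_{I\downarrow},\MM^E_{I\uparrow},\MM^E_{\{i\}-}$ together with the two points for $g$). One must check that Propositions \ref{prop:equivpseudo} and \ref{prop:pseudocombi} are indeed stated (or immediately extend) to this ``generalised pseudocycle'' setting of Remark \ref{rema:generalisedpseudo}, applying the dimension count component-by-component and taking the maximum; the equivariance of each stratum map $\Ga^E_\bullet$ and of the constant maps $c^\pm$ has already been verified above, so the remaining work is purely the dimension bookkeeping, all cases of which collapse to the single inequality $2m\leq 2m$.
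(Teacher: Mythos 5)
Your proposal is correct and follows the same route as the paper: the first claim via Proposition \ref{prop:productpseudo} using the $2m$-dimensional bound on the omega-maps from Lemmas \ref{lemm:omlimf} and \ref{lemm:omlimg}, then Proposition \ref{prop:equivpseudo} (with the $\STS$-equivariance of $\Ga^E_\bullet$ and $c^\pm$ and the free action on $S^{2k+1}\times S^{2k+1}$) for $f^E_k$ and $g_k$, and Proposition \ref{prop:pseudocombi} with the same dimension count for $f^E_k\times g_k$. Your extra remarks on the dimension bookkeeping and on handling omega-maps with disconnected sources of varying dimension (Remark \ref{rema:generalisedpseudo}) only make explicit what the paper's proof leaves implicit.
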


\begin{proof}
Proposition \ref{prop:productpseudo} applies to $f^E\times g$ because since $\Om_g$ is contained in the image of $0$-dimensional manifolds, $\Om_{f^E\times g}$ is contained in the image of manifolds of the same dimensions as those that contain $\Om_{f^E}$.

Since $\Om_{f^E}$ is contained in the images of $\STS$-equivariant maps and $\STS$ acts freely on the compact manifold $S^{2k+1}\times S^{2k+1}$, by virtue of Proposition \ref{prop:equivpseudo}, we have that $f_k^E$ is a pseudocycle. The same arguments apply to assert that $g_k$ is a pseudocycle because the maps $c^+,c^-$ are trivially $(\STS)$-equivariant. 

Finally, $f_k^E\times g_k$ is a pseudocycle thanks to Proposition \ref{prop:pseudocombi}, which applies for the same dimensional reasons we used to prove that $f^E\times g$ is a pseudocycle.
\end{proof}

We prove one last result, that will imply independence on the choice of a perturbation:

\begin{lemm}\label{lemm:bordclassindep}
There exists a residual subset $R'\subseteq\PP$ such that the bordism class of $f_k^E\times g_k$ does not depend on the choice of $E\in R'\cap R$.
\end{lemm}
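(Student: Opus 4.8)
The plan is to show that any two regular perturbations can be connected by a path along which the relevant constructions fit together into a bordism. First I would take $E_0, E_1 \in R$ and a generic path $E_s$, $s \in [0,1]$, joining them inside $\PP$ --- such paths exist because $R$ is residual, hence dense, and a path that is generic with respect to the relevant Sard-type argument can be chosen. The key object is the family version: consider $\widetilde{D} := D \times [0,1] = M \times \RR \times S^1 \times [0,1]$, equipped with the same $\STS$-action acting trivially on the $[0,1]$ factor, and define
\[
\begin{array}{rccc}
\tilde{f}:&\widetilde{D}&\longrightarrow&\MTM\\
 &(p,t,\al,s)&\longmapsto&(p,\al\cdot\xi^{E_s}_t(p))
\end{array}
\]
together with $\tilde{g}(p,t,\al,s) = g(p,t,\al) = [1:2^t\al]$, which is independent of $s$. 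Both are $\STS$-equivariant by the same computation as in Lemma \ref{lemm:fgareequivsimple}. Passing to the Borel construction with the approximating bundles $S^{2k+1}\times S^{2k+1}$ gives maps $\tilde{f}_k\times\tilde{g}_k$ on $\widetilde{D}_k = \widetilde{D}\times_{\STS}(S^{2k+1}\times S^{2k+1})$, whose restrictions to the two boundary components $s=0$ and $s=1$ are exactly $f_k^{E_0}\times g_k$ and $f_k^{E_1}\times g_k$.

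Next I would verify that $\tilde{f}\times\tilde{g}$ is a pseudocycle of $\MTM\times\CC P^1$ with the right dimension bound, so that after the Borel construction $\tilde{f}_k\times\tilde{g}_k$ provides a genuine bordism in the sense of the definition in Section \ref{sec:pseudocycles}. For this I need a family analogue of Lemma \ref{lemm:omlimf}: the omega-limit set of $\tilde{f}$ is captured by the union over $I$ and over $s\in[0,1]$ of the spaces of broken gradient lines of $(\mu,(J+E_s)X)$, assembled into a parametrised family over $[0,1]$. Here is where the genericity of the path matters: one wants the parametrised moduli spaces $\bigcup_s \MM^{E_s}_{I\downarrow}$, etc., to be smooth manifolds of dimension at most $2m+1$ (one more than in the fixed-$E$ case because of the extra parameter). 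This follows from a parametrised transversality argument exactly parallel to Theorem \ref{theo:genericity} --- one replaces $\PP$ by a neighbourhood of the path and applies Sard's theorem to the total space of the corresponding fibre bundle, extracting a residual set $R'\subseteq\PP$ of ``good'' endpoints (or good paths). Since $\dim\widetilde{D}=2m+3$ and the omega-map has dimension $\le 2m+1 = \dim\widetilde{D}-2$, and since $\Om_{\tilde g}$ sits in $0$-dimensional sets, Propositions \ref{prop:productpseudo}, \ref{prop:equivpseudo} and \ref{prop:pseudocombi} apply verbatim to give that $\tilde{f}_k\times\tilde{g}_k$ is a pseudocycle with omega-map of dimension $\le 2m + 1 + \dim(S^{2k+1}\times S^{2k+1}) - 2 = \dim\widetilde{D}_k - 2$, as required for a bordism. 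The boundary condition $\partial\widetilde{D}_k = (D_k)_{s=1} - (D_k)_{s=0}$ is immediate.

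The main obstacle I anticipate is the parametrised regularity statement: Chapter \ref{ch:standard} only guarantees regularity of $(\mu,(J+E)X)$ for $E$ in a residual set, so along a one-parameter family one must argue that the ``bad'' parameter values form a codimension-$\ge 1$ subset that a generic path avoids, \emph{and} that at every parameter the parametrised fibred products defining the omega-map remain smooth of the expected dimension. This is where the bookkeeping of Definition \ref{defi:regularity} and Theorem \ref{theo:genericity} has to be redone with $\PP$ replaced by $\PP\times[0,1]$ (or a thickening of the path); the transversality map $H_I$ acquires one extra source dimension, its zero set jumps from $m-r+1$ to $m-r+2$, and one checks this is still $\le 2m+1$ after the relevant fibred products are taken. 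Once this parametrised genericity is in hand, everything else is a routine repetition of the lemmas already proved in this section, and intersecting the finitely many residual sets obtained for each $I$ (and each relevant $k$, noting stabilisation makes the dependence on $k$ disappear for $k$ large) yields the single residual $R'\subseteq\PP$ claimed in the statement.
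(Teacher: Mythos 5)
Your proposal is correct and follows essentially the same route as the paper: there too the bordism is the family map on $D_k\times[0,1]$ (realised as the Cartesian square of $\pi_k:D_k\times\PP\rightarrow\PP$ with a path $\psi$ joining $E_0$ to $E_1$), with omega-map given by the parametrised moduli spaces of broken gradient lines pulled back over the path. The only cosmetic difference is that instead of redoing Theorem \ref{theo:genericity} over $\PP\times[0,1]$, the paper uses the already-smooth universal moduli spaces over $\PP$, takes $R'$ to be the common regular values of their projections to $\PP$, and chooses the path transverse to these projections.
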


\begin{proof} Let $E\in\PP$. Since the sets $\MM_{I\downarrow}^E$, $\MM_{I\uparrow}^E$, $\MM_{I-}^E$ admit $\STS$-actions we can consider the set
\[
\MM_{I\downarrow,k}^E:=\MM_{I\downarrow}^E\times_{\STS}(S^{2k+1}\times S^{2k+1})
\] 
and similarly we get $\MM_{I\uparrow,k}^E$ and $\MM_{I-,k}^E$.

According to Lemmas \ref{lemm:omlimf}, \ref{lemm:omlimg} above and Proposition \ref{prop:pseudocombi}, the omega-limit set of $f_k^E\times g_k$ is contained in the union of the images of the maps
\[
\begin{array}{rccc}
\Ga_{I\downarrow,k}^{E+}:&\MM_{I\downarrow,k}^E&\longrightarrow &(\MTM)_k\times\CC P^1_k\\
 &\left[x,s\right]&\longmapsto &\left(\left[\Ga_{I\downarrow}^E(x),s\right],[[1:0],s]\right)
\end{array}
\]
and the images of the analogous maps $\Ga_{I\downarrow,k}^{E-}$, $\Ga_{I\uparrow,k}^{E+}$, $\Ga_{I\uparrow,k}^{E-}$, $\Ga_{I-,k}^{E+}$, $\Ga_{I-,k}^{E-}$. Here $s$ denotes an element of $S^{2k+1}\times S^{2k+1}$ and the superscript $\pm$ denotes if we are taking either $[1:0]$ or $[0:1]$ in the second component, following the notation $c^\pm$ of Lemma \ref{lemm:omlimg}.

Consider now the manifold
\[
\MM_{I\downarrow,k}=\{([x,s],E):\ E\in\PP,\ [x,s]\in\MM_{I\downarrow,k}^E\}
\]
and the map
\[
\begin{array}{rccc}
\Ga_{I\downarrow,k}^\pm:&\MM_{I\downarrow,k}&\longrightarrow &(\MTM)_k\times\CC P^1_k\\
 &(\left[x,s\right],E)&\longmapsto &\Ga_{I\downarrow,k}^{E\pm}(\left[x,s\right])
\end{array}.
\]
In an analogous way, define manifolds $\MM_{I\uparrow,k},\MM_{I-,k}$ and maps $\Ga_{I\uparrow,k}^\pm,\Ga_{I-,k}^\pm$. All these manifolds project on $\PP$. Not surprisingly we denote these projections by $\pi_{I\downarrow,k},\pi_{I\uparrow,k},\pi_{I-,k}$. We also take $\pi_k:D_k\times\PP\rightarrow\PP$ to be the projection to the second factor. By Sard's theorem there exists a residual subset $R'\subseteq\PP$ of regular values of all these projections. Given $E_0,E_1\in R$ take a path $\psi:[0,1]\rightarrow\PP$ transverse to all the projections and such that $\psi(0)=E_0$ and $\psi(1)=E_1$. We claim that the map
\[
\begin{array}{rccc}
B:&CS(\pi_k,\psi)&\longrightarrow &(\MTM)_k\times\CC P^1_k\\
 &([x,s],\psi(t),t)&\longmapsto &(f^{\psi(t)}_k\times g_k)([x,s])
\end{array}
\]
is a bordism between $f_k^{E_0}\times g_k$ and $f^{E_1}_k\times g_k$: under the diffeomorphism
\[
\begin{array}{ccc}
CS(\pi_k,\psi) &\longrightarrow & D_k\times[0,1]\\
([x,s],\psi(t),t) &\longmapsto & ([x,s],t)
\end{array}
\] 
the map $B$ is identified with the map $([x,s],t)\mapsto (f^{\psi(t)}_k\times g_k)([x,s])$, which when applied to $([x,s],0)$ gives $(f^{E_0}_k\times g_k)([x,s])$ and when applied to $([x,s],1)$ gives $(f^{E_1}_k\times g_k)([x,s])$.

Finally $\Om_B$ is contained in the union of the images of the maps
\[
\begin{array}{ccc}
CS(\pi_{I\downarrow,k},\psi)&\longrightarrow &(\MTM)_k\times\CC P^1_k\\
([x,s],\psi(t),t)&\longmapsto &\Ga_{I\downarrow,k}^{\psi(t)\pm}(x)
\end{array}
\]
and the images of the analogous maps defined on $CS(\pi_{I\uparrow,k},\psi)$ and $CS(\pi_{I-,k},\psi)$.
\end{proof}

Consider, for $E\in R\cap R'$, the diagram
\[
\begin{diagram}
\node{}\node{D_k}\arrow{sw,t}{f^E_k}\arrow{s,r}{f^E_k\times g_k}\arrow{se,t}{g_k}\node{}\\
\node{(\MTM)_k}\node{(\MTM)_k\times\CC P^1_k}\arrow{w,b}{\pi_{(\MTM)_k}}\arrow{e,b}{\pi_{\CC P^1_k}}\node{\CC P^1_k}
\end{diagram}
\]
where $\pi_{(\MTM)_k}$ and $\pi_{\CC P^1_k}$ are the projections. Since $f^E_k\times g_k$ is a pseudocycle, according to Proposition \ref{prop:exerciseeight} the map
\[
\lm_k:H^*(\CC P^1_k;\ZZ)\longrightarrow H^{*+2m-2}((\MTM)_k;\ZZ)
\]
defined by
\[
\lm_k(\al):=PD^{-1}_{(\MTM)_k}(\pi_{(\MTM)_k})_*(\pi_{\CC P^1_k}^*\al\frown\Phi_*^{(\MTM)_k\times\CC P^1_k}[f_k^E\times g_k])
\]
provides a substitute for $(f^E_k)^!g_k^*$. This map does not depend on the choice of $E$ thanks to Lemma \ref{lemm:bordclassindep}. Finally using the stabilisation of equivariant cohomology --Lemma \ref{lemm:stabilisation}-- on the maps $\lm_k$ we get a map in equivariant cohomology
\[
\lm:H_{\STS}^*(\CC P^1;\ZZ)\longrightarrow H_{\STS}^{*+2m-2}(\MTM;\ZZ)
\]
that we call the {\it lambda-map}.

\section{The lambda-map for general actions}\label{sec:lambdageneral}

In this section we drop the condition that the action of $S^1$ on $M$ is semi-free so the right framework to work with is that of Chapter \ref{ch:multivalued}: there we constructed a space $\PP$ of multivalued perturbations of $J$ and for $P\in\PP$ we defined the space $\GG^P$ of oriented $P$-perturbed gradient lines (Definition \ref{defi:pbgd}). The rest of Chapter \ref{ch:multivalued} was devoted to proving that, for a generic choice of $P\in\PP$, $\GG^P$ admits a stratification parametrised by the critical set of $\mu$ and that the strata are branched manifolds 

We will follow the same steps as in Section \ref{sec:lambdasimple}, where the role played by $D=M\times\RR\times S^1$ will now be taken by $\GG_\emptyset^P\times S^1$. Since this is not a manifold, but a branched manifold, we cannot apply the theory of pseudocycles directly and we will need to be careful when doing so. In particular the lambda-map will be defined in equivariant cohomology with rational coefficients.

Let $\STS$ act on $\GG_\emptyset^P\times S^1$ via
\[
(\te,\ze)(G,\al)=(\te\cdot G,\ze\al\bar{\te}),
\]
where the $S^1$-action on $\GG^P_\emptyset$ is the one described in Remark \ref{rema:actionpbgl}.

Remember that if $G\in\GG^P_\emptyset$ is not point-like, it is parametrised by a unique gradient segment with first component $\ga:[a_0,a_1]\rightarrow M$ such that $h_K(0)=\ga(a_0)$ and $h_K(1)=\ga(a_1)$ (see Remark \ref{rema:singleparameter}). In these terms define the maps
\[
\begin{array}{rccc}
f^P:&\GG_\emptyset^P\times S^1&\longrightarrow&\MTM\\
 &(G,\al)&\longmapsto& (b,\al\cdot e)
\end{array},
\]
where $e$ is the end of $G$ and
\[
\begin{array}{rccc}
g^P:&\GG_\emptyset^P\times S^1&\longrightarrow&\CC P^1\\
 &(G,\al)&\longmapsto&[1:2^t\al]
\end{array},
\]
where $t=0$ if $G$ is point-like, $t=a_1-a_0$ if $G$ is descending and $t=a_0-a_1$ if $G$ is ascending.

\begin{lemm}\label{lemm:fgareequivgeneral} 
Both $f^P$ and $g^P$ are $\STS$-equivariant maps.
\end{lemm}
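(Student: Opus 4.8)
The plan is to verify $\STS$-equivariance of $f^P$ and $g^P$ directly, using the already-established $S^1$-equivariance of the underlying structures (the flow, the parametrising gradient segments, and the $S^1$-action on $\GG_\emptyset^P$ of Remark \ref{rema:actionpbgl}). The proof is essentially a translation of Lemma \ref{lemm:fgareequivsimple} into the branched-manifold language, since the computations are formally identical; the only subtlety is keeping track of how the $S^1$-action on $\GG_\emptyset^P$ affects the beginning, the end, and the parameter $t$ of a perturbed gradient line.

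First I would record the key compatibility facts. Given $G=(K;K_1,\dots,K_\ell;b)\in\GG_\emptyset^P$ with end $e$, and $\te\in S^1$, the element $\te\cdot G$ has beginning $\te\cdot b$ and end $\te\cdot e$ (this follows from the description of the $S^1$-action in Remark \ref{rema:actionpbgl}: it acts by $h_{\te\cdot K}=\te\cdot h_K$, so it carries $h_K(0)\mapsto\te\cdot h_K(0)$ and $h_K(1)\mapsto\te\cdot h_K(1)$, hence preserves descending/ascending type). Moreover, if $G$ is parametrised by a gradient segment with first component $\ga:[a_0,a_1]\to M$, then $\te\cdot G$ is parametrised by $\te\cdot\ga$ on the same interval $[a_0,a_1]$, because the $P$-perturbed gradient segment equation is preserved under the $S^1$-action (Remark \ref{rema:actionpgs}) and the flow is $S^1$-equivariant. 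In particular the parameter $t\in\{0,a_1-a_0,a_0-a_1\}$ attached to $G$ by the definition of $g^P$ is \emph{unchanged} when passing to $\te\cdot G$.

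Now the verification of equivariance of $f^P$: for $(\te,\ze)\in\STS$,
\[
f^P((\te,\ze)(G,\al))=f^P(\te\cdot G,\ze\al\bar\te)=(\te\cdot b,\ (\ze\al\bar\te)\cdot(\te\cdot e))=(\te\cdot b,\ \ze\al\cdot e)=(\te,\ze)(b,\al\cdot e)=(\te,\ze)f^P(G,\al),
\]
where we used that the beginning of $\te\cdot G$ is $\te\cdot b$, the end is $\te\cdot e$, and $\bar\te\te=1$ in $S^1$. For $g^P$: writing $t$ for the parameter attached to $G$ (so the parameter attached to $\te\cdot G$ is the same $t$ by the remark above),
\[
g^P((\te,\ze)(G,\al))=g^P(\te\cdot G,\ze\al\bar\te)=[1:2^t\ze\al\bar\te]=[\te:2^t\ze\al]=(\te,\ze)[1:2^t\al]=(\te,\ze)g^P(G,\al),
\]
which is exactly the computation in Lemma \ref{lemm:fgareequivsimple}, now with $2^t\al$ in place of $2^t\al$ and the parameter $t$ read off from $G$.

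There is no real obstacle here; the only thing to be careful about — and the step I would spell out explicitly — is that the $S^1$-action on $\GG_\emptyset^P$ genuinely sends the parametrising data $(\ga,\ga_1^\#,\dots,\ga_\ell^\#)$ of $G$ to the parametrising data $(\te\cdot\ga,\te\cdot\ga_1^\#,\dots,\te\cdot\ga_\ell^\#)$ of $\te\cdot G$ over the \emph{same} parameter interval, so that $b\mapsto\te\cdot b$, $e\mapsto\te\cdot e$ and $t\mapsto t$. This is precisely the content of Remarks \ref{rema:actionpgs} and \ref{rema:actionpbgl}, together with the $S^1$-equivariance of $J$, the $E_i$, and the flow $\xi^{JX}$; once it is invoked, both equivariance statements reduce to the two displayed one-line computations above. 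The point-like case is immediate since then $b=e$ is a fixed point and $t=0$, so both sides of each identity collapse to $(\te,\ze)(b,\al\cdot b)$ and $(\te,\ze)[1:\al]$ respectively.
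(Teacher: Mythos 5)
Your proof is correct and follows essentially the same route as the paper: establish that the beginning, end, and parameter interval of $\te\cdot G$ are $\te\cdot b$, $\te\cdot e$, and unchanged (via the $S^1$-action of Remarks \ref{rema:actionpgs}--\ref{rema:actionpbgl}), then conclude with the same two one-line computations, exactly as in the paper, with the paper using the parametrisation $\te\cdot\ga$ of $\te\cdot G$ in the same way. Your extra care in spelling out the point-like case and the invariance of $t$ is consistent with, and only slightly more explicit than, the published argument.
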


\begin{proof} If $b,e$ are the beginning and end of $G$, then the beginning and end of $\te\cdot G$ are $\te\cdot b,\te\cdot e$. The equivariance of $f^P$ follows:
\[
\begin{array}{rcl}
f^P(\te\cdot G,\ze\al\bar{\te}) & = & (\te\cdot b,\ze\al\bar{\te}\te\cdot e)\\
 & = & (\te,\ze)(b,\al\cdot e)\\
 & = & (\te,\ze)f^P(G,\al).
\end{array}
\]

Note that if $G$ is not point-like then $\te\cdot\ga$ parametrises $\te\cdot G$, so in any case we have
\[
\begin{array}{rcl}
g^P(\te\cdot G,\ze\al\bar{\te}) & = & [1:2^t\ze\al\bar{\te}]\\
 & = & [\te:\ze2^t\al]\\
 & = & (\te,\ze)[1:2^t\al]\\
 & = & (\te,\ze)g^P(G,\al).
\end{array}
\]
\end{proof}

Let $(\GG_\emptyset^P\times S^1)_k:=(\GG_\emptyset^P\times S^1)\times_{\STS} S^{2k+1}\times S^{2k+1}$. From the lemma it follows that we have well-defined maps
\[
\begin{diagram}
\node{}\node{(\GG^P_{\emptyset}\times S^1)_k}\arrow{sw,t}{g_k^P}\arrow{se,t}{f_k^P}\node{}\\
\node{\CC P^1_k}\node{}\node{(\MTM)_k}
\end{diagram}.
\]

If we keep following the pattern of Section \ref{sec:lambdasimple} we see that our next goal is to define an $\STS$ action on $\GG^P\setminus\GG^P_{\emptyset}$, which consists of those perturbed gradient lines with a least one breaking point. Let $I=\{i_1,\ldots,i_r\}\neq\emptyset$ with $\mu(C_{i_1})>\cdots>\mu(C_{i_r})$. If $G\in\GG^P_{I-}$ it must be the case that $G=(\{b\};\emptyset,\ldots,\emptyset;b)$ for some fixed point $b$, so we will take the action to be trivial on point-like elements. Otherwise let $G=(K;K_1,\ldots,K_\ell;b)$ and for $(\te,\ze)\in\STS$ define $K_{(\te,\ze)}$ as in the Section \ref{sec:lambdasimple}. If $G$ is descending we let
\[
K_{j,(\te,\ze)}=\left\{
\begin{array}{ll}
\te\cdot K_j &\mbox{if}\ \inf\mu(R_j)>\mu(C_{i_1})\\
\ze\cdot K_j &\mbox{if}\ \sup\mu(R_j)<\mu(C_{i_r})\\
K_j &\mbox{otherwise}
\end{array}
\right..
\]
while if $G$ is ascending we make the same definition interchanging $\te$ and $\ze$. Then we have that $(K_{(\te,\ze)};K_{1,(\te,\ze)},\ldots,K_{\ell,(\te,\ze)};\te\cdot b)$ is also an element of $\GG_I^P$. We have thus defined an $\STS$ action on $\GG_I^P$ that preserves point-like, ascending and descending elements that makes the map
\[
\begin{array}{rccc}
\Ga_{I\downarrow}^P: &\GG_{I\downarrow}^P &\longrightarrow &\MTM\\
 & (K;\ldots;b) &\longmapsto & (b,e)
\end{array}
\]
and the analogous $\Ga_{I\uparrow}^P,\Ga_{I-}^P$ into $\STS$-equivariant maps.

The two lemmas that follow study the omega-limit sets of $f^P$ and $g^P$ and are analogous to Lemmas \ref{lemm:omlimf} and \ref{lemm:omlimg}:

\begin{lemm}\label{lemm:omlimfp}
The omega-limit set of $f^P:\GG^P_\emptyset\times S^1\rightarrow\MTM$ is covered by the images of the maps $\Ga_{I\downarrow}^P$, $\Ga_{I\uparrow}^P$ and $\Ga_{I-}^P$.
\end{lemm}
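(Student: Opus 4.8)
The plan is to mimic closely the proof of Lemma~\ref{lemm:omlimf}, but being careful with the fact that $\GG^P_\emptyset$ is now a branched manifold rather than a genuine manifold, and that the parametrisation of a point-like element is degenerate. First I would take $(p,q)\in\Om_{f^P}$ and unpack the definition: there is a sequence $(G_j,\al_j)\in\GG^P_\emptyset\times S^1$ with no convergent subsequence such that $f^P(G_j,\al_j)=(b_j,\al_j\cdot e_j)\to(p,q)$, where $b_j,e_j$ are the beginning and end of $G_j$. Since $S^1$ is compact I may pass to a subsequence so that $\al_j\to\al\in S^1$; then $b_j\to p$ and $e_j\to\al^{-1}q=:q'$.

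The core of the argument is then to show that $(p,q')$ lies in the image of one of the maps $\Ga^P_{I\downarrow}$, $\Ga^P_{I\uparrow}$ or $\Ga^P_{I-}$ (for some non-empty $I$), since the $S^1$-action on $\GG^P$ is by $\STS$-equivariance compatible with $f^P$ and the images of these $\Ga$-maps are $\STS$-invariant subsets of $\MTM$. I would split into cases. If $p$ is a fixed point, i.e. $p\in C_i$ for some $i$, then by Proposition~\ref{prop:muismb}(1) and the fact that each $G_j$ is parametrised by a single $\ep$-perturbed gradient segment which by Lemma~\ref{lemm:monodecreasing} is monotonically decreasing along $\mu$, the whole curve $K_j$ must be squeezed near $C_i$ as $j\to\infty$; hence $q'\in C_i$ as well (in fact $p=q'$ after accounting for the possibility that $G_j$ is descending or ascending), and $(p,q')\in\De_{C_i\times C_i}=\im\Ga^P_{\{i\}-}$. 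If $p$ is not a fixed point, then the non-existence of a convergent subsequence of $(G_j,\al_j)$, together with $\al_j\to\al$, forces $G_j$ itself to have no convergent subsequence in $\GG^P_\emptyset$; geometrically this means the gradient segments parametrising $G_j$ must be growing in length and accumulating onto a broken configuration, i.e. $(p,q')$ is the pair (beginning, end) of a $P$-perturbed broken gradient line with at least one breaking point. Depending on whether $p$ is the beginning or the end of that broken line, $(p,q')$ lands in $\im\Ga^P_{I\downarrow}$ or $\im\Ga^P_{I\uparrow}$ for the appropriate $I$.

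The step I expect to be the main obstacle is making rigorous the passage ``$G_j$ has no convergent subsequence, hence it accumulates onto a genuine broken gradient line in $\GG^P$''. This is the analogue, in the branched/multivalued setting, of the standard fact that gradient lines converge to broken gradient lines (cited as \cite[Appendix A]{AuBr} in the proof of Theorem~\ref{theo:stratification}). I would argue as follows: the interval-lengths $a_1^{(j)}-a_0^{(j)}$ of the parametrising segments must be unbounded (else, using the uniform bounds on $\ep$-perturbed gradient segments from Lemma~\ref{lemm:inequalities} and compactness of $M$, a subsequence of the $\ga_j$, hence of the $G_j$ in the metric $d$ on $\GG^P$, would converge, contradicting non-convergence); once the lengths blow up, the segments must pass arbitrarily slowly near some critical component, and a standard diagonal/compactness extraction produces breaking points $c_1,\dots,c_r$ with $\mu(c_1)>\cdots>\mu(c_r)$ and limiting segments between consecutive ones, with the associated lifts $K_{j,i}$ converging (in the Hausdorff sense on the tori $R^\#_i$, using Proposition~\ref{prop:goodintersec} to control how the $K_j$ meet each $R_j$) to the lifts of the limiting broken configuration. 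The limiting object is by construction an element of $\GG^P_I$ for $I=\{i_1,\dots,i_r\}$, and its beginning-end pair is $(p,q')$ (or $(q',p)$).

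Finally, to conclude that $f^P$ is itself amenable to the pseudocycle machinery I would only need to recall that each $\GG^P_{I\downarrow}$, $\GG^P_{I\uparrow}$ is a branched manifold of dimension $2m-\#I+1\le 2m$ (from the propositions in Section~\ref{sec:modpbgd}), while $\GG^P_\emptyset\times S^1$ has dimension $2m+2$; since the notions of omega-limit set and strong transversality extend to branched manifolds (as noted in the remark at the end of Section~\ref{sec:intersectionpairing}), the covering of $\Om_{f^P}$ by the images of the $\Ga$-maps is exactly what is required. The statement of the lemma asks only for this covering, so the proof stops once all three cases above have been exhausted.
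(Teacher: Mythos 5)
Your overall route is the one the paper takes: the printed proof of Lemma \ref{lemm:omlimfp} is literally the remark that it is ``totally analogous'' to Lemma \ref{lemm:omlimf}, and your reduction (extract $\al_j\to\al$, show that $(p,\bar\al q)$ is the beginning/end pair of a limiting configuration, use the $\STS$-invariance of the images of the $\Ga^P$-maps) together with your elaboration of the Morse--Bott compactness in the multivalued setting (unbounded parameter lengths, extraction of breaking points, convergence of the lifts controlled via Proposition \ref{prop:goodintersec}) is exactly the content being alluded to there.

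The genuine problem is your case analysis. The deduction in your first case --- that $b_j\to p\in C_i$ forces the whole of $K_j$ to be squeezed near $C_i$, hence $q'\in C_i$ and in fact $q'=p$ --- is false: Lemma \ref{lemm:monodecreasing} only says that $\mu$ decreases along a segment, so it bounds $\mu$ on a descending $K_j$ from above by $\mu(b_j)\approx\mu(C_i)$ and gives no lower bound; the segments may start arbitrarily close to $C_i$ (say on its unstable set) and descend far below, in which case $p\in C_i$ but $q'$ is any point reachable from $C_i$ by a broken trajectory, and the limit belongs to $\im\Ga^P_{I\downarrow}$, not to the point-like stratum. Since your second case is explicitly conditioned on $p\notin F$, such omega-limit points fall outside both of your cases. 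Relatedly, your claim that bounded interval-lengths would force a subsequence of the $G_j$ to converge in $\GG^P_\emptyset$ is also not quite right: with bounded lengths the segments can collapse onto a fixed point (beginning and end both tending to $p\in C_i$), and the limit is then the point-like element of $\GG^P_{\{i\}-}$, which lies outside $\GG^P_\emptyset$, so no contradiction with non-convergence arises --- this collapse is precisely the situation for which the maps $\Ga^P_{I-}$ appear in the statement. The repair is to sort the cases by the behaviour of the limiting configuration rather than by whether $p$ is fixed: bounded lengths with a non-degenerate limit contradicts non-convergence; bounded lengths with collapse onto a fixed point gives $(p,q)=(p,p)\in\im\Ga^P_{\{i\}-}$; unbounded lengths gives, by the compactness argument you sketch, a broken limit and hence a point of $\im\Ga^P_{I\downarrow}$ or $\im\Ga^P_{I\uparrow}$, with $p$ or $q'$ allowed to be critical. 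With that correction the rest of your argument goes through and matches the paper's intent.
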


\begin{proof} 
The proof is totally analogous to that of Lemma \ref{lemm:omlimf}.
\end{proof}

\begin{lemm}\label{lemm:omlimgp}
The omega-limit set of $g^P:\GG^P_{\emptyset}\times S^1\rightarrow\CC P^1$ is contained in $\{[1:0],[0:1]\}$.
\end{lemm}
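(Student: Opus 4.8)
The claim to prove is Lemma \ref{lemm:omlimgp}: $\Om_{g^P}\subseteq\{[1:0],[0:1]\}$, where $g^P:\GG^P_\emptyset\times S^1\to\CC P^1$ sends $(G,\al)$ to $[1:2^t\al]$, with $t$ the signed "flow time" of $G$ (zero if point-like, $a_1-a_0$ if descending, $a_0-a_1$ if ascending). This is the exact analogue of Lemma \ref{lemm:omlimg}, where $D=M\times\RR\times S^1$ played the role of $\GG^P_\emptyset\times S^1$ and $g(p,t,\al)=[1:2^t\al]$. So the plan is to imitate that proof, with the one subtlety being that now "the source is a branched manifold, not a manifold," and the sequence escaping to infinity lives in $\GG^P_\emptyset\times S^1$ rather than in the simple product $M\times\RR\times S^1$.

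Here is the plan in order. First I would take a sequence $(G_j,\al_j)\in\GG^P_\emptyset\times S^1$ with no convergent subsequence such that $g^P(G_j,\al_j)=[1:2^{t_j}\al_j]\to[z:w]$ in $\CC P^1$, where $t_j$ is the signed flow time of $G_j$. I would argue by contradiction: assume $[z:w]\neq[1:0],[0:1]$, so $w/z\neq 0$ and $2^{t_j}\al_j\to w/z$. Taking absolute values (using $|\al_j|=1$) gives $2^{t_j}\to|w/z|$, hence $t_j\to\log_2|w/z|=:t_\infty$, a finite limit. In particular the flow times $t_j$ are bounded, say $|t_j|\le C$ for all $j$. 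The second step is to show that boundedness of the flow times, together with compactness of $M$ and $S^1$, forces $(G_j,\al_j)$ to have a convergent subsequence in $\GG^P_\emptyset\times S^1$ — contradicting our choice. Since $S^1$ is compact we may pass to a subsequence with $\al_j\to\al\in S^1$, so the real work is extracting a convergent subsequence of $G_j$ in $\GG^P_\emptyset$.

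For that third step I would invoke the local description of $\GG^P_\emptyset$ given in Proposition \ref{prop:gzstruc} together with the map $\al^P_\emptyset:\GG^P_\emptyset\to\RR\times(M\setminus F)$, $G\mapsto(t,b)$ where $t$ is the signed flow time and $b$ the beginning. Concretely, each $G_j$ is parametrised (Remark \ref{rema:singleparameter}) by an $\ep$-perturbed gradient segment $\ga_j:[a_0^j,a_1^j]\to M$ with beginning $b_j=\ga_j(a_0^j)$ (or the end in the ascending case) and $|a_1^j-a_0^j|=|t_j|\le C$. By compactness of $M$ extract a subsequence with $b_j\to b\in M$; one must check $b$ stays away from $F$ (this follows because $\ep$-perturbed gradient segments are monotone along $\mu$ by Lemma \ref{lemm:monodecreasing}, so a bounded-time segment beginning near $F$ would have both endpoints on the same side and still in $\GG^P_\emptyset$ in the limit; alternatively, if $b\in F$ the limit would sit in some $\GG^P_{\{i\}}$, and a short argument rules this out since $t_j$ is bounded while approaching a critical value forces $t_j\to\infty$). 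Then the continuous sections $\vp_k$ of Proposition \ref{prop:gzstruc} (defined near the limit point $(t_\infty,b)\in\RR\times(M\setminus F)$) provide, for $j$ large, elements $G_j':=\vp_k((t_j,b_j))\in\GG^P_\emptyset$ with $\al^P_\emptyset(G_j')=(t_j,b_j)$ lying in a single branch, hence $G_j'\to\vp_k((t_\infty,b))$ in $\GG^P_\emptyset$. The last point is to note that $G_j$ and $G_j'$ agree: once $t_j$ and $b_j$ are fixed, the $\ep$-perturbed gradient segment through $b_j$ of that flow time is unique (it solves an ODE), and the decomposition $\GG^P_\emptyset=\bigcup\vp_k(W_k)$ exhausts a neighbourhood, so for large $j$ the class of $G_j$ coincides with $G_j'$; therefore $G_j$ converges, a contradiction. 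I would then conclude, as in Lemma \ref{lemm:omlimg}, that the two constant maps $c^\pm:\{pt\}\to\CC P^1$ hitting $[1:0]$ and $[0:1]$ form an omega-map of $g^P$.

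The main obstacle is the third step: making precise that "a convergent subsequence of the data $(t_j,b_j)$ lifts to a convergent subsequence of $G_j$ in the branched manifold $\GG^P_\emptyset$." In the manifold case (Lemma \ref{lemm:omlimg}) this was immediate because $D=M\times\RR\times S^1$ and $g$ literally factored through the coordinates. Here one must feed $(t_j,b_j)$ through the local charts $\vp_k$ of Proposition \ref{prop:gzstruc} and check that the metric $d$ on $\GG^P$ (built from Hausdorff distances of the $K$'s and the lifts $K_i$'s) indeed records this convergence — which it does, since the lifts $K_{i,(t,p)}$ were defined continuously from $(t,p)$ in that proposition. I expect this to be routine given the machinery already in place, so the lemma's proof can legitimately be written as "analogous to Lemma \ref{lemm:omlimg}, using Proposition \ref{prop:gzstruc} and Lemma \ref{lemm:monodecreasing} in place of the elementary compactness of $M\times S^1$."
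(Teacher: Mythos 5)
Your strategy is the same as the paper's: the paper's proof of this lemma is exactly the one-sentence assertion that a sequence in $\GG^P_\emptyset\times S^1$ can only fail to subconverge if the lengths $a_{1j}-a_{0j}$ of the parametrising intervals tend to $+\infty$, in which case the image tends to $[0:1]$ or $[1:0]$; you argue the contrapositive and try to supply the compactness half in detail. But the sub-step you yourself single out --- that the limit $b$ of the beginnings $b_j$ avoids $F$ --- is precisely where the argument breaks, and neither of your two justifications works. ``Approaching a critical value forces $t_j\to\infty$'' is only true for segments forced to traverse a fixed band of $\mu$-values around the critical level; it is false for segments that merely start near the fixed-point set. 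Concretely: fix $t_j=1$, take $b_j\in M\setminus F$ with $b_j\to b\in C_i$, and let $G_j\in\GG^P_\emptyset$ be the descending gradient segment of time-length $1$ starting at $b_j$ (near $C_i$ the perturbations vanish and a finite-time flow line through a non-critical point never meets $F$, so $K_j\cap F=\emptyset$ and all lifts are empty). By continuous dependence on initial conditions $K_j\to\{b\}$ in Hausdorff distance, so in the metric space $\GG^P$ the sequence converges to the point-like element $(\{b\};\emptyset,\ldots,\emptyset;b)$, which lies in $\GG^P_{\{i\}-}$ and is not identified with any element of $\GG^P_\emptyset$; by uniqueness of limits no subsequence of $(G_j,1)$ converges in $\GG^P_\emptyset\times S^1$, yet $g^P(G_j,1)=[1:2]$ for every $j$. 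So the claim ``bounded flow times give a convergent subsequence in $\GG^P_\emptyset$'' is false, your step three cannot be completed as stated, and the monotonicity of $\mu$ along $\ep$-perturbed segments (Lemma \ref{lemm:monodecreasing}) does not exclude this degeneration either.

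For the comparison: the paper's own proof asserts exactly the dichotomy you are trying to establish and passes over this escape route in silence, so your proposal is a faithful (indeed more explicit) rendering of the argument in the text, and the gap it exposes is one the text shares. The new phenomenon arises because, unlike the semi-free model $D=M\times\RR\times S^1$ of Lemma \ref{lemm:omlimg}, whose domain contains the constant lines at fixed points, the stratum $\GG^P_\emptyset$ excludes the point-like elements over $F$, so a sequence can leave it at bounded flow time by sinking into a fixed-point component. If you want a statement that can be proved along these lines and still suffices for the later applications (Theorem \ref{theo:bigone}), control the omega-limit set of the pair $f^P\times g^P$ rather than of $g^P$ alone: in the degeneration above $f^P(G_j,\al)\to(b,\al\cdot b)=(b,b)\in\im\Ga^P_{\{i\}-}$, so such sequences are already absorbed by the omega-maps of Lemma \ref{lemm:omlimfp}, and the $\CC P^1$-factor is forced into $\{[1:0],[0:1]\}$ exactly along those escaping sequences whose flow times are unbounded --- which is the only situation the present lemma can legitimately address.
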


\begin{proof} Let $[z:w]\in\CC P^1$. Then, there exists $(G_j,\al_j)\in \GG_\emptyset^P\times S^1$, a sequence with no convergent subsequences such that $g^P(G_j,\al_j)\mathop{\rightarrow}\limits_{j}[z:w]$. The only way such a sequence cannot converge is that $G_j$ is not point-like for an infinite number of $j$ and that, for these terms, the parametrising curve $[a_{0j},a_{1j}]\rightarrow M$ is such that $\lim_ja_{1j}-a_{0j}=+\infty$. Then $\lim_j[1:2^{a_{1j}-a_{0j}}\al_j]=[0:1]$ and $\lim_j[1:2^{a_{0j}-a_{1j}}\al_j]=[1:0]$.
\end{proof}

At this point we cannot follow the steps of Section \ref{sec:lambdasimple} anymore. The reason is that although we have control over the omega-limit sets of the maps $f^P,g^P$ we cannot say that they are pseudocycles because they are defined on branched manifolds and we have not developed a theory of pseudocycles in that case. All the necessary ingredients to define the lambda-map are provided by the following theorem:

\begin{theo}\label{theo:bigone} Let $\ell:W\rightarrow(\MTM)_k$ be a $(q+2m-2)$-pseudocycle and let $h:V\rightarrow \CC P^1_k$ be a $(4k+2-q)$-pseudocycle.

Take also $D\subseteq C^\infty(T\CC P^1_k)$ and $E\subseteq C^\infty(T(\MTM)_k)$ as in Lemma \ref{lemm:surjev} and let $\DC=\exp D$ and $\EE=\exp E$. For $\eta\in\DC$, $\ga\in\EE$ consider the diagram of Cartesian squares
\[
\begin{diagram}
\node{CS(f_k^P\circ\pi_{\eta\circ h}^P,\ga\circ\ell)}\arrow{s}\arrow[2]{e}\node[2]{W}\arrow{s,r}{\ga\circ\ell}\\
\node{CS(g_k^P,\eta\circ h)}\arrow{e,t}{\pi^P_{\eta\circ h}}\arrow{s}\node{(\GG^P_\emptyset\times S^1)_k}\arrow{e,t}{f^P_k}\arrow{s,r}{g^P_k}\node{(\MTM)_k}\\
\node{V}\arrow{e,b}{\eta \circ h}\node{\CC P^1_k}
\end{diagram}.
\]
There exists a residual subset $R\subseteq \PP\times\DC\times\EE$ such that for every $(P,\eta,\ga)\in R$

\begin{enumerate}[(1)]
\item $CS(g_k^P,\eta\circ h)$ is a branched manifold of dimension $2m+4k+2-q$. 

\item $\sres:=CS(f_k^P\circ\pi^P_{\eta\circ h},\ga\circ\ell)$ is a finite set.

\item For any $x=([(G,\al),s],v,w)\in\sres$, $G\in\GG^P_\emptyset$ is not a branching point. Then we let $w(x)$ be the weight of $G$ (Definition \ref{defi:weight}). We also assign a sign to $x$ as follows: if $y:=(f_k^P\circ\pi_{\eta\circ h}^P)([(G,\al),s],v)=(\ga\circ\ell)(w)$, the differential at $x$ of the map
\[
(f_k^P\circ\pi_{\eta\circ h}^P)\times(\ga\circ\ell):CS(g_k^P,\eta\circ h)\times W\longrightarrow (\MTM)_k\times(\MTM)_k
\]
induces an isomorphism between $T_x(CS(g_k^P,\eta\circ h)\times W)$ and its image $T_{(y,y)}\De_{(\MTM)_k\times(\MTM)_k}$. We define $\sig(x)=\pm 1$ depending on whether this isomorphism preserves ($+1$) or reverses ($-1$) orientations.

\item The linear map
\[
\begin{array}{rccc}
\LL_{[h],k}:&\BB_{q+2m-2}((\MTM)_k)&\longrightarrow&\QQ\\
      &[\ell]&\longmapsto& \sum_{x\in\sres}\sig(x)w(x)
\end{array}
\] 
is well defined on bordism classes of pseudocycles, i.e. it does not depend on the choice of $\ell$. Moreover it only depends on the bordism class $[h]$, but not on the particular choice of $h$ (hence the notation). It neither depends on the choices of $P,\eta,\ga,D,E$.
\end{enumerate}
\end{theo}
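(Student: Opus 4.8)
The plan is to run, in the branched‑manifold setting, the same chain of transversality and compactness arguments that underpinned the single‑valued construction in Section \ref{sec:lambdasimple}, now using the genericity results already proved in Chapter \ref{ch:multivalued}. First I would fix $\eta\in\DC$, $\ga\in\EE$ and $P\in\PP$ generic and establish (1): by the remark at the end of Section \ref{sec:pseudocycles} the notions of omega‑limit set and strong transversality make sense for branched manifolds, and the arguments of Lemma \ref{lemm:generictransverse} apply stratum by stratum — on each branch $g_k^P$ is an honest submersion onto $\CC P^1_k$ up to composing with a diffeomorphism, so a generic $\eta$ makes $\eta\circ h$ transverse to $g_k^P$ on every branch simultaneously (a countable intersection of residual sets is residual). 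The fibred product $CS(g_k^P,\eta\circ h)$ then inherits a branched atlas from the local charts of $\GG^P_\emptyset$ produced by Proposition \ref{prop:gzstruc}, with dimension $\dim(\GG^P_\emptyset\times S^1)_k+\dim V-\dim\CC P^1_k=(2m+2+4k)+(4k+2-q)-(2+4k)=2m+4k+2-q$ as claimed. Here I use Lemma \ref{lemm:omlimgp} together with the dimension bounds to guarantee, via the branched analogue of Lemma \ref{lemm:genericstronglytransverse}, that the strong‑transversality condition can also be arranged, so that the omega‑limit set of $g_k^P$ does not interfere.

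For (2) I would again argue as in Lemma \ref{lemm:genericstronglytransverse}: composing $\ell$ with a generic $\ga\in\EE$ and using Lemma \ref{lemm:omlimfp} to bound the omega‑limit set of $f_k^P\circ\pi_{\eta\circ h}^P$ by images of the $\Ga^P_{I\downarrow,k},\Ga^P_{I\uparrow,k},\Ga^P_{I-,k}$ (all of dimension $\le 2m+4k$, after the Borel construction), a dimension count shows that $\ga\circ\ell$ is strongly transverse to $f_k^P\circ\pi_{\eta\circ h}^P$; since the two source dimensions add up to $\dim(\MTM)_k$, the Cartesian square $\sres$ is a compact $0$‑dimensional branched manifold, hence a finite set. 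For (3), the key point is that $\sres$ is generically disjoint from the branching locus $(\GG^P_\emptyset)^\prec$: by Proposition \ref{prop:gzstruc} the branching points form a subset of codimension at least $1$ inside $\GG^P_\emptyset$, so after composing with generic diffeomorphisms $\sres$ misses it, and each $x\in\sres$ lies on a single branch, where the weight $w(x)$ of Definition \ref{defi:weight} and the intersection sign $\sig(x)$ are unambiguously defined exactly as in the manifold case.

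Part (4) is the heart of the statement and I expect it to be the main obstacle. The independence of $\ell$ within its bordism class and of $h$ within its bordism class is proved by the standard cobordism argument: given a bordism $\tilde\ell$, make it strongly transverse to $f_k^P\circ\pi_{\eta\circ h}^P$ by Lemma \ref{lemm:genericstronglytransverse}; then $CS(f_k^P\circ\pi_{\eta\circ h}^P,\ga\circ\tilde\ell)$ is a compact $1$‑dimensional \emph{weighted} branched manifold — a train track in the sense of Definition \ref{defi:traintrack} — whose weighting is inherited from the weights of the branches of $\GG^P_\emptyset$ (Definition \ref{defi:weight}), and whose boundary is $\sres_{\ell_1}\sqcup-\sres_{\ell_0}$. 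Proposition \ref{prop:ttboundary}, the weighted analogue of "a compact $1$‑manifold has an even number of boundary points", then gives $\sum_{x\in\sres_{\ell_1}}\sig(x)w(x)=\sum_{x\in\sres_{\ell_0}}\sig(x)w(x)$, which is exactly well‑definedness; the verification that weights add correctly at branching points of the cobordism is precisely the weighting condition of Definition \ref{defi:weighting}, and checking it is where the real care is needed. The independence of $h$ up to bordism is symmetric, using a bordism in $\CC P^1_k$ and Lemma \ref{lemm:omlimgp}. For independence of $(P,\eta,\ga)$ I would connect two generic choices by a generic path in $\PP\times\DC\times\EE$ (the analogue of Lemma \ref{lemm:bordclassindep}, now combined with the genericity theorem at the end of Chapter \ref{ch:multivalued} ensuring the transversality conditions persist along the path), producing a train‑track cobordism between the two Cartesian squares and concluding again by Proposition \ref{prop:ttboundary}; independence of the finite‑dimensional spaces $D,E$ follows since any two such choices are contained in a common larger one, over which the path argument applies. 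Finally, taking the residual subset $R\subseteq\PP\times\DC\times\EE$ to be the intersection of the (countably many) residual subsets produced in (1)–(4) gives the asserted $R$.
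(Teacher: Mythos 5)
Your proposal follows essentially the same route as the paper's proof: generic transversality via Lemmas \ref{lemm:generictransverse} and \ref{lemm:genericstronglytransverse} applied in the branched setting for (1)--(2), avoidance of the branching locus for (3), and weighted train-track cobordisms with Proposition \ref{prop:ttboundary} for all the independence statements in (4), including the trick of embedding $D,E$ in a common larger space. The only cosmetic difference is that you connect $(P,\eta,\ga)$ by one path in the product while the paper runs three separate path/parametrized-moduli arguments, which changes nothing essential.
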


\begin{proof}
The proof of this theorem relies on strong transversality techniques (Section \ref{sec:intersectionpairing}) but applied to branched manifolds instead of usual manifolds. We will also use train tracks (Section \ref{subsec:traintracks}) to prove most of point (4):

Applying Lemma \ref{lemm:generictransverse} to $g_k^P$ and $h$ we deduce that for a generic $\eta\in\DC$, the maps $g_k^P$ and $\eta\circ h$ are transverse. Then $CS(g_k^P,\eta\circ h)$ is a branched manifold and its dimension is 
\[
\begin{array}{l}
\dim(\GG_\emptyset^P\times S^1)_k+\dim V-\dim\CC P^1_k=\\
\hspace{3cm}=(2m+4k+2)+(4k+2-q)-(4k+2)\\
\hspace{3cm}=2m+4k+2-q
\end{array}.
\]
This proves (1).

To prove (2) we apply \ref{lemm:genericstronglytransverse} to the maps $f_k^P\circ\pi_{\eta\circ h}^P$ and $\ell$ to assert that for a generic $\ga\in\EE$, the maps $f_k^P\circ\pi_{\eta\circ h}^P$ and $\ga\circ\ell$ are strongly transverse. Note that to do so we need some dimensional relations to hold: from Lemmas \ref{lemm:omlimfp}, \ref{lemm:omlimgp} and equivariance we can construct omega-maps for $f_k^P\circ\pi_{\eta\circ h}^P$ of codimension 2 and since $\ell$ is a pseudocycle it also admits an omega-map of codimension 2. Now, thanks to Lemma \ref{lemm:stcsfinite}, $CS(f_k^P\circ\pi^P_\eta,\ga\circ\ell)$ is a compact smooth branched manifold of dimension
\[
\begin{array}{l}
\dim CS(g_k^P,\eta\circ h)+\dim W-\dim (\MTM)_k=\\
\hspace{3cm}=(2m+4k+2-q)+(q+2m-2)-(4m+4k)=0.
\end{array}
\] 
Being compact and $0$-dimensional means that it is a finite set.

The claim in (3) follows from the fact that the set of branching points $(\GG_\emptyset^P)^\prec$ has zero measure.

Now we shall see that $\LL_{[h],k}$ does not depend on the choice of a representative in $[\ell]$. The proof of this fact is very similar to that of Proposition \ref{prop:intersectionproduct}: let $(\ell',W')$ be a pseudocycle bordant with $\ell$ and let $(\tilde{\ell},\widetilde{W})$ be a bordism between $\ell$ and $\ell'$. By Lemma \ref{lemm:genericstronglytransverse} $f_k^P\circ\pi_\eta^P$ and $\ga\circ\tilde{\ell}$ are strongly transverse maps for a generic choice of $\ga\in\EE$. Then $CS(f_k^P\circ\pi_\eta^P,\ga\circ\tilde{\ell})=:T$ is a compact branched manifold of dimension 1. Since it is oriented, $T$ is a train track. Let $S:=\sres$ and $S':=\mathcal{S}_{(h,\ell',P,\eta,\ga)}$. Then $\partial T=S'-S$ because $\partial\widetilde{W}=W'-W$. Reversing the orientation of $T$ if necessary we get that
\[
\partial T^+=\{x\in S:\ \sig(x)=1\}\cup\{x\in S':\ \sig(x)=-1\},
\]
\[
\partial T^-=\{x\in S:\ \sig(x)=-1\}\cup\{x\in S':\ \sig(x)=1\}.
\]
Finally note that $T$ carries a weighting (see Definition \ref{defi:weighting}) induced by the weights on $\GG_\emptyset^P$ as in (3). Applying Proposition \ref{prop:ttboundary} we deduce that
\[
\sum_{x\in S\atop\sig(x)=1}w(x)+\sum_{x\in S'\atop\sig(x)=-1}w(x)=\sum_{x\in S\atop\sig(x)=-1}w(x)+\sum_{x\in S'\atop\sig(x)=1}w(x),
\]
which collecting in terms of $S$ and $S'$ gives
\[
\sum_{x\in S\atop\sig(x)=1}w(x)-\sum_{x\in S\atop\sig(x)=-1}w(x)=\sum_{x\in S'\atop\sig(x)=1}w(x)-\sum_{x\in S'\atop\sig(x)=-1}w(x).
\]
From this we conclude that
\[
\sum_{x\in S}\sig(x)w(x)=\sum_{x\in S'}\sig(x)w(x),
\]
which is what we wanted to prove.

To show independence from the choice of a rsepresentative in $[h]$ we do something very similar: if $h'$ is a pseudocycle and $\tilde{h}$ is a cobordism between $h$ and $h'$, for a generic $\eta\in\DC$ we have that $CS(g_k^P,\eta\circ\tilde{h})$ is an oriented compact branched manifold and that $CS(f_k^P\circ\pi_{\eta\circ\tilde{h}}^P,\ga\circ\ell)$ is a train track with boundary ${\cal S}_{(h',\ell,P,\eta,\ga)}-{\cal S}_{(h,\ell,P,\eta,\ga)}$. Applying the same arguments as above to this train track we get the desired result.

We also use train tracks to prove independence from $\eta$, $\ga$ and $P$:

Let $\ga_0,\ga_1\in\EE$ be generic and satisfy (2). Take a path $\psi:[0,1]\rightarrow\EE$ such that $\psi(0)=\ga_0,\psi(1)=\ga_1$ and such that $f_k^P\circ\pi_{\eta\circ h}^P$ and
\[
\begin{array}{rccc}
L: & W\times[0,1]&\longrightarrow &(\MTM)_k\\
   & (w,t)&\longmapsto &(\psi(t)\circ\ell)(w)
\end{array}
\]
are strongly transverse maps: we can do so because the dimensions of the omega-limit sets of $L$ and $f_k^P\circ\pi^P_{\eta\circ h}$ are controlled (in the fashion of Lemma \ref{lemm:genericstronglytransverse}), thanks to $\ell$ being a pseudocycle in the former and from Lemmas \ref{lemm:omlimfp}, \ref{lemm:omlimgp} in the latter. Then $CS(f_k^P\circ\pi_{\eta\circ h},L)$ is a train track with boundary $\mathcal{S}_{(h,\ell,P,\eta,\ga_1)}-\mathcal{S}_{(h,\ell,P,\eta,\ga_0)}$. Using the same arguments on train tracks as above we get independence of $\ga\in\EE$.

Independence of $\eta$ is proved very similarly: let $\eta_0,\eta_1\in\DC$ be generic and satisfy (1). Take a path $\psi:[0,1]\rightarrow\DC$ such that $\psi(0)=\eta_0,\psi(1)=\eta_1$ making $g_k^P$ and
\[
\begin{array}{rccc}
H: & V\times[0,1]&\longrightarrow &\CC P^1_k\\
   & (v,t)&\longmapsto &(\psi(t)\circ h)(w)
\end{array}
\]
into transverse maps. Then $CS(g_k^P,H)$ is a smooth branched manifold and the projection $\pi^P_H:CS(g_k^P,H)\rightarrow(\GG_\emptyset^P\times S^1)_k$ is smooth. If we take $\psi$ so that $f_k^P\circ\pi^P_H$ and $\ga\circ\ell$ are strongly transverse maps (which we can, controlling dimensions of omega-limit sets as above), then $CS(f_k^P\circ\pi_H^P,\ga\circ\ell)$ is a train track with boundary $\mathcal{S}_{(h,\ell,P,\eta_1,\ga)}-\mathcal{S}_{(h,\ell,P,\eta_1,\ga)}$. This shows independence of $\eta$.

Finally, once again independence of $P$ follows in a very similar way. Take $P_0,P_1\in\PP$ and let $\psi:[0,1]\rightarrow\PP$ be a a path such $\psi(0)=P_0$ and $\psi(1)=P_1$. Define 
\[
\GG=\{(x,t):t\in[0,1],x\in(\GG_\emptyset^{\psi(t)}\times S^1)_k\}
\]
and consider the maps
\[
\begin{array}{rccc}
g: & \GG&\longrightarrow &\CC P^1_k\\
   & (x,t)&\longmapsto &g_k^{\psi(t)}(x)
\end{array},
\begin{array}{rccc}
f: & \GG&\longrightarrow &(\MTM)_k\\
   & (x,t)&\longmapsto &f_k^{\psi(t)}(x)
\end{array}
\]
Taking $\psi$ properly we get that $CS(g,\eta\circ h)$ is a smooth branched manifold, and we have a projection $\pi:CS(g,\eta\circ h)\rightarrow\GG$, and also we have that $CS(f\circ\pi,\ga\circ\ell)$ is a train track with boundary $\mathcal{S}_{(h,\ell,P_1,\eta,\ga)}-\mathcal{S}_{(h,\ell,P_0,\eta,\ga)}$. This shows independence of $P$.

The only remaining thing to do is to prove that $\LL_{[h],k}$ is independent of the choice of $D$ and $E$: if $D'$ also satisfies the conditions of Lemma \ref{lemm:surjev}, then so does $D'':=D+D'$. Applying the result to $\DC''=\exp(D'')$ instead of $\DC$ we see that $L_{[h],k}$ does not depend on the choice of elements within (a residual subset of) $\DC''$. Since $\DC''$ contains both $\DC$ and $\DC':=\exp D'$, we can assert that $L_{[h],k}$ does not depend of any choice within these two sets. The same applies to $E$.
\end{proof}

In this theorem, given a bordism class $[h]\in\BB_{4k+2-q}(\CC P^1_k)$ we have constructed a linear map $\LL_{[h],k}:\BB_{q+2m-2}((\MTM)_k)\rightarrow\QQ$. Recall from Section \ref{sec:pseudocycles} on pseudocycles the equivalences 
\[
\begin{array}{rl}
\Psi_*^{(\MTM)_k}:& H_*((\MTM)_k;\ZZ)\longrightarrow\BB_*((\MTM)_k)\\
\Phi_*^{\CC P^1_k}:&\BB_*(\CC P^1_k)\longrightarrow H_*(\CC P^1_k;\ZZ)
\end{array}.
\]

Then $\LL_{[h],k}\circ\Psi_*^{(\MTM)_k}: H_{q+2m-2}((\MTM)_k;\ZZ)\rightarrow\QQ$ is linear and we can view it as
\[
\LL_{[h],k}\circ\Psi_*^{(\MTM)_k}\in H^{q+2m-2}((\MTM)_k;\QQ).
\] 
Also $\Phi_*^{\CC P^1_k}[h]\in H_{4k+2-q}(\CC P^1_k;\ZZ)$ and via intersection pairing of pseudocycles we get a cohomology class 
\[
I_{\Phi_*^{\CC P^1_k}[h]}\in H^q(\CC P^1_k;\QQ).
\]
Since the elements of the form $I_{\Phi_*^{\CC P^1_k}(\be)}$ with $\be\in\BB_*(\CC P^1_k)$ generate $H^*(\CC P^1_k;\QQ)$ we get a correspondence
\[
\begin{array}{rccc}
\lm_k:H^*(\CC P^1_k;\QQ) & \longrightarrow & H^{*+2m-2}((\MTM)_k;\QQ)\\
I_{\Phi_*^{\CC P^1_k}(\be)} &\longmapsto & \LL_{\be,k}\circ\Psi_*^{(\MTM)_k}
\end{array},
\]
Finally, by means of stabilisation in equivariant cohomology --Lemma \ref{lemm:stabilisation}-- we get a map 
\[
\lm:H^*_{\STS}(\CC P^1;\QQ)\longrightarrow H^{*+2m-2}_{\STS}(\MTM;\QQ),
\]
which we call the {\it lambda-map}.

\begin{rema} Before going on we shall justify why we used different techniques to define the lambda-map for the semi-free and for the general case: for the semi-free case we could have used ideas related to the intersection pairing as well, but that would have made us lose information contained in the torsion. Moreover, being able to use the correspondence between bordisms of pseudocycles and integral homology we were able to make a more direct construction. Trying this more direct construction in the general case would be possible if there was a theory relating bordisms of pseudocycles defined on branched manifolds and rational homology. Such a result may be achieved if we define branched manifolds always with a weighting as in \cite{Sal} and try to make an analogous construction to that in \cite{Zin} for such more general pseudocycles. However, this is far from the scope of our work.
\end{rema}


\section{The Kirwan map}

From now on will consider a general action of $S^1$ (i.e. not necessarily semi-free) and all cohomologies will be taken with {\bf rational coefficients}.

Let $c\in\RR$ be a regular value of the moment map $\mu$. The $S^1$-action on $\mu^{-1}(c)$ has finite stabilisers and by the Cartan isomorphism (Proposition \ref{prop:cartaniso}) we have that $H^*_{S^1}(\mu^{-1}(c))\simeq H^*(M\sq_cS^1)$, where $M\sq_cS^1:=\mu^{-1}(c)/S^1$ is a common notation for the symplectic quotient. The inclusion $\mu^{-1}(c)\hookrightarrow M$ induces a map $H_{S^1}^*(M)\rightarrow H_{S^1}^*(\mu^{-1}(c))$, which composed with the Cartan isomorphism gives rise to the {\it Kirwan map}
\[
\ka_c:H^*_{S^1}(M)\longrightarrow H^*(M\sq_cS^1).
\]

This construction is valid for any Hamiltonian action of an abelian group. In particular for the $\STS$ action on $\MTM$ defined by $(\te,\ze)(p,q)=(\te\cdot p,\ze\cdot q)$, whose moment map is $(p,q)\mapsto(\mu(p),\mu(q))$. If $c\in\RR$ is a regular value of $\mu$, then $(c,c)$ is a regular value of this action and there is a natural identification $(\MTM)\sq_{(c,c)}(\STS)\simeq M\sq_cS^1\times M\sq_cS^1$. The corresponding Kirwan map is of the form
\[
\ka_{(c,c)}:H^*_{\STS}(\MTM)\longrightarrow H^*(M\sq_cS^1\times M\sq_cS^1).
\]

\begin{defi}
Let $c$ be a regular value of $\mu$. A cohomology class $\be\in H^{2m-2}_{\STS}(\MTM)$ is a {\it biinvariant diagonal class} for $c$ if $\ka_{(c,c)}(\be)$ is the Poincaré dual of $[\De_{M\ssq_cS^1\times M\ssq_cS^1}]$. We say that $\be$ is a {\it global} biinvariant diagonal class if it is a biinvariant diagonal class for all regular values of $\mu$.
\end{defi}

\begin{rema}
Since the action of $S^1$ on $\mu^{-1}(c)$ may contain non-trivial finite stabilisers, $M\sq_c S^1$ is not a smooth manifold in general, but only an orbifold. Nevertheless, Poincaré duality still holds for orbifolds. The details can be found in \cite{Sat} (note that Satake refers to orbifolds as ``V-manifolds'').
\end{rema}

Using the isomorphism $H_*(-;\QQ)\simeq H^*(-;\QQ)^\vee$ we can view Poincaré duality as taking values in the dual of the cohomology, i.e. 
\[
PD_{M\ssq_cS^1}:H^*(M\sq_cS^1)\longrightarrow H^{2m-2-*}(M\sq_cS^1)^\vee.
\]
Using the natural identification
\[
(\MTM)\times_{\STS}B(\STS)\simeq (M\times_{S^1}BS^1)\times (M\times_{S^1}BS^1)
\] 
and Künneth we have
\[
H_{\STS}^{2m-2}(\MTM)\simeq\bigoplus_{p=0}^{2m-2}H_{S^1}^{2m-2-p}(M)\otimes H_{S^1}^p(M).
\]
Consider the map
\[
\bigoplus_{p=0}^{2m-2}H_{S^1}^{2m-2-p}(M)\otimes H_{S^1}^p(M)\longrightarrow\bigoplus_{p=0}^{2m-2}H^p(M\sq_cS^1)^\vee\otimes H_{S^1}^p(M),
\] 
defined by $(PD_{M\ssq_cS^1}\circ\ka_c)\ot\mbox{id}$. In this way for any regular value $c$ of the moment map $\mu$ and for any degree $2m-2$ equivariant cohomology class $\be$ of $\MTM$ we get a degree preserving linear map $l_c^\be:H^*(M\sq_cS^1)\longrightarrow H_{S^1}^*(M)$.

\begin{lemm}\label{lemm:rightinverse}
If $\be$ is a biinvariant diagonal class for $c$, then $l_c^\be$ is a right inverse of $\ka_c$.
\end{lemm}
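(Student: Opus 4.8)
The plan is to reduce the statement to the computation already sketched in the introduction, specialised to $G=S^1$ and to the fixed regular value $c$. First I would fix, once and for all, a Künneth decomposition $\be=\sum_i\ep_i\ot\eta_i$ with $\ep_i\in H^{2m-2-k_i}_{S^1}(M)$ and $\eta_i\in H^{k_i}_{S^1}(M)$, coming from the identification $(\MTM)\times_{\STS}B(\STS)\simeq(M\times_{S^1}BS^1)\times(M\times_{S^1}BS^1)$ and the Künneth isomorphism recalled just above the lemma. Unwinding the definition $l_c^\be=((PD_{M\ssq_cS^1}\circ\ka_c)\ot\mathrm{id})(\be)$, and using that under the identification $H_*(M\sq_cS^1)\simeq H^*(M\sq_cS^1)^\vee$ the functional $PD_{M\ssq_cS^1}(\gamma)$ sends $\al$ to $\int_{M\ssq_cS^1}\al\smile\gamma$, one gets
\[
l_c^\be(\al)=\sum_i\left(\int_{M\ssq_cS^1}\al\smile\ka_c(\ep_i)\right)\eta_i
\]
for every $\al\in H^*(M\sq_cS^1)$.

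Next I would apply $\ka_c$, which is linear (in fact a ring homomorphism), to obtain $\ka_c(l_c^\be(\al))=\sum_i\left(\int_{M\ssq_cS^1}\al\smile\ka_c(\ep_i)\right)\ka_c(\eta_i)$. Then I would invoke the compatibility of the two Künneth decompositions involved — on the source $H^*_{\STS}(\MTM)$ and on the target $H^*((\MTM)\sq_{(c,c)}(\STS))\simeq H^*(M\sq_cS^1\times M\sq_cS^1)$ — under which $\ka_{(c,c)}$ is identified with $\ka_c\ot\ka_c$; this is exactly the observation recorded in the introduction that the Kirwan map of the Cartesian product reads as $\ka\ot\ka$. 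Consequently $\ka_{(c,c)}(\be)=\sum_i\ka_c(\ep_i)\ot\ka_c(\eta_i)$ in $H^*(M\sq_cS^1\times M\sq_cS^1)$.

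The final ingredient is the standard characterisation of the Poincaré dual of the diagonal: for a compact connected oriented orbifold $Q$ — Poincaré duality being available by Satake, as noted in the remark above — if $\tau=\sum_i u_i\ot v_i\in H^*(Q\times Q)$ equals $PD_{Q\times Q}[\De_Q]$, then $\sum_i\left(\int_Q\al\smile u_i\right)v_i=\al$ for every $\al\in H^*(Q)$. Since $\be$ is a biinvariant diagonal class for $c$ we have $\ka_{(c,c)}(\be)=PD[\De_{M\ssq_cS^1\times M\ssq_cS^1}]$, so applying the displayed identity with $Q=M\sq_cS^1$, $u_i=\ka_c(\ep_i)$, $v_i=\ka_c(\eta_i)$ yields $\ka_c(l_c^\be(\al))=\al$. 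As $l_c^\be$ is degree preserving by construction and this holds for every $\al$, $l_c^\be$ is a right inverse of $\ka_c$.

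The routine but genuinely delicate point is the bookkeeping of orientations and signs: one must check that the conventions in $PD_M(\al)=\al\frown[M]$, in the identification $H_*\simeq(H^*)^\vee$, in the intersection pairing, and in the Künneth comparison $\ka_{(c,c)}\leftrightarrow\ka_c\ot\ka_c$ are mutually consistent, so that the $(-1)^{k_i(2m-2-k_i)}$-type signs that could a priori appear in the two identities above cancel and the formulas hold on the nose (as in the introduction). Everything else is formal once Poincaré duality on the orbifold quotient is in place.
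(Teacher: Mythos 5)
Your proof is correct and follows essentially the same route as the paper: decompose $\be=\sum\ep_i\ot\eta_i$ via Künneth, observe that $\ka_{(c,c)}(\be)=\sum\ka_c(\ep_i)\ot\ka_c(\eta_i)$, unwind $l_c^\be(\al)=\sum\bigl(\int_{M\ssq_cS^1}\al\smile\ka_c(\ep_i)\bigr)\eta_i$, and conclude with the reproducing property of the Poincaré dual of the diagonal. Your closing remark on sign conventions is a reasonable caution but not a gap; the paper's own proof handles it exactly as you do, implicitly relying on consistent conventions.
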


\begin{proof}
Let $\be=\sum\ep_i\ot\eta_i\in H_{S^1}^*(M)\ot H_{S^1}^*(M)$ be any decomposition. Then we get $\ka_{(c,c)}(\be)=\sum\ka_c(\ep_i)\otimes\ka_c(\eta_i)$. Since $\ka_{(c,c)}(\be)$ is the Poincaré dual of $[\De_{M\ssq_cS^1\times M\ssq_cS^1}]$, for any $a\in H^*(M\sq_cS^1)$ we have
\[
\sum\left(\int_{M\ssq_cS^1}a\smile\ka_c(\ep_i)\right)\ka_c(\eta_i)=a.
\]
On the other hand
\[
l_c^\be(a)=\sum\left(\int_{M\ssq_cS^1}a\smile\ka_c(\ep_i)\right)\eta_i,
\] 
and applying $\ka_c$ we get 
\[
(\ka_c\circ l_c^\be)(a)=\sum\left(\int_{M\ssq_cS^1}a\smile\ka_c(\ep_i)\right)\ka_c(\eta_i).
\] 
Therefore $(\ka_c\circ l_c^\be)(a)=a$.
\end{proof}

We have defined biinvariant diagonal classes and we have seen that they provide right inverses of the Kirwan map. However we still do not know if biinvariant diagonal classes exist. The lambda-map now comes into play to prove that they do exist:

\begin{theo}\label{theo:lambdagivesbdc}
$\lm(1)$ is a global biinvariant diagonal class.
\end{theo}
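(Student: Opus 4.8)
\emph{Reduction.} Fix a regular value $c$ of $\mu$; by definition $\lm(1)$ is a global biinvariant diagonal class once we show that for every such $c$ the class $\ka_{(c,c)}(\lm(1))$ is Poincaré dual to $[\De_{M\ssq_cS^1\times M\ssq_cS^1}]$. Recall that $\ka_{(c,c)}$ is the composite of the restriction morphism $\iota_c^*\colon H^*_{\STS}(\MTM)\to H^*_{\STS}(\mu^{-1}(c)\times\mu^{-1}(c))$ with the Cartan isomorphism of Proposition \ref{prop:cartaniso}, the latter induced by the projection $q\colon(\mu^{-1}(c)\times\mu^{-1}(c))\times_{\STS}E(\STS)\to M\sq_cS^1\times M\sq_cS^1$ (whose fibre is rationally acyclic because the stabilisers on $\mu^{-1}(c)$ are finite). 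By Lemma \ref{lemm:stabilisation} it suffices to argue at a fixed large approximation level $k$, where $\lm_k(1)\in H^{2m-2}((\MTM)_k;\QQ)$ and the slice $(\mu^{-1}(c)^2)_k:=(\mu^{-1}(c)\times\mu^{-1}(c))\times_{\STS}(S^{2k+1}\times S^{2k+1})$ is an oriented submanifold of $(\MTM)_k$.

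\emph{Step 1: identify $\lm_k(1)$.} Unwinding the construction, $\lm_k(1)$ is the equivariant rational Poincaré dual of the class carried by the pseudocycle $f^P_k$. In the semi-free case this is literally $PD^{-1}_{(\MTM)_k}\Phi^{(\MTM)_k}_*[f^P_k]$: take $\al=1$ in the definition of $\lm_k$ and use $\pi_{\CC P^1_k}^*(1)=1$, so that $\pi_{(\MTM)_k}$-pushforward collapses $f^P_k\times g_k$ to $f^P_k$. In the general case, apply Theorem \ref{theo:bigone}(4) with $h=\mathrm{id}_{\CC P^1_k}$, for which $I_{\Phi^{\CC P^1_k}_*[h]}=1\in H^0(\CC P^1_k)$; then for $b\in H_{2m-2}((\MTM)_k;\QQ)$ the value $\lm_k(1)(b)$ is the weighted signed count $\sum_{x\in\sres}\sig(x)w(x)$ of the intersections of $f^P_k$ with a pseudocycle $\ell$ representing $\Psi^{(\MTM)_k}_*(b)$. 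Either way, pairing $\lm_k(1)$ with cycles amounts to intersecting the compactified image of $f^P_k$ --- the Borel-construction incarnation of the set $\De_{S^1}$ of the introduction --- with cycles in $(\MTM)_k$.

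\emph{Step 2: intersection with the slice (Property 3 made rigorous).} The core of the proof is the description of $\mathrm{im}\,f^P_k\cap(\mu^{-1}(c)^2)_k$. Since $f^P_k$ sends (the class of) $((G,\al);s)$ to (the class of) $(b,\al\cdot e;s)$, where $b,e$ are the beginning and end of the perturbed gradient line $G$, and $\mu$ is $S^1$-invariant, landing in the slice forces $\mu(b)=\mu(e)=c$. But along a perturbed gradient segment $\mu$ is strictly decreasing through the regular level $c$ (Lemma \ref{lemm:monodecreasing}), so $\mu(b)=\mu(e)=c$ occurs only for point-like $G=(\{b\};\dots;b)$ with $b\in\mu^{-1}(c)$. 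Exploiting the freedom in the choice of auxiliary regular levels and the independence statements of Theorem \ref{theo:bigone}, we may take the perturbation zone $Z'$ disjoint from $\mu^{-1}(c)$; then such $G$ carry weight $1$ (empty product in Definition \ref{defi:weight}) and are not branching points. Hence $(f^P_k)^{-1}((\mu^{-1}(c)^2)_k)$ is, along the relevant stratum, the submanifold parametrised by $\{(b,\al;s):b\in\mu^{-1}(c),\ \al\in S^1\}$ modulo the $\STS$-relation, $f^P_k$ restricts on it to an orientation-compatible embedding onto $q^{-1}(\De_{M\ssq_cS^1\times M\ssq_cS^1})$, and this intersection is transverse; all remaining omega-limit pieces (broken lines, tangencies, fixed-point strata) have codimension $\geq 2$ and, $c$ being regular, can be pushed off the slice with a generic choice of the auxiliary diffeomorphisms as in Section \ref{sec:intersectionpairing}. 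A dimension count ($\dim\De_{S^1}=2m+2$, $\dim\mu^{-1}(c)^2=4m-2$, so the intersection has dimension $2m=\dim\mu^{-1}(c)+\dim S^1$) confirms that nothing is missed; this is precisely Property 3 of the introduction.

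\emph{Step 3: conclusion, and the main obstacle.} Restricting an equivariant Poincaré dual to a submanifold to which the representing pseudocycle is strongly transverse yields the Poincaré dual of the intersection pseudocycle --- the branched/pseudocycle analogue of the standard fact, proved by a local computation in the spirit of Lemma \ref{lemm:commutes} together with the strong-transversality tools of Section \ref{sec:intersectionpairing}. Therefore $\iota_c^*(\lm_k(1))=PD^{-1}_{(\mu^{-1}(c)^2)_k}[\,q^{-1}(\De_{M\ssq_cS^1\times M\ssq_cS^1})\,]$. Finally the Cartan isomorphism, induced by the rationally acyclic fibration $q$ and compatible with orbifold Poincaré duality (\cite{Sat}), carries $PD^{-1}[\,q^{-1}(Z)\,]$ to $PD^{-1}[Z]$ for a suborbifold $Z$; applied to $Z=\De_{M\ssq_cS^1\times M\ssq_cS^1}$ this gives $\ka_{(c,c)}(\lm(1))=PD[\De_{M\ssq_cS^1\times M\ssq_cS^1}]$, and since $c$ was arbitrary, $\lm(1)$ is a global biinvariant diagonal class. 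I expect the hard part to be Steps 2--3: making precise, with the rational weights and on branched manifolds, both that the weighted intersection of $\mathrm{im}\,f^P_k$ with the regular-level slice has total multiplicity exactly $1$ along $q^{-1}(\De)$ --- so that one gets $PD[\De]$ on the nose, with the correct sign, rather than a nonzero multiple --- and that restriction of an equivariant Poincaré dual equals the Poincaré dual of the transverse intersection in this generalised setting.
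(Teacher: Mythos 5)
Your proposal follows essentially the same route as the paper's own proof: reduce to the approximation level $k$, identify $\lm_k(1)$ via $h=\mathrm{id}_{\CC P^1_k}$ (so that evaluating $\lm_k(1)$ on a class means taking a weighted signed count of intersections of $f_k^P$ with a representing pseudocycle), and then use the strict monotonicity of $\mu$ along perturbed gradient segments to show that $\im f_k^P$ meets the level-$c$ slice exactly along the preimage of $\De_{M\ssq_cS^1\times M\ssq_cS^1}$, which is the paper's key geometric step. The only deviations are minor: the paper compares the two classes by pairing with pseudocycles $h_a$ rather than through a class-level restriction plus Cartan/Poincaré-duality compatibility, and it does not move the perturbation zone off $\mu^{-1}(c)$ (an independence not literally covered by Theorem \ref{theo:bigone}); instead the multiplicity one along the diagonal comes from the $o_j$ lifts of a point-like line each carrying weight $1/o_j$, which is the cleaner way to settle the issue you flag as the hard part.
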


\begin{proof}
To simplify notation we will write $\De:=\De_{M\sq_cS^1\times M\sq_cS^1}$ and we will omit subscripts and superscripts for $PD$, $\Phi_*$ and $\Psi_*$ which should be clear from the context.

Take $k$ large enough so that $\lm(1)=\lm_k(1)$ and so that we have isomorphisms
\[
H^{2m-2}_{\STS}(\MTM)\simeq H^{2m-2}((\MTM)_k),
\]
\[
H^{2m-2}_{\STS}(\mu^{-1}(c)\times\mu^{-1}(c))\simeq H^{2m-2}((\mu^{-1}(c)\times\mu^{-1}(c))_k).
\]

Consider the maps
\[
\begin{diagram}
\node{(\mu^{-1}(c)\times\mu^{-1}(c))_k}\arrow{e,t,J}{\io_k}\arrow{s,l}{v_k}\node{(\MTM)_k}\\
\node{M\sq_cS^1\times M\sq_cS^1}
\end{diagram}
\]
where the vertical arrow, defined by $[(p,q),s]\mapsto([p],[q])$, induces the Cartan isomorphism in degree $2m-2$ cohomology. Then 
\[
\ka_{(c,c)}(\lm(1))=((v_k^*)^{-1}\circ\io_k^*\circ\lm_k)(1).
\]
and therefore the result is equivalent to proving that
\[
(v_k)^*PD^{-1}[\De_{M\ssq_cS^1\times M\ssq_cS^1}]=(\io_k^*\circ\lm_k)(1).
\]

Let us first compute the right hand side of this equation: the cohomology class $\lm_k(1)$ can be seen as a morphism
\[
\vp_k:H_{2m-2}((\MTM)_k;\ZZ)\longrightarrow\QQ
\] 
and then the cohomology class $(\io_k^*\circ\lm_k)(1)$ is identified with the map $\vp_k\circ(\io_k)_*$. Given $a\in H_{2m-2}((\mu^{-1}(c)\times\mu^{-1}(c))_k;\ZZ)$ we shall compute $(\vp_k\circ(\io_k)_*)a$.

If $id$ denotes the identity on $\CC P^1_k$, then its bordism class is identified with the fundamental class of $\CC P^1_k$: $\Phi_*[id]=[\CC P^1_k]$. Then $I_{\Phi_*[id]}=PD^{-1}[\CC P^1_k]=1$ and from the definition of the lambda-map it follows that $\vp_k=\LL_{[id],k}\circ\Psi_*$.

Let us compute $\LL_{[id],k}$ using Theorem \ref{theo:bigone}: $CS(g_k^P,id)$ is the graph of $g_k^P$ and therefore the projection $\pi_{id}^P:CS(g_k^P,id)\rightarrow(G_\emptyset^P\times S^1)_k$ is a diffeomorphism, so for a $(2m-2)$-pseudocycle $\ell$ of $(\MTM)_k$ we can identify $CS(f_k^P\circ\pi_{id}^P,\ell)$ with $CS(f_k^P,\ell)$. Hence $\LL_{[id],k}([\ell])$ is obtained by counting points of $CS(f_k^P,\ell)$ with signs and weights. In other words, $\LL_{[id],k}([\ell])=\#_wCS(f_k^P,\ell)$ where $\#_w$ denotes counting with weights.

If $h_a$ is a $(2m-2)$-pseudocycle of $(\mu^{-1}(c)\times\mu^{-1}(c))_k$ such that $\Phi_*[h_a]=a$, then
\[
\begin{array}{rcl}
(\vp_k\circ(\io_k)_*)a & = & (\vp_k\circ(\io_k)_*)\Phi_*[h_a]\\
 & \stackrel{(1)}{=} & \vp_k(\Phi_*[\io_k\circ h_a])\\
 & \stackrel{(2)}{=} & (\LL_{[id],k}\circ\Psi_*\circ\Phi_*)[\io_k\circ h_a]\\
 &                =  & \LL_{[id],k}[\io_k\circ h_a]\\
 & \stackrel{(3)}{=} & \#_wCS(f_k^P,\io_k\circ h_a)
\end{array}
\]
where $(1)$ is given by the fact that $\io_k$ is a pseudocycle of $(\MTM)_k$ and the functoriality of $\Phi_*$, $(2)$ is the expression we found out for $\vp_k$ and $(3)$ follows from the computation in the last paragraph.

In conclusion, the cohomology class $(\io_k)^*\lm_k(1)$ is identified with the map
\[
\begin{array}{rccc}
\vp_k\circ(\io_k)_*:& H_{2m-2}((\mu^{-1}(c)\times\mu^{-1}(c))_k;\ZZ)&\longrightarrow &\QQ\\
 & a & \longmapsto & \#_wCS(f_k^P,\io_k\circ h_a)
\end{array},
\]
where $h_a$ is any pseudocycle such that $\Phi_*[h_a]=a$.

On the other hand the cohomology class $(v_k)^*PD^{-1}[\De_{M\ssq_cS^1\times M\ssq_cS^1}]$ seen as a map from integer homology to the rationals is defined by sending $a$ to the intersection number of $\im (v_k\circ h_a)$ and $\De_{M\ssq_cS^1\times M\ssq_cS^1}$. That this value coincides with $\#_wCS(f_k^P,\io_k\circ h_a)$ follows from the fact that
\[
v_k(\im f_k^P\cap(\mu^{-1}(c)\times\mu^{-1}(c))_k)=\De_{M\ssq_cS^1\times M\ssq_cS^1},
\]
which we shall now prove:

By equivariance we only need to see that if $(p,q)\in\im f^P\cap(\mu^{-1}(c)\times\mu^{-1}(c))$ then $p$ and $q$ are in the same $S^1$-orbit: since $(p,q)\in\im f^P$ there exists $\al\in S^1$ such that $p$ and $\al\cdot q$ are the beginning and the end of a $P$-perturbed gradient line, but since $\mu(\al\cdot q)=\mu(q)=\mu(p)$ the perturbed gradient line must be point-like and $p=\al\cdot q$, so $p$ and $q$ are in the same $S^1$-orbit.
\end{proof}

\begin{coro}[Kirwan surjectivity]\label{coro:kirwansurj}
The Kirwan map $\ka_c$ is surjective for every regular value $c$ of $\mu$.
\end{coro}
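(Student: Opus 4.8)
The plan is to derive Kirwan surjectivity for each regular value $c$ of $\mu$ directly from the existence of a biinvariant diagonal class, which Theorem \ref{theo:lambdagivesbdc} provides. First I would fix a regular value $c$ and set $\be=\lm(1)$, which is a global biinvariant diagonal class; in particular it is a biinvariant diagonal class for $c$. By Lemma \ref{lemm:rightinverse}, the associated degree-preserving linear map $l_c^\be:H^*(M\sq_cS^1)\rightarrow H^*_{S^1}(M)$ is a right inverse of the Kirwan map $\ka_c$, i.e. $\ka_c\circ l_c^\be=\mathrm{id}_{H^*(M\sq_cS^1)}$.

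From there the conclusion is immediate: if $\ka_c$ admits a right inverse then it is surjective. Explicitly, given any class $a\in H^*(M\sq_cS^1)$, the class $l_c^\be(a)\in H^*_{S^1}(M)$ satisfies $\ka_c(l_c^\be(a))=a$, so $a$ lies in the image of $\ka_c$. Since $a$ was arbitrary, $\ka_c$ is surjective. As $c$ was an arbitrary regular value of $\mu$, this establishes the corollary.

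There is really no main obstacle here — the corollary is a formal consequence of the machinery already built. The genuine content was absorbed into Theorem \ref{theo:lambdagivesbdc} (the existence of a biinvariant diagonal class via the lambda-map) and Lemma \ref{lemm:rightinverse} (that a biinvariant diagonal class yields a right inverse of $\ka_c$). The only thing worth a remark is that nothing about $c$ beyond regularity is used, which is exactly why one gets surjectivity at every regular value simultaneously; this is consistent with $\lm(1)$ being a \emph{global} biinvariant diagonal class.

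\begin{proof}
Fix a regular value $c$ of $\mu$. By Theorem \ref{theo:lambdagivesbdc}, $\lm(1)$ is a global biinvariant diagonal class, hence in particular a biinvariant diagonal class for $c$. By Lemma \ref{lemm:rightinverse}, the map $l_c^{\lm(1)}:H^*(M\sq_cS^1)\rightarrow H^*_{S^1}(M)$ is a right inverse of $\ka_c$, so $\ka_c\circ l_c^{\lm(1)}=\mathrm{id}$. Therefore, for every $a\in H^*(M\sq_cS^1)$ we have $\ka_c(l_c^{\lm(1)}(a))=a$, which shows that $\ka_c$ is surjective. Since $c$ was an arbitrary regular value of $\mu$, the proof is complete.
\end{proof}
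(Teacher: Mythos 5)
Your proof is correct and follows exactly the paper's argument: fix a regular value $c$, invoke Theorem \ref{theo:lambdagivesbdc} to see that $\lm(1)$ is a biinvariant diagonal class for $c$, and apply Lemma \ref{lemm:rightinverse} to obtain the right inverse $l_c^{\lm(1)}$, whence surjectivity. Nothing to add.
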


\begin{proof}
Given any regular value $c$ of $\mu$ the theorem asserts that $\lm(1)$ is a biinvariant diagonal class for $c$. Then, according to Lemma \ref{lemm:rightinverse}, $l_c^{\lm(1)}$ is a right-inverse of $\ka_c$ and hence $\ka_c$ is surjective.
\end{proof}

\begin{rema}
Kirwan surjectivity was originally proved by Kirwan in \cite{Kir} in a more general context; namely for the action of any compact and connected Lie group. The techniques we used are rather different from hers but have the advantage that a right-inverse of the Kirwan map is made explicit. Our techniques are much more similar to the ones in \cite{Mun}, where Kirwan surjectivity is recovered in the same context as ours by means of the construction of a global biinvariant diagonal class. The upshot here is that the global biinvariant diagonal class we obtained is just $\lm(1)$ and we can get extra results by further studying the lambda-map.
\end{rema}


\section{Biinvariant diagonal classes of product manifolds}\label{sec:biidiagprod}

We have seen that the lambda-map provides a global biinvariant diagonal class and, as a consequence, Kirwan's surjectivity at every regular level. To do so we just needed to compute $\lm(1)$. In this section we compute the global biinvariant diagonal class of a product Hamiltonian space in terms of the lambda-maps of its components. To do so it is convenient to study in detail the source space of the lambda-map, which is the equivariant cohomology ring $H^*_{\STS}(\CC P^1)$. 

Let $(M,\om_M,S^1,\mu_M)$ and $(N,\om_N,S^1,\mu_N)$ be Hamiltonian spaces and let 
\[
(\MTN,\pi_M^*\om_M+\pi_N^*\om_N,S^1,\mu_M+\mu_N)
\] 
be their product Hamiltonian space (here $\pi_M,\pi_N$ are the projections). We would like to relate the lambda map $\lm^{\MTN}$ of the product space with the lambda maps $\lm^M,\lm^N$ of its components.

Let $\io_\De:\CC P^1\rightarrow\CC P^1\times\CC P^1$ denote the diagonal map. In Section \ref{subsec:edc} we constructed an equivariant version of the shriek map it induces: 
\[
\io_{\De,\STS}^!:H^*_{\STS}(\CC P^1)\longrightarrow H_{\STS}^{*+2}(\CC P^1\times\CC P^1).
\]
By means of equivariant Künneth we see that the target space of $\io_{\De,\STS}^!$ can be identified with
\[
\bigoplus_{q=0}^{*+2} H^q_{\STS}(\CC P^1)\ot_{H^*(\CC P^\infty\times\CC P^\infty)}H_{\STS}^{*+2-q}(\CC P^1).
\] 
If we apply $\lm^M\ot\lm^N$ to an element of this space we obtain an element of
\[
\bigoplus_{q=0}^{*+2} H^{q+2m-2}_{\STS}(\MTM)\ot_{H^*(\CC P^\infty\times\CC P^\infty)}H_{\STS}^{*+2n-q}(\NTN)
\]
which again in terms of equivariant Künneth can be seen  as a subspace of $H^{*+2(m+n)-2}_{\STS}(\MTM\times\NTN)$. In these terms we can think
\[
(\lm^M\ot\lm^N)\circ\io^!_{\De,{\STS}}:H^*_{\STS}(\CC P^1)\longrightarrow H^{*+2(m+n)-2}_{\STS}(\MTN\times\MTN).
\]
The result we want to prove is

\begin{theo}\label{theo:bigtwo}
We have that $(\lm^M\ot\lm^N)\circ\io^!_{\De,{\STS}}=\lm^{\MTN}$.
\end{theo}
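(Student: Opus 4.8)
The plan is to trace both sides of the equation back to their geometric definitions in terms of pseudocycles and intersection counts, and show they agree by exhibiting a natural correspondence between the geometric data defining $\lm^{\MTN}$ and the composition $(\lm^M\ot\lm^N)\circ\io^!_{\De,\STS}$. The fundamental observation is that a $P$-perturbed gradient line on $\MTN$ (for $P$ a product perturbation $P^M\times P^N$ of $J^M\oplus J^N$) is nothing but a pair consisting of a $P^M$-perturbed gradient line on $M$ and a $P^N$-perturbed gradient line on $N$ sharing the \emph{same time parameter}, because the moment map of the product is $\mu_M+\mu_N$ and the gradient vector field is $J^M X^M\oplus J^N X^N$, so the flow is the product flow. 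Concretely, the map $F_1^{\MTN}$ analogous to $(p,t,\al)\mapsto(p,\al\cdot\xi_t(p))$ factors through the fibre product over the $\RR$-parameter of $F_1^M$ and $F_1^N$, and similarly $F_2^{\MTN}$ is related to $F_2^M$ and $F_2^N$ via the diagonal $\CC P^1\hookrightarrow\CC P^1\times\CC P^1$ (both $F_2^M$ and $F_2^N$ send $(p,t,\al)$ to $[1:2^t\al]$, i.e. they agree, which is exactly the source of the diagonal).

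\textbf{Key steps.} First I would set up the product perturbation space: given generic $P^M\in\PP^M$ and $P^N\in\PP^N$, check that $P^M\times P^N$ lies in a residual subset of $\PP^{\MTN}$ for which all the transversality conditions of Chapter \ref{ch:multivalued} hold (the perturbation zones of the product are products of the individual zones, and the critical components of $\mu_M+\mu_N$ are products $C^M_i\times C^N_j$; one should verify $(H_I)$-regularity survives passing to products, using that transversality is stable under taking products with transverse maps). Second, I would identify $\GG_\emptyset^{P^M\times P^N}$ with the fibre product $\GG_\emptyset^{P^M}\times_{\RR}\GG_\emptyset^{P^N}$, where the map to $\RR$ records the (signed) time-length $a_1-a_0$ of the parametrising segment; this is the geometric heart of the argument. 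Third, I would chase the definition of $\lm^{\MTN}(\al)$ through Theorem \ref{theo:bigone}: for a bordism class represented by a pseudocycle $h$ of $\CC P^1_k$, the intersection count $\LL_{[h],k}$ on $\MTM\times\NTN$ decomposes, via the fibre-product identification and the Künneth decomposition of $h$ across the diagonal, into a sum over $q$ of products of intersection counts on $\MTM$ (dimension $q+2m-2$) and $\NTN$ (dimension $\text{(something)}+2n-q$), weighted correctly — the weights multiply because the branch structure of $\GG_\emptyset^{P^M\times P^N}$ is the product of the branch structures. Fourth, I would recognise this decomposition as precisely the formula for $(\lm^M_k\ot\lm^N_k)$ applied to $\io^!_{\De,k}$ of the class, using Lemma \ref{lemm:thom} (the shriek map as Thom-class composition) and Proposition \ref{prop:exerciseeight} relating shriek maps to pseudocycle intersection: the diagonal shriek $\io^!_{\De,\STS}$ is exactly what implements "require the two $\CC P^1$-coordinates to coincide", i.e. "require the two time parameters to coincide". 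Finally, stabilise in $k$ via Lemma \ref{lemm:stabilisation}.

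\textbf{Main obstacle.} The hard part will be bookkeeping the degrees, signs and weights through the Künneth decomposition simultaneously on the source (where $\io^!_{\De,\STS}$ lands in $\bigoplus_q H^q_{\STS}(\CC P^1)\ot H^{*+2-q}_{\STS}(\CC P^1)$) and on the target (where one must match $\bigoplus_q H^{q+2m-2}_{\STS}(\MTM)\ot H^{*+2n-q}_{\STS}(\NTN)$ with a subspace of $H^*_{\STS}(\MTM\times\NTN)$), making sure the fibre-product-over-$\RR$ description of $\GG_\emptyset^{P^M\times P^N}$ really produces the Thom class of the diagonal and not some other class, and that the orientations on Cartesian squares compose correctly. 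A secondary technical point is that $\GG_\emptyset^{\MTN}$ is a \emph{branched} manifold and the fibre product of two branched manifolds over $\RR$ needs the transversality of the time-length maps to their common target to be a smooth (branched) manifold of the right dimension; one must confirm this transversality is generic, which should follow from the submersivity arguments already developed (the time-length function is a submersion wherever a gradient segment genuinely crosses a perturbation zone, and near the critical strata one argues on each stratum separately). Once the identification $\GG_\emptyset^{\MTN}\cong\GG_\emptyset^M\times_\RR\GG_\emptyset^N$ is nailed down with compatible weights and orientations, everything else is a diagram chase through the constructions of Sections \ref{sec:lambdageneral}, \ref{subsec:edc} and the pseudocycle intersection pairing.
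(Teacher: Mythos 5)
Your proposal is correct in its essentials and its geometric core coincides with the paper's: both rest on the observation that a perturbed gradient line of $\MTN$ with no breaking points is the same thing as a pair of perturbed gradient lines of $M$ and of $N$ with equal time-length, and that the diagonal of $\CC P^1\times\CC P^1$ is exactly what encodes this equality (together with the matching of the circle phase $\al$). Where you diverge is in how the composition $(\lm^M\ot\lm^N)\circ\io^!_{\De,\STS}$ is unwound: you propose to Künneth-decompose and match intersection counts degree by degree, which --- as you anticipate --- forces you to test against split pseudocycles, argue that these generate rationally, and track signs and multiplicative weights through the decomposition. The paper avoids all of this with a more economical packaging: it notes that the construction of Theorem \ref{theo:bigone}, applied verbatim to the second diagram whose source is the fibre product of the Borel constructions of $\GG^{P_M}_\emptyset\times S^1$ and $\GG^{P_N}_\emptyset\times S^1$ over the base $\CC P^k\times\CC P^k$, with maps $g^M_k\times g^N_k$, $f^M_k\times f^N_k$ and the pseudocycle $\io_{\De,k}\circ h$ of $\CC P^1_k\times_{\CC P^k\times\CC P^k}\CC P^1_k$, computes $(\lm^M_k\ot\lm^N_k)\circ\io^!_{\De,k}$ directly; it then builds a diffeomorphism $\tilde{\chi}$ between the two Cartesian squares $CS(g_k^{\MTN},h)$ and $CS(g^M_k\times g^N_k,\io_{\De,k}\circ h)$ commuting with the evaluation maps to $((\MTN)\times(\MTN))_k\simeq(\MTM)_k\times_{\CC P^k\times\CC P^k}(\NTN)_k$, so the equality of the two weighted counts is immediate for every test pseudocycle $\ell$, with no Künneth bookkeeping at all. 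If you pursue your route, two points need tightening: the fibre product must be taken over $\CC P^1_k$ (equivalently over time-length and phase simultaneously; in the paper the freeness of the $\STS$-action on $S^{2k+1}\times S^{2k+1}$ is what pins down the relation between the two phases and forces $t_G=t_{G'}$), not over $\RR$ alone; and the slogan identifying $\GG_\emptyset^{\MTN}$ with a fibre product of $\GG_\emptyset^{P_M}$ and $\GG_\emptyset^{P_N}$ fails on the locus where one projection is constant at a fixed point of its factor (that projection is then point-like, with time-length $0$ by convention), a positive-codimension locus that must either be treated separately or avoided by genericity of the intersections.
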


\begin{rema}
Before doing anything else, we need to be careful on how we take perturbations: if $J_M,J_N$ are almost complex structures compatible with $\om_M,\om_N$ respectively, then $J_M\times J_N$ is an almost complex structure compatible with $\pi_M^*\om_M+\pi_N^*\om_N$. Moreover if ${\cal D}_M,{\cal D}_N$ and ${\cal D}_{\MTN}$ are the bundles of infinitesimal perturbations of $J_M,J_N$ and $J_M\times J_N$, the bundle $\pi_M^*{\cal D}_M\oplus\pi_N^*{\cal D}_N$ is a subbundle of ${\cal D}_{\MTN}$. We shall take a perturbation zone $Z'$ around a union of regular values $Z$ in $\MTN$ such that its projections onto $M,N$ are valid perturbation zones too: if $(p,q)\in Z$ we need that $p$ and $q$ are not both fixed points. Then if $\de$ is small enough, the projections of the solid torus $R((p,q),\de)$ on $M,N$ are also valid tori. Recall that perturbations are supported on a finite number of tori the union of which contains $Z$. From there and the relation of bundles above we can construct a space of perturbations $\PP_{\MTN}$ on $\MTN$ that gives rise to spaces of perturbations $\PP_M,\PP_N$ and such that each $P_{\MTN}\in\PP_{\MTN}$ gives $P_M\in\PP_M$ and $P_N\in\PP_N$. It is in this framework that we shall prove the theorem.
\end{rema}

\begin{proof}
As in the remark take a perturbation $P_{\MTN}\in\PP_{\MTN}$ and the associated perturbations $P_M,P_N$. In Section \ref{sec:modpbgd} we defined the moduli space of perturbed broken gradient lines $\GG^{P_M}$ associated to $M$. To simplify notation we will write $\GG^M:=\GG^{P_M}_\emptyset\times S^1$. At the beginning of Section \ref{sec:lambdageneral} we defined $\STS$-equivariant maps $g^{P_M},f^{P_M}$ from $\GG^M$ to $\CC P^1$ and $\MTM$ respectively. Again to simplify notation let us write $g^M,f^M$ for these maps. The bundles
\[
E_k:=S^{2k+1}\times S^{2k+1}\longrightarrow B_k:=\CC P^k\times\CC P^k
\] 
stabilise to the universal bundle of $\STS$. Hence we can form the associated bundles $\GG^M_k$, $\CC P^1_k$, $(\MTM)_k$ over $B_k$ and we the corresponding associated maps $g_k^M,f_k^M$ (these notations are taken again from Section \ref{sec:lambdageneral}). The very same concepts applied to $N$ and $\MTN$ respectively let us define $\GG^N_k$,$g^N_k$,$f^N_k$ and $\GG^{\MTN}_k$, $g^{\MTN}_k$,$f^{\MTN}_k$.

Consider the two diagrams
\[
\begin{diagram}
\node{CS(g_k^{\MTN},h)}\arrow{e,t}{\pi_h}\arrow{s}\node{\GG^{\MTN}_k}\arrow{e,t}{f^{\MTN}_k}\arrow{s,r}{g^{\MTN}_k}\node{((\MTN)\times(\MTN))_k}\\
\node{V}\arrow{e,b}{h}\node{\CC P^1_k}
\end{diagram},
\]
\[
\begin{diagram}
\node{CS(g_k^M\times g_k^N,\io_{\De,k}\circ h)}\arrow{e,t}{\pi_{\io_{\De,k}\circ h}}\arrow{s}\node{\GG^M_k\times_{B_k}\GG^N_k}\arrow{e,t}{f^M_k\times f^N_k}\arrow{s,r}{g^M_k\times g^N_k}\node{(\MTM)_k\times_{B_k}(\NTN)_k}\\
\node{V}\arrow{e,b}{\io_{\De,k}\circ h}\node{\CC P^1_k\times_{B_k}\CC P^1_k}
\end{diagram},
\]
where $h:V\rightarrow\CC P^1_k$ is a pseudocycle. According to Theorem \ref{theo:bigone} the first diagram is the one used to define the successive approximations $\lm^{\MTN}_k$ of the lambda map $\lm^{\MTN}$. The same construction of Theorem \ref{theo:bigone} applied to the second diagram gives rise to $(\lm_k^M\ot\lm_k^N)\circ\io_{\De,k}^!$. 

Once again according to Theorem \ref{theo:bigone} the spaces we need to study are the images of $f_k^{\MTN}\circ\pi_h$ and $(f_k^M\circ g_k^M)\circ\pi_{\io_{\De,k}\circ h}$. Their respective target spaces are identified under the diffeomorphism
\[
\begin{array}{rccc}
\chi:&((\MTN)\times(\MTN))_k  & \longrightarrow & (\MTM)_k\times_{B_k}(\NTN)_k\\
 &\left[((p,q),(p',q')),s\right] & \longmapsto & ([(p,p'),s],[(q,q'),s])
\end{array}.
\]
Therefore the result we are now proving will be completed if we are able to see that the images of $f_k^{\MTN}\circ\pi_h$ and $(f_k^M\circ g_k^M)\circ\pi_{\io_{\De,k}\circ h}$ are identified under $\chi$. To see this we will construct a diffeomorphism 
\[
\tilde{\chi}:CS(g_k^{\MTN},h)\rightarrow CS(g_k^M\times g_k^N,\io_{\De,k}\circ h)
\] 
between their source spaces such that
\[
\begin{diagram}
\node{CS(g_k^{\MTN},h)}\arrow[4]{e,t}{f^{\MTN}_k\circ\pi_h}\arrow{s,lr}{\tilde{\chi}}{\simeq}\node{}\node{}\node{}\node{((\MTN)\times(\MTN))_k}\arrow{s,lr}{\simeq}{\chi}\\
\node{CS(g_k^M\times g_k^N,\io_{\De,k}\circ h)}\arrow[4]{e,b}{(f^M_k\times f^N_k)\circ\pi_{\io_{\De,k}\circ h}}\node{}\node{}\node{}\node{(\MTM)_k\times_{B_k}(\NTN)_k}
\end{diagram}
\]
commutes.

Let $(G,\al)\in\GG^{\MTN}$. If $G=(K;\ldots;b)$ then $\pi_MG:=(\pi_M(K);\ldots;\pi_M(b))$ with the suitable lifts is an element of $\GG^M$. The same construction can be done for the projection on $N$. If $G$ is point-like define $t_G:=0$. Otherwise $G$ is parametrised by a curve $\ga:[t_0,t_1]\rightarrow\MTN$. Define $t_G:=t_1-t_0$ if $G$ is descending and define $t_G:=t_0-t_1$ if $G$ is ascending. Then $([(G,\al),s],v)\in CS(g_k^{\MTN},h)$ if and only if $h(v)=[[1:2^{t_G}\al],s]$. We define
\[
\tilde{\chi}([(G,\al),s],v)=([(\pi_MG,\al),s],[(\pi_NG,\al),s],v),
\]
which is an element of $CS(f_k^P\times g_k^P,\io_{\De,k}\circ h)$ because $t_{\pi_MG}=t_{\pi_NG}=t_G$. We shall now construct the inverse of $\tilde{\chi}$:

The elements of $CS(g_k^M\times g_k^N,\io_{\De,k}\circ h)$ are those of the form
\[
([(G,\al),s'],[(G',\al'),s'],v)\in\GG^M_k\times\GG^N_k\times V
\] 
such that $[s]=[s']$ and $h(v)=[[1:2^{t_G}\al],s]=[[1:2^{t_{G'}}\al'],s']$.

Since $s,s'\in S^{2k+1}\times S^{2k+1}$ are in the same orbit and the action of $\STS$ on this space is free, there exists a unique $(\te,\ze)\in\STS$ such that $(\te,\ze)s=s'$. Then
\[
[[1:2^{t_{G'}}\al'],s']=[[1: 2^{t_{G'}}\al'],(\te,\ze)s]=[(\bar{\te},\bar{\ze})[1:2^{t_{G'}}\al'],s]
\]
and hence
\[
[1:2^{t_G}\al]=(\bar{\te},\bar{\ze})[1:2^{t_{G'}}\al']=[\bar{\te},2^{t_{G'}}\bar{\ze}\al']=[1:2^{t_{G'}}\te\bar{\ze}\al'].
\]
Therefore we have that $2^{t_G}\al=2^{t_{G'}}\te\bar{\ze}\al'$ and in particular $|2^{t_G}|=|2^{t_{G'}}|$, from where we deduce that $t_G=t_{G'}$ and then that $\al=\te\bar{\ze}\al'$, so $\al'=\ze\al\bar{\te}$. Now we have that $(G',\al')=(G',\ze\al\bar{\te})=(\te,\ze)\cdot(\bar{\te}G',\al)$ according to how the action of $\STS$ on $\GG^N$ is defined. 

Finally we have that $[(G',\al'),s']=[(\te,\ze)\cdot(\bar{\te}G',\al),(\te,\ze)s]=[(\bar{\te}G',\al),s]$. Since $\bar{\te}$ can be any element of $S^1$ and $t_{\bar{\te}G'}=t_{G'}$ we end up finding out that $CS(g_k^M\times g_k^N,\io_{\De,k}\circ h)$ is the set of elements of the form
\[
([(G,\al),s],[(G',\al),s],v)
\]
where $\al\in S^1$, $s\in S^{2k+1}\times S^{2k+1}$, $v\in V$ and $G,G'$ are perturbed curves of $M,N$ with no breaking points such that $t_G=t_{G'}$ and such that $h(v)=[[1:2^{t_G}\al],s]$. Given an element of this set let $G=(K;\ldots;b)$ and let $G'=(K';\ldots;b')$. Let $G\times G'=(K\times K';\ldots;(b,b'))$ with the lifts chosen according to those of $G$ and $G'$. Since $t_G=t_{G'}$, if $G,G'$ are not point-like then $h_{K\times K'}$ gives to $G\times G'$ the structure of a perturbed gradient line of $\MTN$. The construction implies that the projections of $G\times G'$ are precisely $G$ and $G'$. Moreover, $t_{G\times G'}=t_G=t_{G'}$ so $([(G\times G',\al),s],v)$ is an element of $CS(g_k^{\MTN},h)$. This finishes the construction of the inverse of $\tilde{\chi}$.

Now if $G\in\GG^{\MTN}$ has beginning and end $(b,b'),(e,e')\in\MTN$ we have that
\[
\begin{array}{rcl}
(\chi\circ f_k^{\MTN}\circ\pi_h)([(G,\al),s],v) & = & \chi([(b,b'),(\al\cdot e,\al\cdot e'),s])\\
&=&([(b,\al\cdot e),s],[(b',\al\cdot e'),s])
\end{array}
\]
and that
\[
\begin{array}{l}
((f_k^M\circ f_k^N)\circ\pi_{\io_{\De,k}\circ h}\circ\tilde{\chi})([(G,\al),s],v)=\\ 
\hspace{3cm}=(f_k^M\times f_k^N)([(\pi_MG,\al),s],[(\pi_NG,\al),s])\\
\hspace{3cm}=([(b,\al\cdot e),s],[(b',\al\cdot e'),s])
\end{array}.
\]
This shows that $\chi\circ f_k^{\MTN}\circ\pi_h=(f_k^M\circ f_k^N)\circ\pi_{\io_{\De,k}\circ h}\circ\tilde{\chi}$ and the proof is completed.
\end{proof}

\subsection{The cohomology ring $H^*_{\STS}(\CC P^1)$}

The source space of both the lambda-map and $\io^!_{\De,\STS}$ is the cohomology ring $H^*_{\STS}(\CC P^1)$. In this section we compute it in detail --also giving a geometric interpretation of the classes that generate it-- and we study its image under $\io^!_{\De,\STS}$. Combining this with the last theorem we will get formulas for a global biinvariant class of a product Hamiltonian space.

From equivariant cohomology we know that 
\[
H_{\STS}^*(\CC P^1)=H^*(\CC P^1\times_{\STS}(S^\infty\times S^\infty)),
\]
where
\[
\pi:\CC P^1\times_{\STS}(S^\infty\times S^\infty)\longrightarrow\CC P^\infty\times\CC P^\infty
\]
is the fibre bundle induced by the $\STS$ actions on $\CC P^1$ and on its universal bundle.

Let $\rho_1,\rho_2:\CC P^\infty\times\CC P^\infty\rightarrow\CC P^\infty$ denote the projections to each factor and let $\OO_j(-1)=\rho_j^*\OO_{\CC P^\infty}(-1)$ be the pull-back of the tautological bundle $\OO_{\CC P^\infty}(-1)$. If $s_1,s_2\in S^\infty\subseteq\CC^\infty$, the fibre of the bundle
\[
\CP(\OO_1(-1)\oplus\OO_2(-1))\longrightarrow\CC P^\infty\times\CC P^\infty
\]
over the point $([s_1],[s_2])$ is $\CP(\langle s_1\rangle\oplus\langle s_2\rangle)$, where $\langle s_j\rangle$ is the $1$-dimensional complex subspace of $\CC^\infty$ generated by $s_j$.

We can identify $\CP(\OO_1(-1)\oplus\OO_2(-1))$ with $\CC P^1\times_{\STS}(S^\infty\times S^\infty)$ via
\[
[as_1,bs_2]\longmapsto \left[[a:b],(s_1,s_2)\right].
\]
From this identification we get that
\[
H_{\STS}^*(\CC P^1)\simeq H^*(\CP(\OO_1(-1)\oplus\OO_2(-1)))
\]
and we can compute this cohomology in terms of Chern classes: the total Chern class of $\OO_1(-1)\oplus\OO_2(-1)$ is 
\[
\begin{array}{rcl}
c(\OO_1(-1)\oplus\OO_2(-1)) & = & c(\OO_1(-1))c(\OO_2(-1))\\
 & = & (1+\tau_1)(1+\tau_2)\\
 & = & 1+(\tau_1+\tau_2)+\tau_1\tau_2,
\end{array}
\]
where $\tau_1,\tau_2$ are the pull-backs, under $\rho_1,\rho_2$ respectively, of the Euler class of $\OO_{\CC P^\infty}(-1)$. Then we have
\[
c_1(\OO_1(-1)\oplus\OO_2(-1))=\tau_1+\tau_2,\ \ \ c_2(\OO_1(-1)\oplus\OO_2(-1))=\tau_1\tau_2.
\]
From the Leray-Hirsch theorem we deduce that
\[
\begin{array}{rcl}
H^*(\CP(\OO_1(-1)\oplus\OO_2(-1))) & \simeq & \QQ[t_1,t_2,u]/(u^2-(t_1+t_2)u+t_1t_2)\\
 & \simeq & \QQ[t_1,t_2,u]/(u-t_1)(u-t_2)
\end{array},
\]
where $u$ is the Euler class of the bundle $\OO_{\mathbb{P}(\OO_1(-1)\oplus\OO_2(-1))}(-1)$ and $t_1=\pi^*\tau_1,t_2=\pi^*\tau_2$. In conclusion we have that 
\[
H^*_{\STS}(\CC P^1)\simeq\QQ[t_1,t_2,u]/(u-t_1)(u-t_2).
\]

To study the image of $\io_{\De,\STS}^!$ we will use its approximations $\io_{\De,k}^!$ induced by the diagonal inclusion $\io_{\De,k}:\CC P^1_k\rightarrow(\CC P^1\times\CC P^1)_k$ as we did in Section \ref{subsec:edc}. As usual let $\CC P^1_k=\CC P^1\times_{\STS}(S^{2k+1}\times S^{2k+1})$. We can carry out the exact same construction as above to compute the approximations $H^*(\CC P^1_k)$ of $H^*_{\STS}(\CC P^1)$:
 
Let $\rho_1,\rho_2:\CC P^k\times\CC P^k\rightarrow\CC P^k$ be the projections to each factor and let $\OO_j(-1)=\rho_j^*\OO_{\CC P^k}(-1)$. There is an identification $\CP(\OO_1(-1)\oplus\OO_2(-1))\simeq\CC P^1_k$ and 
\[
H^*(\CC P^1_k)=\QQ[u,t_1,t_2]/(t_1^{k+1},t_2^{k+1},(u-t_1)(u-t_2))
\]
where $u$ is the Euler class of $\OO_{\CP(\OO_1(-1)\oplus\OO_2(-1)))}(-1)$ and $t_1,t_2$ are the pull-backs under $\rho_1,\rho_2$ of the Euler class of $\OO_{\CC P^k}(-1)$.

Let $s_1,s_2\in S^{2k+1}\subseteq\CC^{k+1}$ and let $H^0=\{(s^0,\ldots,s^k)\in\CC^{k+1}:\ H^0=0\}$. Then
\[
\begin{array}{rccc}
\sig: & \CP(\OO_1(-1)\oplus\OO_2(-1))) & \longrightarrow & \OO_{\CP(\OO_1(-1)\oplus\OO_2(-1)))}(-1)\\
 &[as_1,bs_2] &\longmapsto &\left(\displaystyle\frac{s_1}{s_1^0},\frac{bs_2}{as_1^0}\right)
\end{array}
\]
is  a well-defined meromorphic section with no zeroes and poles in the union of
\[
P=\{[0:1]\}\times_{\STS}(S^{2k+1}\times S^{2k+1})\ \mbox{and}\ Q=\CC P^1\times_{\STS}((S^{2k+1}\cap H^0)\times S^{2k+1}).
\]
From this we deduce that $PD(u)=-[P]-[Q]$.

If $s\in S^{2k+1}$, then
\[
\begin{array}{ccc}
\CC P^k & \longrightarrow &\OO_{\CC P^k}(-1)\\
\left[s\right] & \longmapsto & \displaystyle\frac{s}{s^0}
\end{array}
\] 
is a meromorphic section with no zeroes and pole locus $\CP(H^0)$. Hence the Poincaré dual of the Euler class is $PD(c_1(\OO(\CC P^k)))=-[\CP(H_0)]$. Now observe that$Q=\pi^{-1}(\rho_1^{-1}(\CP(H_0)))$, and since $\pi$ and $\rho_1$ are submersions we have that
\[
\begin{array}{rcl}
-PD^{-1}[Q] & = & -PD^{-1}[\pi^{-1}(\rho_1^{-1}(\CP(H_0)))]\\
            & = & -\pi^*\rho_1^*PD^{-1}[\CP(H_0)]\\
            & = & \pi^*\rho_1^*c_1(\OO_{\CC P^k}(-1))\\
            & = & t_1.
\end{array}
\]
From this we obtain that $u=-PD^{-1}[P]+t_1$.

Under the diffeomorphism
\[
\begin{array}{ccc}
(\CC P^1\times\CC P^1)_k &\longrightarrow & \CC P^1_k\times_{\CC P^k\times\CC P^k}\CC P^1_k\\
\left[(x,y),(s_1,s_2)\right] & \longmapsto & [[x,(s_1,s_2)],[y,(s_1,s_2)]]
\end{array}
\]
we can think of $\io_{\De,k}$ as the map
\[
\begin{array}{rccc}
\io_{\De,k}: &\CC P^1_k &\longrightarrow & \CC P^1_k\times_{\CC P^k\times\CC P^k}\CC P^1_k\\
 & \left[x,(s_1,s_2)\right] & \longmapsto & [[x,(s_1,s_2)],[x,(s_1,s_2)]]
\end{array}
\]
By Leray-Hirsch and Künneth, there is an isomorphism
\[
\begin{array}{ccc}
H^*(\CC P^k)\ot_{H^*(\CC P^k\times\CC P^k)}H^*(\CC P^k) & \longrightarrow & H^*( \CC P^1_k\times_{\CC P^k\times\CC P^k}\CC P^1_k)\\
\al\ot \be & \longmapsto & \pi_1^*\al\smile\pi_2^*\be
\end{array}
\]
where $\pi_1,\pi_2:\CC P^1_k\times_{\CC P^k\times\CC P^k}\CC P^1_k\rightarrow\CC P^1_k$ are the projections.

In these terms we have

\begin{lemm}\label{lemm:iot}
For $j=1,2$, $\io_{\De,k}^!(t_j)=t_j\io^!_{\De,k}(1)$.
\end{lemm}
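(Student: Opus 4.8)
The statement $\io_{\De,k}^!(t_j)=t_j\,\io_{\De,k}^!(1)$ is a projection-formula-type identity: the shriek map of an embedding is a module map over cohomology of the ambient space, provided one pulls the ambient class back to the source. The plan is to exhibit $t_j\in H^2(\CC P^1_k)$ as the restriction (pullback under $\io_{\De,k}$) of a natural degree-$2$ class on $\CC P^1_k\times_{\CC P^k\times\CC P^k}\CC P^1_k$, and then invoke the projection formula $\io_{\De,k}^!(\io_{\De,k}^*\beta\smile\al)=\beta\smile\io_{\De,k}^!(\al)$.

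First I would recall that $t_1,t_2\in H^*(\CC P^1_k)$ are by construction pulled back from the base $\CC P^k\times\CC P^k$ via the bundle projection $\CC P^1_k\to\CC P^k\times\CC P^k$; write $p:\CC P^1_k\to\CC P^k\times\CC P^k$ and $q:\CC P^1_k\times_{\CC P^k\times\CC P^k}\CC P^1_k\to\CC P^k\times\CC P^k$ for the two bundle projections, so that $t_j=p^*\tau_j$ on $\CC P^1_k$ and set $\hat t_j:=q^*\tau_j$ on the fibre product. Since $q\circ\io_{\De,k}=p$ (the diagonal lies over the identity on the base), we get $\io_{\De,k}^*(\hat t_j)=t_j$. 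Next I would establish the projection formula for $\io_{\De,k}^!$: this is the standard fact that for an embedding $i:A\hookrightarrow B$ of compact oriented manifolds one has $i^!(i^*\beta\smile\al)=\beta\smile i^!(\al)$, which follows directly from the definition $i^!=PD_B^{-1}\circ i_*\circ PD_A$ together with the homology projection formula $i_*(i^*\beta\frown c)=\beta\frown i_*(c)$ and the interplay of $\frown$ with $\smile$. Alternatively one can read it off from Lemma \ref{lemm:thom}, since the Thom isomorphism $t$, the excision isomorphism $e$, and the connecting map $g$ appearing there are all morphisms of modules over $H^*(B)$ (with $A$-classes acting via restriction). Here $B=\CC P^1_k\times_{\CC P^k\times\CC P^k}\CC P^1_k$ and $A=\CC P^1_k$ embedded diagonally, and the relevant ambient classes $\hat t_j$ are pulled back from the base so they certainly act compatibly through every step.

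Putting these together: $\io_{\De,k}^!(t_j)=\io_{\De,k}^!(\io_{\De,k}^*\hat t_j\smile 1)=\hat t_j\smile\io_{\De,k}^!(1)$. It remains to identify $\hat t_j\smile\io_{\De,k}^!(1)$ with $t_j\,\io_{\De,k}^!(1)$ under the Künneth/Leray-Hirsch identification $H^*(\CC P^1_k\times_{\CC P^k\times\CC P^k}\CC P^1_k)\simeq H^*(\CC P^1_k)\ot_{H^*(\CC P^k\times\CC P^k)}H^*(\CC P^1_k)$ described just before the lemma: under this identification $\hat t_j$, being pulled back from the base, corresponds to $t_j\ot 1=1\ot t_j$ (the two agree because $t_j$ is a base class), and multiplication by it on the module $H^*(\CC P^1_k)\ot_{H^*(\CC P^k\times\CC P^k)}H^*(\CC P^1_k)$ is exactly multiplication by $t_j$ in either tensor factor. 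Hence $\hat t_j\smile\io_{\De,k}^!(1)=t_j\,\io_{\De,k}^!(1)$, as desired; passing to the inverse limit gives the equivariant statement $\io_{\De,\STS}^!(t_j)=t_j\,\io_{\De,\STS}^!(1)$.

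\textbf{Main obstacle.} None of the steps is deep; the only point requiring care is bookkeeping the identifications. Specifically, one must check that the class called $t_j$ on the fibre product $\CC P^1_k\times_{\CC P^k\times\CC P^k}\CC P^1_k$ (namely $\hat t_j=q^*\tau_j$) really is the ambient class whose restriction to the diagonal is $t_j$, and that it coincides with $\pi_1^*t_j$ (equivalently $\pi_2^*t_j$) under the Künneth isomorphism — this uses that $\pi_1,\pi_2$ both cover $q$ over the base. Once the naturality of the bundle projections is spelled out, the projection formula does all the real work, so I expect the writeup to be short.
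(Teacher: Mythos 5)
Your argument is correct and is essentially the paper's own proof: you write $t_j$ as the restriction along $\io_{\De,k}$ of the class pulled back from the base of the fibre product (your $q$ is the paper's $\tilde{\pi}$), apply the projection formula for $\io_{\De,k}^!$, and identify the ambient class with $\pi_j^*t_j$ via Künneth, exactly as in the paper. The only cosmetic difference is that you spell out the projection formula from $i^!=PD^{-1}\circ i_*\circ PD$, which the paper uses without comment.
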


\begin{proof}
Let $\tilde{\pi}:\CC P^1_k\times_{\CC P^k\times\CC P^k}\CC P^1_k\rightarrow\CC P^k\times\CC P^k$ be the projection. Then
\[
\begin{array}{rcl}
\io_{\De,k}^!(t_j) & = & \io_{\De,k}^!(\pi^*\rho_j^*c_1(\OO_{\CC P^k}(-1)))\\
                   & \stackrel{(1)}{=} & \io_{\De,k}^!(\io_{\De,k}^*\tilde{\pi}^*\rho_j^*c_1(\OO_{\CC P^k}(-1))\\
                   & = & \tilde{\pi}^*\rho_j^*c_1(\OO_{\CC P^k}(-1))\smile\io^!_{\De,k}(1)\\
                   & \stackrel{(2)}{=} & \pi_j^*\pi^*\rho_j^*c_1(\OO_{\CC P^k}(-1))\smile\io^!_{\De,k}(1)\\
                   & = & \pi_j^*t_j\smile\io^!_{\De,k}(1)\\
                   & = & t_j\io^!_{\De,k}(1)
\end{array}
\]
where we used $\pi=\tilde{\pi}\circ\io_{\De,k}$ in $(1)$, $\tilde{\pi}=\pi\circ\pi_j$ in $(2)$ and Künneth in the last equality.
\end{proof}

In view of this result we are also interested in computing:

\begin{lemm}\label{lemm:ioone}
$\io_{\De,k}^!(1)=(t_1+t_2)(1\ot 1)-u\ot 1-1\ot u$.
\end{lemm}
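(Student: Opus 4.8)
The plan is to identify $\io_{\De,k}^!(1)$ with the Poincar\'e dual of the diagonal and then compute that class by exhibiting the diagonal as the transverse zero locus of a section of an explicit complex line bundle. By Remark \ref{rema:edcapprox}, applied with $M=\CC P^1$, $G=\STS$ and $EG_k=S^{2k+1}\times S^{2k+1}$, and using the diffeomorphism $(\CC P^1\times\CC P^1)_k\simeq\CC P^1_k\times_{\CC P^k\times\CC P^k}\CC P^1_k$ recalled just above, we have
\[
\io_{\De,k}^!(1)=PD^{-1}[\De_{\CC P^1_k}],
\]
the Poincar\'e dual of the diagonally embedded copy of $\CC P^1_k$ inside the fibred product.

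Write $X:=\CC P^1_k=\CP(V)\stackrel{\pi}{\to}B:=\CC P^k\times\CC P^k$ with $V:=\OO_1(-1)\oplus\OO_2(-1)$, and let $p_1,p_2:X\times_BX\to X$ be the two projections (these are the maps $\pi_1,\pi_2$ used in the preceding discussion). On $X$ there is the tautological exact sequence $0\to\OO_X(-1)\to\pi^*V\to Q\to 0$ in which $Q$ is a line bundle; since $c_1(\OO_X(-1))=u$ and $\pi^*c_1(V)=t_1+t_2$ (by the very definition of $t_1,t_2$), we get $c_1(Q)=\pi^*c_1(V)-c_1(\OO_X(-1))=(t_1+t_2)-u$. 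Now consider the composition
\[
p_1^*\OO_X(-1)\hookrightarrow (\pi\circ p_1)^*V=(\pi\circ p_2)^*V\twoheadrightarrow p_2^*Q,
\]
which is a section $\phi$ of the line bundle $L:=p_1^*\OO_X(1)\ot p_2^*Q=\mathrm{Hom}(p_1^*\OO_X(-1),p_2^*Q)$. At a point of $B$ over which $X\times_B X$ parametrises pairs $(\ell_1,\ell_2)$ of lines in the corresponding fibre of $V$, the value of $\phi$ is the composite $\ell_1\hookrightarrow V\twoheadrightarrow V/\ell_2$, which vanishes exactly when $\ell_1=\ell_2$; hence the zero locus of $\phi$ is precisely $\De_X$. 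Since $\De_X$ has real codimension $2$, equal to the real rank of $L$, and $\phi$ is transverse to the zero section --- its derivative along the second $X$-factor is an isomorphism, which is just the standard identification of the normal bundle of the diagonal in a $\CC P^1$-bundle with $\OO_X(1)\ot Q$ restricted to the diagonal --- the diagonal $\De_X$ is the transverse zero locus of a section of the complex line bundle $L$, so that $PD^{-1}[\De_X]=c_1(L)$.

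It remains to compute $c_1(L)$. As $t_1,t_2$ are pulled back from $B$, we have $p_1^*t_j=p_2^*t_j=t_j$ in $H^*(X\times_BX)$, and therefore
\[
c_1(L)=-c_1(p_1^*\OO_X(-1))+c_1(p_2^*Q)=-p_1^*u+p_2^*\big((t_1+t_2)-u\big)=(t_1+t_2)-p_1^*u-p_2^*u.
\]
In the notation $\al\ot\be=\pi_1^*\al\smile\pi_2^*\be$ this reads exactly $(t_1+t_2)(1\ot 1)-u\ot 1-1\ot u$, which is the claim. The only step requiring any care is the transversality of $\phi$, but that is nothing more than the classical description of the normal bundle of the diagonal of a projective bundle; everything else is Chern-class bookkeeping, the one sign convention to watch being $c_1(\OO_X(1))=-u$.
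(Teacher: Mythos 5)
Your proposal is correct, but it reaches the formula by a genuinely different route than the paper. The paper reduces $\io_{\De,k}^!(1)$ to $PD^{-1}[\De_{\CC P^1_k\times\CC P^1_k}]$ exactly as you do, but then invokes Graham's theorem on the class of the diagonal in flag bundles \cite{Grah}: one chooses bases $\{x_1,x_2\}$, $\{y_1,y_2\}$ of $H^*(\CC P^1_k)$ restricting to bases of the fibre, computes the matrix $Z=(\pi^!(x_iy_j))$ by integration along the fibre (using $\pi^!(1)=0$, $\pi^!(u)=-1$, $\pi^!(u^2)=-(t_1+t_2)$), and reads the diagonal class off the transposed inverse of $Z$. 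You instead exhibit the fibrewise diagonal directly as the transverse zero locus of the tautological section $\phi$ of the line bundle $L=p_1^*\OO_X(1)\ot p_2^*Q$, so that the answer is just $c_1(L)=(t_1+t_2)-p_1^*u-p_2^*u$; your sign conventions ($c_1(\OO_X(-1))=u$, $c_1(Q)=(t_1+t_2)-u$) and the identification of $\io_{\De,k}^!(1)$ via Remark \ref{rema:edcapprox} are all consistent with the paper's, and the transversality claim is indeed the standard identification of the normal bundle of the diagonal in $X\times_BX$ with the relative tangent bundle $\mathrm{Hom}(\OO_X(-1),Q)$. The trade-off: the paper's argument applies verbatim to flag bundles of any rank at the cost of citing an external theorem and a small matrix inversion, whereas yours is self-contained and geometric but exploits the rank-two situation (the quotient $Q$ being a line bundle makes $L$ a line bundle, so the Poincar\'e dual is a single first Chern class); in higher rank your method would require the top Chern class of a higher-rank hom-bundle, which is essentially Graham's formula anyway.
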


\begin{proof}
Note that
\[
\begin{array}{rcl}
\io_{\De,k}^!(1) & = & (PD^{-1}\circ(\io_{\De,k})_*\circ PD)(1)\\
                 & = & PD^{-1}\circ(\io_{\De,k})_*[\CC P^1_k]\\
                 & = & PD^{-1}[\io_{\De,k}(\CC P^1_k)]\\
                 & = & PD^{-1}[\De_{\CC P^1_k\times\CC P^1_k}]
\end{array}
\]
Consider the bundle $\pi:\CC P^1_k\rightarrow\CC P^k\times\CC P^k$. In \cite[theo. 4.1]{Grah} there is a description of how to compute the Poincaré dual of $[\De_{\CC P^1_k\times\CC P^1_k}]$ as an element of $H^*(\CC P^1_k\times_{\CC P^k\times\CC P^k}\CC P^1_k)$ in terms of this bundle:

The cohomology of the base is $H^*(\CC P^k\times\CC P^k)\simeq\QQ[\tau_1,\tau_2]/(\tau_1^{k+1},\tau_2^{k+1})$ and the cohomology of the total space is 
\[
H^*(\CC P^1_k)\simeq\QQ[t_1,t_2,u]/((u-t_1)(u-t_2),t_1^{k+1},t_2^{k+1}).
\]

Consider $x_1=y_2=u$ and $x_2=y_1=1$ as elements of $H^*(\CC P^1_k)$. Then both sets $\{x_1,x_2\}$ and $\{y_1,y_2\}$ restrict to a basis of the cohomology of the fibre. Let $z_{ij}=\pi^!(x_iy_j)$ and consider the matrix $Z=(z_{ij})$. According to the aforementioned theorem, $Z$ is invertible and, if $(Z^{-1})^t=(a_{ij})$, the class we are looking for is $\sum a_{ij}x_i\ot y_j$.  

For dimensional reasons $\pi^!(1)=0$. Since $\pi^!$ is integration along the fibre and $u$ is the Euler class, we have  $\pi^!(u)=-1$. Finally, 
\[
\pi^!(u^2)=\pi^!((t_1+t_2)u-t_1t_2)=(t_1+t_2)\pi^!(u)+t_1t_2\pi^!(1)=-(t_1+t_2).
\]
Then
\[
Z=
\begin{pmatrix}
-1 & -(t_1+t_2)\\
0  & -1
\end{pmatrix},
\]
and therefore
\[
(Z^{-1})^t=
\begin{pmatrix}
-1 & 0\\
 t_1+t_2  & -1
\end{pmatrix}.
\]
Then the class $PD^{-1}[\De_{\CC P^1_k\times\CC P^1_k}]$ coincides with
\[
(t_1+t_2)(x_2\ot y_1)+(-1)x_1\ot y_1+ (-1)x_2\ot y_2=(t_1+t_2)(1\ot 1)-u\ot 1-1\ot u
\] 
and the result is proved.
\end{proof}

\begin{rema}
By the usual stabilisation argument in equivariant cohomology  --Lemma \ref{lemm:stabilisation}-- we see that 
$$
\io_{\De,\STS}^!(1)=(t_1+t_2)(1\ot 1)-u\ot 1-1\ot u
$$
in terms of equivariant Künneth. This is the equivariant diagonal class --see Section \ref{subsec:edc}-- for the $\STS$-action on $\CC P^1$.
\end{rema}

The last generator of the cohomology of $H^*(\CC P^1_k)$ for which we have to compute its image under $\io_{\De,k}^!$ is $u$. Let us do it:

\begin{lemm}\label{lemm:iou}
$\io_{\De,k}^!(u)=t_1t_2(1\ot 1)-u\ot u$.
\end{lemm}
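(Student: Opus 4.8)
The plan is to combine the two previous lemmas with the multiplicative structure of $H^*(\CC P^1_k)$, using the fact that $\io_{\De,k}^!$ is a module homomorphism over $H^*((\CC P^1 \times \CC P^1)_k)$ via the projection formula. Recall that for a shriek map $f^!$ associated to $f:X\to Y$ one has the projection formula $f^!(f^*\be \smile \al) = \be \smile f^!(\al)$. Applying this to $\io_{\De,k}$ with $\al = 1$ and $\be$ the class in $H^*((\CC P^1\times \CC P^1)_k)$ that pulls back to $u\in H^*(\CC P^1_k)$ under the diagonal, we get $\io_{\De,k}^!(u) = \be \smile \io_{\De,k}^!(1)$. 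So the key point is to identify which class $\be$ on $(\CC P^1\times \CC P^1)_k$ restricts to $u$ along the diagonal, and then compute $\be \smile \io_{\De,k}^!(1)$ using Lemma \ref{lemm:ioone}.

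First I would identify $\be$: under the Künneth/Leray--Hirsch identification $H^*((\CC P^1\times \CC P^1)_k) \simeq H^*(\CC P^1_k)\ot_{H^*(\CC P^k\times \CC P^k)} H^*(\CC P^1_k)$ with the projections $\pi_1,\pi_2$, the class $\pi_1^* u = u\ot 1$ restricts under $\io_{\De,k}^*$ to $u$, and likewise $\pi_2^* u = 1\ot u$ restricts to $u$. I would take $\be = u\ot 1$ (the choice $1\ot u$ gives the same final answer, which is a useful consistency check). Then $\io_{\De,k}^!(u) = (u\ot 1)\smile \io_{\De,k}^!(1)$. Substituting Lemma \ref{lemm:ioone}, this is
\[
(u\ot 1)\smile\bigl((t_1+t_2)(1\ot 1) - u\ot 1 - 1\ot u\bigr) = (t_1+t_2)(u\ot 1) - u^2\ot 1 - u\ot u.
\]
Now I would use the relation $u^2 = (t_1+t_2)u - t_1 t_2$ in $H^*(\CC P^1_k)$: the first factor $u^2\ot 1$ becomes $(t_1+t_2)(u\ot 1) - t_1 t_2(1\ot 1)$, using that $t_1,t_2$ are pulled back from the base and hence act the same on both tensor factors (this is where Lemma \ref{lemm:iot} and its proof, identifying $t_j$ with base classes, is implicitly invoked). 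Thus
\[
(t_1+t_2)(u\ot 1) - \bigl((t_1+t_2)(u\ot 1) - t_1 t_2(1\ot 1)\bigr) - u\ot u = t_1 t_2(1\ot 1) - u\ot u,
\]
which is exactly the claimed formula.

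The main obstacle I anticipate is bookkeeping: making sure the projection formula is applied with the correct class $\be$ on the product and being careful that $t_1, t_2$ appearing in $H^*(\CC P^1_k)$ really do commute past the Künneth decomposition in the way needed (they are pullbacks from $\CC P^k\times \CC P^k$, so $t_j \smile (\al\ot\gamma) = (t_j\al)\ot\gamma = \al\ot(t_j\gamma)$, but one should state this cleanly). An alternative, more hands-on route would be to compute $\io_{\De,k}^!(u) = PD^{-1}(\io_{\De,k})_*(u\frown[\CC P^1_k])$ directly via the geometric description of $PD(u)$ as $-[P]-[Q]$ given just before Lemma \ref{lemm:iot}, pushing these cycles forward under the diagonal; but the projection-formula argument is shorter and reuses Lemma \ref{lemm:ioone} verbatim, so that is the approach I would write up. Finally I would pass to the limit via the stabilisation argument (Lemma \ref{lemm:stabilisation}) exactly as in the remark following Lemma \ref{lemm:iou} if an equivariant statement is wanted, though for this lemma the finite-$k$ version suffices.
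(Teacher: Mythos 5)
Your argument is correct, but it follows a genuinely different route from the paper's. The paper proves the lemma by writing $u=t_1-PD^{-1}[P]$ (using the meromorphic section computed just before Lemma \ref{lemm:iot}), applying Lemma \ref{lemm:iot} to handle $\io_{\De,k}^!(t_1)$, and then computing $\io_{\De,k}^!(PD^{-1}[P])$ geometrically: the diagonal image of $P$ is $P\times_{\CC P^k\times\CC P^k}P=\pi_1^{-1}(P)\cap\pi_2^{-1}(P)$, whose Poincaré dual is $\pi_1^*(t_1-u)\smile\pi_2^*(t_1-u)=(t_1-u)\ot(t_1-u)$; subtracting gives the formula. You instead stay entirely at the level of the ring structure: the projection formula $\io_{\De,k}^!(\io_{\De,k}^*\be\smile\al)=\be\smile\io_{\De,k}^!(\al)$ with $\be=\pi_1^*u=u\ot 1$ (legitimate since $\pi_1\circ\io_{\De,k}=\mathrm{id}$, so $\io_{\De,k}^*(u\ot 1)=u$), Lemma \ref{lemm:ioone}, and the defining relation $u^2=(t_1+t_2)u-t_1t_2$, together with the fact that $t_1,t_2$ are pulled back from the base and hence slide across the tensor product over $H^*(\CC P^k\times\CC P^k)$. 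The projection formula you invoke is exactly the identity already used (without being named) in the third equality of the paper's proof of Lemma \ref{lemm:iot}, so no new foundational input is needed. What each approach buys: yours is shorter, reuses Lemma \ref{lemm:ioone} verbatim, and avoids the implicit transversality/cycle bookkeeping needed to assert $PD^{-1}[\pi_1^{-1}(P)\cap\pi_2^{-1}(P)]=PD^{-1}[\pi_1^{-1}(P)]\smile PD^{-1}[\pi_2^{-1}(P)]$; the paper's computation, on the other hand, keeps the geometric meaning of $u$ via the cycle $P$ visible and does not rely on the quadratic relation. Your consistency check with $\be=1\ot u$ and the final stabilisation remark are both fine.
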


\begin{proof}
Recall that $u=t_1-PD^{-1}[P]$ where
\[
P=\{[0:1]\}\times_{\STS}(S^{2k+1}\times S^{2k+1})\subseteq\CC P^1_k. 
\]
From the two previous lemmas we have that
\[
\io^!_{\De,k}(t_1)=t_1\io^!_{\De,k}(1)=(t_1^2+t_1t_2)(1\ot 1)-t_1(u\ot 1)-t_1(1\ot u).
\]
Let us now compute $\io_{\De,k}^!(PD^{-1}[P])$. We have
\[
\begin{array}{rcl}
\io_{\De,k}^!(PD^{-1}[P]) & = & (PD^{-1}\circ(\io_{\De,k})_*\circ PD)(PD^{-1}[P])\\
                 & = & (PD^{-1}\circ(\io_{\De,k}))_*[P]\\
                 & = & PD^{-1}[\io_{\De,k}(P)]\\
                 & = & PD^{-1}[P\times_{\CC P^k\times\CC P^k}P]\\
                 & = & PD^{-1}[(P\times_{\CC P^k\times\CC P^k}\CC P^1_k)\cap(\CC P^1_k\times_{\CC P^k\times\CC P^k}P)]\\
                 & = & PD^{-1}[\pi_1^{-1}(P)\cap\pi_2^{-1}(P)]\\
                 & = & PD^{-1}[\pi_1^{-1}(P)]\smile PD^{-1}[\pi_2^{-1}(P)]\\
                 & = & \pi_1^*PD^{-1}[P]\smile\pi_2^*PD^{-1}[P]\\
                 & = & \pi_1^*(t_1-u)\smile\pi_2^*(t_1-u)\\
                 & = & (t_1-u)\ot(t_1-u)\\
                 & = & t_1^2(1\ot 1)-t_1(u\ot 1)-t_1(1\ot u)+u\ot u
\end{array}
\]
The difference of the two expressions gives $\io^!_{\De,k}(u)=t_1t_2(1\ot 1)-u\ot u$.
\end{proof}

From the three previous lemmas and by means of stabilisation in equivariant cohomology --Lemma \ref{lemm:stabilisation}-- we have, in terms of equivariant Künneth, that
\begin{itemize}
\item $\io^!_{\De,\STS}(1)=(t_1+t_2)(1\ot 1)-u\ot 1-1\ot u$
\item $\io^!_{\De,\STS}(t_j)=(t_j^2+t_1t_2)-t_j(u\ot 1)-t_j(1\ot u)$ for $j=1,2$
\item $\io^!_{\De,\STS}(u)=t_1t_2(1\ot 1)-u\ot u$
\end{itemize}

Now, using these formulas we get the following immediate corollary of Theorem \ref{theo:bigtwo}:

\begin{coro}
Let $(M,\om_M,S^1,\mu_M)$ and $(N,\om_N,S^1,\mu_N)$ be Hamiltonian spaces and let $\lm^M,\lm^N$ be its corresponding lambda-maps. If $\lm^{\MTN}$ is the lambda-map of its corresponding product Hamiltonian space, we have the following formulas:
\begin{enumerate}
\item $\lm^{\MTN}(1)=(t_1+t_2)\lm^M(1)\ot\lm^N(1)-\lm^M(u)\ot\lm^N(1)-\lm^M(1)\ot\lm^N(u)$
\item $\lm^{\MTN}(u)=t_1t_2\lm^M(1)\ot\lm^N(1)-\lm^M(u)\ot\lm^N(u)$
\end{enumerate}
\end{coro}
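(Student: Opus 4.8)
The plan is to read the result directly off Theorem \ref{theo:bigtwo} together with the explicit computation of $\io_{\De,\STS}^!$ carried out just above the corollary. Theorem \ref{theo:bigtwo} identifies $\lm^{\MTN}$ with the composite $(\lm^M\ot\lm^N)\circ\io_{\De,\STS}^!$, so it suffices to evaluate this composite on the two classes $1$ and $u$ of $H^*_{\STS}(\CC P^1)$. (These, together with $t_1,t_2$ and the relation $(u-t_1)(u-t_2)=0$, generate the whole ring, but for the two formulas claimed we only need the values on $1$ and on $u$.)

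First I would recall, from the three lemmas preceding the corollary and the stabilisation argument (Lemma \ref{lemm:stabilisation}), that in terms of equivariant Künneth
\[
\io_{\De,\STS}^!(1)=(t_1+t_2)(1\ot 1)-u\ot 1-1\ot u,\qquad \io_{\De,\STS}^!(u)=t_1t_2(1\ot 1)-u\ot u.
\]
Next I would use that $\lm^M$ and $\lm^N$ are homomorphisms of $H^*(B(\STS);\QQ)$-modules, so the induced map $\lm^M\ot\lm^N$ is $\QQ[t_1,t_2]$-linear and sends a decomposable class $a\ot b$ to $\lm^M(a)\ot\lm^N(b)$ under the equivariant Künneth identifications used in the statement of Theorem \ref{theo:bigtwo}. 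Applying $\lm^M\ot\lm^N$ to the first displayed formula yields
\[
\lm^{\MTN}(1)=(t_1+t_2)\lm^M(1)\ot\lm^N(1)-\lm^M(u)\ot\lm^N(1)-\lm^M(1)\ot\lm^N(u),
\]
which is formula (1); applying it to the second yields
\[
\lm^{\MTN}(u)=t_1t_2\lm^M(1)\ot\lm^N(1)-\lm^M(u)\ot\lm^N(u),
\]
which is formula (2).

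There is no substantial obstacle here: the statement is a formal consequence of Theorem \ref{theo:bigtwo} and the lemmas computing $\io_{\De,\STS}^!$. The only point requiring a little care is bookkeeping, namely checking that the equivariant Künneth identifications of the source space $H^*_{\STS}(\CC P^1\times\CC P^1)$ and of the target space $H^*_{\STS}(\MTM\times\NTN)\simeq H^*_{\STS}(\MTN\times\MTN)$ agree with the ones used to state Theorem \ref{theo:bigtwo}, and that $t_1,t_2$ denote consistently the two standard generators of $H^*(B(\STS))$ attached to the two circle factors; this compatibility has already been fixed in the preceding sections, so the argument closes immediately.
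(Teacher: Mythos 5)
Your proposal is correct and matches the paper's own route: the paper states this corollary as an immediate consequence of Theorem \ref{theo:bigtwo} combined with Lemmas \ref{lemm:iot}, \ref{lemm:ioone} and \ref{lemm:iou} (stabilised via Lemma \ref{lemm:stabilisation}), exactly as you argue. Your added remark that $\lm^M\ot\lm^N$ acts $\QQ[t_1,t_2]$-linearly on the Künneth decomposition is precisely the (unwritten) bookkeeping the paper relies on, so nothing is missing.
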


\begin{rema}
In particular we get a formula to compute $\lm^{\MTN}(1)$, which is a global biinvariant diagonal class of $\MTN$, in terms of $\lm^M$ and $\lm^N$.
\end{rema}

\section[Non-uniqueness of global biinvariant diagonal classes]{Non-uniqueness of global biinvariant diagonal\\ classes}

In Theorem \ref{theo:lambdagivesbdc} we proved that given a Hamiltonian space $(M,\om,S^1,\mu)$ there exists an element $\be\in H^{2m-2}_{\STS}(\MTM)$ which is a global biinvariant diagonal class. It is a natural question to ask whether such a class is unique or not. In this section we prove that this is not true in general. To do so, we carry out some hands-on computations of biinvariant diagonal classes that might be illustrative.

Fix a basis $\la e^q_1,\ldots,e^q_{d_q}\ra$ of $H^q_{S^1}(M)$ for $q=0,\ldots,2m-2$. Given a regular value $c$ of $\mu$, by means of integration on $M\sq_cS^1$ we get a pairing
\[
\begin{array}{rccc}
I_s^q:&H^q_{S^1}(M)\ot H^{2m-2-q}_{S^1}(M)&\longrightarrow&\QQ\\
      &e^q_i\ot e^{2m-2-q}_j&\longmapsto&\int_{M\ssq_cS^1}\ka_c(e^q_i)\smile\ka_c(e^{2m-2-q}_j)
\end{array}.
\]

Let 
\[
\be\in H^{2m-2}_{\STS}(\MTM)\simeq\bigoplus_{q=0}^{2m-2}H_{S^1}^q(M)\ot H_{S^1}^{2m-2-q}
\] 
be a biinvariant diagonal class for the regular value $c$ and let $b_{ij}^q\in\QQ$ denote the coefficient of $e^q_i\ot e^{2m-2-q}_j$ in the expression of $\be$ in terms of the chosen bases. Since $\ka_{(c,c)}(\be)$ is Poincaré dual to the diagonal class, for any $a\in H^{2m-2-q}(M\sq_cS^1)$ we have the relation
\begin{equation}\label{eq:one}
\sum_{i,j}b_{ij}^q\left(\int_{M\ssq_cS^1}\ka_c(e_i^q)\smile a\right)\ka_c(e_j^{2m-2-q})=a.
\end{equation}
In particular, for $a=\ka_c(e_l^{2m-2-q})$ we obtain
\begin{equation}\label{eq:two}
\sum_{i,j}b_{ij}^qI_c^q(e_i^q\ot e_l^{2m-2-q})\ka_c(e_j^{2m-2s-q})=\ka_c(e_l^{2m-2-q}).
\end{equation}
Applying $\int_{M\ssq_cS^1}\ka_c(e_k^q)\smile\cdot$ --i.e. Poincaré duality w.r.t. $\ka_c(e_k^q)$-- to both sides of the equality we get
\begin{equation}\label{eq:three}
\sum_{i,j}b_{ij}^qI_c^q(e_i^q\ot e_l^{2m-2-q})I^q_c(e_k^q\ot e_j^{2m-2-q})=I^q_c(e^q_k\ot e_l^{2m-2-q}).
\end{equation}
These equations can be expressed in a much more compact way in terms of matrices: define the $d_q\times d_{2m-2-q}$ matrices $A_c^q$ with entries $I_c^q(e_i^q\ot e_j^{2m-2-q})$ and $B^q$ with entries $b_{ij}^q$. Equation \ref{eq:three} is equivalent to $A_c^q(B^q)^t A_c^q=A_c^q$. In other words, $(B^q)^t$ is a {\it pseudoinverse}\footnote{In the literature the notion of `pseudoinverse' varies. To us a pseudoinverse of a $r\times c$ matrix $A$ is a $c\times r$ matrix $B$ satisfying the equation $ABA=A.$ This coincides with the notion of $\{1\}$-pseudoinverse in \cite[\S 1.1]{BeGr}.} of $A_c^q$. Conversely, if equation \ref{eq:three} is satisfied, we get equation \ref{eq:two} by applying the inverse of Poincaré duality w.r.t. $\ka_c(e_k^q)$. Then equation \ref{eq:one} is satisfied for classes of the form $a=\ka_c(\al)$, but from Kirwan's surjectivity --Corollary \ref{coro:kirwansurj}-- these are all the possible classes. In summary, we have proved the following result:

\begin{prop} 
A class $\be$ is biinvariant diagonal for a regular value $c$ if and only if $A_c^q(B^q)^tA_c^q=A_c^q$, $q=0,\cdots,2m-2$.
\end{prop}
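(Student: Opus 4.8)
The plan is to observe that the proposition merely packages the implications from \eqref{eq:one} to \eqref{eq:two} to \eqref{eq:three} derived above, together with the fact that each link is reversible; so the proof consists of checking reversibility, the only non-formal inputs being Poincaré duality for the orbifold $M\sq_cS^1$ and Kirwan surjectivity. For the forward implication I would start from the hypothesis that $\ka_{(c,c)}(\be)$ is Poincaré dual to $[\De_{M\ssq_cS^1\times M\ssq_cS^1}]$. Passing to the Künneth descriptions of source and target --- under which $\ka_{(c,c)}$ is $\ka_c\ot\ka_c$, the ``Künneth interpretation'' used throughout this chapter --- I would invoke the standard characterisation of the Poincaré dual of a diagonal (the identity used in the proof of Lemma~\ref{lemm:rightinverse}) to get that, for every homogeneous $a\in H^*(M\sq_cS^1)$, equation~\eqref{eq:one} holds; here one notes that for $a$ of degree $2m-2-q$ only the $q$-th summand of $\be$ contributes, since $\int a\smile\ka_c(e^q_i)=0$ unless $q=\deg a$. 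Specialising $a=\ka_c(e^{2m-2-q}_l)$ gives \eqref{eq:two}, and applying $\int_{M\ssq_cS^1}\ka_c(e^q_k)\smile(\cdot)$ to both sides of \eqref{eq:two} gives \eqref{eq:three}, i.e. $A^q_c(B^q)^tA^q_c=A^q_c$ for every $q$.

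For the converse I would run the chain backwards. From \eqref{eq:three}, for fixed $q$ and $l$, the left-hand side equals $\int_{M\ssq_cS^1}\ka_c(e^q_k)\smile\gamma_l$, where $\gamma_l=\sum_{i,j}b^q_{ij}I^q_c(e^q_i\ot e^{2m-2-q}_l)\ka_c(e^{2m-2-q}_j)$, and the right-hand side equals $\int_{M\ssq_cS^1}\ka_c(e^q_k)\smile\ka_c(e^{2m-2-q}_l)$; since $\ka_c$ is surjective (Corollary~\ref{coro:kirwansurj}) the classes $\ka_c(e^q_k)$ span $H^q(M\sq_cS^1)$, so nondegeneracy of the Poincaré pairing on the orbifold $M\sq_cS^1$ forces $\gamma_l=\ka_c(e^{2m-2-q}_l)$, which is \eqref{eq:two}. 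But \eqref{eq:two} is exactly \eqref{eq:one} evaluated at $a=\ka_c(e^{2m-2-q}_l)$; by linearity in $a$, \eqref{eq:one} then holds for every $a$ in the span of these classes, and, writing an arbitrary class as $\ka_c(\al)$ with $\al\in H^{2m-2-q}_{S^1}(M)$ expanded in the chosen basis (Corollary~\ref{coro:kirwansurj} again), this span is all of $H^{2m-2-q}(M\sq_cS^1)$. Summing over $q$, \eqref{eq:one} holds for every homogeneous $a$, which by the diagonal characterisation recalled above means $\ka_{(c,c)}(\be)=PD^{-1}[\De_{M\ssq_cS^1\times M\ssq_cS^1}]$; hence $\be$ is biinvariant diagonal for $c$.

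I do not expect a serious obstacle: the argument is essentially linear-algebra bookkeeping with two geometric inputs (Poincaré duality and its nondegeneracy on $M\sq_cS^1$, available for orbifolds as recalled after the definition of biinvariant diagonal class, and Kirwan surjectivity from Corollary~\ref{coro:kirwansurj}). The points that need a little care, and which I would state explicitly, are: that $\ka_{(c,c)}$ really is $\ka_c\ot\ka_c$ under the two Künneth isomorphisms (this needs rational coefficients and equivariant formality of $M$, which also guarantees that each $H^q_{S^1}(M)$ is finite dimensional, so that the bases $e^q_\bullet$ and the matrices $A^q_c,B^q$ make sense); and the degree bookkeeping that makes the statement for a given $q$ independent of the other degrees. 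Everything else is the formal manipulation already sketched between \eqref{eq:one} and the statement.
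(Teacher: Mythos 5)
Your proposal is correct and follows essentially the same route as the paper: the forward direction is the chain of equations \eqref{eq:one} $\Rightarrow$ \eqref{eq:two} $\Rightarrow$ \eqref{eq:three} derived in the text, and the converse reverses it using nondegeneracy of the Poincaré pairing on $M\sq_cS^1$ together with Kirwan surjectivity (Corollary \ref{coro:kirwansurj}) to pass from \eqref{eq:three} back to \eqref{eq:two} and then to \eqref{eq:one} for all classes $a=\ka_c(\al)$. Your extra remarks on degree bookkeeping and on $\ka_{(c,c)}=\ka_c\ot\ka_c$ under Künneth only make explicit what the paper leaves implicit.
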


\begin{rema}\label{rema:symmetry}
Since $I_c^q$ is symmetric we get $A_c^{2m-2-q}=(A_c^q)^t$. Transposing $A_c^{2m-2-q}=A_c^{2m-2-q}(B^{2m-2-q})^t A_c^{2m-2-q}$ results into $A_c^q=A_c^qB^{2m-2-q}A_c^q$, so both $(B^q)^t$ and $B^{2m-2-q}$ have to be pseudoinverses of $A_c^q$.
\end{rema}

The set $P(A)$ of pseudoinverses of a matrix $A$ is an affine space with underlying vector space $Z(A)=\{B:\ ABA=0\}$. We call the elements of $Z(A)$ {\it null-pseudoinverses} of $A$. More in general, if $A_1,\ldots,A_n$ is a collection of matrices with the same number of rows and columns and $P(A_1,\ldots,A_n)$ denotes the space of common pseudoinverses of all these matrices, we have that --if it is not empty-- it is an affine space with underlying vector space $Z(A_1,\ldots,A_n)=Z(A_1)\cap\cdots\cap Z(A_n)$, the space of common null-pseudoinverses.

Going back to our particular setting, we first observe that if $c,c'$ are two regular values of $\mu$ with no critical values in between, $M\sq_cS^1$ and $M\sq_{c'}S^1$ are diffeomorphic and $I^q_c=I^q_{c'}$. Therefore to prove that $(B^q)^t$ is a pseudoinverse of $A_c^q$ for all regular values $c$ it suffices to prove that it is a pseudoinverse of $A_c^q$ only for a finite number of regular values $c_1,\ldots, c_n$. We fix such regular values and to make the notation shorter we write $A_i^q$ for $A_{c_i}^q$. 

In Theorem \ref{theo:lambdagivesbdc} we proved that the image of $1$ under the lambda-map is a global biinvariant diagonal class, so if $L^q$ are the matrices representing $\lm(1)$ in the chosen bases we have $L^q\in P(A_1^q,\ldots,A_n^q)$ and hence $P(A_1^q,\ldots,A_n^q)$ is an affine space with underlying vector space $Z(A_1^q)\cap\cdots\cap Z(A_n^q)$.

We need to find a way to compute the matrices $A_i^q$. We will use the main result in \cite{Kal}, which asserts that if the fixed point set $F$ of the action is finite --or, equivalently, if the fixed points are isolated--, then
\[
\int_{M\ssq_cS^1}\ka_c(\al)=\sum_{\mu(p)>0}\frac{\al_0(p)}{w(p)}
\]
where the sum is taken over the points of $F$. Here $w(p)$ denotes the product of weights of the infinitesimal action on $T_pM$, while $\al_0$ is a certain smooth function associated to the class $\al$. This formula is obtained using a version of equivariant cohomology that we have not explained: similarly to how de Rham cohomology recovers the usual cohomology (with real coefficients) there is a complex which uses differential forms, called the Cartan complex, that recovers equivariant cohomology. The function $\al_0$ turns out to be the $0$-form associated to $\al$ in this complex. An excellent reference for these topics is \cite{GuSt}. 

Let $S^1$ act on $\CC P^n$ by
\[
\te[z_0:\cdots:z_n]=[\te^{j_0}z_0:\cdots:\te^{j_n}z_n]
\]
where $j_0<\ldots<j_n$ are integers. To have isolated fixed points we need these numbers to be pairwise different; we take them in increasing order only for convenience when making computations. In Example \ref{exam:eqcoprojexam} we showed that there exist degree 2 classes $x,t$ such that
\[
H_{S^1}^*(\CC P^n)\simeq \QQ[t,x]/(x+j_0t)\cdots(x+j_nt).
\]
Hence odd degree cohomology vanishes. The function associated to $t$ is the constant function $1$, while the function associated\footnote{In the Cartan complex, $x$ is the class of the equivariant symplectic form $1\ot\om-\tau\ot\mu$ and $t$ is the class of $\tau\ot 1$. See \cite[\S 5]{Kal} for the details.} to $x$ is $\mu$. The function associated to a product of these classes is the product of their associated functions. Since the polynomial giving the relation has degree $2n+2$, if $0\leq q\leq n-1$ there are no relations between generators of $H^{2q}_{S^1}(\CC P^n)$, so $\{t^q,t^{q-1}x,\ldots,tx^{q-1},x^q\}$ is a basis of $H^{2q}_{S^1}(\CC P^n)$. The fixed points are
\[
p_0=(1:0:\cdots:0),\ldots,p_n=(0:\cdots:0:1),
\] 
which satisfy $\mu(p_i)=j_i$, $w(p_i)=\prod_{k\neq i}(j_k-j_i)=:w_i$. Finally take regular values $c_i$ such that $j_0<c_1<j_1<\cdots<j_{n-1}<c_n<j_n$. Kalkman's formula gives
\[
\int_{M\ssq_{c_i}S^1}\ka_{c_i}(t^{n-1-q}x^q)=\sum_{k=i}^n\frac{\mu(p_k)^q}{w(p_k)}=\sum_{k=i}^n\frac{j_k^q}{w_k}.
\]
Using this formula, we observe that if $t^{q-a}x^a$ is an element of the basis of $H^{2q}_{S^1}(\CC P^n)$ and $t^{n-1-q-b}x^b$ is an element of the basis of $H^{2(n-1-q)}_{S^1}(\CC P^n)$ we have that
\[
I_{c_i}^{2q}(t^{q-a}x^a\ot t^{n-1-q-b}x^b)=\int_{M\ssq_{c_i}S^1}\ka_{c_i}(t^{n-1-a-b}x^{a+b})=\sum_{k=i}^n\frac{j_k^{a+b}}{w_k},
\]
so we can compute explicitely the matrices $A_i^{2q}$ and look for their common pseudoinverses.

\begin{exam} Let us make all the computations for the case $\CC P^2$. We will show that in this case there is a unique global biinvariant diagonal class:

For $\CC P^2$ we have bases $\{1\}$ of $H^0_{S^1}(\CC P^2)$ and $\{t,x\}$ of $H_{S^1}^2(\CC P^2)$. The matrices $A_1^0,A_2^0$ are of dimension $1\times 2$. Concretely 
\[
A_1^0=\begin{bmatrix}I_{c_1}^0(1\ot t) & I_{c_1}^0(1\ot x)\end{bmatrix}\ \mbox{and}\ A_2^0=\begin{bmatrix}I_{c_2}^0(1\ot t) & I_{c_2}^0(1\ot x)\end{bmatrix}.
\] 
Let us compute them using Kalkman's formula:
\[
\left.
\begin{array}{r}
\displaystyle I_{c_1}^0(1\ot t)=\frac{1}{w_1}+\frac{1}{w_2}=-\frac{1}{w_0}\\
\displaystyle I_{c_1}^0(1\ot x)=\frac{j_1}{w_1}+\frac{j_2}{w_2}=-\frac{j_0}{w_0}
\end{array}
\right\}
\Rightarrow A_1^0=-\frac{1}{w_0}\begin{bmatrix}1 & j_0\end{bmatrix},
\]
\[
\left.
\begin{array}{r}
\displaystyle I_{c_2}^0(1\ot t)=\frac{1}{w_2}\\
\displaystyle I_{c_2}^0(1\ot x)=\frac{j_2}{w_2}
\end{array}
\right\}
\Rightarrow A_2^0=\frac{1}{w_2}\begin{bmatrix}1 & j_2\end{bmatrix}.
\]

We now want to find a $1\times 2$ matrix $B^0=\begin{bmatrix}b_{11}^0 & b_{12}^0\end{bmatrix}$ satisfying both $A_1^0(B^0)^tA_1^0=A_1^0$ and $A_2^0(B^0)^tA_2^0=A_2^0$. Since $A_0^0(B^0)^t$ and $A_1^0(B^0)^t$ are $1\times 1$ matrices, these equations are equivalent to $A_0^0(B^0)^t=A_1^0(B^0)^\top=\begin{bmatrix}1\end{bmatrix}$. These can be wrapped up in the system
\[
\begin{bmatrix}
1& j_0 \\
1& j_2 
\end{bmatrix}
\begin{bmatrix}
b_{11}^0\\ b_{12}^0
\end{bmatrix}=
\begin{bmatrix}
-w_0\\ w_2
\end{bmatrix},
\]
which has as a unique solution $B^0=(j_2-j_0)\begin{bmatrix}-j_1 & 1\end{bmatrix}$. Using the symmetry explained in Remark \ref{rema:symmetry} we get that there is also a unique matrix $B^2$ satisfying the required equations and that it must be $B^2=(B^0)^t$. As a conclusion we find out that
\[
(j_2-j_0)(1\ot x+x\ot 1-j_1(1\ot t+t\ot 1))
\]
is the unique global biinvariant diagonal class.
\end{exam}

We can carry out similar computations also for the $S^1$-action on a product of projective spaces. Let us study in more detail a very particular case, which is very suited for computations and eventually provides an example where global biinvariant diagonal classes are not unique:

Let $S^1$ act on $\CC P^1\times\stackrel{n}{\cdots}\times\CC P^1$ by
\[
\te([z_0:w_0],\ldots,[z_{n-1}:w_{n-1}])=([\te z_0:w_0],\ldots,[\te^{2^{n-1}}z_{n-1}:w_{n-1}]),
\]
i.e. the action on the $i$-th component is given by $[\te^{2^i}z_i:w_i]$. The equivariant cohomology ring is
\[
H^*_{S^1}(\CC P^1\times\stackrel{n}{\cdots}\times\CC P^1)\simeq\QQ[t,x_0,\ldots,x_{n-1}]/((x_0+t)x_0,\ldots,(x_{n-1}+2^{n-1}t)x_{n-1})
\]
so a basis of $H^{2q}_{S^1}(\CC P^1\times\stackrel{n}{\cdots}\times\CC P^1)$ is given by $t^q$ and the elements of the form $x_{k_1}\cdots x_{k_q}$ with 
$0\leq k_1\leq\cdots\leq k_q\leq n-1$. We order these elements lexicographically with respect to the alphabet $t,x_0,\ldots,x_{n-1}$.

The critical points are those of the form $(p_0,\ldots,p_{n-1})$ with $p_i$ being either $[1:0]$ or $[0:1]$. So we have $2^n$ critical points, which can be encoded using $n$-bit strings:
\[
\bb=b_0\cdots b_{n-1}\ \ \mbox{with}\ \ b_i=\left\{\begin{array}{ccc} 0 &\mbox{if} &p_i=[0:1]\\ 1 &\mbox{if} &p_i=[1:0]\end{array}\right..
\]
Let $\sig(\bb)=(-1)^{b_0+\cdots+b_{n-1}}$ be the {\it signature} of $\bb$ and let $\bar{\bb}\in\ZZ$ be the integer represented by the string $b_{n-1}\cdots b_0$ as a binary number. Define also the value $e_n=\frac{1}{2}n(n-1)$. The moment map and the product of weights of the linearised action at the critical points are given by
\[
\mu(\bb)=\sum_{i=0}^{n-1}2^ib_i=\bar{\bb}
\] 
and 
\[
w(\bb)=\sig(\bb)\cdot 2^{e_n}
\] 
respectively. They can be computed from the moment map and the product of weights on each factor: $\mu$ is the sum of $\mu_i(p_i)=2^ib_i$ and $w$ is the product of $w_i(p_i)=(-1)^{b_i}\cdot 2^i$.

We have $2^n$ critical values whose images under $\mu$ are precisely $0,\ldots,2^n-1$. Consider the regular values $c_1,\ldots,c_{2^n-1}$ defined by $c_j=(2j-1)/2$. Then $\mu(\bb)>c_j$ if and only if $\bar{\bb}\geq j$. Now we take the elements of the chosen basis of $H^{2n-2}_{S^1}(\CC P^1\times\stackrel{n}{\cdots}\times\CC P^1)$ and compute the integrals of their images under $\ka_{c_j}$ by means of Kalkman's formula:
\[
\int_{M\ssq_{c_j}S^1}\ka_{c_j}(t^{n-1})=2^{-e_n}\sum_{\bar{\bb}\geq j}\sig(\bb),
\]
\[
\begin{array}{rcl}
\displaystyle \int_{M\ssq_{c_j}S^1}\ka_{c_j}(x_{k_1}\cdots x_{k_{n-1}}) & = & \displaystyle 2^{-e_n}\sum_{\bar{\bb}\geq j}\sig(\bb)\cdot\textstyle\prod_{\ell=1}^{n-1}\mu_{k_\ell}(b_{k_\ell})\\
 & = & \displaystyle 2^{-e_n}\sum_{\bar{\bb}\geq j}\sig(\bb)\cdot\textstyle\prod_{\ell=1}^{n-1}2^{k_\ell}b_{k_\ell}\\
 & = & \displaystyle 2^{-e_n+\sum_{\ell=1}^{n-1}k_\ell}\sum_{\bar{\bb}\geq j}\sig(\bb)\cdot\textstyle\prod_{\ell=1}^{n-1}b_{k_\ell}
\end{array}.
\]

With these formulas we can show an example where global biinvariant diagonal classes are not unique:

\begin{exam} Let us study the action we just described for the case $n=3$. The chosen basis of $H^4_{S^1}(\CC P^1\times\CC P^1\times\CC P^1)$ is $\{t^2,x_0^2,x_0x_1,x_0x_2,x_1^2,x_1x_2,x_2^2\}$. We start by computing the relevant values in the formulas above:

The first table gives the signatures of $\bar{\bb}$:
\[
\begin{array}{|c||c|c|c|c|c|c|c|c|}
\hline
\bar{\bb} & 0 & 1 & 2 & 3 & 4 & 5 & 6 & 7\\
\hline
\sig(\bb) &+1 &-1 &-1 &+1 &-1 &+1 &+1 &-1\\
\hline
\end{array}
\]

Since $n=3$ we have that $e_n=3$. The second table gives the values of the exponents $-e_n+k_1+k_2$ according to the values of $k_1,k_2$:
\[
\begin{array}{|c||c|c|c|c|c|c|}
\hline
k_1,k_2 & 0,0 & 0,1 & 0,2 & 1,1 & 1,2 & 2,2 \\
\hline
-e_n+k_1+k_2 &-3 &-2 &-1 &-1 & 0 & 1\\
\hline
\end{array}
\]

Now we write a table that gives the value of $b_{k_1}b_{k_2}$ in function of $\bar{\bb}$ (rows) and $k_1,k_2$ (columns):
\[
\begin{array}{|c||c|c|c|c|c|c|}
\hline
b_{k_1}b_{k_2} & 0,0 & 0,1 & 0,2 & 1,1 & 1,2 & 2,2 \\
\hhline{|=||=|=|=|=|=|=|}
0              &   0 &   0 &   0 &   0 &   0 &   0 \\
\hline
1              &   1 &   0 &   0 &   0 &   0 &   0 \\
\hline
2              &   0 &   0 &   0 &   1 &   0 &   0 \\
\hline
3              &   1 &   1 &   0 &   1 &   0 &   0 \\
\hline
4              &   0 &   0 &   0 &   0 &   0 &   1 \\
\hline
5              &   1 &   0 &   1 &   0 &   0 &   1 \\
\hline
6              &   0 &   0 &   0 &   1 &   1 &   1 \\
\hline
7              &   1 &   1 &   1 &   1 &   1 &   1 \\
\hline
\end{array}
\]

From these three tables we can extract all the necessary information to make the next table, which we call $T_1$, the rows of which are again labelled by $\bar{\bb}$, while the columns are labelled by $t^2$ and $x_{k_1}x_{k_2}$ according to the order given above. The column under $t^2$ contains the values $2^{-e_n}\sig(\bb)$ and the column under $x_{k_1}x_{k_2}$ contains the values $2^{-e_n+k_1+k_2}\sig(\bb)b_{k_1}b_{k_2}$:

\[
\begin{array}{|c||r|r|r|r|r|r|r|}
\hline
T_1 & t^2 & x_0^2 & x_0x_1 & x_0x_2 & x_1^2 & x_1x_2 & x_2^2 \\ \hhline{|=||=|=|=|=|=|=|=|}
  0 & 1/8 &     0 &      0 &      0 &     0 &      0 &   0   \\ \hline
  1 &-1/8 &  -1/8 &      0 &      0 &     0 &      0 &   0   \\ \hline
  2 &-1/8 &     0 &      0 &      0 &  -1/2 &      0 &   0   \\ \hline
  3 & 1/8 &   1/8 &    1/4 &      0 &   1/2 &      0 &   0   \\ \hline
  4 &-1/8 &     0 &      0 &      0 &     0 &      0 &  -2   \\ \hline
  5 & 1/8 &   1/8 &      0 &    1/2 &     0 &      0 &   2   \\ \hline
  6 & 1/8 &     0 &      0 &      0 &   1/2 &      1 &   2   \\ \hline
  7 &-1/8 &  -1/8 &   -1/4 &   -1/2 &  -1/2 &     -1 &  -2   \\ \hline
\end{array}
\]

We write down one more table, $T_2$, the columns of which are again labelled by the elements of the basis while the rows contain the values 1 to 7, corresponding to the regular values $c_1,\ldots,c_7$ (taken as in the discussion for the general case). Each entry of the table will contain the value of the integral over $M\sq_{c_j}S^1$ of $\ka_{c_j}$ applied to the element indicated by the column. From Kalkman's formulas, the only thing we have to do to obtain the row labelled by $j$ in $T_2$ is to add up the rows labelled $j,\ldots,7$ in $T_1$:
\[
\begin{array}{|c||r|r|r|r|r|r|r|}
\hline
T_2 & t^2 & x_0^2 & x_0x_1 & x_0x_2 & x_1^2 & x_1x_2 & x_2^2 \\ \hhline{|=||=|=|=|=|=|=|=|}
  1 &-1/8 &     0 &      0 &      0 &     0 &      0 &   0   \\ \hline
  2 &   0 &   1/8 &      0 &      0 &     0 &      0 &   0   \\ \hline
  3 & 1/8 &   1/8 &      0 &      0 &   1/2 &      0 &   0   \\ \hline
  4 &   0 &     0 &   -1/4 &      0 &     0 &      0 &   0   \\ \hline
  5 & 1/8 &     0 &   -1/4 &      0 &     0 &      0 &   2   \\ \hline
  6 &   0 &  -1/8 &   -1/4 &   -1/2 &     0 &      0 &   0   \\ \hline
  7 &-1/8 &  -1/8 &   -1/4 &   -1/2 &  -1/2 &     -1 &  -2   \\ \hline
\end{array}
\]
According to the general discussion the basis chosen for $H^2(\CC P^1\times\CC P^1\times\CC P^1)$ is $\{t,x_0,x_1,x_2\}$. The table $T_2$ contains all the necessary information to compute the matrices $A_j^2$ that represent
\[
I_{c_j}^2:H^2_{S^1}(\CC P^1\times\CC P^1\times\CC P^1)\ot H^2_{S^1}(\CC P^1\times\CC P^1\times\CC P^1)\longrightarrow\QQ
\]
in this basis. If $T_2(j,k)$ represents the $(j,k)$-th entry of $T_2$ we have that
\[
A_j^2=
\left[
\begin{array}{rrrr}
                  T_2(j,1) & -T_2(j,2) & -\frac{1}{2}\cdot T_2(j,5) & -\frac{1}{4}\cdot T_2(j,7)\\
                 -T_2(j,2) &  T_2(j,2) &                   T_2(j,3) &                   T_2(j,4)\\
-\frac{1}{2}\cdot T_2(j,5) &  T_2(j,3) &                   T_2(j,5) &                   T_2(j,6)\\
-\frac{1}{4}\cdot T_2(j,7) &  T_2(j,4) &                   T_2(j,6) &                   T_2(j,7)\\
\end{array}
\right],
\]
where the coefficients in the first row and column are determined by the relations that define the cohomology ring.

We get the matrices
\[
A_1^2=\frac{1}{8}
\left[\begin{array}{rrrr}
-1 & 0 & 0 & 0\\
0 & 0 & 0 & 0\\
0 & 0 & 0 & 0\\
0 & 0 & 0 & 0\\
\end{array}\right],\ 
A_2^2=\frac{1}{8}
\left[\begin{array}{rrrr}
0 & -1 & 0 & 0\\
-1 & 1 & 0 & 0\\
0 & 0 & 0 & 0\\
0 & 0 & 0 & 0\\
\end{array}\right],\
\]
\[ 
A_3^2=\frac{1}{8}
\left[\begin{array}{rrrr}
1 & -1 & -2 & 0\\
-1 & 1 & 0 & 0\\
-2 & 0 & 4 & 0\\
0 & 0 & 0 & 0\\
\end{array}\right],\ 
A_4^2=\frac{1}{8}
\left[\begin{array}{rrrr}
0 & 0 & 0 & 0\\
0 & 0 & -2 & 0\\
0 & -2 & 0 & 0\\
0 & 0 & 0 & 0\\
\end{array}\right],\ 
\]
\[
A_5^2=\frac{1}{8}
\left[\begin{array}{rrrr}
1 & 0 & 0 & -4\\
0 & 0 & -2 & 0\\
0 & -2 & 0 & 0\\
-4 & 0 & 0 & 16\\
\end{array}\right],\ 
A_6^2=\frac{1}{8}
\left[\begin{array}{rrrr}
0 & 1 & 0 & 0\\
1 & -1 & -2 & -4\\
0 & -2 & 0 & 0\\
0 & -4& 0 & 0\\
\end{array}\right],\
\]
\[ 
A_7^2=\frac{1}{8}
\left[\begin{array}{rrrr}
-1 & 1 & 2 & 4\\
1 & -1 & -2 & -4\\
2 & -2 & -4 & -8\\
4 & -4 & -8 & -16\\
\end{array}\right].
\]

Now one can prove that a matrix $(B^2)^t$ is a common pseudoinverse of $A_1^2,\ldots,A_7^2$ if and only if $B^2$ is of the form
\[
B^2=-
\left[\begin{array}{rrrc}
8 & 8 & 4 & a\\
8 & 0 & 4 & 2\\
4 & 4 & 0 & 1\\
b & 2 & 1 & \frac{a}{4}+\frac{b}{4}-1
\end{array}\right].
\]
The rows of $T_2$ are precisely the matrices $A_1^4,\ldots,A_7^4$. The only matrix $(B^4)^t$ that is pseudoinverse to all of them satisfies
\[
B^4=-
\left[\begin
{array}{ccccccc}
8 & 8 & 4 & 2 & 4 & 2 & 1
\end{array}
\right].
\]
Finally, the matrices $A_1^0,\ldots,A_7^0$ are given by transposing the rows of $T_2$ and hence the only matrix $(B^0)^t$ that is a common pseudoinverse to all of them is precisely $B^4$ (this is the symmetry explained in Remark \ref{rema:symmetry}).

Putting together all the information we obtained, we conclude that a class is a global biinvariant diagonal class if and only if it is represented by the matrices $B^0=(B^4)^t$, $B^2$, $B^4$ in the chosen bases. In particular the affine space of global biinvariant diagonal classes has dimension $2$, which proves the non-uniqueness of such classes in general.
\end{exam}

\begin{rema}
For the actions on complex spaces we described, having isolated fixed points maximizes the number of critical values, so it is the situation in which we have the most number of pairings $I_{c^i}^q$. The larger this number, the more difficult it is for a class to be a global biinvariant diagonal class, because the matrices representing it have to be common pseudoinverses to a larger number of matrices. Thinking the other way around, actions with isolated fixed points are the ones in which the space of global biinvariant diagonal classes is smallest. The example we computed shows that even in this optimal situation uniqueness is not achieved.
\end{rema}

\pagestyle{fancy}

\chapter*{Epilogue}\label{ch:epilogue}

\addcontentsline{toc}{chapter}{Epilogue}

\fancyhead[RE,LO]{Epilogue}


This epilogue is a concession to the not-so-methodical part of the usual mathematical thought process. The classical definition-proposition-proof pattern is put aside --up to a certain point-- and a more descriptive tone is adopted.  The reader should expect the kind of instances that mathematicians sometimes think or verbalise more than those that they usually write down. 

The goal of these few final pages is to review some of the ideas that appeared throughout the thesis in a less formal but hopefully more intuitive way. These intuitions and thoughts give some clues on how our results could be extended to more general situations.

\subsection*{Biinvariant diagonal classes in Kirwan's original set-up}

Consider a Hamiltonian space $(M^{2m},\om,S^1,\mu)$ and denote by $X$ the vector field generated by the infinitesimal action. Take $J$ an invariant almost complex structure compatible with $\om$ and denote by $\xi^{JX}$ the flow of the vector field $-JX$. Already in the introduction we explained that the geometric idea behind the construction of a global biinvariant diagonal class was to consider the submanifold
\[
\De_{S^1}=\{(p,q)\in\MTM:\ \exists t\in\RR,\ \al\in S^1\ \mbox{s.t.}\ q=\al\cdot\xi^{JX}_t(p)\},
\]
which satisfies the properties
\begin{enumerate}
\item It is $\STS$-invariant.
\item It has dimension $2m+2$.
\item The intersection $\De_{S^1}\cap(\mu^{-1}(c)\times\mu^{-1}(c))$ is the preimage of the diagonal under the projection $\mu^{-1}(c)\times\mu^{-1}(c)\rightarrow M\sq_cS^1\times M\sq_cS^1.$
\end{enumerate}

We also explained that property 1 allows to define an equivariant cohomology --instead of ordinary cohomology-- class, that property 2 makes this class have the right dimension and that property 3 makes the class to be mapped to the Poincaré dual of the diagonal class. The big deal, however, was to find the right compactification for $\De_{S^1}$: we used broken gradient lines, which require certain transversality conditions. In order to preserve these conditions and get invariance at the same time we needed multivalued perturbations.

One natural question is how these techniques could be adapted to the case originally studied in \cite{Kir}, where the Hamiltonian space is of the form $(M^{2m},\om,G,\mu)$ with $G$ any compact and connected Lie group. Let $d=\dim G$. Recall also from the introduction of this thesis that if $0$ is a regular value of $\mu$, the action of $G$ on $\mu^{-1}(0)$ is locally free and $M\sq G=\mu^{-1}(0)/G$ is an orbifold. The action of $G$ in general does not restrict to any other regular level because $\mu$ is an equivariant map and not invariant as it happens when $G$ is abelian. Fix a $G$-invariant inner product on $\mg$ to identify $\mg$ and $\mg^*$. Then $\mu$ can be thought as taking values on $\mg$. Using the norm defined by the inner product we define a $G$-invariant smooth function $f:M\rightarrow\RR$ by $f(p):=\|\mu(p)\|^2$. Let $J$ be an invariant almost complex structure compatible with $\om$ and let $\rho_J=\om(\cdot,J(\cdot))$ be the corresponding Riemannian metric. Finally let $\nabla^Jf$ be the gradient vector field of $f$ with respect to $\rho_J$ and denote by $\xi^J$ the flow of $-\nabla^Jf$. In \cite{Ler} it is proved that for all $p\in M$, $\lim_{t\rightarrow +\infty}\xi^J_t(p)$ consists of unique point, so
\[
S_0=\{p\in M:\ \lim_{t\rightarrow +\infty}\xi^J_t(p)\in\mu^{-1}(0)\}
\]
is the stable set of the critical level $f^{-1}(0)=\mu^{-1}(0)$ of $f$.  Let $\pi:S_0\rightarrow\mu^{-1}(0)$ be the projection given by following the flow $\xi^J$.

\begin{epiprop}\label{prop:rightdelta} 
The set
\[
\De_G=\{(p,q)\in S_0\times S_0:\ \exists g\in G\ \mbox{s.t.}\ \pi(q)=g\cdot\pi(p)\}
\]
satisfies the following properties:
\begin{enumerate}
\item It is $\GTG$-invariant.
\item It is a $(2m+2d)$-dimensional submanifold of $\MTM$.
\item The intersection $\De_G\cap(\mu^{-1}(0)\times\mu^{-1}(0))$ is the preimage of the diagonal under the projection $\mu^{-1}(0)\times\mu^{-1}(0)\rightarrow M\sq G\times M\sq G$.
\end{enumerate}
\end{epiprop}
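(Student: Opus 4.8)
The plan is to verify the three properties in turn, modeling the argument on the $S^1$ case from the introduction but replacing the flow $\xi^{JX}$ by the gradient flow $\xi^J$ of $f=\|\mu\|^2$ and the orbit $\al\cdot(\cdot)$ by the $G$-action. First I would establish property 1: since $f$ is $G$-invariant, $\rho_J$ is $G$-invariant, and $\mu$ is $G$-equivariant, the flow $\xi^J_t$ is $G$-equivariant, hence $S_0$ is $G$-invariant and the projection $\pi:S_0\to\mu^{-1}(0)$ satisfies $\pi(g\cdot p)=g\cdot\pi(p)$. Then for $(p,q)\in\De_G$ with $\pi(q)=g\cdot\pi(p)$ and any $(g_1,g_2)\in\GTG$, one computes $\pi(g_2\cdot q)=g_2\cdot\pi(q)=g_2g\cdot\pi(p)=(g_2gg_1^{-1})\cdot\pi(g_1\cdot p)$, so $(g_1\cdot p,g_2\cdot q)\in\De_G$.

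Next I would handle property 2, the dimension count, which I expect to be the main obstacle. The idea is to exhibit $\De_G$ as the image of an immersion (or embedding) analogous to the map $F_1$ of the introduction. Consider the map
\[
\Theta: \mu^{-1}(0)\times G\times V \longrightarrow \MTM,
\]
where $V$ is chosen to encode the extra data; more precisely one writes $\De_G$ as the image of the map sending $(p,q,g)$ with $p,q\in S_0$, $\pi(q)=g\cdot\pi(p)$. The cleanest route is: $\De_G$ fibers over $\mu^{-1}(0)\sq G\times\mu^{-1}(0)\sq G$'s diagonal, which is $\mu^{-1}(0)/G$ of dimension $2m-2d$; over each point of the diagonal the fiber is (an open subset of) the fiber of $S_0\to M\sq G$ times $G$ itself, where the fiber of $S_0\to\mu^{-1}(0)$ has dimension $2m-\dim\mu^{-1}(0)=2m-(2m-d)=d$ and $\mu^{-1}(0)\to M\sq G$ has fiber dimension $d$. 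Assembling: $\dim\De_G = (2m-2d) + d + d + d + d = 2m+2d$. I would make this rigorous by showing $S_0$ is a manifold (it is the stable manifold of the minimal stratum; by \cite{Ler} the flow converges everywhere, and near $\mu^{-1}(0)$ the function $f$ is minimally degenerate so $S_0$ is an open neighborhood of $\mu^{-1}(0)$, hence $2m$-dimensional), and then that $\De_G$ is the preimage under $\pi\times\pi: S_0\times S_0\to\mu^{-1}(0)\times\mu^{-1}(0)$ of the set $\{(x,y):Gx=Gy\}$, which is the image of $G\times\mu^{-1}(0)\to\mu^{-1}(0)\times\mu^{-1}(0)$, $(g,x)\mapsto(x,g\cdot x)$; since $0$ is a regular value the $G$-action on $\mu^{-1}(0)$ is locally free, so this last map is an immersion with image a submanifold of dimension $\dim G+\dim\mu^{-1}(0)=d+(2m-d)=2m$, and $\pi\times\pi$ is a submersion, giving $\dim\De_G = 2m + (2m - (2m-d) - (2m-d)) $\dots I would need to be careful here: the honest computation is $\dim\De_G = \dim(S_0\times S_0) - \dim(\mu^{-1}(0)\times\mu^{-1}(0)) + \dim\{Gx=Gy\} = 4m - 2(2m-d) + 2m = 2m+2d$. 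This is the step where one must check transversality of $\pi\times\pi$ to the submanifold $\{Gx=Gy\}$, which follows from $\pi$ being a submersion.

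Finally, property 3 is the most direct. If $(p,q)\in\mu^{-1}(0)\times\mu^{-1}(0)$, then $p,q\in S_0$ and $\pi(p)=p$, $\pi(q)=q$ (points of $\mu^{-1}(0)$ are fixed by the flow and project to themselves). So $(p,q)\in\De_G$ iff there is $g\in G$ with $q=g\cdot p$, i.e. iff $p$ and $q$ lie in the same $G$-orbit, i.e. iff $p$ and $q$ have the same image in $M\sq G=\mu^{-1}(0)/G$. That is precisely the preimage of the diagonal under $\mu^{-1}(0)\times\mu^{-1}(0)\to M\sq G\times M\sq G$. I would then remark that, just as in the circle case, properties 1–3 are exactly what is needed for a suitable compactification of $\De_G$ to define a class in $H^{2(m-d)}_{\GTG}(\MTM)$ mapping to the Poincaré dual of the diagonal under $\ka^2$ — the hard analytic work (constructing that compactification equivariantly, presumably via broken flow lines and multivalued perturbations as in Chapters \ref{ch:multivalued}–\ref{ch:biinvariant}) being the natural next project rather than part of this proposition.
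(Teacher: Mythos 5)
Your handling of properties 1 and 3 matches the paper's proof: invariance comes from the $G$-equivariance of $\pi$, and property 3 from the fact that $\pi(p)=p$ for $p\in\mu^{-1}(0)$. For property 2 you take a genuinely different route. The paper considers the single map $F:S_0\times S_0\rightarrow M\sq G\times M\sq G$, $(p,q)\mapsto(G\cdot\pi(p),G\cdot\pi(q))$, argues that it is transverse to the diagonal, and obtains $\De_G=F^{-1}(\De_{M\ssq G\times M\ssq G})$ of dimension $4m-(2m-2d)=2m+2d$, citing Borzellino--Brunsden precisely because $M\sq G$ is only an orbifold. You instead stay upstairs: you write $\De_G=(\pi\times\pi)^{-1}(R)$ with $R=\{(x,y)\in\mu^{-1}(0)\times\mu^{-1}(0):\ Gx=Gy\}$ and claim that $R$ is a $2m$-dimensional submanifold because it is the image of the immersion $G\times\mu^{-1}(0)\rightarrow\mu^{-1}(0)\times\mu^{-1}(0)$, $(g,x)\mapsto(x,g\cdot x)$, the action being locally free.

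That last step is the gap. Local freeness makes this map an immersion, but not an embedding: the fibre over $(x,g\cdot x)$ is the coset $gG_x$, so injectivity fails exactly at the points where the stabiliser $G_x$ is a nontrivial finite group, i.e.\ exactly where $M\sq G$ has orbifold singularities. At such a point the image consists of $|G_x|$ local sheets, one for each element of the coset, and these sheets are pairwise distinct whenever points with strictly smaller stabiliser accumulate at $x$ (already for $S^1$ acting on $\CC P^2$ by $\te[z_0:z_1:z_2]=[z_0:\te z_1:\te^2z_2]$, at a regular level between the two upper critical values, one gets two distinct sheets through $(x,\te_0\cdot x)$ over a $\ZZ/2$-point $x$, indexed by $\pm\te_0$). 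So ``image of an immersion'' does not give you an embedded submanifold, and the subsequent ``preimage under the submersion $\pi\times\pi$'' step has nothing well-defined to pull back; the heuristic fibre-dimension count at the start of your argument does not repair this. Your argument is complete as written only when $G$ acts freely on $\mu^{-1}(0)$; in the general locally free case you must either descend to the quotient and use transversality in the orbifold category, which is what the paper's appeal to \cite{BoBr} is for, or otherwise account explicitly for the finite stabilisers. (Your assertions that $S_0$ is open, hence a $2m$-manifold, and that $\pi$ is a submersion are left at the same level of detail as in the paper, which also defers them, so no objection there.)
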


\begin{proof}
That $\De_G$ is $\GTG$-invariant is a consequence of the equivariance of $\pi$: if $\pi(q)=g\pi(p)$ and $g',g''\in G$, then
$\pi(g''q)=g''g(g')^{-1}\pi(g'p)$.

We will see later that $S_0$ is a smooth manifold and that $\pi$ is a submersion. Since $f^{-1}(0)$ is the minimum of the function $f$ we get $\dim S_0=\dim M=2m$. Moreover the smooth map
\[
\begin{array}{rccc}
F: &S_0\times S_0&\longrightarrow &M\sq G\times M\sq G\\
   &(p,q)&\longmapsto &(G\cdot\pi(p),G\cdot\pi(q))
\end{array}
\]
is transverse to the diagonal and thus $\De_G=F^{-1}(\De_{M\ssq G\times M\ssq G})$ is a smooth submanifold of dimension $4m-(2m-2d)=2m+2d$. This holds even with $M\sq G$ being an orbifold \cite{BoBr}.

The last property follows from the observation that if $p,q\in\mu^{-1}(0)$ then $\pi(p)=p$ and $\pi(q)=q$.
\end{proof}

This proposition suggests that $\De_G$ is a good candidate to construct a biinvariant diagonal class. Again compactification is the most difficult part. For $S^1$-actions we had the advantage that the moment map is a Morse-Bott function and therefore gradient lines converge to broken gradient lines (recall Theorem \ref{theo:stratification}). Unfortunately, the function $f$ is not Morse-Bott in general so we cannot apply this. The problem arises because we have limited control on how gradient lines converge to critical points: the connected components of the critical set are not smooth manifolds in general, although its stable sets are  \cite[10.16]{Kir} and following the flow gives a retraction from each stable manifold to its corresponding critical set \cite{Ler}. Similar results do not seem to exist for unstable sets, which is a serious drawback if one wants to apply a technique similar to broken gradient lines.
 
Another approach would be studying if the inclusion $\io:\De_G\hookrightarrow\MTM$ is a pseudocycle and then define the cohomology class by intersection pairings of pseudocycles. For $\io$ to be a pseudocycle we need to have control over its omega-limit set. In \cite{Kir} the stable manifolds of the flow of $f$ are well studied and it is proven that they define a stratification of $M$ in a sense \cite[2.11]{Kir}. Moreover all the strata are of even dimension, so the non-dense strata have all at least codimension $2$. From this stratification one could construct a stratification of the closure of $\De_G$. However, even the stratification on $M$ is not known to have any good convergence behaviour like e.g. being Whitney \cite{GoMa}. This was already pointed out by Kirwan in the footnote at the end of the introduction chapter of \cite{Kir}.

However we still have many good reasons to assert that $\De_G$ is a sensible choice. We shall prove some results to support this statement. We will explain our set-up first: let ${\bf G}=G\exp(i\mg)$ be a reductive complex Lie group with  $G$ a maximal compact subgroup and let $(M,\om,J)$ be a Kähler manifold. Suppose that ${\bf G}$ acts holomorphically on the complex manifold $(M,J)$ and that the induced action of $G$ on the symplectic manifold $(M,\om)$ is Hamiltonian. Let $\mu$ be the moment map for this Hamiltonian action and define $f=\|\mu\|^2$ as above by means of an invariant inner product on $\mg$. In \cite[7.4]{Kir} it is proved that $S_0=\{{\bf g}\cdot p:\ {\bf g}\in{\bf G},\  p\in\mu^{-1}(0)\}$ and that there is a natural homeomorphism
\[
M\ssq G\simeq S_0/{\bf G}.
\]
Hence there is a natural correspondence between $\De_{M\ssq G\times M\ssq G}$ and $\De_{S_0/{\bf G}\times S_0/{\bf G}}$.  We have the following result:

\begin{epiprop}\label{DeKah}
In the described Kähler setting, $\De_G$ is the preimage of the diagonal under the projection
\[
S_0\times S_0\rightarrow S_0/{\bf G}\times S_0/{\bf G},
\]
i.e. $\De_G=\{(p,q)\in S_0\times S_0:\ \exists {\bf g}\in{\bf G}\ \mbox{s.t.}\ q={\bf g}\cdot p\}$.
\end{epiprop}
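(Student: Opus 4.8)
The plan is to show a double inclusion between $\De_G$, as defined for a general compact Lie group via the gradient flow of $f=\|\mu\|^2$, and the set $\De_G^{\bf G}:=\{(p,q)\in S_0\times S_0:\ \exists\, {\bf g}\in{\bf G}\ \text{s.t.}\ q={\bf g}\cdot p\}$. Throughout I would rely on the two facts quoted from \cite{Kir}: first, that in the K\"ahler setting $S_0={\bf G}\cdot\mu^{-1}(0)$; second, that the projection $\pi:S_0\to\mu^{-1}(0)$ (following the flow $\xi^J$ to its limit) descends to a homeomorphism $M\ssq G\simeq S_0/{\bf G}$, so that two points of $S_0$ lie in the same ${\bf G}$-orbit if and only if their $\pi$-images lie in the same $G$-orbit inside $\mu^{-1}(0)$.

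For the inclusion $\De_G\subseteq\De_G^{\bf G}$: suppose $(p,q)\in\De_G$, so $\pi(q)=g\cdot\pi(p)$ for some $g\in G$. Since $\pi(p)\in\mu^{-1}(0)$ and $p\in S_0$, the point $p$ is ${\bf G}$-equivalent to $\pi(p)$; more precisely I would argue that $p$ and $\pi(p)$ lie in the same ${\bf G}$-orbit because $\pi$ induces the homeomorphism $S_0/{\bf G}\simeq M\ssq G$ and $\pi(\pi(p))=\pi(p)$ (the gradient flow fixes points of $\mu^{-1}(0)$), so $[p]=[\pi(p)]$ in $S_0/{\bf G}$. Likewise $[q]=[\pi(q)]=[g\cdot\pi(p)]=[\pi(p)]=[p]$, using that $G\subseteq{\bf G}$. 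Hence $q={\bf g}\cdot p$ for some ${\bf g}\in{\bf G}$, i.e. $(p,q)\in\De_G^{\bf G}$. Conversely, if $(p,q)\in\De_G^{\bf G}$ with $q={\bf g}\cdot p$, then $[p]=[q]$ in $S_0/{\bf G}$, so under the homeomorphism $S_0/{\bf G}\simeq M\ssq G$ we get $G\cdot\pi(p)=G\cdot\pi(q)$ in $\mu^{-1}(0)/G$, which is exactly the statement that $\pi(q)=g\cdot\pi(p)$ for some $g\in G$; thus $(p,q)\in\De_G$. Combining the two inclusions gives the claimed equality.

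The one point that requires genuine care — and which I expect to be the main obstacle — is the claim that every $p\in S_0$ is ${\bf G}$-equivalent to its flow-limit $\pi(p)\in\mu^{-1}(0)$, i.e. that the orbit map $\mu^{-1}(0)\hookrightarrow S_0\to S_0/{\bf G}$ and the retraction $\pi$ are mutually inverse on the level of ${\bf G}$-orbits. This is essentially the content of \cite[7.4]{Kir}: the identification $M\ssq G\simeq S_0/{\bf G}$ is set up precisely so that the composite $\mu^{-1}(0)\to S_0/{\bf G}$ agrees with the usual projection $\mu^{-1}(0)\to\mu^{-1}(0)/G=M\ssq G$, and the retraction $\pi$ realizes the inverse. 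I would spell this out by noting that for ${\bf g}\in{\bf G}$ and $p\in\mu^{-1}(0)$ the limit $\lim_{t\to+\infty}\xi^J_t({\bf g}\cdot p)$ lies in the ${\bf G}$-orbit of $p$ — this is the key analytic input from Kirwan's stratification theory — and that moreover this limit, being in $\mu^{-1}(0)$, is $G$-equivalent to $p$ since the $G$-orbit is the unique closed ${\bf G}$-orbit meeting $\mu^{-1}(0)$. Everything else is then formal manipulation with the homeomorphism $M\ssq G\simeq S_0/{\bf G}$ and the inclusion $G\subseteq{\bf G}$.
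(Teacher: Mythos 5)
Your proposal is correct and follows essentially the same route as the paper: everything reduces to the two facts you isolate --- that $\pi(p)\in{\bf G}\cdot p$ for every $p\in S_0$, and that ${\bf G}\cdot p\cap\mu^{-1}(0)=G\cdot p$ for $p\in\mu^{-1}(0)$ --- followed by the same double-inclusion bookkeeping with elements ${\bf k},{\bf h}\in{\bf G}$ relating $p,q$ to $\pi(p),\pi(q)$. The only difference is that the paper proves these two facts as standalone lemmas (the computation $\na^Jf=2JX^{\mu}$ showing flow lines stay inside ${\bf G}$-orbits, a closure-plus-finite-stabiliser dimension count for the limit, and a Rolle's-theorem argument for the orbit intersection), whereas you cite them from \cite[7.4]{Kir}; be careful that your appeal to the homeomorphism $S_0/{\bf G}\simeq M\sq G$ to conclude $[p]=[\pi(p)]$ is not circular, since the statement that $\pi$ descends to (the inverse of) that homeomorphism is precisely the limit claim, which must be established independently --- as you yourself flag in your final paragraph.
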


We prove the proposition as consequence of a set of lemmas which are all proved, at least implicitly, in \cite[\S 6,7]{Kir} (see also \cite[8.3]{MFK}). For $\eta\in\mg$ let $X^\eta_p=\frac{d}{dt}_{|t=0}\exp(t\eta)p$ be the vector field on $M$ induced by the infinitesimal action of $\eta$. The Lie algebra of ${\bf G}$ is $\mg\oplus i\mg$ and the vector field induced by the infinitesimal action of $i\eta$ is $JX^\eta$.

\begin{epilemm}\label{flow}
For all $p\in M$, $(\na^Jf)_p=2JX^{\mu(p)}_p$.
\end{epilemm}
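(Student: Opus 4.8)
The plan is a direct computation from the definitions, so I expect no serious obstacle. First I would fix $p\in M$ and $v\in T_pM$ and differentiate $f=\|\mu\|^2=\langle\mu,\mu\rangle$ at $p$. Writing $\mu^\eta:=\langle\mu,\eta\rangle\in C^\infty(M)$ for the component of the moment map along a fixed $\eta\in\mg$, the Leibniz rule gives
\[
d_pf(v)=2\langle d_p\mu(v),\mu(p)\rangle=2\,d_p\mu^{\mu(p)}(v),
\]
where the second equality is legitimate because we are differentiating the function $q\mapsto\langle\mu(q),\mu(p)\rangle$ in which $\mu(p)$ is a \emph{constant} vector; this is the only bookkeeping point that needs a little care, and it uses the fixed $G$-invariant identification $\mg\simeq\mg^*$. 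By the defining property of the moment map recalled in the preamble, $d\mu^\eta=\io_{X^\eta}\om$, so $d_p\mu^{\mu(p)}(v)=\om_p(X^{\mu(p)}_p,v)$ and therefore $d_pf(v)=2\,\om_p(X^{\mu(p)}_p,v)$.

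Next I would invoke the definition of the gradient together with $\rho_J=\om(\cdot,J(\cdot))$: the vector $(\na^Jf)_p$ is characterised by $\om_p((\na^Jf)_p,Jv)=\rho_J((\na^Jf)_p,v)=d_pf(v)$ for all $v\in T_pM$. Using the compatibility identity $\om_p(Ja,Jb)=\om_p(a,b)$ one checks that the candidate $2JX^{\mu(p)}_p$ satisfies
\[
\om_p\bigl(2JX^{\mu(p)}_p,Jv\bigr)=2\,\om_p\bigl(X^{\mu(p)}_p,v\bigr)=d_pf(v),
\]
and since $\om_p$ is non-degenerate and $v$ was arbitrary this forces $(\na^Jf)_p=2JX^{\mu(p)}_p$, which is the claim. (One can cross-check the normalisation against the $S^1$-case: for a single component $\mu^\eta$ the same computation gives $\na^J\mu^\eta=JX^\eta$, so $-JX$ is the downward gradient of the $S^1$-moment map, consistent with Section~\ref{sec:hamiltonian}.)

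Finally I would record the reading of the lemma that is actually used afterwards: since $JX^\eta$ is precisely the vector field generated by the infinitesimal action of $i\eta$ in the Lie algebra $\mg\oplus i\mg$ of ${\bf G}$, Lemma~\ref{flow} says that at each $p$ the downward gradient $-\tfrac12(\na^Jf)_p$ is the value of the fundamental vector field of $i\mu(p)\in i\mg$. This is exactly what makes the flow $\xi^J$ move points along ${\bf G}$-orbits, and it is the key input for Proposition~\ref{DeKah}.
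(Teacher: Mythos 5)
Your proof is correct and follows essentially the same route as the paper: differentiate $f=\|\mu\|^2$, use $d\mu^\eta=\io_{X^\eta}\om$ and linearity in $\eta$ to get $d_pf=2\om_p(X^{\mu(p)}_p,\cdot)$, then identify the gradient via $\rho_J=\om(\cdot,J(\cdot))$. The only cosmetic differences are that the paper expands $\mu$ in an orthonormal basis of $\mg$ where you pair directly with the constant vector $\mu(p)$, and that the paper solves $d_pf=\om_p(\cdot,J(\na^Jf)_p)$ for the gradient while you verify the candidate $2JX^{\mu(p)}_p$ using $\om(J\cdot,J\cdot)=\om(\cdot,\cdot)$.
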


\begin{proof} Take $\eta_1,\ldots,\eta_d$ an orthonormal basis of $\mg$ and define the functions $\mu_1,\ldots,\mu_d$ on $M$ by $\mu_k(p)=\la\mu(p),\eta_k\ra$. Then we have
\[
\mu(p)=\sum_{k=1}^d\mu_k(p)\eta_k\ \mbox{and}\ f(p)=\sum_{k=1}^d(\mu_k(p))^2.
\] 
Therefore 
\[
d_pf=2\sum_{k=1}^d\mu_k(p)d_p\mu_k=2\sum_{k=1}^d\mu_k(p)\om_p(X^{\eta_k}_p,\cdot)=\om_p(2X^{\mu(p)}_p,\cdot).
\] 
But on the other hand 
\[
d_pf=(\rho_J)_p(\cdot,(\na^Jf)_p)=\om_p(\cdot,J(\na^Jf)_p).
\]
Hence $-J(\na^Jf)_p=2X^{\mu(p)}_p$ and applying $J$ to both sides the result is obtained.\end{proof}

\begin{epilemm}\label{curve}
For every $p\in M$, the integral curve of $-\na^Jf$ through $p$ is contained in the ${\bf G}$-orbit of $p$. That is $\{\xi^J_t(p):\ t\in\RR\}\subseteq {\bf G}\cdot p$.
\end{epilemm}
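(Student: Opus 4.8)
The statement to prove is Epilemma~\ref{curve}: for every $p\in M$, the integral curve of $-\na^Jf$ through $p$ stays inside the ${\bf G}$-orbit ${\bf G}\cdot p$. The plan is to combine Epilemma~\ref{flow} with the fact that the ${\bf G}$-orbit is a (possibly singular, but immersed) submanifold whose tangent space at each point contains all vectors coming from the infinitesimal ${\bf G}$-action, in particular the vectors $JX^\eta$ for $\eta\in\mg$. Since Epilemma~\ref{flow} gives $(\na^Jf)_q = 2JX^{\mu(q)}_q$ at every point $q$, the vector field $-\na^Jf$ is, pointwise, tangent to the ${\bf G}$-orbit through that point.

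First I would fix $p\in M$ and consider the ${\bf G}$-orbit $\OO = {\bf G}\cdot p$ with its natural manifold structure as the image of the orbit map ${\bf G}\to M$, $\mathbf g\mapsto \mathbf g\cdot p$; its tangent space at any $q\in\OO$ is $T_q\OO = \{X^\eta_q + JX^{\eta'}_q : \eta,\eta'\in\mg\}$, because the Lie algebra of ${\bf G}$ is $\mg\oplus i\mg$ and $i\eta'$ acts infinitesimally by $JX^{\eta'}$. In particular $JX^{\mu(q)}_q\in T_q\OO$ for every $q\in\OO$. Next I would invoke Epilemma~\ref{flow} to rewrite $-(\na^Jf)_q = -2JX^{\mu(q)}_q$, so the restriction of the vector field $-\na^Jf$ to $\OO$ is a smooth vector field $V$ on $\OO$ (smoothness is clear since $q\mapsto\mu(q)$ is smooth and the infinitesimal action is smooth). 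Then the integral curve $\gamma(t)=\xi^J_t(p)$ of $-\na^Jf$ starting at $p$, and the integral curve $\tilde\gamma(t)$ of $V$ on $\OO$ starting at $p$, both satisfy the same ODE with the same initial condition when viewed in $M$ (here I use that the inclusion $\OO\hookrightarrow M$ is an immersion, so $V$ pushes forward to $-\na^Jf$ along $\OO$). By uniqueness of solutions of ODEs, $\gamma$ and $\tilde\gamma$ agree on the maximal interval where $\tilde\gamma$ is defined, and in particular $\gamma(t)\in\OO = {\bf G}\cdot p$ for all such $t$; since $M$ is compact, $\gamma$ is defined for all $t\in\RR$ and the orbit curve cannot escape $\OO$, giving $\{\xi^J_t(p):t\in\RR\}\subseteq {\bf G}\cdot p$ as claimed.

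The one technical point that needs care — and which I expect to be the main obstacle — is that the ${\bf G}$-orbit $\OO$ need not be an embedded submanifold nor closed in $M$; it is only an injectively immersed submanifold, and its closure may contain lower-dimensional orbits. The cleanest way around this is to work with the orbit map $a_p:{\bf G}\to M$ and lift the flow: I would show that $-\na^Jf$ pulls back (via $a_p$, at least locally near the identity, using a local section of $a_p$ onto its image) to a right-invariant-in-spirit vector field on ${\bf G}$, or more simply argue that the time-$t$ flow $\xi^J_t$ can be realized as acting by a one-parameter family $\mathbf g(t)\in{\bf G}$ with $\mathbf g(0)=e$, solving the ${\bf G}$-valued ODE $\dot{\mathbf g}(t)\cdot\mathbf g(t)^{-1} = -2i\,\mu(\mathbf g(t)\cdot p)$ in the Lie algebra $\mg\oplus i\mg$; this ODE has a solution for all $t$ (again by compactness of $M$, so the right-hand side stays bounded), and then $\xi^J_t(p) = \mathbf g(t)\cdot p$ by construction, which is manifestly in ${\bf G}\cdot p$. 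This is exactly the content implicit in \cite[\S 6,7]{Kir}; the routine verification that the ${\bf G}$-valued ODE is well-posed and that its solution intertwines with the gradient flow on $M$ is what I would fill in, but I would not grind through it here.
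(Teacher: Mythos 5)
Your proposal is correct and takes essentially the same route as the paper: by Lemma~\ref{flow} the velocity $-(\na^Jf)_{\xi^J_t(p)}=-2JX^{\mu(\xi^J_t(p))}_{\xi^J_t(p)}$ is generated by the infinitesimal action of $-2i\mu(\xi^J_t(p))\in i\mg$, hence tangent to the ${\bf G}$-orbit, so the gradient flow cannot leave the orbit. The paper states this tangency-implies-invariance step in a single line, while you additionally justify it (via the ${\bf G}$-valued ODE $\dot{\mathbf g}(t)\mathbf g(t)^{-1}=-2i\mu(\mathbf g(t)\cdot p)$ and uniqueness of integral curves), which is a refinement of the same argument rather than a different one.
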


\begin{proof} We have that 
\[
\frac{d}{dt}_{|t=0}\xi_t^J(p)=-(\na^Jf)_{\xi_t^J(p)}=-2JX^{\mu(\xi_t^J(p))}_{\xi^J_t(p)}
\]
is an element of $\in T_{\xi^J_t(p)}({\bf G}\cdot p)$ because $JX^{\mu(\xi_t^J(p))}$ is induced by the infinitesimal action of $i\mu(\xi_t^J(p))\in i\mg$.
\end{proof}

\begin{epilemm}\label{orbit} 
For every $p\in\mu^{-1}(0)$, ${\bf G}\cdot p\cap \mu^{-1}(0)=G\cdot p$.
\end{epilemm}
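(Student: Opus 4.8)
The plan is to prove the two inclusions of $\mathbf{G}\cdot p\cap\mu^{-1}(0)=G\cdot p$ separately; only the inclusion $\subseteq$ requires work, and for it I would run the classical Kempf--Ness argument (see also \cite[8.3]{MFK}).

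The inclusion $G\cdot p\subseteq\mathbf{G}\cdot p\cap\mu^{-1}(0)$ is immediate. Since $\mu$ is equivariant with respect to the (co)adjoint action and $0$ is a fixed point of that action, $G$ preserves $\mu^{-1}(0)$; combined with $G\subseteq\mathbf{G}$ and $p\in\mu^{-1}(0)$ this gives $G\cdot p\subseteq\mu^{-1}(0)\cap\mathbf{G}\cdot p$.

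For the reverse inclusion I would first reduce to a one-parameter statement using the polar decomposition $\mathbf{G}=G\exp(i\mg)$ built into our hypotheses: given $\mathbf{g}\in\mathbf{G}$ with $\mathbf{g}\cdot p\in\mu^{-1}(0)$, write $\mathbf{g}=g\exp(i\eta)$ with $g\in G$, $\eta\in\mg$. As $g^{-1}$ preserves $\mu^{-1}(0)$, the point $\exp(i\eta)\cdot p=g^{-1}\cdot(\mathbf{g}\cdot p)$ again lies in $\mu^{-1}(0)$, so it suffices to show that $p,\exp(i\eta)\cdot p\in\mu^{-1}(0)$ forces $\exp(i\eta)\cdot p=p$; granting this, $\mathbf{g}\cdot p=g\cdot p\in G\cdot p$. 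To prove the reduced statement, set $c(t)=\exp(it\eta)\cdot p$ and $\varphi(t)=\langle\mu(c(t)),\eta\rangle$. The one-parameter subgroup $s\mapsto\exp(is\eta)$ generates the vector field $JX^\eta$, so $c'(t)=(JX^\eta)_{c(t)}$; using the moment map identity $d\langle\mu(\cdot),\eta\rangle=\iota_{X^\eta}\om$ and $\rho_J=\om(\cdot,J(\cdot))$ one gets
\[
\varphi'(t)=\om_{c(t)}\big(X^\eta_{c(t)},(JX^\eta)_{c(t)}\big)=\rho_J\big(X^\eta_{c(t)},X^\eta_{c(t)}\big)=\|X^\eta_{c(t)}\|^2\ge 0.
\]
Hence $\varphi$ is nondecreasing on $[0,1]$, while $\varphi(0)=\langle\mu(p),\eta\rangle=0$ and $\varphi(1)=\langle\mu(\exp(i\eta)\cdot p),\eta\rangle=0$; therefore $\varphi\equiv 0$, so $\varphi'\equiv 0$, so $X^\eta_{c(t)}=0$ for all $t$, whence $c'(t)=(JX^\eta)_{c(t)}=0$, $c$ is constant, and $\exp(i\eta)\cdot p=c(1)=c(0)=p$. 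This is the same convexity phenomenon underlying Lemma \ref{flow}, now applied along directions in $i\mg$ instead of along the gradient flow of $f$.

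The main obstacle is the deceptively simple reduction step: one needs that every $\mathbf{g}\in\mathbf{G}$ genuinely factors as $g\exp(i\eta)$ with $g\in G$, $\eta\in\mg$, in a way compatible with the action as used above. For a reductive complex group with maximal compact $G$ this is the standard polar decomposition, and in our framework it is essentially the meaning of the notation $\mathbf{G}=G\exp(i\mg)$; once it is available, everything else is the elementary monotonicity computation displayed above.
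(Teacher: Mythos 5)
Your proof is correct and follows essentially the same route as the paper: reduce via the decomposition ${\bf G}=G\exp(i\mg)$ to the case $\exp(i\eta)p\in\mu^{-1}(0)$, then exploit that $\frac{d}{dt}\la\mu(\exp(it\eta)p),\eta\ra=\|X^\eta_{\exp(it\eta)p}\|^2\ge 0$. The only cosmetic difference is that you conclude by monotonicity (nondecreasing with equal endpoint values, hence $X^\eta\equiv0$ along the whole curve), whereas the paper applies Rolle's theorem at one interior point and then uses that the one-parameter subgroup $\exp(i\eta\RR)$ fixing $\exp(is\eta)p$ also fixes $p$; both give $\exp(i\eta)p=p$.
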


\begin{proof}
To begin with, we will see that if $\eta\in\mg$ and $\exp(i\eta)p\in\mu^{-1}(0)$, then $\exp(i\eta)p=p$. Define $h:\RR\rightarrow\RR$ by
\[
h(t)=\la\mu(\exp(it\eta)p),\eta\ra.
\] 
Then $h(0)=\la\mu(p),\eta\ra=0$ and $h(1)=\la\mu(\exp(i\eta)p),\eta\ra=0$, because $p$ and $\exp(i\eta)p$ are in $\mu^{-1}(0)$. By Rolle's theorem there exists $s\in(0,1)$ such that $h'(s)=0$. The function $h$ is the composition of the curve $\ga:t\mapsto\exp(it\eta)p$ with the function $\la\mu(\cdot),\eta\ra$. Note that $\ga'(t)=JX^{\eta}_{\ga(t)}$ and that $d_q\la\mu(\cdot),\eta\ra=\om_q(X_q^\eta,\cdot)$. Then, by the chain rule
\[
\begin{array}{rcccl}
0 & = & h'(s) & = & d_{\ga(s)}\la\mu(\cdot),\eta\ra(\ga'(s))\\
  &   &       & = & \om_{\ga(s)}(X^\eta_{\ga(s)},JX^\eta_{\ga(s)})\\
  &   &       & = & (\rho_J)_{\ga(s)}(X^\eta_{\ga(s)},X^\eta_{\ga(s)})\\
  &   &       & = & \|X^\eta_{\ga(s)}\|^2
\end{array}
\] 
Hence $JX^\eta_{\exp(is\eta)p}=0$, which means that the $1$-parameter subgroup $\exp i\eta\RR$ fixes $\exp(is\eta)p$ and so it also fixes $p$. In particular $\exp(i\eta)p=p$, as we wanted to show.

Now let ${\bf g}\in{\bf G}$ be such that ${\bf g}p\in\mu^{-1}(0)$. Then, since ${\bf G}=G\exp(i\mg)$, we have that ${\bf g}p=g\exp(i\eta)p$ for certain $g\in G$ and $\eta\in\mg$. We have that $\exp(i\eta)p\in\mu^{-1}(0)$ because $\mu^{-1}(0)$ is $G$-invariant. From what we have seen previously we know that $\exp(i\eta)p=p$ and then ${\bf g}p=gp$.
\end{proof}

\begin{epilemm}\label{limit}
For every $p\in S_0$, $\pi(p)\in{\bf G}\cdot p$.
\end{epilemm}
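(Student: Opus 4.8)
The plan is to combine the description $S_0=\{{\bf g}\cdot r:\ {\bf g}\in{\bf G},\ r\in\mu^{-1}(0)\}$ recalled just before Proposition \ref{DeKah} --- this is \cite[7.4]{Kir} --- with Kempf--Ness theory and with the already established Lemma \ref{curve}. First I would take $p\in S_0$ and write $p={\bf g}\cdot q$ with ${\bf g}\in{\bf G}$ and $q\in\mu^{-1}(0)$. The point $q$ is polystable, so by the standard characterisation of the points of $\mu^{-1}(0)$ as exactly those whose ${\bf G}$-orbit is closed in $S_0$ (\cite[\S 7]{Kir}; cf. \cite[Ch. 8]{MFK}), the orbit ${\bf G}\cdot q$ is a closed subset of $S_0$. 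Since ${\bf G}\cdot p={\bf G}\cdot q$, the orbit ${\bf G}\cdot p$ is closed in $S_0$ as well.

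Next I would invoke Lemma \ref{curve}: the whole integral curve $\{\xi^J_t(p):t\in\RR\}$ of $-\na^Jf$ through $p$ lies in ${\bf G}\cdot p$. Because $p\in S_0$, the forward limit $\pi(p)=\lim_{t\to+\infty}\xi^J_t(p)$ exists and, by definition of $S_0$, lies in $\mu^{-1}(0)\subseteq S_0$. Thus $\pi(p)$ is a point of $S_0$ which is a limit of points of ${\bf G}\cdot p$; since ${\bf G}\cdot p$ is closed in $S_0$, it follows that $\pi(p)\in{\bf G}\cdot p$, which is the assertion of the lemma.

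The only genuinely nontrivial ingredient here is the closedness of ${\bf G}\cdot q$ in $S_0$ for $q\in\mu^{-1}(0)$, and this is precisely where the Kähler hypothesis is used; everything else is either the definition of $S_0$ and $\pi$ or the already proved Lemma \ref{curve}. So the main obstacle is just to quote the correct version of the Kempf--Ness theorem, after which the argument is immediate. A more self-contained route would write $\xi^J_t(p)=\beta(t)\cdot p$ for the path $\beta$ in ${\bf G}$ solving $\dot\beta(t)=-2i\,\mu\big(\beta(t)\cdot p\big)\,\beta(t)$ with $\beta(0)=e$ (as in the proof of Lemma \ref{curve}) and then control the convergence of $\beta(t)$ modulo the stabiliser of $p$; but this essentially reproves the same closedness statement with more analytic bookkeeping, so I would prefer the GIT-theoretic argument above.
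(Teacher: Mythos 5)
Your proof is correct in outline, but it takes a genuinely different route from the paper's. The paper stays elementary: Lemma \ref{curve} gives $\pi(p)\in\overline{{\bf G}\cdot p}$, and the possibility $\pi(p)\notin{\bf G}\cdot p$ is excluded by a dimension count --- a boundary orbit would have strictly smaller dimension, whereas $\pi(p)\in\mu^{-1}(0)$ has finite stabiliser in $G$ (because $0$ is a regular value) and hence, via Lemma \ref{orbit}, finite stabiliser in ${\bf G}$, so $\dim{\bf G}\cdot\pi(p)=\dim{\bf G}\geq\dim{\bf G}\cdot p$. You instead import a Kempf--Ness-type theorem: the ${\bf G}$-orbit of a point of $\mu^{-1}(0)$ is closed in $S_0$, after which the conclusion is immediate since, by Lemma \ref{curve} and the definition of $\pi$, the point $\pi(p)\in S_0$ is a limit of points of ${\bf G}\cdot p$. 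This is sound, and it does not use the regularity of $0$ at the key step; but it replaces the paper's two-line argument (which needs only Lemma \ref{orbit}, a result the epilogue proves by hand) with a substantially deeper external input, which somewhat defeats the epilogue's aim of giving direct geometric proofs of these facts. Two points of caution. First, your quoted ``characterisation'' is misstated: the points of $S_0$ with closed ${\bf G}$-orbit are those whose orbit \emph{meets} $\mu^{-1}(0)$, not only the points of $\mu^{-1}(0)$ themselves (and when $0$ is regular this is every point of $S_0$, by \cite[7.4]{Kir}); the implication you actually use, namely $q\in\mu^{-1}(0)\Rightarrow{\bf G}\cdot q$ closed in $S_0$, is the correct direction, but note that \cite[Ch.~8]{MFK} covers only the projective case, while the setting here is general compact Kähler. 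Second, the cleanest way to justify that closedness with what the paper already quotes is to observe that $S_0/{\bf G}\simeq M\sq G$ is Hausdorff, so each point of the quotient is closed and its preimage under the continuous quotient map $S_0\rightarrow S_0/{\bf G}$, i.e.\ the orbit, is closed in $S_0$; phrased this way your argument rests only on \cite[7.4]{Kir}, exactly as cited in the text, rather than on a separately quoted Kempf--Ness statement.
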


\begin{proof} 
By Lemma \ref{curve}, $\{\xi^J_t(p):\ t\in\RR\}\subseteq{\bf G}\cdot p$, so that $\pi(p)$ is in the closure $\overline{{\bf G}\cdot p}$. Hence, either $\pi(p)\in{\bf G}\cdot p$ or $\dim{\bf G}\cdot\pi(p)<\dim {\bf G}\cdot p$. Let us see why the second fact is impossible: since $\pi(p)\in\mu^{-1}(0)$, we know that the stabiliser $G_{\pi(p)}$ is finite, and by Lemma \ref{orbit} so is ${\bf G}_{\pi(p)}$. Therefore 
\[
\dim {\bf G}\cdot\pi(p)=\dim {\bf G}-\dim {\bf G}_{\pi(p)}=\dim {\bf G}\geq\dim {\bf G}\cdot p.\qedhere
\]
\end{proof}

Finally we prove Proposition \ref{DeKah}:

\begin{proof}
Let $p,q\in S_0$. By Lemma \ref{limit} there exist ${\bf k},{\bf h}\in{\bf G}$ such that $\pi(p)={\bf k}p$ and $\pi(q)={\bf h}q$. We check both inclusions:

\noindent $\subseteq)$ Suppose that $(p,q)\in\De_G$, so there is $g\in G$ such that $\pi(q)=g\pi(p)$. Then
\[
q={\bf h}^{-1}\pi(q)={\bf h}^{-1}g\pi(p)={\bf h}^{-1}g{\bf k}\cdot p\in{\bf G}\cdot p.
\]

\noindent $\supseteq)$ Suppose that there exists ${\bf g}\in {\bf G}$ such that $q={\bf g}p$. Then
\[
\pi(q)={\bf h}q={\bf h}{\bf g}p={\bf h}{\bf g}{\bf k}^{-1}\pi(p)\in{\bf G}\cdot\pi(p).
\]
We get $\pi(q)\in{\bf G}\cdot\pi(p)$, so by Lemma \ref{orbit} $\pi(q)\in G\cdot\pi(p)$.
\end{proof}

When the Kähler manifold is a nonsingular complex projective variety and ${\bf G}$ acts linearly, the corresponding GIT quotient coincides precisely with $S_0/{\bf G}$ (see \cite[\S 8]{Kir}, \cite[ch. 8]{MFK} for the details). In this algebraic set-up, thanks to Proposition \ref{DeKah}, it turns out that $\De_G$ is very similar to set that Mundet i Riera uses at the very beginning of \cite{Mun} as an inspiration to study the symplectic 
set-up.

At this point one believes that the amount of evidence clearly points to $\De_G$ as the right object to be studied if one wants to construct a biinvariant diagonal class in Kirwan's original set-up. As it has already been explained, the remaining challenge is to find a right way to compactify it.

\subsection*{Biinvariant diagonal classes in other set-ups}

Kirwan surjectivity it has nowadays been generalised to several other frameworks:
\begin{itemize}
\item If $\CA$ is the space of connections of a $G$-principal bundle over a Riemann surface, in \cite{AtBo1} it is proved that $\CA$ may be treated as infinite dimensional symplectic manifold and that the action of the gauge group $\GG$ on $\CA$ is Hamiltonian. The corresponding moment map $\mu$ sends a connection $A$ to its curvature. Therefore $\mu^{-1}(0)$ is the set of flat connections. The symplectic quotient $\mathcal{A}\sq\mathcal{G}$ is a finite dimensional orbifold that we call the {\it moduli space of flat connections}. This work is actually previous to \cite{Kir} and in it Atiyah and Bott already consider the map $\|\mu\|^2$. 

\item Consider a Lie group $G$ and let $LG$ be its loop group. Let $X$ be a symplectic Banach manifold on which $LG$ acts in a Hamiltonian way. Then we have a moment map $\mu:X\rightarrow L\mg^*$ and the symplectic quotient $X\sq LG$, which coincides with $\mu^{-1}(0)/G$, is a finite dimensional orbifold. By means of the inclusion $\mu^{-1}(0)\hookrightarrow X$ and the Cartan isomorphism we get a Kirwan map $H^*_G(X)\rightarrow H^*(X\sq LG)$. In \cite{BTW} it is proved that this infinite dimensional version of the Kirwan map is surjective. 

Studying Symplectic Banach manifolds with Hamiltonian $LG$-actions has also another motivation: they are in correspondence with quasi-hamiltonian $G$-spaces as described in \cite{AMM}. These spaces arise from the idea of defining moment maps taking values on $G$ instead of $\mg^*$.

\item In \cite{HaLa} a $K$-theoretic version of Kirwan surjectivity is given. Similarly in \cite{HaSe} the result for loop groups from \cite{BTW} is also translated to $K$-theory.
\end{itemize}

In all these cases the techniques used are similar to the ones in \cite{Kir}. It is then reasonable to wonder how techniques similar to biinvariant diagonal classes can be adapted to those other situations. One approach to the loop group case that might be promising is the following:

According to the results in \cite{BTW}, if $LG$ acts on a Banach manifold $X$ in a Hamiltonian way there are finite dimensional manifolds $X_n$ approximating $X$ and smooth maps $f_n:X_n\rightarrow\RR$ approximating $f=\|\mu\|^2$ such that the functions $f_n$ are minimally degenerate in the sense of \cite[\S 10]{Kir} and hence they induce surjective maps $H_G^*(X_n)\rightarrow H_G(f_n^{-1}(0))$. It is also shown that all these maps being surjective implies Kirwan surjectivity. 

In the simpler case $LG=LS^1$, one might try to construct approximations $\mu_n:X_n\rightarrow\RR$ of $\mu$ and see whether they induce surjections 
\[
\ka_n:H_{S^1}^*(X_n)\rightarrow H_{S^1}(\mu_n^{-1}(0)).
\]
Although the manifolds $X_n$ are not expected to be symplectic as explained in \cite{BTW}, it seems plausible that the maps $\mu_n$ can be taken to be Morse-Bott anyway. Then the techniques developed in this thesis should be reproducible in order to construct a biinvariant diagonal class $\de_n\in H^{2\dim X_n-2}_{\STS}(X_n\times X_n)$ for each $\ka_n$. An obvious question then would be what kind of object do the $\de_n$ induce on $H^*_{\STS}(X\times X)$ and how to interpret it from the point of view of quasi-hamiltonian $S^1$-spaces.


\newpage


\begin{thebibliography}{XXXXXX}

\addcontentsline{toc}{chapter}{Bibliography}

\fancyhead[RE,LO]{Bibliography}



\bibitem[AMM]{AMM} A. Alexeev, A. Malkin and E. Meinrenken, {\it Lie group valued moment maps}, Journal of differential geometry 48, pp. 445-495, 1998.

\bibitem[Ati]{Ati} M.F. Atiyah, {\it Convexity and commuting Hamiltonians}, Bulletin of the London Mathematical Society 14, pp. 1-15, 1982.

\bibitem[AtBo1]{AtBo1} M.F. Atiyah and R. Bott, {\it The Yang-Mills equations over Riemann surfaces}, Philosophical Transactions of the Royal Society of London A 308, pp. 523-615, 1982.

\bibitem[AtBo2]{AtBo2} M.F. Atiyah and R. Bott, {\it The moment map and equivariant cohomology}, Topology 23, no. 1, pp. 1-28, 1984.

\bibitem[Aud]{Aud} M. Audine, {\it Torus actions on symplectic manifolds (2nd ed.)}, Progress in Mathematics 93, Birkhäuser, 2004.

\bibitem[AuBr]{AuBr} D. Austin, P. Braam, {\it Morse-Bott theory and equivariant cohomology}, in {\it The Floer Memorial Volume} pp. 123-183, Progress in Mathematics 133, Birkhäuser, 1995.

\bibitem[BeGr]{BeGr} A. Ben-Israel, T.N.E. Greville, {\it Generalized inverses (2nd ed.)}, Springer, New York, 2003.


\bibitem[Bre]{Bre} G.E. Bredon, {\it Topology and Geometry}, Graduate Texts in Mathematics 139, Springer, 1993.

\bibitem[BoBr]{BoBr} J.E. Borzellino, V. Brunsden, {\it Elementary orbifold differential topology}, Topology and its Applications, 159, no. 17, pp. 3583-3589, 2012.

\bibitem[BTW]{BTW} R. Bott, S. Tolman and J. Weitsman, {\it Surjectivity for Hamiltonian loop group spaces}, Inventiones Mathematicae 155, pp. 225-251, 2004.

\bibitem[BoTu]{BoTu} R. Bott and L.W. Tu, {\it Differential Forms in Algebraic Topology}, Graduate Texts in Mathematics 82, Springer, 1982.

\bibitem[BrTo]{BrTo} T. Bröcker and T. tom Dieck, {\it Representations of Compact Lie Groups}, Graduate Texts in Mathematics 98, Springer, 1985.

\bibitem[Can]{Can} A. Cannas da Silva, {\it Lectures on symplectic geometry (corrected 2nd printing)}, Lecture Notes in Mathematics 1764, Springer, 2008.

\bibitem[GKM]{GKM} M. Goresky, R. Kottwitz and R. MacPherson, {\it Equivariant cohomology, Koszul duality and the localization thorem}, Inventiones Mathematicae 131, pp. 25-83, 1998.

\bibitem[GoMa]{GoMa} M. Goresky, R. MacPherson, \emph{Stratified Morse Theory}, Ergebnisse der Mathematik und ihrer Grenzgebiete 14, Springer, 1988.

\bibitem[Grah]{Grah} W. Graham, {\it The class of the diagonal in flag bundles}, Journal of Differential Geometry 45, pp. 471-487, 1997.

\bibitem[Gray]{Gray} A. Gray, {\it Tubes (2nd ed.)}, Progress in Mathematics 221, Birkhäuser, 2004.

\bibitem[GGK]{GGK} V. Guillemin, V. Ginzburg, Y. Karshon, {\it Moment Maps, Cobordisms, and Hamiltonian Group Actions}, Mathematical Surveys and Monographs 98, AMS, 2002.


\bibitem[GuSt]{GuSt} V.W. Guillemin and S. Sternberg, {\it Supersymmetry and equivariant de Rham theory}, Mathematics Past and Present 2, Springer, 1999.

\bibitem[HaLa]{HaLa} M. Harada, G.D. Landweber {\it Surjectivity for Hamiltonian $G$-spaces in $K$-theory}, Transactions of the American Mathematical Society 359, pp. 6001-6025, 2007.

\bibitem[HaSe]{HaSe} M. Harada, P. Selick {\it Kirwan surjectivity in $K$-theory for Hamiltonian loop group quotients}, The Quarterly Journal of Mathematics 61, no. 1, pp. 69-86, 2008.

\bibitem[Hir]{Hir} M.W. Hirsch, {\it Differential Topology}, Graduate Texts in Mathematics 33, Springer, 1976.

\bibitem[HPS]{HPS} M.W. Hirsch, C.C. Pugh, M. Shub, {\it Invariant Manifolds}, Lecture Notes in Mathematics 583, Springer, 1977.


\bibitem[Kah]{Kah} P. Kahn, {\it Pseudohomology and homology}, {\tt arXiv:math/0111223v1}, 2001.

\bibitem [Kal]{Kal} J. Kalkman, {\it Cohomology rings of symplectic quotients}, Journal für die reine und angewandte Mathematik 458, pp. 37-52, 1995.
	
\bibitem[Kir]{Kir} F.C. Kirwan, {\it Cohomology of quotients in symplectic and algebraic geometry}, Princeton University Press, Princeton, 1984.
	
\bibitem[Ler]{Ler} E. Lerman, {\it Gradient flow of the norm squared of a moment map}, {\tt arXiv:math/0410568v2}, 2005.
	
\bibitem[Lib]{Lib} M. Libine, {\it Lecture notes on equivariant cohomology}, {\tt arXiv:0709.3615v3}, 2010.

\bibitem[McSa1]{McSa1} D. McDuff and D. Salamon, {\it Introduction to symplectic topology (2nd ed.)}, Oxford Mathematical Monographs, Oxford University Press, 1998.

\bibitem[McSa2]{McSa} D. McDuff and D. Salamon, {\it J-Holomorphic curves and symplectic topology}, Colloquium Publications 52, American Mathematical Society, 2004.



	
\bibitem[MFK]{MFK} D. Mumford, J. Fogarty and F. Kirwan, {\it Geometric Invariant Theory (3rd enlarged ed.)}, Ergebnisse der Mathematik un ihrer Grenzgebiete 34, Springer, 1994.
	
\bibitem[Mun]{Mun} I. Mundet i Riera, {\it The biinvariant diagonal class for Hamiltonian torus actions}, Advances in Mathematics 214, pp. 469-493, 2007.


\bibitem[Sal]{Sal} D. Salamon, {\it Lectures on Floer Homology}, in {\it Symplectic Geometry and Topology} pp. 123-183, IAS/Park City Mathematics Series 7, American Mathematical Society, 1999.

\bibitem[Sat]{Sat} I. Satake, {\it On a generalization of the notion of manifold}, Proceedings of the National Academy of Sciences 42, pp. 359-363, 1956.


\bibitem[Wil]{Wil} R.F. Williams, {\it Expanding Attractors}, Publications Mathemátiques de l'I.H.É.S 43, pp. 169-203, 1974.


\bibitem[Zin]{Zin} A. Zinger, {\it Pseudocycles and integral homology}, Transactions of the American Mathematical Society 360, no. 5, pp. 2741-2765, 2008.

\end{thebibliography}
\end{document}